\newcommand{\vep}{\varepsilon}
\numberwithin{equation}{section}
\newtheorem{theorem}{Theorem}[section]
\newtheorem{lemma}[theorem]{Lemma}
\newtheorem{proposition}[theorem]{Proposition}
\newtheorem{conjecture}[theorem]{Conjecture}
\theoremstyle{remark}
\newtheorem{remark}[theorem]{Remark}
\newtheorem{definition}[theorem]{Definition}
\newtheorem{example}{Example}
\newcommand{\Z}{\mathbb{Z}}
\newcommand{\N}{\mathbb{N}}
\newcommand{\R}{\mathbb{R}}
\newcommand{\C}{\mathbb{C}}
\newcommand{\T}{\mathbb{T}}
\newcommand{\A}{\mathcal{A}}
\newcommand{\I}{\mathcal{I}}
\date{}
\begin{document}
\sloppy
\title[Anti-symmetric skew products with singularities]
{On the ergodicity of anti-symmetric skew products with singularities and its applications}

\author[P.\ Berk]{Przemys\l aw Berk}
\address{Faculty of Mathematics and Computer Science, Nicolaus
Copernicus University, ul. Chopina 12/18, 87-100 Toru\'n, Poland}
\email{zimowy@mat.umk.pl}

\author[K.\ Fr\k{a}czek]{Krzysztof Fr\k{a}czek}
\address{Faculty of Mathematics and Computer Science, Nicolaus
Copernicus University, ul. Chopina 12/18, 87-100 Toru\'n, Poland}
\email{fraczek@mat.umk.pl}

\author[F.\ Trujillo]{Frank Trujillo}
\address{Centre de Recerca Matem\`atica, Campus de Bellaterra,
Edifici C, 08193 Bellaterra, Barcelona, Spain}
\email{ftrujillo@crm.cat}

\date{}

\makeatletter
\@namedef{subjclassname@2020}{\textup{2020} Mathematics Subject Classification}
\makeatother

\subjclass[2020]{37E35, 37A10, 37C40, 37C83, 37J12}
\keywords{locally Hamiltonian flows, ergodicity, skew products over interval exchange transformations}
\thanks{}
\maketitle

\begin{abstract}
We introduce a novel method for proving the ergodicity of skew products of interval exchange transformations (IETs) with piecewise smooth cocycles having singularities at the ends of exchanged intervals. This approach is inspired by Borel-Cantelli-type arguments in \cite{Fa-Le}. The key innovation of our method lies in its applicability to singularities beyond the logarithmic type, whereas previous techniques were restricted to logarithmic singularities. Our approach is particularly effective for proving the ergodicity of skew products for symmetric IETs and antisymmetric cocycles. Moreover, its most significant advantage is the ability to study the equidistribution of error terms in the spectral decomposition of Birkhoff integrals for locally Hamiltonian flows on compact surfaces, applicable not only when all saddles are perfect (harmonic) but also in the case of some non-perfect saddles.
\end{abstract}

\section{Introduction}
Let $T=T_{\pi,\lambda}:[0,1)\to[0,1)$ be an interval exchange transformation (IET) given by an irreducible permutation $\pi=(\pi_0,\pi_1)$ ($\pi_0,\pi_1:\mathcal{A}\to\{1,\ldots,d\}$ are bijections describing the position of intervals, labeled by elements of $\mathcal{A}$, before and after the translation) and the vector $\lambda=(\lambda_\alpha)_{\alpha\in\mathcal{A}}$ collecting the lengths of exchanged intervals. Denote by $I_\alpha=[l_\alpha,r_\alpha)$, $\alpha\in\mathcal{A}$ the intervals exchanged by $T$. We mainly deal with symmetric IETs, i.e.\ $\pi_0(\alpha)+\pi_1(\alpha)=d+1$ for any $\alpha\in\mathcal{A}$. If $\I:[0,1]\to[0,1]$ is the reflection across $1/2$, i.e.\ $\I x=1-x$, then $\I\circ T=T^{-1}\circ \I$ and this map acts on every interval $I_\alpha$ as the reflection across the center of $I_\alpha$ denoted by $m_\alpha$.

The main objective of the article is to develop novel methods to prove the ergodicity of skew products $T_f:[0,1)\times \R\to [0,1)\times \R$ of the form $T_f(x,r)=(Tx,r+f(x))$, where $f:[0,1)\to\R$ is a $C^1$-map on the interior of all exchanged intervals and having singularities at their ends. Suppose that
\begin{align}\label{prop:theta tau0}
\begin{aligned}
\text{$\theta:[x_0,+\infty)\to\R_{>0}$ is an increasing $C^1$-map such that $\int_{x_0}^{+\infty}\frac{dx}{x\theta(x)}=+\infty$,} \\
\text{ and the map $\tau:(0,x_0^{-1}]\to\R_{>0}$ given by $\tau(s)=\frac{s^2}{\theta'(1/s)}$ is increasing.}\quad
\end{aligned}
\end{align}
Notice that we can always extend the functions $\theta$ and $\tau$ so that $x_0 = 1$ while preserving the properties above.

In this work, we consider maps with singularities that behave like $s\mapsto \theta(1/s)$. More precisely, we deal with the space $\Upsilon_\theta(\bigsqcup_{\alpha\in \mathcal{A}}
I_{\alpha})$ of functions $f:[0,1)\to\R$ which are $C^1$ on the interior of all exchanged intervals $I_{\alpha}$, $\alpha\in \mathcal{A}$ and
\begin{equation}\label{def:Zf0}
Z_\theta(f):=\max_{\alpha\in\mathcal{A}}\Big\{\sup_{x\in(l_\alpha,m_\alpha]}|f'(x)\tau(x-l_\alpha)|,\sup_{x\in[m_\alpha,r_\alpha)}|f'(x)\tau(r_\alpha-x)|\Big\}<+\infty.
\end{equation}
For any $f\in\Upsilon_\theta(\bigsqcup_{\alpha\in \mathcal{A}}
I_{\alpha})$ let
\begin{equation}\label{def:zf0}
z_\theta(f):=\max_{\alpha\in\mathcal{A}}\Big\{\inf_{x\in(l_{\alpha},m_{\alpha}]}|f'(x)\tau(x-l_{\alpha})|,
\inf_{x\in[m_{\alpha},r_{\alpha})}|f'(x)\tau(r_{\alpha}-x)|\Big\}.
\end{equation}
Roughly speaking, the positivity of $z_\theta(f)$ expresses non-triviality of at least one singularity of the type $s\mapsto \theta(1/s)$.

In this article, we focus on the ergodic properties of skew products $T_f$ when the function $f$ is additionally \emph{anti-symmetric}, which means that $f\circ T^{-1}\circ \I=-f$, so is anti-symmetric with respect to the central reflection of any interval $I_\alpha$, $\alpha\in\mathcal{A}$. Moreover, we say that $f$ is \emph{piecewise monotonic} if $f$ it is increasing when restricted to the interior of each interval $I_\alpha$, $\alpha\in\mathcal{A}$ or is always decreasing.

The main basic result of the paper, based on ideas developed in \cite{Fa-Le} for rotations, is the following.
\begin{theorem}\label{thm:anti}
Let $\theta:[x_0,+\infty)\to\R_{>0}$, with $x_0>0$, be a slowly varying increasing $C^1$-map such that $\int_{x_0}^{+\infty}\frac{dx}{x\theta(x)}=+\infty$ and the map
$x\mapsto x^2\theta'(x)$ is increasing. Then for a.e.\ symmetric IET $T$ if
\begin{enumerate}
\item $f\in \Upsilon_\theta(\bigsqcup_{\alpha\in \mathcal{A}}I_{\alpha})$ with $z_\theta(f)>0$;
\item $f$ is anti-symmetric;
\item $f$ is piecewise monotonic,
\end{enumerate}
then the skew product $T_f$ is ergodic.
\end{theorem}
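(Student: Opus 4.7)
My strategy is to apply Schmidt's essential values criterion: the skew product $T_f$ is ergodic if and only if the group $E(f)\subset\R$ of essential values of $f$ coincides with $\R$. Since $E(f)$ is a closed subgroup of $\R$, it suffices to exhibit non-zero essential values accumulating at $0$. To detect essential values I would build, for a.e.\ symmetric $(\pi,\lambda)$, the Rauzy--Veech tower structure over $[0,1)$: intervals $I^{(n)}=\bigsqcup_{\alpha\in\mathcal{A}}I_\alpha^{(n)}$ and heights $h_\alpha^{(n)}\to+\infty$ such that $\{T^jI_\alpha^{(n)}:\alpha\in\mathcal{A},\ 0\le j<h_\alpha^{(n)}\}$ tiles $[0,1)$. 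For a.e.\ such IET these towers satisfy the standard balance and Diophantine-type estimates of Zorich/Forni, and the relation $\I\circ T=T^{-1}\circ\I$ lets one arrange them to be $\I$-equivariant in a way compatible with the anti-symmetry of $f$.

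The key analytic ingredient is a precise description of the partial Birkhoff sum $F_n^\alpha:=f^{(h_\alpha^{(n)})}$ on $I_\alpha^{(n)}$. A Denjoy--Koksma-type argument adapted to $\theta$-singularities gives, thanks to $Z_\theta(f)<+\infty$, a variation bound of the form $\operatorname{Var}(F_n^\alpha|_{I_\alpha^{(n)}})\le C\,\theta(h_\alpha^{(n)})$, while $z_\theta(f)>0$ provides a matching lower bound through a derivative estimate near the endpoint of $I_\alpha^{(n)}$ closest to the most recent singularity encounter. The hypothesis that $x\mapsto x^2\theta'(x)$ is increasing (equivalently, that $\tau$ is increasing) is what allows one to compare $\theta$ evaluated at different induction scales. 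The anti-symmetry of $f$ then transfers to $F_n^\alpha\circ\iota_n^\alpha=-F_n^\alpha$ for an appropriate involution $\iota_n^\alpha$ of $I_\alpha^{(n)}$ (conjugate to $\I\circ T^{-1}$), forcing $F_n^\alpha$ to vanish at a distinguished point of $I_\alpha^{(n)}$ and to assume values symmetrically distributed in an interval of length $\asymp\theta(h_\alpha^{(n)})$. Piecewise monotonicity upgrades this to quasi-monotonicity of $F_n^\alpha$ on each half of $I_\alpha^{(n)}$.

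With these estimates I would follow the Borel--Cantelli scheme of \cite{Fa-Le}. Fix $c\in\R$ and $\varepsilon>0$ and consider $A_n^\alpha:=\{x\in I_\alpha^{(n)}:|F_n^\alpha(x)-c|<\varepsilon\}$. The quasi-monotone derivative bound yields $|A_n^\alpha|\gtrsim \varepsilon\,|I_\alpha^{(n)}|/\theta(h_\alpha^{(n)})$, so after saturating with the tower one obtains a subset of $[0,1)$ of measure $\gtrsim\varepsilon/\theta(h_\alpha^{(n)})$. The divergence hypothesis $\int_{x_0}^{+\infty}dx/(x\theta(x))=+\infty$, combined with the typical exponential growth of the Rauzy--Veech heights, implies $\sum_n 1/\theta(h_\alpha^{(n)})=+\infty$. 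A second-moment Borel--Cantelli argument then produces a positive-measure set of $x$ whose Birkhoff sums $f^{(h_\alpha^{(n)})}(x)$ fall inside $(c-\varepsilon,c+\varepsilon)$ along infinitely many $n$, and this is what is required to verify Schmidt's condition for $c$ at scale $\varepsilon$. Letting $\varepsilon\to 0$ and varying $c$, we conclude $E(f)=\R$, hence $T_f$ is ergodic.

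The principal obstacle is the quasi-independence estimate underpinning the second moment. For rotations with logarithmic singularities this was extracted in \cite{Fa-Le} from the sharp asymptotic $F_{q_k}(x)\sim K\log(1/\{q_kx\})$; in the present generality one must replace this by a quantitative Denjoy--Koksma-type description of $F_n^\alpha$ modulo lower-order terms, where the slow variation of $\theta$ and the monotonicity of $x\mapsto x^2\theta'(x)$ give just enough regularity to decorrelate the events $\{|F_n^\alpha-c|<\varepsilon\}$ at different scales $n<m$. A secondary difficulty is establishing, almost surely on the symmetric stratum, the precise Diophantine conditions on the Rauzy--Veech orbit that sustain the tower estimates; this should follow from integrability of the appropriate Kontsevich--Zorich-type cocycles restricted to the symmetric Rauzy class.
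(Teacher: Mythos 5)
Your overall outline matches the paper's route in spirit: essential values (Schmidt's criterion), Rauzy--Veech towers with balance/Diophantine conditions for a.e.\ symmetric IET, anti-symmetry forcing the Birkhoff sum $S_{q_n}f$ to vanish near distinguished points on each tower level (this is exactly what Lemma~\ref{lem:anti} delivers via the involution $\mathcal{I}$), and derivative bounds $|S_{q_n}(f')|\asymp q_n\theta(q_n)$ from $Z_\theta(f)<\infty$ and $z_\theta(f)>0$ (Proposition~\ref{prop:controlder}) to get a window of values of controlled size $\asymp 1/\theta(q_n)$ inside each level. Up to that point you are reconstructing Lemmas~\ref{lem:anti} and Proposition~\ref{prop:controlder} and the setup for Theorem~\ref{thm:ergcr}.

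The genuine gap is the final step. You propose a \emph{second-moment} Borel--Cantelli argument and explicitly flag as the ``principal obstacle'' the quasi-independence (decorrelation) between the events $\{|S_{q_n}f-c|<\varepsilon\}$ at different scales $n<m$. You do not supply such a decorrelation estimate, and in this generality --- arbitrary slowly varying $\theta$, symmetric IETs rather than rotations --- there is no off-the-shelf mechanism producing it; indeed the \cite{Fa-Le} decorrelation relies on the explicit additive structure of continued-fraction denominators for rotations. The paper avoids this entirely: Proposition~\ref{prob: BC} is a \emph{deterministic} packing argument. Given the Rokhlin-tower conditions \eqref{eq: qn2,5}--\eqref{eq: qn5}, one inductively selects subcollections $\tilde A_n$ of the target intervals $J_i^n$ so that $\{\tilde A_n,\,T^{q_n}\tilde A_n\mid n\ge k\}$ are pairwise disjoint inside a fixed interval $(r,s)$; at each step, the gap $q_{n+1}\ge 320C^2 q_n$ and the hole-size control from \eqref{eq: qn4} guarantee each remaining hole is long enough to absorb a definite fraction $\ge 2c/\theta(q_{n+1})$ of its length. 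The divergence $\sum_n 1/\theta(q_n)=\infty$ (a consequence of $\int dx/(x\theta(x))=\infty$ together with the at-most-exponential growth \eqref{eq: qn1}) then forces the complement measure $\prod(1-2c/\theta(q_n))\to 0$, giving $\mathrm{Leb}(\bigcup\tilde A_n)=\tfrac12(s-r)$ with no covariance computation at all. In short: your sketch identifies the right ingredients but leaves open exactly the step where the paper replaces a probabilistic Borel--Cantelli by a constructive geometric one; without the deterministic packing, your argument does not close.

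Two smaller points. First, you bound the Birkhoff sums $F_n^\alpha$ themselves via a Denjoy--Koksma-type inequality; the paper instead controls the Birkhoff sums of the \emph{derivative} $S_{q_n}(f')$ and combines this with the near-vanishing of $S_{q_n}f$ at the anti-symmetry points $\xi_i^n$ --- this is what actually produces the intervals $J_i^n$ on which $S_{q_n}f$ is monotone with prescribed image $[a-\varepsilon,a+\varepsilon]$, as needed to run the packing argument. Second, the a.e.\ conditions on the Rauzy--Veech orbit are not just integrability of a cocycle: Proposition~\ref{prop:DC} packages the towers, the controlled shift \eqref{eq: qn5}, the hole structure \eqref{eq: qn4} (via the horizontal-spacing Lemma~\ref{lem: horizontalspacing}), and the UDC/SUDC conditions, and Remark~\ref{rem: sym_midpoints} ensures the midpoints propagate correctly under induction --- these are all needed before Lemma~\ref{lem:anti} can be invoked.
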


This theorem and its versions are directly applicable to studying of the behavior of error terms in the spectral decomposition of Birkhoff integrals for locally Hamiltonian flows on compact surfaces when all saddles are perfect. The aforementioned spectral decomposition has recently been studied in \cite{Fr-Ul2} (for non-degenerate saddles) and in \cite{Fr-Ki1} (more generally, when all saddles are perfect). We present all the details in Section~\ref{sec:introHam}. One of the main results of \cite{Fr-Ul2} is to prove the equidistribution of the error term when almost every flow has no saddle loops, which is a consequence of the ergodicity of a certain skew extension of the flow. In turn, the ergodicity of the extension follows from the ergodicity of a certain skew product $T_f$ for $f$ having logarithmic singularities of symmetric type.

One of the goals of this article is to show that the error term is equidistributed also when the locally Hamiltonian flow has saddle loops.
In this case, the methods developed so far fail because the symmetry condition is broken. To overcome this problem, we use a version of Theorem~\ref{thm:anti} for $\theta=\log$ to prove the equidistribution of the error term in the case of locally Hamiltonian flows (with perfect saddles) of hyper-elliptic type, see Theorem~\ref{mainthm:onesaddle}, which constitutes another main result of the paper.

However, Theorem~\ref{thm:anti} also has applications beyond the use of the logarithmic function. In Appendix~\ref{sec:imper}, we construct a family of locally Hamiltonian flows with a new type of degenerate and imperfect saddles whose antisymmetric skew product extensions are ergodic. This is an entirely new class of ergodic skew products unknown even in the context of extensions of rotations.

\section{Applications to locally Hamiltonian flows on compact surfaces}\label{sec:introHam}

Let $M$ be a smooth, compact, connected, orientable surface of genus $g\geq 1$.
We focus on smooth flows $\psi_\R = (\psi_t)_{t\in\R}$ on $M$ preserving a \emph{smooth} area form $\omega$, i.e., such that for any (orientable)
local coordinates $(x,y)$, we have $\omega=V(x,y)dx\wedge dy$ with $V$ positive and smooth.
Then for (orientable) local coordinates $(x,y)$ such that $\omega=V(x,y)dx\wedge dy$, the flow $\psi_\R$ is (locally) a solution to the Hamiltonian equation
\[
\frac{dx}{dt} = \frac{\frac{\partial H}{\partial y}(x,y)}{V(x,y)},\qquad
\frac{dy}{dt} = -\frac{\frac{\partial H}{\partial x}(x,y)}{V(x,y)},
\]
for a smooth real-valued locally defined function $H$, or equivalently $\frac{dz}{dt}=-2\iota\frac{\frac{\partial H}{\partial \overline{z}}(z,\overline{z})}{V(z,\overline{z})}$ in complex variables. The flows $\psi_\R$ are usually called \emph{locally Hamiltonian flows} or \emph{multivalued Hamiltonian flows}.

For any smooth observable $f: M\to\R$, we are interested in understanding the asymptotic behavior of the so-called \emph{Birkhoff integrals}
\[\int_0^Tf(\psi_tx)dt\quad\text{as}\quad T\to+\infty.\]
We always assume that all fixed points of the flow $\psi_\R$ are isolated, so the set $\mathrm{Fix}(\psi_\R)$ of fixed points of $\psi_\R$ is finite and, if $g \geq 2$, then it is non-empty. As $\psi_\R$ is area-preserving, fixed points are either centers, simple saddles, or multi-saddles (saddles with $2k$ prongs with $k \geq 2$). In this article, we will mainly consider \emph{perfect} (also known as \emph{harmonic}) saddles. A fixed point $\sigma\in \mathrm{Fix}(\psi_\R)$ is a (perfect) saddle of multiplicity $m=m_\sigma\geq 2$ if there exists a chart $(x,y)$ (called \emph{a singular chart}) in a neighborhood $U_\sigma$ of $\sigma$ such that $d\mu=V(x,y)dx\wedge dy$ and $H(x,y)=\Im (x+\iota y)^m$ ($(0,0)$ are coordinates of $\sigma$). Then the corresponding local Hamiltonian equation in $U_\sigma$ is of the form
$\frac{dz}{dt}=\frac{m\overline{z}^{m-1}}{V(z,\overline{z})}$. We denote the \emph{set of perfect saddles} of $\psi_\R$ by $\mathrm{PSd}(\psi_\R)$.

Note that perfect saddles are, in a sense, a generalization of non-degenerate saddles. Indeed, suppose that $(0,0)$ is a non-degenerate saddle for the local Hamiltonian equation $\frac{dz}{dt}=-2\iota\frac{\frac{\partial H}{\partial \overline{z}}(z,\overline{z})}{V(z,\overline{z})}$, i.e.\ the Hessian of $H$ at $(0,0)$ is non-zero. Then, by Morse Lemma, after a smooth change of coordinates, we have $H(z,\bar{z})=\Im z^2$, so the saddle is perfect of multiplicity $2$.

However, in the current article, we also go beyond the case of perfect saddles. The techniques we propose are also proving effective for flows with certain imperfect saddles, see Appendix~\ref{sec:imper}, which seems to be a major breakthrough in understanding the behavior of Birkhoff integrals for locally Hamiltonian flows.

A \emph{saddle connection} of $\psi_\R$ is an orbit of $\psi_\R$ running from a saddle to a saddle. A \emph{saddle loop} is a saddle connection
joining the same saddle.
We will deal only with flows for which all their saddle connections are loops.
We denote by $\mathrm{SL}$ the set of all saddle loops of the flow.
Recall that if every fixed point of $\psi_\R$ is isolated, then $M$ splits into a finite number of $\psi_\R$-invariant surfaces (with boundary) so that any such surface is either a \emph{minimal component} (i.e., every orbit, except fixed points and saddle loops, is dense in the component) or is a periodic component (filled by periodic orbits, fixed points and saddle loops). The boundary of each component consists of saddle loops and fixed points. Since the dynamics of flows on periodic components is not interesting, we only focus on the study of minimal components.

\subsection{Historical overview}
The first important step towards a full understanding of the asymptotic behavior of Birkhoff integrals was the phenomenon of deviation spectrum and its relation with Lyapunov exponents observed by Zorich \cite{Zo:how} while studying deviations of Birkhoff sums for piecewise constant observables for almost all interval exchange translations.
Inspired by this, Kontsevich \cite{Ko} and Zorich \cite{Ko-Zo} formulated their famous conjecture: for almost every $\psi_\mathbb{R}$ with non-degenerate fixed points
there exist values $1=\nu_1>\nu_2>\ldots>\nu_g>\nu_{g+1}=0$ so that
for every smooth map $f:M\to\mathbb{R}$ there exists $1\leq i\leq g+1$ such that
\[\limsup_{T\to+\infty}\frac{\log\left|\int_0^Tf(\psi_t(x))\,dt\right|}{\log T}=\nu_i, \quad \text{ for almost every }x\in M.\]
They related the values $\nu_i$, for $1\leq i\leq g$, with the positive Lyapunov exponents of the Kontsevich-Zorich cocycle.
Fundamental steps in the verification of the conjecture have been done in the seminal paper by Forni \cite{Fo2}, introducing Forni's distributions $D_i(f)$, for $1\leq i\leq g$, and later developed by Bufetov \cite{Bu}, constructing Bufetov's cocycles $u_i(t,x),$ for $1\leq i\leq g$, for a family of observables $f$. More precisely,
for almost every locally Hamiltonian flow $\psi_\mathbb{R}$ without saddle connections ($M$ is the only one minimal component) and for every observable $f:M\to\mathbb{R}$ from a weighted Sobolev space, we have
\begin{equation}\label{eq:sumbasic}
\int_0^T f(\psi_t(x))dt = \sum_{i=1}^g {D_i}(f)u_i(T,x) + err(f,T,x),
\end{equation}
where
\begin{gather}\label{eq:growthbasic}
\limsup_{T\to+\infty}\frac{\log\big|u_i(T,x)\big|}{\log T}=\nu_i,\ \quad
\lim_{T\to+\infty}\frac{\log|err(f,T,x)|}{\log T}=0, \quad \text{ for a.e. }x\in M.
\end{gather}
However, the final results on the deviation spectrum for Birkhoff integrals in full generality have only recently been obtained in \cite{Fr-Ul2} for non-degenerate saddles and in \cite{Fr-Ki1} for arbitrary perfect saddles. Both articles go beyond the setting considered so far, when the flow is minimal on $M$ and the observables $f$ vanishes at fixed points, and exploit and develop the techniques introduced by Marmi-Moussa-Yoccoz in \cite{Ma-Mo-Yo,Ma-Yo}.
In particular, in \cite{Fr-Ul2} the authors proved \eqref{eq:sumbasic} with \eqref{eq:growthbasic} for a.e.\ non-degenerate locally Hamiltonian flow $\psi_\mathbb{R}$ restricted to any of its minimal components and any smooth observable $f:M\to\mathbb{R}$, fully solving the Kontsevich-Zorich conjecture and giving a somewhat deeper analysis of the asymptotics of the error term is provided.

The transition to the setting in which perfect saddles of any multiplicity appear gives rise to new invariant distributions and new terms in the deviational spectrum, which we now intend to describe.

\subsection{Deviation spectrum in full generality}
Suppose $\psi_\R$ is a locally Hamiltonian flow on $M$ with isolated fixed points such that all saddles are perfect and all saddle connections are loops. Let $M'$ be its minimal component. For any saddle $\sigma\in \mathrm{PSd}(\psi_\R)$, any $0\leq l< 2m_\sigma$ and $0\leq k\leq m_\sigma-2$, let us consider after \cite{Fr-Ki2} the distribution $\mathfrak{C}^k_{\sigma,l}:C^k(M)\to\C$ given by
\begin{equation}\label{def:gothc}
\mathfrak{C}^k_{\sigma,l}(f):=\sum_{\substack{0\leq i\leq k}}
\theta_\sigma^{l(2i-k)}\binom{k}{i}\mathfrak{B}\big(\tfrac{(m_\sigma-1)-i}{m_\sigma},\tfrac{(m_\sigma-1)-k+i}{m_\sigma}\big)\frac{\partial^{k}(f\cdot V)}{\partial z^i\partial\overline{z}^{k-i}}(0,0),
\end{equation}
where $\theta_\sigma$ is the principal $2m_\sigma$-th root of unity and
$\mathfrak{B}(x,y)=\frac{\pi e^{\iota\frac{\pi}{2}(y-x)}}{2^{x+y-2}}\frac{\Gamma(x+y-1)}{\Gamma(x)\Gamma(y)}$, for any $x,y>0$, where we adopt the convention $\Gamma(0)=1$. By \cite{Fr-Ki2}, the distributions $\mathfrak{C}^k_{\sigma,l}$ are $\psi_\R$-invariant. Moreover, by definition,
\begin{gather*}
\mathfrak{C}^k_{\sigma,l+m_\sigma}=(-1)^k\mathfrak{C}^k_{\sigma,l}, \text{ for }0\leq l< m_\sigma, \quad \text{ and } \quad
\sum_{0\leq l<m_\sigma}\theta_\sigma^{(k-2j)l}\mathfrak{C}^k_{\sigma,l}=0, \text{ if }k< j<m_\sigma.
\end{gather*}
\begin{figure}[h!]
 \includegraphics[width=0.3\textwidth]{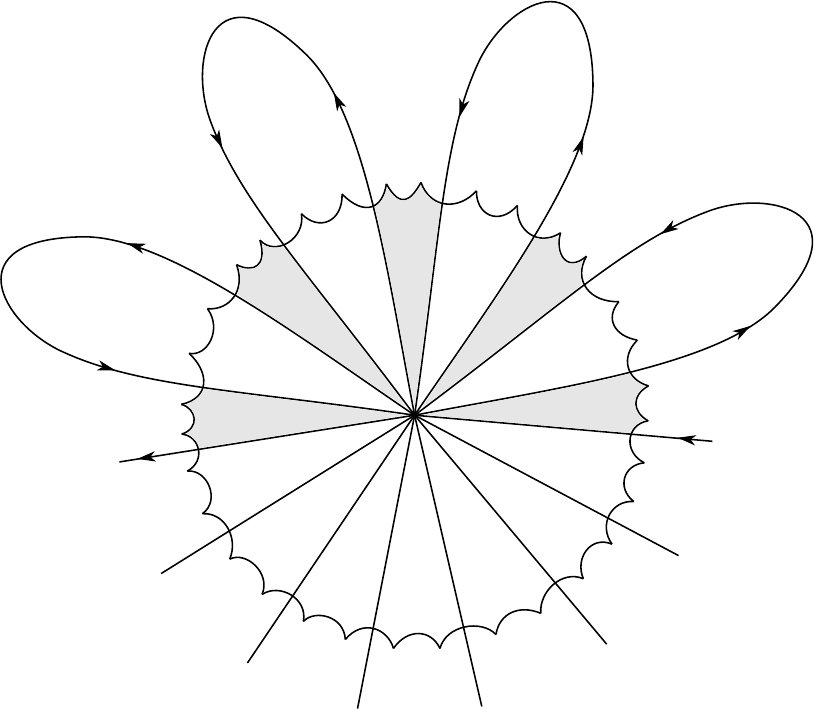}
 \caption{A chain of adjacent saddle loops.} \label{fig:class}
\end{figure}

Recall that the boundary of the minimal component $M'$ consists of saddle loops of the flow $\psi_\R$. Suppose that $\sigma\in M'\cap\mathrm{PSd}(\psi_\R)$ is a saddle on the boundary of $M'$, that is, $\sigma$ emanates at least one saddle loop. The neighborhood $U_\sigma$ (in local singular coordinates associated with $\sigma$) splits into $2m_\sigma$ invariant angular sectors
\[U_{\sigma,l}:=\{z\in U_\sigma:\operatorname{Arg} z\in[l\pi/m_\sigma,(l+1)\pi/m_\sigma]\}, \quad \text{ for }0\leq l<2m_\sigma.\]
Denote
\[\mathcal{L}^\sigma:=\{0\leq l<2m_\sigma: U_{\sigma,l}\subseteq M'\}\]
 and let us consider an equivalence relation $\sim$ on $\mathcal{L}^\sigma$ as follows:
 $l\sim l'$ if the angular sectors $U_{\sigma,l}$ and $U_{\sigma,l'}$ are connected through a chain of adjacent saddle loops emanating from the saddle $\sigma$. Two saddle loops are adjacent when they touch the same angular sector. For every equivalence class $\ell\in \mathcal{L}^\sigma_\sim:=\mathcal{L}^\sigma/\sim$ (an example of such a class is shown in Figure~\ref{fig:class}), let
\[\mathfrak{C}^k_{\sigma,\ell}(f):=\sum_{l\in\ell}\mathfrak{C}^k_{\sigma,l}(f).\]
The following result is a more subtle version of Theorem~1.1 from \cite{Fr-Ki1}, obtained by applying techniques from \cite{Fr-Ki1} together with Theorem~5.6 of \cite{Fr-Ki2}.

\begin{theorem}\label{thm:devspect}
For almost every locally Hamiltonian flow $\psi_\mathbb{R}$ on a compact surface $M$, the following holds. Let $M'$ be a minimal component of $\psi_\mathbb{R}$ of genus $g \geq 1$ and denote $m:=\max\{m_\sigma:\sigma\in \mathrm{Fix}(\psi_\mathbb{R})\cap M'\}$. There exist Lyapunov exponents
\[
1:= \nu_1 > \nu_2 > \cdots > \nu_g >0,
\]
invariant distributions ${D_i}:C^m(M)\to\mathbb{R}$, $1\leq i\leq g$, smooth cocycles $u_i(T,x) : \mathbb{R} \times M \rightarrow \mathbb{R}$, $1\leq i\leq g$, and smooth cocycles $c^k_{\sigma,\ell}(T,x): \mathbb{R} \times M \rightarrow \mathbb{R}$, for all $\sigma\in \mathrm{Fix}(\psi_\mathbb{R})\cap M'$, $0\leq k\leq m_\sigma-2$ and $\ell\in \mathcal{L}^\sigma_\sim$, such that for every $f \in C^{m}(M)$,
\begin{align}\label{eqn;dev-complete}
\begin{split}
\int_0^T f(\psi_t(x))dt &= \sum_{\sigma \in \mathrm{Fix}(\psi_\mathbb{R})\cap M'}\sum_{0\leq k < m_\sigma-2}\sum_{\ell\in \mathcal{L}^\sigma_\sim}
\mathfrak{C}^k_{\sigma,\ell}(f) c^k_{\sigma,\ell}(T,x) \\
&\quad + \sum_{i=1}^g {D_i}(f)u_i(T,x) + err(f,T,x),%
\end{split}
\end{align}
with the following conditions on the growth of main terms satisfied for a.e.\ $x \in M'$:
\begin{gather}
\limsup_{T \rightarrow \infty} \frac{\log{|c^k_{\sigma,\ell}(T,x)|}}{\log T} =\frac{(m_\sigma-2)-k}{m_\sigma}\quad \text{for } \sigma\in \mathrm{Fix}(\psi_\mathbb{R})\cap M',\ 0\leq k< m_\sigma-2,\ \ell\in \mathcal{L}^\sigma_\sim; \label{res;dev1}\\
\limsup_{T \rightarrow \infty} \frac{\log{| u_i(T,x)|}}{\log T} = \nu_i \quad \text{for }\ 1\leq i\leq g; \label{res;dev3}\\
\limsup_{T \rightarrow \infty} \frac{\log|err(f,T,x)|}{\log T} \leq 0 . \label{eqn;dev-remain}
\end{gather}
\end{theorem}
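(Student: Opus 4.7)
\emph{Proof sketch.} The plan is to reduce the statement to a problem about Birkhoff sums of a cocycle over an interval exchange transformation, and then assemble the decomposition from the renormalization machinery of \cite{Bu,Fo2,Ma-Mo-Yo,Ma-Yo,Fr-Ul2,Fr-Ki1,Fr-Ki2}. First, I would restrict $\psi_\R$ to the minimal component $M'$ and choose a transversal segment $I$ to the flow; by the Veech construction the first return map is an IET $T$ with a suitable irreducible permutation and the first return time $\tau$ is piecewise smooth with singularities at the discontinuities of $T$. The Birkhoff integral $\int_0^T f(\psi_t x)\,dt$ can then be written, up to a uniformly bounded boundary term, as a Birkhoff sum $\sum_{n=0}^{N-1}\varphi(T^n y)$, with $\varphi(y):=\int_0^{\tau(y)}f(\psi_t y)\,dt$ and $N=N(x,T)$ the number of visits of the orbit to $I$ up to time $T$; the task is then to understand the asymptotics of these Birkhoff sums.

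Next, I would analyse the local behaviour of $\varphi$ near the endpoints of each interval of continuity using the normal form $H(x,y)=\Im (x+\iota y)^{m_\sigma}$ around a perfect saddle $\sigma$. Passing close to $\sigma$ inside an angular sector $U_{\sigma,l}$ yields, after integration of $f$ along the trajectory, an asymptotic expansion of $\varphi$ whose singular part splits into finitely many contributions indexed by $0\leq k\leq m_\sigma-2$; the coefficients are explicit linear combinations of the mixed derivatives $\partial^{k}(f\cdot V)/\partial z^i\partial\overline{z}^{k-i}(0,0)$ which, paired with the Beta-function weights $\mathfrak{B}$, assemble precisely into the distributions $\mathfrak{C}^k_{\sigma,l}(f)$ of \eqref{def:gothc}. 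This sector-by-sector statement is the content of Theorem~5.6 of \cite{Fr-Ki2}.

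Then I would build the cocycles by Marmi--Moussa--Yoccoz correction along the Kontsevich--Zorich renormalization. Splitting $\varphi$ into its singular part and its smooth mean-zero remainder, one projects the latter onto the unstable spaces of the KZ cocycle, producing the hyperbolic term $\sum_i D_i(f)u_i(T,x)$ with exponents $\nu_i$ as in \eqref{res;dev3}. Each singular contribution of order $k$ at a saddle $\sigma$ along a sector $U_{\sigma,l}\subseteq M'$ produces, after the same renormalization, a cocycle with growth rate $(m_\sigma-2-k)/m_\sigma$ as in \eqref{res;dev1}. Saddle loops are incorporated by observing that two sectors $U_{\sigma,l},U_{\sigma,l'}\subseteq M'$ joined by a chain of adjacent saddle loops always appear in tandem for the return map: the two corresponding singular branches glue into a single piecewise smooth function on $I$ whose singular coefficient is the sum $\sum_{l\in\ell}\mathfrak{C}^k_{\sigma,l}(f)$, so that the renormalization produces one cocycle $c^k_{\sigma,\ell}$ per equivalence class $\ell\in\mathcal{L}^\sigma_\sim$ rather than one per sector. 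Finally, the sub-polynomial bound \eqref{eqn;dev-remain} on $err$ follows from Zorich-type acceleration combined with the log-averaging estimates of \cite{Fr-Ul2,Fr-Ki1}.

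The main difficulty will be the passage from the per-sector distributions $\mathfrak{C}^k_{\sigma,l}$ to the per-class distributions $\mathfrak{C}^k_{\sigma,\ell}$: one has to check that the singular contributions of sectors in the same $\sim$-class are linked through the gluings along saddle loops in such a way that they cannot be separated at the level of the Birkhoff integral, while contributions from distinct classes remain genuinely independent. This requires careful bookkeeping of how the boundary of $M'$ identifies the sectors, combined with the identities $\mathfrak{C}^k_{\sigma,l+m_\sigma}=(-1)^k\mathfrak{C}^k_{\sigma,l}$ and $\sum_{0\leq l<m_\sigma}\theta_\sigma^{(k-2j)l}\mathfrak{C}^k_{\sigma,l}=0$ for $k<j<m_\sigma$ recorded just before Figure~\ref{fig:class}, and is precisely the refinement beyond Theorem~1.1 of \cite{Fr-Ki1} that the sector-level version of Theorem~5.6 of \cite{Fr-Ki2} supplies.
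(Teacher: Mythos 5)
Your proposal follows essentially the same line of attack as the paper: pass to the first-return IET via a transversal, analyze the singularity type of the return cocycle $\varphi_f$ near the ends of the exchanged intervals using the perfect-saddle normal form, and then invoke the Marmi--Moussa--Yoccoz / Kontsevich--Zorich correction machinery of \cite{Fr-Ki1,Fr-Ki2,Fr-Ul2} to peel off the polynomially growing terms. The discussion of how sectors joined by chains of adjacent saddle loops contribute in tandem, producing one cocycle per class $\ell\in\mathcal{L}^\sigma_\sim$, is the right intuition and matches the paper's reliance on the class-indexed functions $\xi_{(\sigma,\ell,k)}$ coming from \cite{Fr-Ki3}.

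There are, however, two genuine gaps. First, the theorem asserts the existence of \emph{smooth cocycles on $\mathbb{R}\times M$}, not merely IET-level corrections. Your sketch stays at the level of Birkhoff sums over $I$ and never explains how the $u_i$ and $c^k_{\sigma,\ell}$ are realized as functions on $\mathbb{R}\times M$. The paper does this by an explicit lift: for any piecewise function $\varphi$ on $I$ it sets $f_\varphi(x,r):=\varphi(x)/\tau(x)$ on the special-flow representation, takes $\zeta_i:=f_{h_i}$ and $\zeta^k_{(\sigma,\ell)}:=f_{\xi_{(\sigma,\ell,k)}}$ (bounded and piecewise smooth because $h_i$ is bounded and the singularities of $\xi_{(\sigma,\ell,k)}$ are dominated by those of the roof function $\tau=\varphi_1$), and then defines $u_i(T,x):=\int_0^T\zeta_i(\psi_t x)\,dt$ and $c^k_{\sigma,\ell}(T,x):=\int_0^T\zeta^k_{(\sigma,\ell)}(\psi_t x)\,dt$. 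Without some such construction the objects in the statement are not actually produced. Second, \eqref{res;dev1} and \eqref{res;dev3} are \emph{equalities}; your renormalization argument only yields the upper bound for the $\limsup$. The paper obtains the matching lower bound via Proposition~5.6 of \cite{Fr-Ki1} and the arguments of Part~V of the proof of Theorem~1.1 there, exploiting the explicit non-vanishing of $|(x-l_\alpha)^{1+b}(\xi_{(\sigma,\ell,k)})'(x)|$; your sketch does not address this direction.

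A lesser point: where you cite Theorem~5.6 of \cite{Fr-Ki2} for the decomposition of $\varphi_f$, the paper actually leans on Theorem~1.3 of \cite{Fr-Ki3}, which already packages the splitting $\varphi_f=\sum\mathfrak{C}^k_{(\sigma,\ell)}(f)\,\xi_{(\sigma,\ell,k)}+\sum D_i(f)h_i+\sum B_s(f)c_s+V(f)v+\mathfrak{r}(f)$ with the correct class-level grouping and the growth estimate on the remainder $\mathfrak{r}(f)$ (which is then shown to be a coboundary by Gottschalk--Hedlund). This spares one the per-sector to per-class bookkeeping you flag as the main difficulty; it is indeed a real issue, but it is absorbed into the cited reference rather than re-proved here.
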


An outline of the proof for this result can be found in Section \ref{sc:PS}.

Suppose that $\psi_\R$ is minimal and has only non-degenerate saddles, i.e., $m=2$. Then the first part of the deviation spectrum in \eqref{eqn;dev-complete} disappears, $\mathcal{L}^\sigma_\sim=\{0,1,2,3\}$ and $\mathfrak{C}^0_{\sigma,l}(f)=2f(\sigma)$.
In this case, \cite{Fr-Ul2} provides a more in-depth analysis of the error term, which has subpolynomial growth. More precisely, if $f(\sigma)\neq 0$ for some $\sigma \in \mathrm{Fix}(\psi_\mathbb{R})$ then for a.e.\ $x\in M$ the error term $err(f,T,x)$ is equidistributed on $\R$ as $T\to+\infty$, that is,
\[\lim_{T\to+\infty}\frac{|\{t\in[0,T]:err(f,T,x)\in I\}|}{|\{t\in[0,T]:err(f,T,x)\in J\}|}=\frac{|I|}{|J|},\]
for any pair of bounded intervals $I, J\subseteq \R$. On the other hand, if $f(\sigma)= 0$ for all $\sigma \in \mathrm{Fix}(\psi_\mathbb{R})$ then the error term $err(f,T,x)$ is uniformly bounded.

In fact, in the first scenario, the equidistribution of the error term follows from the equidistribution of a.e. Birkhoff integral $\int_0^Tf(\psi_t(x))\,dt$ whenever $f:M\to\R$ is of class $C^2$, $D_i(f)=0$ for $1\leq i \leq g$, and $f(\sigma)\neq 0$, for some fixed point $\sigma$. In turn, for such an observable $f$, the equidistribution of the Birkhoff integrals follows directly, applying Hopf's ergodic theorem, from the ergodicity of the skew product flow $\psi^f_\R=(\psi^f_t)_{t\in\R}$ acting on $M\times\R$ by
\[\psi^f_t(x,r)=\left(\psi_t(x),r+\int_0^tf(\psi_s (x))\,ds\right).\]

Recall that the ergodicity of skew product flows as above can be deduced from the ergodicity of certain discrete skew products over IETs. Indeed, if $I\subseteq M$ is a transversal curve identified with an interval, then the first return map of the flow $\psi_\R$ to $I$ is an interval exchange transformation $T:I\to I$ for the so-called \emph{standard parameterizations}. Moreover, the first return map of the flow $\psi^f_\R$ to $I\times \R$ is the skew product map $T_{\varphi_f}:I\times\R\to I\times\R$ given by
\begin{equation}\label{def:phif}
T_{\varphi_f}(x,r)=(T(x),r+\varphi_f(x)), \quad \text{ with }\varphi_f(x)=\int_0^{\tau(x)}f(\psi_t (x))\,dt,
\end{equation}
where $\tau:I\to\R_{>0}\cup\{+\infty\}$ is the first return time map of the flow $\psi_\R$ to $I$. Thus, the ergodicity of the flow $\psi^f_\R$ is equivalent to the ergodicity of the skew product map $T_{\varphi_f}$.

\begin{remark}
Recall that a subset $A$ of locally Hamiltonian flows has full Lebesgue measure (in the sense of Katok's fundamental class, see \cite{Ka0}) if and only if a full measure set of IETs appears in the
base of special flows representations of flows in $A$. For a more detailed explanation of the space of locally Hamiltonian flows, the partition of the surface into invariant components, and the measure class, we refer the reader to \cite{Rav}.
\end{remark}

Let us mention that the proof of ergodicity in \cite{Fr-Ul2} relies on the fact that the map (cocycle) $\varphi_f:I\to\R$ has logarithmic singularities of \emph{symmetric type} at the ends of the intervals exchanged by $T$. A similar argument has been recently used in \cite{Be-Tr-Ul} to conclude the ergodicity of {a certain symmetric class of cocycles
with symmetric logarithmic singularities. This symmetry condition is a consequence of the absence of saddle loops. Notice that if there are saddle loops, then we are limited to studying the flow on the minimal components, and the symmetry condition is naturally broken. We, however, expect that the symmetry condition for logarithmic singularities is not relevant to the ergodicity of the skew product and, consequently, to the equidistribution of the error term in the deviation spectrum.
Our working conjecture is as follows:
\begin{conjecture}\label{conj}
Let $\psi_\mathbb{R}$ be a locally Hamiltonian flow on a compact surface $M$ for which Theorem~\ref{thm:devspect} holds. Let $M'$ be one of its minimal components of genus $g \geq 1$ and denote $m:=\max\{m_\sigma:\sigma\in \mathrm{Fix}(\psi_\mathbb{R})\cap M'\}$. For every $f\in C^m(M)$ such that
\begin{itemize}
\item $D_i(f)=0$ for all $1\leq i\leq g$,
\item $\mathfrak{C}^k_{\sigma,\ell}(f)=0$ for all $\sigma\in\mathrm{Fix}(\psi_\mathbb{R})\cap M'$, $0\leq k<m_\sigma-2$ and $\ell\in\mathcal{L}^\sigma_\sim$, and
\item $\mathfrak{C}^{m_\sigma-2}_{\sigma,\ell}(f)\neq 0$ for some $\sigma\in\mathrm{Fix}(\psi_\mathbb{R})\cap M'$ and $\ell\in\mathcal{L}^\sigma_\sim$,
\end{itemize}
the skew product flow $\psi^f_\R$ is ergodic.
\end{conjecture}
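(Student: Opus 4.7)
The strategy is to reduce the problem to the ergodicity of a discrete skew product $T_{\varphi_f}$ over an IET, as in \eqref{def:phif}, and then to apply a suitable generalization of Theorem~\ref{thm:anti} with $\theta=\log$. First, I would fix a transversal $I\subset M'$ to $\psi_\R|_{M'}$ for which the first-return map $T:I\to I$ is an IET (in some standard parameterization). As noted after \eqref{def:phif}, the ergodicity of the continuous skew extension $\psi^f_\R$ on $M'\times \R$ is then equivalent to that of the discrete skew product $T_{\varphi_f}$, with $\varphi_f(x)=\int_0^{\tau(x)}f(\psi_t(x))\,dt$.

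Next, I would compute the asymptotics of $\varphi_f$ at each discontinuity $l_\alpha, r_\alpha$ of $T$. Each such discontinuity arises from an orbit hitting a boundary saddle $\sigma\in\mathrm{PSd}(\psi_\R)\cap M'$ inside an angular sector $U_{\sigma,l}$ whose equivalence class $\ell\in\mathcal{L}^\sigma_\sim$ is determined by the chain of adjacent saddle loops. Working in the singular chart with $H=\Im(x+\iota y)^{m_\sigma}$ and Taylor-expanding $f\cdot V$ at $\sigma$ up to order $m_\sigma-2$, the cocycle $\varphi_f$ should admit an expansion with a power-type contribution of order $|x-l_\alpha|^{-(m_\sigma-2-k)/m_\sigma}$ coming from each $0\leq k< m_\sigma-2$ and a logarithmic contribution from $k=m_\sigma-2$, with coefficients proportional to $\mathfrak{C}^k_{\sigma,\ell}(f)$, plus a $C^1$ remainder. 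The vanishing of $\mathfrak{C}^k_{\sigma,\ell}(f)$ for $k<m_\sigma-2$ then kills every power-type singularity, while the vanishing of the $D_i(f)$ prevents any residual Forni-type polynomial drift from obstructing the analysis; the outcome is that $\varphi_f\in \Upsilon_{\log}(\bigsqcup_{\alpha\in\A}I_\alpha)$, and the third hypothesis guarantees $z_{\log}(\varphi_f)>0$.

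The final step is to run (a version of) Theorem~\ref{thm:anti} on $\varphi_f$. The main obstacle is that Theorem~\ref{thm:anti} requires $T$ to be symmetric and $\varphi_f$ to be anti-symmetric and piecewise monotonic. When $M'$ is hyper-elliptic, the hyper-elliptic involution can be arranged to act as the reflection $\I$ on a well-chosen transversal, forcing both the symmetry of $T$ and the anti-symmetry of $\varphi_f$; this reduction is precisely Theorem~\ref{mainthm:onesaddle}. For a general minimal component with saddle loops, neither property holds, and two routes are available: (i) pass to a finite cover of $M$ admitting a suitable involution, apply the symmetric case there, and descend ergodicity via a standard factor argument, or (ii) extend the Borel--Cantelli mechanism of Theorem~\ref{thm:anti} directly to asymmetric IETs.

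Strategy (ii) is where the real difficulty lies, and is the reason the statement is left as a conjecture. In \cite{Fa-Le}-style arguments the symmetry of $T$ is used at a decisive step to cancel otherwise divergent sums over Rauzy--Veech times; without this symmetry one must identify a substitute cancellation mechanism, most plausibly by exploiting the algebraic relations between the singular coefficients $\mathfrak{C}^{m_\sigma-2}_{\sigma,\ell}(f)$ dictated by perfectness of the saddles (for instance, the vanishing identities $\sum_l\theta_\sigma^{(k-2j)l}\mathfrak{C}^k_{\sigma,l}=0$ for $k<j<m_\sigma$). Executing this, or alternatively realising a canonical symmetrising cover in (i) for arbitrary minimal components, is the principal open problem.
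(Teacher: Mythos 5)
The statement you are asked to prove is, in the paper, explicitly a \emph{conjecture}: the authors do not prove it in general, but only in two special cases (Theorem~\ref{mainthm:mini} when $\psi_\R$ is minimal on all of $M$, and Theorem~\ref{mainthm:onesaddle} for one perfect saddle of hyper-elliptic type on $M'$). Your proposal correctly recognizes this and is therefore not in error for stopping short of a complete argument. Your setup is sound and matches the paper's: reduce to the discrete skew product $T_{\varphi_f}$ over the first-return IET, use the decomposition \eqref{eq:varphif} to see that the vanishing of the $D_i(f)$ and of the lower-order $\mathfrak{C}^k_{\sigma,\ell}(f)$ kills the power-type singularities $|x-l_\alpha|^{-(m_\sigma-2-k)/m_\sigma}$ and leaves $\varphi_f\in\mathrm{LG}\subset\Upsilon_{\log}$ with $z_{\log}(\varphi_f)>0$ under the third hypothesis, modulo a coboundary. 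You also correctly identify that Theorem~\ref{thm:anti} requires symmetry of $T$ and anti-symmetry/monotonicity of the cocycle, which is exactly the hyper-elliptic mechanism behind Theorem~\ref{mainthm:onesaddle}.

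Two refinements relative to the paper are worth noting. First, for Theorem~\ref{mainthm:mini} (no saddle loops) the paper does \emph{not} use the anti-symmetry machinery at all: it applies the earlier criterion of \cite{Fr-Ul2} (restated here as Theorem~\ref{thm:erg}), whose hypothesis $\mathcal{AS}(\widetilde{\varphi}_f)=0$ is verified by the roots-of-unity cancellation $\sum_{0\leq l<m_\sigma}\theta_\sigma^{2l(2i+2-m_\sigma)}=0$ unless $i=(m_\sigma-2)/2$, forcing $\Delta_{\mathcal{O}}(\widetilde{\varphi}_f)=0$. Your proposal folds this case into the ``general asymmetric'' discussion, whereas the paper handles it by an entirely symmetric-singularity route that bypasses Theorem~\ref{thm:anti}. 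Second, the paper's Theorem~\ref{mainthm:onesaddle} needs more than hyper-ellipticity alone: the proof of Theorem~\ref{thm:ergasym} uses that $\Sigma(\pi)$ is a singleton, i.e.\ that $M'$ carries a single saddle, to make the decomposition $\varphi=\varphi_a+\varphi_s$ (anti-symmetric plus $\mathcal{AS}=0$ part) work. Your routes (i) and (ii) for the genuinely open case are reasonable speculation; the paper itself offers no such route, which is precisely why the statement is left as Conjecture~\ref{conj}.
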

Note that this yields the equidistribution of the error term in \eqref{eqn;dev-complete} whenever $\mathfrak{C}^{m_\sigma-2}_{\sigma,\ell}(f)\neq 0$ for some $\sigma\in\mathrm{Fix}(\psi_\mathbb{R})\cap M'$ and $\ell\in\mathcal{L}^\sigma_\sim$.

\subsection{Main results and technical novelties}
In the current article, we positively verify Conjecture \ref{conj} in two important special cases. First, see Theorem~\ref{mainthm:mini}, we consider the case when the flow $\psi_\R$ has no saddle loops, that is, the flow is minimal over the entire surface $M$. Then, in view of \cite[Theorem~5.6]{Fr-Ki1} applied to $r=0$, the map $\varphi_f$ has logarithmic singularities. By the absence of saddle connections, $\varphi_f$ satisfies the symmetry condition. Then, we can use directly the ergodicity criterion developed in \cite[Theorem~8.1]{Fr-Ul2}. In conclusion, this case does not require the development of new tools.

Second, we consider the case when the flow $\psi_\R$ has exactly
one saddle point, and we restrict ourselves to a minimal component $M'$. We additionally assume that $\psi_\R$ on $M'$ is of hyper-elliptic type, that is, there is a transversal curve $I\subseteq M'$ such that the IET $T:I\to I$ given by the first return map of $\psi_\R$ to $I$ is symmetric. In this context, the main result is the following.

\begin{theorem}\label{mainthm:onesaddle}
For almost every locally Hamiltonian flow $\psi_\mathbb{R}$ on a compact surface $M$ having exactly one perfect saddle, the following holds. Let $M'$ be a minimal component of $\psi_\mathbb{R}$ of genus $g \geq 1$ and assume that $\psi_\mathbb{R}$ on $M'$ is of hyper-elliptic type. Then, for every $f\in C^{m_\sigma}(M)$ such that
\begin{itemize}
\item $D_i(f)=0$ for all $1\leq i\leq g$,
\item $\mathfrak{C}^k_{\sigma,\ell}(f)=0$ for all $0\leq k<m_\sigma-2$ and $\ell\in\mathcal{L}^\sigma_\sim$, and
\item $\mathfrak{C}^{m_\sigma-2}_{\sigma,\ell}(f)\neq 0$ for some $\ell\in\mathcal{L}^\sigma_\sim$,
\end{itemize}
the skew product flow $\psi^f_\R$ is ergodic.

In particular, for every $f\in C^{m_\sigma}(M)$ such that $\mathfrak{C}^{m_\sigma-2}_{\sigma,\ell}(f)\neq 0$ for some $\ell\in\mathcal{L}^\sigma_\sim$, we have
\[
\int_0^T f(\psi_t(x))dt = \sum_{0\leq k < m_\sigma-2}\sum_{\ell\in \mathcal{L}^\sigma_\sim}
\mathfrak{C}^k_{\sigma,\ell}(f) c^k_{\sigma,\ell}(T,x)
 + \sum_{i=1}^g {D_i}(f)u_i(T,x) + err(f,T,x)
\]
with the cocycles $u_i$ and $c^k_{\sigma,\ell}$ that grow polynomially according to rules given by \eqref{res;dev1} and \eqref{res;dev3}, and the error term $err(f,T,x)$ that oscillates sub-polynomially and which is equidistributed on $\R$ as $T\to+ \infty$.
\end{theorem}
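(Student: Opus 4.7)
The plan is to reduce the ergodicity of $\psi^f_\R$ on $M'\times\R$ to the ergodicity of a skew product $T_{\varphi_f}$ over a symmetric IET, and then to invoke Theorem~\ref{thm:anti} with $\theta=\log$. Once ergodicity of $\psi^f_\R$ is established, the equidistribution of $err(f,T,x)$ follows from Hopf's ratio ergodic theorem applied to the $\R$-extension, exactly as in \cite{Fr-Ul2}: the error term differs from $\int_0^T f(\psi_t x)\,dt$ only by the polynomially growing cocycles $u_i(T,x)$ and $c^k_{\sigma,\ell}(T,x)$, whose contributions become negligible when counting return times to a bounded interval.

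First, by the hyper-elliptic assumption I pick a transversal $I\subseteq M'$ so that the first return map $T:I\to I$ of $\psi_\R$ is a symmetric IET. With $\varphi_f$ as in \eqref{def:phif}, the Poincar\'e return of $\psi^f_\R$ to $I\times\R$ is $T_{\varphi_f}$, so the required ergodicity reduces to that of $T_{\varphi_f}$. Second, I describe the singularities of $\varphi_f$ at the endpoints $l_\alpha,r_\alpha$. The expansions of $\int_0^{\tau(x)}f(\psi_t x)\,dt$ near the saddle, provided by \cite[Theorem~5.6]{Fr-Ki2} and already at the core of Theorem~\ref{thm:devspect}, express $\varphi_f$ as a sum of terms whose coefficients are $\mathfrak{C}^k_{\sigma,\ell}(f)$ and whose singularity exponents are $(m_\sigma-2-k)/m_\sigma$, plus a smooth remainder. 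Under the vanishing hypotheses $\mathfrak{C}^k_{\sigma,\ell}(f)=0$ for $k<m_\sigma-2$, only the logarithmic-type term carrying $\mathfrak{C}^{m_\sigma-2}_{\sigma,\ell}(f)$ survives. After subtracting an appropriate $T$-coboundary I obtain a cohomologous cocycle $\widetilde\varphi_f\in \Upsilon_{\log}(\bigsqcup_\alpha I_\alpha)$; the assumption $\mathfrak{C}^{m_\sigma-2}_{\sigma,\ell}(f)\neq 0$ for some $\ell$ yields $z_{\log}(\widetilde\varphi_f)>0$, and piecewise monotonicity follows from the definite sign of the leading logarithmic derivative on each $I_\alpha$.

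The third step is the antisymmetry relation $\widetilde\varphi_f\circ T^{-1}\circ \I=-\widetilde\varphi_f$. The hyper-elliptic structure on $M'$ lifts the reflection $\I x=1-x$ on $I$ to a smooth orientation-reversing involution $J:M'\to M'$ that fixes the unique saddle $\sigma$ and conjugates the flow, $J\circ\psi_t=\psi_{-t}\circ J$. The sectors $U_{\sigma,l}\subseteq M'$ become paired under $J$, and this pairing organises the equivalence classes $\ell\in\mathcal{L}^\sigma_\sim$ into $J$-dual pairs; using \eqref{def:gothc}, the leading coefficients at the two endpoints of any $I_\alpha$ paired by $J$ appear with opposite sign. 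Combined with the direct change of variables $J\circ \psi_t=\psi_{-t}\circ J$ in the integral defining $\varphi_f$, this gives antisymmetry of $\widetilde\varphi_f$. With $\widetilde\varphi_f$ satisfying all three hypotheses of Theorem~\ref{thm:anti} for $\theta=\log$, one concludes ergodicity of $T_{\widetilde\varphi_f}$, hence of $T_{\varphi_f}$ and of $\psi^f_\R$.

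The principal obstacle I anticipate is the simultaneous realization of these three properties for a single cohomological representative $\widetilde\varphi_f$. Piecewise monotonicity depends delicately on the sign of $\mathfrak{C}^{m_\sigma-2}_{\sigma,\ell}(f)$ across the endpoints of each $I_\alpha$, membership in $\Upsilon_{\log}$ requires that the full sub-leading asymptotic tail be absorbed into a coboundary without producing a non-integrable derivative, and antisymmetry has to survive this coboundary subtraction. The presence of saddle loops splits the sectors at $\sigma$ into several equivalence classes $\ell\in\mathcal{L}^\sigma_\sim$, so one must track precisely which classes are exchanged by $J$ and check that the top Beta-function factors coming from \eqref{def:gothc} match up with opposite signs on the two sides. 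This is the step where the hyper-elliptic hypothesis is essential: without it the antisymmetry cannot be restored, and Theorem~\ref{thm:anti} would not apply.
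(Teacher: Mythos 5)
There is a genuine gap in Step 3: you assert that $\widetilde\varphi_f\circ T^{-1}\circ\I=-\widetilde\varphi_f$, but this does not hold for a general observable $f$. The hyper-elliptic involution $J$ (which acts on $I$ as $\I$ and satisfies $J\circ\psi_t=\psi_{-t}\circ J$) gives only
\[
\varphi_f(\I(Tx))=\int_0^{\tau(\I(Tx))}f(\psi_t(\I(Tx)))\,dt
=\int_0^{\tau(x)}f\bigl(J(\psi_{\tau(x)-t}(x))\bigr)\,dt
=\varphi_{f\circ J}(x),
\]
so $\varphi_f$ is anti-symmetric only when $f\circ J=-f$. In Theorem~\ref{mainthm:onesaddle} the observable $f$ is arbitrary (subject only to the vanishing of certain distributions), so you cannot expect the coefficients $\mathfrak{C}^{m_\sigma-2}_{\sigma,\ell}(f)$ to pair with opposite signs across $J$-dual sectors — they are just derivatives of $f\cdot V$ at the saddle. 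For the same reason your piecewise monotonicity claim fails: once the full logarithmic and bounded-variation remainder is retained, $S_{q_n}(\widetilde\varphi_f)$ is not forced to be monotone on the $\Delta_i^n$, so Theorem~\ref{thm:anti} is not directly applicable.

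The paper avoids both problems with a different strategy, embodied in Theorem~\ref{thm:ergasym}. Rather than showing $\widetilde\varphi_f$ is anti-symmetric, it decomposes $\widetilde\varphi_f=\varphi_a+\varphi_s$, where $\varphi_a$ is an \emph{artificially constructed} anti-symmetric and piecewise monotone cocycle (built from one-sided $\pm\log$ singularities) designed to carry the entire singularity asymmetry $\Delta_{\mathcal O}(\widetilde\varphi_f)$, and $\varphi_s$ then has $\mathcal{AS}(\varphi_s)=0$. Lemma~\ref{lem:anti} controls the Birkhoff sums of $\varphi_a$ at points near the tower midpoints; Proposition~\ref{prop:van} gives $\overline{d}(\varphi_a)=0$; and the symmetric remainder $\varphi_s$ is tamed using the SUDC estimates of Theorem~\ref{operatorcorrection} (it is $O(1)$ on the renormalized intervals). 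These ingredients feed directly into the Borel--Cantelli ergodicity criterion of Theorem~\ref{thm:ergcr}, without ever requiring $\widetilde\varphi_f$ itself to be anti-symmetric or monotone. Your proposal would need to be replaced by this decomposition argument. The first and last paragraphs of your proposal (reduction to $T_{\varphi_f}$; equidistribution via the ratio ergodic theorem) are in line with the paper.
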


Arguments coming from \cite{Fr-Ki2} show that $\varphi_f$ has logarithmic singularities, but due to the existence of saddle loops, the symmetry condition is broken, and the techniques developed in \cite{Fr-Ul2} do not work. In this case, we take advantage of the fact that the flow has only one saddle point and is hyperelliptic, so the corresponding interval transformation is symmetric. This, in turn, provides an opportunity to decompose $\varphi_f$ into a symmetric and anti-symmetric part. In fact, only its anti-symmetric part is responsible for the ergodicity of the skew product map $T_{\varphi_f}$, and finally also for the ergodicity of the skew product flow $\psi^f_\R$.

The main novelty of this article is the development of completely new techniques for proving the ergodicity of skew products based on the anti-symmetry of cocycles having singularities. The developed tools are mainly inspired by ideas from \cite{Fa-Le}, in which Fayad and Lema\'nczyk prove the ergodicity of skew products of a.a.\ rotations with cocycles having one logarithmic singularity using Borel-Cantelli-type results, and by ideas from \cite{Be-Tr}, in which anti-symmetry was used to prove the ergodicity of skew products of interval exchange transformations with piecewise constant cocycles. We should emphasize that the power of the developed techniques is also revealed by their applicability to anti-symmetric skew products with non-logarithmic singularities. In Appendix~\ref{sec:imper}, we present a family of locally Hamiltonian flows with a new type of degenerate and imperfect saddles whose antisymmetric skew product extensions are ergodic. This is an entirely new class of ergodic skew products unknown even in the context of extensions of rotations.

\section{Interval exchange transformations and translation surfaces}
 Let $\mathcal{A}$ be a $d$-element alphabet. An interval exchange transformation $T = (\pi, \lambda)$ is a piecewise isometry of the interval $I=[0,|I|)$ determined by a pair $\pi=(\pi_0,\pi_1)$ (to which we refer to as a \emph{permutation}) of bijections $\pi_\vep:\mathcal{A}\to\{1,\ldots,d\}$, for $\vep=0,1$, and a vector $\lambda=(\lambda_\alpha)_{\alpha\in\mathcal{A}}\in \R_{>0}^{\mathcal{A}}$. For any $\lambda=(\lambda_\alpha)_{\alpha\in\mathcal{A}}\in \R_{>0}^{\mathcal{A}}$, let
\[|\lambda|=\sum_{\alpha\in\mathcal{A}}\lambda_\alpha,\quad I=\left[0,|\lambda|\right),\]
and define
\[I_{\alpha}=[l_\alpha,r_\alpha),\qquad \text{ where } \qquad l_\alpha=\sum_{\pi_0(\beta)<\pi_0(\alpha)}\lambda_\beta,\;\;\;r_\alpha
=\sum_{\pi_0(\beta)\leq\pi_0(\alpha)}\lambda_\beta.\]
We denote the midpoint of the exchanged interval $I_\alpha$ by $m_\alpha=\frac{1}{2}(l_\alpha+r_\alpha)$. With these notations, $|I_\alpha|=\lambda_\alpha$.

Denote by $\Omega_\pi$ the matrix
$[\Omega_{\alpha\,\beta}]_{\alpha,\beta\in\mathcal{A}}$ given by
\begin{equation}\label{eq:Omega}
	\Omega_{\alpha\,\beta}=
	\left\{\begin{array}{cl} +1 & \text{ if
		}\pi_1(\alpha)>\pi_1(\beta)\text{ and
		}\pi_0(\alpha)<\pi_0(\beta),\\
		-1 & \text{ if }\pi_1(\alpha)<\pi_1(\beta)\text{ and
		}\pi_0(\alpha)>\pi_0(\beta),\\
		0& \text{ in all other cases.}
	\end{array}\right.\end{equation}
Then, the IET $T$ associated with $(\pi,\lambda)$ is given by $Tx=x+w_\alpha$, for any $x\in I_\alpha$, where $w=\Omega_\pi\lambda$. The vector $w$ is called the \emph{translation vector} of the IET.

In this work, we consider only \emph{irreducible} permutations, that is, pairs of bijections $(\pi_0, \pi_1)$ for which $\pi_1\circ\pi_0^{-1}\{1,\dots,k\}=\{1,\ldots,k\}$ implies $k=d$. We denote by $S^{\mathcal A}$ the set of all irreducible permutations.

As already mentioned, we will often deal in this article with \emph{symmetric} permutations. More precisely, a permutation $\pi$ on $\A$ is called \emph{symmetric} if $\pi_1(\alpha)=d+1-\pi_0(\alpha)$, for any $\alpha \in \A$, that is, any IET $T$ with combinatorial data given by $\pi$ exchanges intervals symmetrically.

Let $p:\{0,1,\ldots,d,d+1\}\to\{0,1,\ldots,d,d+1\}$ be the \emph{extended permutation} given by
\[p(j)=\left\{
\begin{matrix}
	\pi_1\circ\pi^{-1}_0(j)&\text{ if }&1\leq j\leq d\\
	j&\text{ if }&j=0,d+1.
\end{matrix}
\right.
\]
Following Veech (see \cite{Ve1}), denote by $\sigma=\sigma_\pi$ the
corresponding permutation on $\{0,1,\ldots,d\}$,
\[\sigma(j)=p^{-1}(p(j)+1)-1\text{ for }0\leq j\leq d.\]
Denote by $\Sigma(\pi)$ the set
of orbits for the permutation $\sigma$. Let $\Sigma_0(\pi)$ stand
for the subset of orbits that do not contain zero. The set $\Sigma(\pi)$ corresponds to singular points of translation flows, which in turn to the saddle points of the locally Hamiltonian flows. The cardinality of $\Sigma(\pi)$ relates to the matrix $\Omega_\pi$ by the formula
\[ \# \Sigma(\pi) = \dim(\textup{Ker}(\Omega_\pi)) + 1.\]
 For every $\mathcal{O}\in \Sigma(\pi)$, we denote by
\[
\mathcal{A}^-_{\mathcal{O}}=\{
\alpha\in\mathcal{A}, \ \pi_0(\alpha)\in \mathcal{O}\}, \qquad
\mathcal{A}^+_{\mathcal{O}}=\{ \alpha\in\mathcal{A}, \
\pi_0(\alpha)-1\in \mathcal{O}\} .
\]

On the space of all interval exchange transformations, there exists a classical notion of induction on the space of interval exchange transformations called \emph{Rauzy-Veech induction} (or \emph{Rauzy-Veech algorithm}), see \cite{Ra,Ve1}. Namely, we define
\[
\hat R:S^{\mathcal A}\times \R^{\A}_{>0}\to S^{\mathcal A}\times \R^{\A}_{>0},\ \hat R(\pi,\lambda):=(\pi^{(1)},\lambda^{(1)}),
\]
where $\tilde T:=(\pi^{(1)},\lambda^{(1)})$ is an IET obtained by inducing $T$ on $[0,|\lambda|-\min\{\lambda_{\pi_0^{-1}(d)},\lambda_{\pi_1^{-1}(d)}\})$. Keane in \cite{Keane} proved that for Lebesgue-a.e.\ parameter $\lambda$, the map $\hat R$ is defined indefinitely. We denote $\hat R^n(\pi,\lambda)=(\pi^{(n)},\lambda^{(n)})$ (also $(\pi,\lambda)=(\pi^{(0)},\lambda^{(0)})$). We also denote by $I^{(n)}_{\alpha}$, $\alpha\in\A$ the intervals exchanged by $\hat R^n(\pi,\lambda)$ and by $I^{(n)}$ its domain. For $j=0,1$, if $|I_{\pi_j^{-1}(d)}^{(n)}|>|I_{\pi_{1-j}^{-1}(d)}^{(n)}|$, then the symbol $\pi_j^{-1}(d)$ is called the $n$-th \emph{winner} of the Rauzy-Veech algorithm, while $\pi_{1-j}^{-1}(d)$ is the \emph{loser}.

Finally, we say that $(\pi,\lambda)$ is of \emph{top} (forward) type if $\lambda_{\pi_0^{-1}(d)}>\lambda_{\pi_1^{-1}(d)}$ and of \emph{bottom} (forward) type if $\lambda_{\pi_0^{-1}(d)}<\lambda_{\pi_1^{-1}(d)}$ and we describe a single action of $\hat R$ as of \emph{top} and \emph{bottom} type, in respective cases. If $\lambda_{\pi_0^{-1}(d)}=\lambda_{\pi_1^{-1}(d)}$, then $\hat R$ is not defined. It is easy to see that $\hat R$ is continuous on the set $\{(\pi,\lambda)\in S^{\mathcal A}\times \R^{\A}_{>0}\mid \lambda_{\pi_0^{-1}(d)}\neq \lambda_{\pi_1^{-1}(d)} \}$.

One of the crucial objects describing the action of $\hat R$ is the so-called \emph{Rauzy-Veech matrix}. More precisely, it is a $d\times d$ matrix $B(1):=B(1)(\pi,\lambda)$ such that
\[
\lambda^{(1)}B(1)=\lambda.
\]
Then for every $n\in\N$, we have
\[
\lambda^{(n)}B(n)=\lambda,
\]
where 
\begin{equation}\label{eq:defB}
B(n):=B(1)(\pi^{(n-1)},\lambda^{(n-1)})\cdot\ldots\cdot B(1)(\pi^{(0)},\lambda^{(0)}).
\end{equation} 
In other words, the matrices $B(\cdot)$ describe how the lengths of intervals change under the Rauzy-Veech induction. More precisely, for every $\alpha,\beta\in\A$, the coefficient $B_{\alpha\beta}(n)$ gives the number of visits of interval $I^{(n)}_{\alpha}$ in $I_{\beta}$ before its first return to $I^{(n)}$.

Another related object is the Rokhlin tower decomposition. Namely, for every $\alpha\in\A$ we denote by $q^{(n)}_{\alpha}$ the first return time of the interval $I^{(n)}_{\alpha}$ to $I^{(n)}$ via $T$. Then, the whole interval $I$ can be decomposed into the disjoint union of towers of intervals:
\[
I=\bigsqcup_{\alpha\in\A}\bigsqcup_{i=0}^{q^{(n)}_{\alpha}-1}T^i I^{(n)}_{\alpha}.
\]
In view of the interpretation of entries of $B(\cdot)$ as numbers of visits, we have that
\[
q^{(n)}_{\alpha}=\sum_{\beta\in\A}B_{\alpha\beta}(n).
\]
As we are going to deal with the cocycles, it is useful to introduce the natural action of $\hat R$ on the functions defined over the intervals. If $f:I\to \R$, then
\[
S(n)(f):I^{(n)}\to \R,\ \text{with}\ S(n)(f)(x):=S_{q^{(n)}_{\alpha}}(f)(x)\ \text{for every}\ x\in I^{(n)}_{\alpha},
\]
where $S_k(\cdot)(\cdot)$ denotes the $k$-th Birkhoff sum, see \eqref{def:BS}.

It is easy to see that $\hat R$ is not reversible. Indeed, it is typically a 2-to-1 map, with one pre-image coming from the top type induction and the other from the bottom type. However, there is the more natural setting of \emph{translation surfaces}, where the extended induction is actually reversible, which we now introduce.

Take $(\pi,\lambda)\in S^{\A}\times \R^{\A}_{>0}$ and consider a vector $\tau\in\R^{\mathcal A}$, such that for every $1\le j< d$ we have
\begin{equation}\label{eq: deftau}
\sum_{\alpha\in\A\mid \pi_0(\alpha)\le j}\tau_\alpha>0\quad\text{and}\quad
\sum_{\alpha\in\A\mid \pi_1(\alpha)\le j}\tau_\alpha<0.
\end{equation}
Then in $\C$ we consider a polygon made by connecting the vertices given by $\sum_{i=1}^j(\lambda_{\pi_0^{-1}(i)}+ i\tau_{\pi_0^{-1}(i)})$ and $\sum_{i=1}^j(\lambda_{\pi_1^{-1}(i)}+ i\tau_{\pi_1^{-1}(i)})$ for $1\le j\le d$ and the point 0. Due to \eqref{eq: deftau}, this polygon is made of two broken lines, one \emph{upper}, whose all non-extreme vertices are above the real axis, on the other, \emph{lower}, whose all non-extreme vertices are below the real axis. Note that both broken lines consist of $d$ intervals and that every segment in the upper broken line has its equivalent in the lower line obtained by translation. By gluing all of such pairs, using said translation, we obtain the \emph{translation surface} $M=(\pi,\lambda,\tau)$. We denote by $\Sigma=\Sigma(\pi,\lambda,\tau)$ the set of vertices of $(\pi,\lambda,\tau)$ which form the singularities of $M$. There exists a natural atlas of charts on $M\setminus\Sigma$ such that each transition map is a translation.

The translation structure on $(\pi,\lambda,\tau)$ allows to define the global notion of direction derived from $\C$ and, in particular, the directional flows on $(\pi,\lambda,\tau)$. We will consider two such flows: \emph{the vertical translation flow} $\phi=(\phi_t)_{t\in\R}$ which moves points upwards with unit speed and \emph{the horizontal translation flow} $\psi=(\psi_t)_{t\in\R}$ which moves the points rightwards with unit speed. Neither of these flows is defined in $\Sigma$. We often refer to the points in $\Sigma$ as \emph{singularities} of $\phi$ and $\psi$. It is easy to see that the directional flows preserve two-dimensional Lebesgue measure. Moreover, by considering a horizontal interval $I\subseteq M$ starting at $0$ of length $|\lambda|$ as a Poincar\'e section for $\phi$, we get that the first return map to $I$ is an IET $T=(\pi,\lambda)$. We can use this map to get a different representation of the surface $(\pi,\lambda,\tau)$. Namely, for every $\alpha\in\A$ we consider a \emph{zippered rectangle} $D_{\alpha}:=\bigcup_{t\in[0,h_{\alpha})}\phi_t(I_{\alpha})$, where $h_{\alpha}$ is the first (positive) return time of $I_{\alpha}$ to $I$ via $\phi$. We refer to $h_{\alpha}$ as the \emph{height of the zippered rectangle} $D_{\alpha}$. With these notations, it is easy to show that the \emph{heights vector} $h=(h_{\alpha})_{\alpha\in\A}$ satisfies $h=-\Omega_{\pi}\tau$. {We refer to the vertical segment $E_\alpha := \{\phi_t(l_\alpha) \mid t \in [0, h_\alpha)\}$ as the \emph{edge of the zippered rectangle} $D_\alpha$.}

The set $\Sigma(\pi)$ defined earlier corresponds to the set $\Sigma(\pi,\lambda,\tau)$ in the way that the permutation $\sigma$ describes the gluing of sides in the polygonal construction around a fixed singularity. In particular, if $\phi$ has only one singularity, then $\Sigma(\pi)$ has only one element. %
{Moreover, the genus of the corresponding surface is related to $\Sigma(\pi)$ by the formula
\begin{equation}
\label{eq:genus}
g = \frac{d + 1 - \#\Sigma(\pi)}{2}.
\end{equation}
}

As already mentioned, we can define an extension of $\hat R$ on the space of translation surfaces so that it is invertible. More precisely, we define
\[
R(\pi,\lambda,\tau):=(\pi^{(1)},\lambda^{(1)},\tau^{(1)}),\ \text{where}\ (\pi^{(1)},\lambda^{(1)})=\hat R(\pi,\lambda)\ \text{and}\ \tau^{(1)}:=\tau B(1)^{-1}.
	\]
	One can check that $\tau^{(1)}$ still satisfies \eqref{eq: deftau}, hence the surface $(\pi^{(1)},\lambda^{(1)},\tau^{(1)})$ is well defined. We refer to $R$ as the \emph{2-dimensional Rauzy-Veech induction}. However, we simply write "Rauzy-Veech induction" if it does not cause any confusion. As before, we define $(\pi^{(n)},\lambda^{(n)},\tau^{(n)}):=R^n(\pi,\lambda,\tau)$. Moreover, one can show that the vector of heights in the zippered rectangle representation $h^{(n)}=(h^{(n)})_{\alpha\in\A}$ is given by the formula $h^{(n)}=B(n)h$.
	
	For $(\pi,\lambda,\tau)$ there exists only one pre-image $R^{-1}(\pi,\lambda,\tau):=(\pi^{(-1)},\lambda^{(-1)},\tau^{(-1)})$ for which $\tau^{(-1)}$ satisfies \eqref{eq: deftau}. We say that $(\pi,\lambda,\tau)$ is of \emph{backward top} type if $\sum_{\alpha\in\A}\tau_{\alpha}<0$ and of \emph{backward bottom} type if $\sum_{\alpha\in\A}\tau_{\alpha}>0$. The backward induction $R^{-1}$ is not defined if $\sum_{\alpha\in\A}\tau_{\alpha}=0$. It is worth underlining that if $(\pi,\lambda,\tau)$ is of backward top (bottom) type, then $(\pi^{(-1)},\lambda^{(-1)},\tau^{(-1)})$ is of top (bottom) type.
	
The set $S^{\mathcal A}$ can be divided into a disjoint union of sets, minimal and invariant under projected action of $\hat R$ called \emph{Rauzy graphs}.

	One of the main reasons to consider Rauzy-Veech induction is that it defines a renormalization scheme on the space of translation surfaces as well as on the space of interval exchange transformations. To be more precise, we fix a Rauzy graph $\mathcal G\subseteq S^{\A}$ and consider a unit simplex $\Lambda^{\A}:=\{\lambda\in \R^{\A}_{>0}\mid |\lambda|=1\}$. Moreover we consider $\Theta_{\pi}:=\{\tau\in \R^{\A}\mid \tau\ \text{satisfies \eqref{eq: deftau}}\}$. We define a \emph{Rauzy-Veech renormalization} $\mathcal R$ on $\mathcal M:=\bigsqcup_{\pi\in\mathcal G}\{\pi\}\times \Lambda^{\A}\times \Theta_{\pi}$ given by
	\[
	\mathcal R(\pi,\lambda,\tau):=(\pi^{(1)},\frac{\lambda^{(1)}}{|\lambda^{(1)}|},|\lambda^{(1)}|\cdot\tau^{(1)}).
	\]
Let $\mathcal M^1:=\{(\pi,\lambda,\tau)\in\mathcal M\, |\, -\langle\lambda,\Omega_\pi\tau\rangle=1\}$ recalling that $-\langle\lambda,\Omega_\pi\tau\rangle=\langle\lambda,h\rangle$ is the area of the surface $(\pi,\lambda,\tau)$.	It was proven by Masur \cite{Ma:erg} and Veech \cite{Ve1}, that there exists an infinite measure $\mu$ on $\mathcal M^1$, which is equivalent to the product of counting and Lebesgue measures, with respect to which $\mathcal R$ is measure-preserving, recurrent and ergodic. This already allows us to deduce a large number of results that hold for typical translation surface or interval exchange transformation. However, to talk about the hyperbolic properties of the cocycles, we want to work with finite measures. For this reason, Zorich in \cite{Zor} introduced an acceleration of the Rauzy-Veech induction, obtained by inducing to an appropriately chosen set, such that the induced measure is finite. Namely, we consider a subset $\mathcal X$ of triples $(\pi,\lambda,\tau)\in \mathcal M^1$ such that $(\pi,\lambda,\tau)$ is of forward top type and backward bottom type or it is of forward bottom type and backward top type. Then $\mu(\mathcal X)<\infty$. The map obtained by inducing $\mathcal R$ on $\mathcal X$ is called \emph{Zorich induction}, and we denote it by $\mathcal Z$. The cocycle obtained by inducing $B$, { as defined in \eqref{eq:defB}}, on $\mathcal X$ is called \emph{Kontsevich-Zorich cocycle} (KZ-cocycle), and we denote it by $Z$. We also denote $m:=\mu|_{\mathcal X}$. For every $(\pi,\lambda,\tau)\in\mathcal X$, if $n_k:=n_k(\pi,\lambda,\tau)$ is the $k$-th return time of $(\pi,\lambda,\tau)$ to $\mathcal X$ via $\mathcal R$, where $k\ge 1$, then $\mathcal Z(\pi,\lambda,\tau)=\mathcal R^{n_k}(\pi,\lambda,\tau)$ and we denote
	\[
	Z(k)=Z(k)(\pi,\lambda,\tau):= B(n_{k}-n_{k-1})(\pi^{(n_{k-1})},\lambda^{(n_{k-1})},\tau^{(n_{k-1})}),
	\]
	as well as $Q(k):=Z(k)\cdot\ldots\cdot Z(1)$.
	Moreover, since $\mu$ is infinite and $m$ is finite, the function of the first return time $n_1(\cdot)$ is unbounded. Since $\mathcal R$ is ergodic, then so is $\mathcal Z$.
	
	One of the most important features of the Kontsevich-Zorich cocycle is its log-integrability, that is
	\begin{equation}\label{eq: KZintegrabilty}
	\int_{\mathcal X}\log\|Z(1)\|\,dm<\infty.
	\end{equation}
	Recall that the log-integrability is inherited by induced cocycles, i.e., accelerating the induction by considering returning to smaller subsets of $\mathcal X$ does not destroy this property.
	
	Later in the article, we will use many notions defined for the Rauzy-Veech induction and then used for the Kontsevich-Zorich induction. However, for simplicity of the notation, we will often write $(\cdot)^{(k)}$ instead of $(\cdot)^{(n_k)}$ whenever it does not cause confusion.
	
	For more information on interval exchange transformations and translation surfaces, we refer to \cite{Vi0,ViB} and \cite{Yoc}.

\section{Horizontal spacing}
In this section, we prove a crucial property of interval exchange transformations, namely that levels of each Rokhlin tower obtained via the Rauzy-Veech algorithm are roughly evenly distributed inside an interval $I=[0,|I|)$. {This is partially inspired by the proof} of the $d+2$-gaps theorem obtained by Taha in \cite{Ta}.

\begin{proposition}\label{prop: horizontal spacing}
	Let $T=(\pi,\lambda)$ and let $\tilde\tau\in\Theta_{\pi}$ be rationally independent. There exists $\epsilon:=\epsilon(\tilde\tau)>0$ and $C:=C(\tilde\tau,\epsilon)>0$ such that for every $\tau\in\Theta_{\pi}$, if $\mathcal{R}^n(\pi,\lambda,\tau)\in \{\pi\}\times\R^\A_{>0}\times B_{\Theta_{\pi}}(\tilde\tau,\epsilon)$\footnote{$B_{\Theta_{\pi}}(\tilde\tau,\epsilon)$ is the ball in $\Theta_\pi$ (on $\Theta_{\pi}$ we consider standard supremum metric) of radius $\epsilon$ centered at $\tilde\tau$.}, then for every $\alpha\in\A$ we have that {any interval ${\mathcal{J}}\subseteq I\setminus \bigsqcup_{j=0}^{q_\alpha^{(n)}-1}T^j I_{\alpha}^{(n)}$ satisfies
	\[
	|{\mathcal{J}}|<C\cdot \max_{\beta\in\A\setminus\{\alpha\}}|I_{\beta}^{(n)}|
	\]}
	and
	\[
	\max\big\{\min_{0\leq i<q^{(n)}_\alpha}\operatorname{dist}(0,T^i(I^{(n)}_{\alpha})),\min_{0\leq i<q^{(n)}_\alpha}\operatorname{dist}(|I|,T^i(I^{(n)}_{\alpha}))\big\}<C\cdot \max_{\beta\in\A\setminus\{\alpha\}}|I_{\beta}^{(n)}|.
	\]
\end{proposition}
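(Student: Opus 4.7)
The plan is to reduce the two assertions to a single combinatorial statement, then bound that combinatorial quantity using the zippered-rectangle representation together with the open condition around $\tilde\tau$.

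First I would reformulate the first claim. The complement $I\setminus \bigsqcup_{j=0}^{q^{(n)}_\alpha-1}T^jI^{(n)}_\alpha$ is an open subset of $I$ whose maximal connected components are exactly the ``gaps'' between consecutive (in the linear order on $I$) $\alpha$-levels. Each such gap is itself a finite concatenation of consecutive non-$\alpha$ tower levels coming from the Rokhlin decomposition $I=\bigsqcup_{\beta\in\A}\bigsqcup_{j=0}^{q^{(n)}_\beta-1}T^jI^{(n)}_\beta$. Since each individual non-$\alpha$ level has length at most $\max_{\beta\neq\alpha}|I^{(n)}_\beta|$, the first claim reduces to showing that the number $N^{(n)}_\alpha$ of non-$\alpha$ levels forming any such maximal gap admits a bound $N=N(\pi,\tilde\tau)$ independent of $n$, $\lambda$ and $\tau$; one then takes $C:=N$. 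The second assertion of the proposition is then an immediate corollary: the leftmost (resp.\ rightmost) gap is precisely the interval $[0,\min_i\operatorname{dist}(0,T^iI^{(n)}_\alpha))$ (resp.\ its symmetric analogue near $|I|$), so the bound on gaps controls both minima.

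Next I would work inside the zippered-rectangle picture of the translation surface $M^{(n)}=(\pi^{(n)},\lambda^{(n)},\tau^{(n)})=(\pi,\lambda^{(n)},\tau^{(n)})$ at time $n$. The $d$ rectangles $D^{(n)}_\beta$ tile $M^{(n)}$, and the horizontal interval $I$ cuts each $D^{(n)}_\beta$ into the $q^{(n)}_\beta$ horizontal strips that are the tower levels $T^jI^{(n)}_\beta$. Transitions from one strip to the next along $I$ occur at the points where the left edges $E^{(n)}_\beta$ of the rectangles meet (lifts of) $I$. Fixing a maximal non-$\alpha$ gap, one can therefore encode it by a combinatorial ``walk'' in $M^{(n)}$: start at the right edge of the $\alpha$-strip on the left of the gap, move rightward across $I$, and record the sequence of non-$\alpha$ labels of the rectangles entered, stopping at the left edge of the next $\alpha$-strip. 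The length of this walk equals $N^{(n)}_\alpha$. Because $\pi^{(n)}=\pi$ is fixed and the renormalized parameters $(\lambda^{(n)}/|\lambda^{(n)}|,|\lambda^{(n)}|\tau^{(n)})$ lie in the $\epsilon$-ball around $\tilde\tau$ in the second coordinate, the entire combinatorial ordering of the $2d$ top/bottom sides of the polygon defining $M^{(n)}$, as well as the vertical ordering of the edges $E^{(n)}_\beta$ relative to the horizontal strips of $I$, is locked into a single open combinatorial regime as soon as $\epsilon$ is chosen small enough.

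The role of rational independence of $\tilde\tau$ is to keep $\tilde\tau$ at a definite distance from the finitely many hyperplanes (depending only on $\pi$) across which this combinatorial regime could change: any failure of rational independence would allow coincidences among partial sums of coordinates of $\tau^{(n)}$, which in turn would permit arbitrarily many thin non-$\alpha$ rectangles to be slipped into a single gap. With the open regime fixed, the possible walks are drawn from a finite list depending only on $\pi$ and on the (fixed) combinatorial type of the decomposition around $\tilde\tau$, and one can take $N$ to be the maximum length of a walk in that list. The main obstacle I anticipate lies in Step 3, namely in writing down precisely which finitely many open inequalities on $\tilde\tau$ determine the combinatorial regime and verifying that the length-of-walk function is a well-defined, bounded integer on each connected component of the complement of those hyperplanes; this is essentially the bookkeeping performed in Taha's $d+2$-gaps argument in~\cite{Ta}, adapted from the orbit of a single point to the orbit of the whole base interval $I^{(n)}_\alpha$.
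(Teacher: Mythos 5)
Your reduction of both assertions to the bound ``each maximal gap in $I\setminus\bigsqcup_j T^jI^{(n)}_\alpha$ contains a uniformly bounded number of non-$\alpha$ tower levels'' is correct, and your zippered-rectangle picture and stability-under-perturbation observation match the paper's setup. But the step where you assert ``with the open regime fixed, the possible walks are drawn from a finite list depending only on $\pi$ and on the combinatorial type around $\tilde\tau$'' is precisely the missing content, not bookkeeping. Locking the order of the vertices and edges of the polygon does not by itself bound the number of non-$\alpha$ strips between consecutive $\alpha$-strips, because the number of strips $q^{(n)}_\beta$ in each rectangle is unbounded as $n\to\infty$. The paper's Lemma~\ref{lem: horizontalspacing} supplies the mechanism you are missing: it chooses a vertical transversal $J$ of length $h^{(n)}_\alpha$ spanning the rectangle $D^{(n)}_\alpha$ (both endpoints on the outgoing separatrix from the singularity), and invokes \cite[Corollary~5.5]{Yoc} to conclude that, in the absence of horizontal saddle connections, the first return map of the horizontal flow to $J$ is an IET of exactly $d$ intervals. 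It is \emph{this} fact that forces the walk length to take only finitely many values: for each of the $d$ continuity intervals of the return map, the itinerary of rectangle edges crossed is a single fixed word, and $C$ is the maximum length of these $d$ words (they compute $C=2d-2$ in the model case, and then observe that a small enough $\epsilon$-ball preserves the combinatorics). Your appeal to rational independence is used in the right way, but only to rule out horizontal saddle connections and keep the first-return combinatorics stable; it does not substitute for the IET structure of the return map. A secondary issue: your walk is defined as starting from ``the $\alpha$-strip on the left of the gap,'' which does not exist for the leftmost gap $[0,\min_i\operatorname{dist}(0,T^iI^{(n)}_\alpha))$; the paper handles that gap by a separate backward-flow argument from the leftmost $\alpha$-level, which your reduction silently skips.
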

A natural approach to studying the spacing between intervals is to consider them inside the corresponding surface and then study the orbit of the horizontal translation flow on the translation surface given by $(\pi,\lambda,\tau)$. Hence, we first prove the following reduction to the regime of translation surfaces.
\begin{lemma}\label{lem: horizontalspacing}
	Fix $\pi\in S^\A$ and let $\tilde\tau\in\Theta_{\pi}$ be a rationally independent vector. There exists $\epsilon:=\epsilon(\tilde\tau)>0$ and $C:=C(\tilde\tau,\epsilon)>0$ such that for any $\lambda\in\R^\A_{>0}$, any $\tau\in B_{\Theta_{\pi}}(\tilde\tau,\epsilon)$ %
and any $\alpha\in \A$ {the following holds.
Consider the translation surface $(\pi, \lambda, \tau)$ with associated zippered rectangles $\{D_\beta\}_{\beta \in \A}$ of heights $(h_\beta)_{\beta \in \A}$ and vertical (resp. horizontal) flow $\phi_t$ (resp. $\psi_t$). Then, for any vertical segment $J$ of length $h_\alpha$ starting at $I_\alpha$ and for any point $x \in J$, the orbit segment given by the first return of $x$ to $J$ by the horizontal flow crosses at most $C$ times the edges of $\{D_\beta\}_{\beta \neq \alpha}$.}
\end{lemma}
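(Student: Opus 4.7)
Working in the polygonal representation of the translation surface $(\pi,\lambda,\tau)$, the strategy is to show that the horizontal orbit $\gamma_x$ for $x\in J$ has a combinatorial itinerary of bounded length, uniformly in $\lambda\in\R^\A_{>0}$ and $\tau\in B_{\Theta_\pi}(\tilde\tau,\epsilon)$. Each crossing of $\gamma_x$ with an edge $E_\beta$ corresponds to a transition between zippered rectangles---either to a horizontally adjacent rectangle in the polygon (no vertical shift) or via a top/bottom gluing identification of the polygon (vertical shift given by an integer combination of the $\tau_\beta$'s). The total count of crossings with $\bigsqcup_{\beta\neq\alpha}E_\beta$ equals the number of non-$\alpha$ rectangle visits in the combinatorial itinerary of $\gamma_x$, so the problem reduces to bounding this itinerary length.

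Next, I would establish that the first-return map $\psi|_J\colon J\to J$ is an IET with at most $N(\pi)$ continuity intervals: its discontinuities are those $y\in J$ whose horizontal orbit meets a singularity of the surface before returning to $J$, and since the $\#\Sigma(\pi)$ singularities contribute only boundedly many incoming horizontal separatrices, $N(\pi)$ depends only on the combinatorics $\pi$. On each continuity interval, the orbit $\gamma_x$ follows a fixed combinatorial itinerary whose length is what needs to be bounded.

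The main obstacle is bounding this itinerary length uniformly in $\lambda$. My approach is to track the vertical coordinate of $\gamma_x$ on $J$ after each return to the rectangle $D_\alpha$: by the structure of the polygon gluings, this coordinate shifts by an integer linear combination $\sum_\beta c_\beta\tau_\beta$ whose coefficients $c_\beta$ are bounded by the itinerary length. Since $\tilde\tau$ is rationally independent, the set of nonzero combinations $\sum_\beta c_\beta\tilde\tau_\beta$ with $|c_\beta|$ bounded is uniformly bounded away from zero, and this gap persists for $\tau\in B(\tilde\tau,\epsilon)$. Combined with a pigeonhole argument on $J$ (whose length $h_\alpha$ is uniformly bounded for $\tau$ in the ball), this forces the itinerary length---and hence the number of edge crossings---to be bounded by a constant $C(\pi,\tilde\tau,\epsilon)$ depending only on the combinatorial data and the rational independence of $\tilde\tau$. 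Uniformity in $\lambda$ is automatic because the vertical-shift structure on $J$ involves only $\pi$ and $\tau$, not the horizontal widths $\lambda_\beta$.
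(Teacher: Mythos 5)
Your plan identifies the same two central ingredients as the paper's proof: (i) the first return of the horizontal flow to $J$ is an IET, and (ii) its discontinuity set and combinatorial itinerary depend only on $\pi$ and $\tau$, not on $\lambda$, so that the bound obtained for $\tilde\tau$ propagates to a neighbourhood by continuity. That $\lambda$-independence observation is the real content of the lemma, and you have it right, as well as the reduction to analysing the first-return IET $S:J\to J$.

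Where you diverge is the mechanism for bounding the itinerary length for a fixed $\tilde\tau$, and this is where the argument has a gap. You propose to track the vertical coordinate after successive returns to $D_\alpha$ and invoke a pigeonhole. But $J$ is a vertical segment of full height $h_\alpha$ in $D_\alpha$, and the horizontal flow moves to the right; hence the first re-entry into $D_\alpha$ necessarily hits $J$ (the orbit enters through the left edge of $D_\alpha$ and sweeps rightward across all of it). So between consecutive hits of $J$ the orbit visits $D_\alpha$ exactly once, and there is no sequence of intermediate returns to $D_\alpha$ on which to run a pigeonhole. If one instead tries to pigeonhole on polygon-side crossings, the difficulty is that the vertical shifts $\delta_i$ from successive gluings can alternate in sign, so the cumulative height does not drift monotonically and the bounded vertical extent of the polygon does not in itself cap the number of crossings; moreover, the asserted lower bound on nonzero combinations $\sum_\beta c_\beta\tilde\tau_\beta$ degrades to zero as the allowed range of coefficients $c_\beta$ grows, which is exactly the quantity you are trying to bound, so the argument as stated is circular. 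The paper sidesteps all of this by working out, in an explicit normal form for the sign pattern of $\tilde\tau$ and the position of $\alpha$ in $\pi$, the precise locations of the discontinuities of $S$ on $J$ (depending only on $\tau$) and the full rectangle itinerary on each continuity interval, obtaining the concrete bound $C=2d-2$; continuity of those formulas in $\tau$ then yields the uniform statement. To complete your proof you would need to replace the pigeonhole heuristic with either such an explicit itinerary computation or some other argument that genuinely exploits the combinatorics of the polygon (e.g., that each pass between side-crossings meets each rectangle at most once, together with a bound on the number of side-crossings coming from the gluing combinatorics rather than from a metric pigeonhole).
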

\begin{proof}
Take $\lambda\in\R^\A_{>0}$ and consider the surface $(\pi,\lambda,\tilde{\tau})$. Due to the rational independence of $\tilde{\tau}$, this surface has no horizontal saddle connections. Fix $\alpha\in\A$ and $x_0\in I_{\alpha}$. Let $J:=\bigcup_{t\in [0,h_{\alpha})} \phi_t(x_0)$ be a vertical segment inside the zippered rectangle $D_{\alpha}=\bigcup_{t\in [0,h_{\alpha})} \phi_t(I_{\alpha})$. Since $J$ is a transversal interval for the horizontal flow $\psi$, we may consider a first return map $S:J\to J$ via $\psi$. Note that the endpoints of $J$ lie on the outgoing separatrix of the singularity at $0$, hence, due to \cite[Corollary 5.5]{Yoc}
and the fact that $(\pi,\lambda,\tilde{\tau})$ has no horizontal connections, $S$ is an IET of $d = \#\A$ intervals.

{The general strategy of the proof is the following. The lengths of the intervals exchanged by $S$ vary continuously depending on $\tau$, without changing the combinatorial data of $S$, as long as the order of the first hitting points of the outcoming and incoming separatrices is kept. Moreover, as long as the perturbation of the parameter $\tau$ is small enough, the number and order of zippered rectangles visited by the points from $J$ via the horizontal flow do not change. Since there are only finitely many exchanged intervals, by taking $C$ as the maximal number of zippered rectangles visited this way, we obtain the desired result. Below, {for the sake of clarity and readability, we provide a detailed proof in a particular case, but let us mention that the general case follows along similar lines.}}

From now on, for simplicity we assume that $\pi_0(\alpha)=1$ and $\pi_1(\alpha)=d$. Moreover, we assume that
\begin{equation}\label{eq: conditionontau}
\tilde{\tau}_\alpha>0,\quad\text{and}\quad\tilde{\tau}_\beta<0, \quad \text{ for every }\beta\in \A\setminus\{\alpha\},
\end{equation}
see Figure~\ref{fig:spacing}.
Note that due to the rational independence of $\tilde{\tau}$, we also have $\sum_{\alpha\in\A} \tilde{\tau}_{\alpha}\neq 0$. Hence we can also assume that
\begin{equation}\label{eq: conditionontau2}
	\sum_{\beta\in\A}\tilde{\tau}_\beta<0.
\end{equation}

Let us describe the parameters of $S$. First, we have
\begin{equation}\label{eq: lengthofJ}
	|J|=h_{\alpha}=-(\Omega_{\pi}\tilde\tau)_{\alpha}=-\sum_{\beta\in\A\setminus\{\alpha\}}\tilde\tau_{\beta}.
	\end{equation}
Moreover, we know that the discontinuities of $S$ are given by the first backward intersections of $J$ with incoming separatrices for $\psi$. More precisely, if $\sigma\in\Sigma$ is a singularity
 and $(\psi_{-t}(\sigma))_{t\in(0,\infty)}$ is one of its incoming horizontal separatrices, then a point $y\in J$ is a discontinuity of $S$ obtained from this separatrix either if $y=\sigma$, or there exists $t(y)\in(0,\infty)$ such that
\[
y=\psi_{-t(y)}(\sigma)\quad\text{and}\quad(\psi_{-t}(\sigma))_{t\in(0,t(y))}\cap J=\emptyset.
\]
By choice of $J$, the only possible case for $y=\sigma$ to hold is if $y=x_0=\partial I_{\alpha}$. By shifting via $\psi$ slightly the interval $J$, we may assume that $y\notin\Sigma$. By shifting it even more, but still inside $D_{\alpha}$, since $\tau_{\pi_0^{-1}(2)}<0$, we may assume that if $z_0$ is the intersection of $J$ with the side connecting $0$ and $\lambda_{\alpha}+i\tilde\tau_{\alpha}$, then
\begin{equation}\label{eq: z_0definition}
	\operatorname{Im}(z_0)>\tilde\tau_{\pi_0^{-1}(1)}+\tilde\tau_{\pi_0^{-1}(2)}.
	\end{equation}
The initial segments of horizontal incoming separatrices correspond to the {horizontal} segments in the polygonal representation of $(\pi,\lambda,\tilde\tau)$ whose right-hand side endpoints are in the vertices of the polygon. Due to \eqref{eq: conditionontau}, the right-hand side endpoints of these segments are the vertices
\[ \Big\{ \sum_{\pi_0(\beta)\le k}(\lambda_{\beta}+i\tilde\tau_\beta) \,\Big| \, k=2,\ldots,d \Big\}.\]
By condition \eqref{eq: conditionontau2}, the vertex $\sum_{ \beta \in \A }(\lambda_{\beta}+i\tilde\tau_\beta)$, that is the one corresponding to $k=d$, is below the $x$-axis. Moreover, due to the fact that %
{$\tilde\tau_{\pi_1^{-1}(d)}=\tilde\tau_{\alpha}>0$,} the subinterval of $J$ given by $\bigcup_{t\in [h_{\alpha}+\sum_{\beta\in\A}\tilde{\tau}_\beta,h_{\alpha})} \phi_t(x_0)$ is fully below the $x$-axis. Hence, since $h_{\alpha}+\sum_{\beta\in\A}\tilde{\tau}_\beta=\tilde\tau_{\alpha}=\tilde\tau_{\pi_0^{-1}(1)}$, the discontinuity of $S$ corresponding to $k=d$ is the point $y_{d}$ given by
\begin{equation}\label{eq: lastendpoint}
y_{d}:=\phi_{\tilde\tau_{\pi_0^{-1}(1)}}(x_0).
\end{equation}
On the other hand, for every $k=2,\ldots,d-1$, the initial segment of the corresponding incoming separatrix lies above the $x$-axis and due to \eqref{eq: conditionontau} (and by \eqref{eq: z_0definition}), the first backward hit of the incoming separatrix via $\psi$ is the point
\begin{equation}\label{eq: otherendpoints}
y_k:=\phi_{t_k}(x_0), \qquad \text{ where } t_k:= \sum_{\pi_0(\beta)\le k}\tilde\tau_{\beta}.
\end{equation}
{Notice} that we can use the above notation also for $k=1$, i.e.\ $t_1=\tilde\tau_{\pi_0^{-1}(1)}$ and $y_1=\phi_{t_1}(x_0)=y_d$.
Note that none of the formulas \eqref{eq: lengthofJ}, \eqref{eq: lastendpoint}, or \eqref{eq: otherendpoints} depend on $\lambda$.

It remains to describe the zippered rectangles crossed by the horizontal orbit segment of any point $x\in J$. Note that due to \eqref{eq: conditionontau} and \eqref{eq: conditionontau2} the order of the points $y_k$ on $J$, counting upwards, is $y_{d-1},y_{d-2},\ldots,y_{3},y_{2},y_{d}$. Assume first that $x\in (y_{k+1},y_k)\subseteq J$ for some $k\in\{2,\ldots,d-2\}$. Then the orbit segment starting at $x$ given by the forward action of $\psi$ leaves the zippered rectangle $D_{\alpha}$ then goes through the cylinders $D_{\pi_0^{-1}(\ell)}$ for $2\leq \ell\leq k+1$ and inside the zippered rectangle $D_{\pi_0^{-1}(k+1)}$ it crosses the edge given by the vertices
$\sum_{ \pi_0(\beta)\le k}(\lambda_{\beta}+i\tilde\tau_\beta)$ and $\sum_{ \pi_0(\beta)\le k+1}(\lambda_{\beta}+i\tilde\tau_\beta)$. This describes the part of the segment in the part of the polygonal representation above the $x$-axis. After crossing the side of the polygon, it goes through zippered rectangles
$D_{\pi_1^{-1}(\ell)}$, with $\pi_1(\pi_0^{-1}(k+1))\leq \ell \leq d-1$ in the lower part of the polygon. After that, the orbit goes to $D_{\pi_1^{-1}(d)}=D_{\alpha}$ and returns directly to $J$. Hence in the case when $x\in (y_{k+1},y_k)$ for some $k\in\{2,\ldots,d-2\}$, the orbit crosses at most $2d-2$ edges of zippered rectangles different than $D_\alpha$.

To describe the horizontal orbit segment for $x\in (y_{2},y_{d})=(y_{2},y_{1})$ it %
{suffices} to take $k=1$ in the above description of the passage of the orbit through zippered rectangles. %

If $x\in(x_0,y_{d-1})$ then the orbit segment starting at $x$ obtained by acting forward via $\psi$ passes all zippered rectangles in the upper part of the polygonal representation of $(\pi,\lambda,\tilde{\tau})$, before crossing the edge given by $\sum_{ \pi_0(\beta) < d}(\lambda_{\beta}+i\tilde\tau_\beta)$ and $\sum_{\beta\in\A}(\lambda_{\beta}+i\tilde\tau_\beta)$. Then, before returning to the zippered rectangle $D_{\alpha}$ it passes through the zippered rectangles $D_{\pi_1^{-1}(\ell)}$, with $\pi_1(\pi_0^{-1}(d))\leq \ell\leq d-1$. After that, the orbit goes to $D_{\pi_1^{-1}(d)}=D_{\alpha}$ and returns directly to $J$. Hence, it also crosses no more than $2d-2$ edges of zippered rectangles different than $D_\alpha$.
\begin{figure}[h!]
	\includegraphics[width=0.5\textwidth]{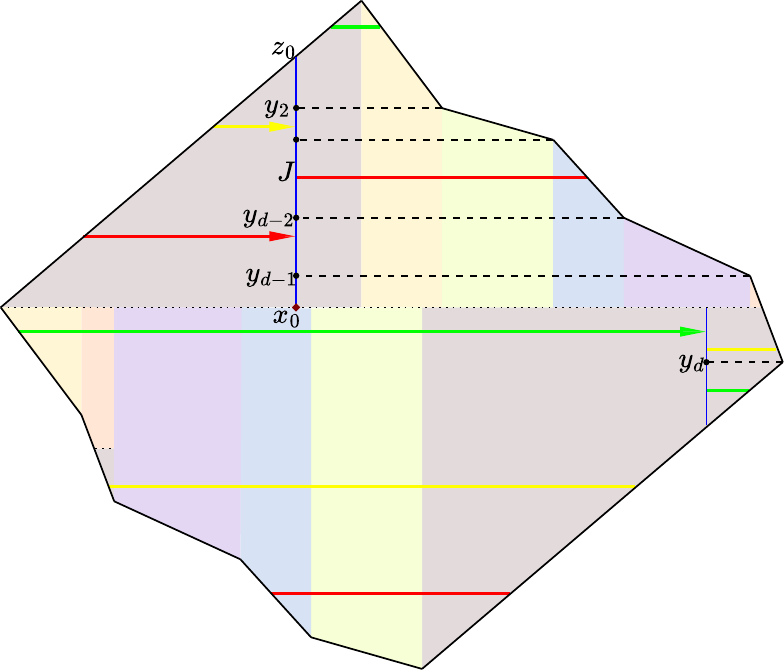}
	\caption{The polygonal representation of the surface $(\pi,\lambda,\tau)$ with indicated zippered rectangles. The picture shows three different orbits, corresponding to three of the cases described in the proof of Lemma \ref{lem: horizontalspacing}.} \label{fig:spacing}
\end{figure}

Finally, if $x\in({y_d},
\phi_{h_{\alpha}}(x_0))$, then the orbit of $x$ does not leave the lower part of the polygonal representation of $(\pi,\lambda,{\tilde\tau})$ before returning to $D_{\alpha}$. After leaving the zippered rectangle $D_{\alpha}$, it goes through zippered rectangles $D_{\pi_1^{-1}(\ell)}$, with $\pi_1(\pi_0^{-1}(d))+1\leq\ell\leq d-1$ (note that here, if $\pi_1(\pi_0^{-1}(d))=d-1$, then the set of the indices $\ell'$ is empty, that is the horizontal orbit of $x$ does not leave $D_{\alpha}$ before the first return to $J$). After that, the orbit goes to $D_{\pi_1^{-1}(d)}=D_{\alpha}$ and returns directly to $J$. Hence, in this case, the horizontal orbit of $x$ does not cross more than $d-1$ edges of zippered rectangles different than $D_\alpha$. Thus the thesis of the Lemma holds for $(\pi,\lambda,\tilde{\tau})$ with $C=2d-2$.

Since the values in \eqref{eq: lengthofJ}, \eqref{eq: lastendpoint} and \eqref{eq: otherendpoints} change continuously with respect to $\tilde\tau$, we can find an $\epsilon>0$ such that the inequalities \eqref{eq: conditionontau} and \eqref{eq: conditionontau2} hold {for any $\tau \in B_{\Theta_\pi}(\tilde\tau, \epsilon)$} and, in particular, the order in which the discontinuities coming from specific vertices appear on the vertical section remains unchanged. Hence, the order of visited zippered rectangles stays the same, and the thesis holds with $C=2d-2$.

\end{proof}

\begin{proof}[Proof of Proposition \ref{prop: horizontal spacing}]
Fix $\tilde\tau\in\Theta_\pi$ and let $\epsilon=\epsilon(\tilde\tau)$ and $C=C(\tilde\tau,\epsilon)$ be given by Lemma \ref{lem: horizontalspacing}. Fix $\alpha\in\A$ and $n\in\N$ such that $\mathcal R^{n}(\pi,\lambda,\tau)\in \{\pi\}\times\R_{>0}^\A\times B_{\Theta_{\pi}}(\tilde\tau,\epsilon)$. Let $(\pi,\lambda^{(n)},\tau^{(n)})=R^{n}(\pi,\lambda,\tau)$. Then $\mathcal R^{n}(\pi,\lambda,\tau)=(\pi,e^s\lambda^{(n)},e^{-s}\tau^{(n)})$ with $s=-\log |\lambda^{(n)}|$. Since ${R}^n(\pi, \lambda, \tau)$ and $(\pi, \lambda, \tau)$ describe the same surface, we can decompose the surface $(\pi, \lambda, \tau)$ into zippered rectangles $\{D^{(n)}_\beta\,|\ \beta\in\A\}$ according to numerical data $(\pi,\lambda^{(n)},\tau^{(n)})$, i.e.\ $D^{(n)}_\beta=\bigcup_{0\leq t<h^{(n)}_\beta}\phi_t I^{(n)}_\beta$, where $h^{(n)}=(h^{(n)}_\beta)_{\beta\in\A}=-\Omega_\pi\tau^{(n)}$. Then $T^jI^{(n)}_\beta=\phi_{t^\beta_j} I^{(n)}_\beta$ for $0\leq j\leq q^{(n)}_\beta$, where $0=t^\beta_0<t^\beta_1<\ldots<t^\beta_{q^{(n)}_\beta}=h^{(n)}_{\beta}$ are consecutive recurrence times of $I^{(n)}_\beta$ to the interval $I$.

Let $x\in T^m(I^{(n)}_{\alpha})$ for some $0\leq m< q^{(n)}_\alpha$. Consider $T^m(I^{(n)}_{\alpha})$ as a subset of the surface $(\pi,\lambda,\tau)$ and let $J$ be a {vertical segment of length $h_{\alpha}^{(n)}$} inside $D_{\alpha}^{(n)}$, which starts at $I_{\alpha}^{(n)}$ (at $x_0=T^{-m}x$),
which contains $x$. As $R^n(\pi, \lambda, \tau)$ is obtained from $\mathcal{R}^n(\pi, \lambda, \tau)$ by vertical and horizontal rescaling, they have the same (up to rescaling) decompositions into zippered rectangles. As $\mathcal R^{n}(\pi,\lambda,\tau)\in \{\pi\}\times\R_{>0}^\A\times B_{\Theta_{\pi}}(\tilde\tau,\epsilon)$, in view of Lemma \ref{lem: horizontalspacing} and denoting by $r$ the first return time of $x$ to $J$ via the horizontal flow $\psi$, the orbit $(\psi_{t}(x))_{t\in(0,r)}$ crosses at most $C$ edges of the zippered rectangles $D^{(n)}_\beta$, $\beta\in\A\setminus\{\alpha\}$ before returning to $J\subset D^{(n)}_\alpha$. In particular, $\psi_{r}(x)\in J\subset D^{(n)}_\alpha$.

There are two possibilities: either $(\psi_{t}(x))_{t\in(0,r)}$ is not included in $I$, which is equivalent to the fact that $T^m(I^{(n)}_{\alpha})$ is the rightmost level of the tower $\bigcup_{j=0}^{q^{(n)}_\alpha-1}T^j(I^{(n)}_{\alpha})$ {with respect to the linear order on $I$}, or $(\psi_{t}(x))_{t\in(0,r)}$ is a subinterval of $I$. Indeed, if $x$ does not belong to the rightmost level of the tower $\bigcup_{j=0}^{q^{(n)}_\alpha-1}T^j(I^{(n)}_{\alpha})$, then denote by $T^{m'}(I^{(n)}_{\alpha})$ the next level travelling horizontally. Then $\psi_r(x)\in T^{m'}(I^{(n)}_{\alpha})\subset I$. It follows that the horizontal interval $(\psi_{t}(x))_{t\in(0,r)}$ is a subinterval of $I$.

{Let us assume first that $(\psi_{t}(x))_{t\in(0,r)} \subseteq I$.} Recall that for any $\beta\in\A$, the tower $\bigcup_{j=0}^{q^{(n)}_\beta-1}{T^j(I^{(n)}_\beta)}$ is fully included inside the zippered rectangle {$D_{\beta}^{(n)}$.} %
 Hence, the fact that $(\psi_{t}(x))_{t\in(0,r)}$ crosses at most $C$ times the edges of zippered rectangles is equivalent to the fact that it crosses at most $C$ times the endpoints of levels of towers corresponding to other symbols than $\alpha$.
 Moreover, since $\psi_r(x)\in T^{m'}(I^{(n)}_\alpha)$ (this is the next level of the tower traveling horizontally), the interval between $T^{m}(I^{(n)}_\alpha)$ and $T^{m'}(I^{(n)}_\alpha)$ contains at most $C$ levels of towers other than $\bigcup_{j=0}^{q^{(n)}_\alpha-1}T^j(I^{(n)}_{\alpha})$.

 If on the other hand, we have $(\psi_{t}(x))_{t\in(0,r)} \not\subseteq I$, then $(\psi_{t}(x))_{t\in(0,r)}$ contains the right-hand-side endpoint of $I$ and hence contains the rightmost interval in $I\setminus \bigsqcup_{j=0}^{q_{\alpha}^{(n)}-1}T^jI^{(n)}_\alpha$. As $(\psi_{t}(x))_{t\in(0,r)}$ crosses at most $C$ edges of the zippered rectangles $D^{(n)}_\beta$, $\beta\in\A\setminus\{\alpha\}$, the argument used in the previous paragraph shows that the rightmost interval in $I\setminus \bigsqcup_{j=0}^{q_{\alpha}^{(n)}-1}T^jI^{(n)}_\alpha$ contains at most $C$ levels of towers other than $\bigcup_{j=0}^{q^{(n)}_\alpha-1}T^j(I^{(n)}_{\alpha})$.

 It remains to consider the leftmost component of $I\setminus \bigsqcup_{j=0}^{q_{\alpha}^{(n)}-1}T^jI^{(n)}_\alpha$, {with respect to the linear order on $I$}. This is done by choosing a point $x$ in the intersection of $J$ and the leftmost level of $\bigsqcup_{j=0}^{q_{\alpha}^{(n)}-1}T^jI^{(n)}_\alpha$ and taking $r_0\geq 0$ such that $\psi_{-r_0}(x)=0$ is the left endpoint of $I$. Then the orbit $(\psi_{-t}(x))_{t\in(0,r_0)}$ contains the leftmost component of $I\setminus \bigsqcup_{j=0}^{q_{\alpha}^{(n)}-1}T^jI^{(n)}_\alpha$. Next choose $x_\delta=\phi_\delta x\in J$ for small enough $\delta>0$. Let $y=\psi_{-r}x_\delta\in J$ be the first backward return point for the horizontal flow to $J$. For all $\delta>0$ small enough, we have $r>r_0$. In view of Lemma \ref{lem: horizontalspacing}, the orbit $(\psi_{t}(y))_{t\in(0,r)}=(\psi_{-t}(x_\delta))_{t\in(0,r)}$ crosses at most $C$ edges of the zippered rectangles $D^{(n)}_\beta$, $\beta\in\A\setminus\{\alpha\}$. As $r>r_0$, the orbit $(\psi_{-t}(x_\delta))_{t\in(0,r_0)}$, and hence the $\delta$-close orbit $(\psi_{-t}(x))_{t\in(0,r_0)}$, crosses at most $C$ edges of the zippered rectangles $D^{(n)}_\beta$, $\beta\in\A\setminus\{\alpha\}$. Finally, the argument used in the previous paragraphs shows that the leftmost interval in $I\setminus \bigsqcup_{j=0}^{q_{\alpha}^{(n)}-1}T^jI^{(n)}_\alpha$ contains at most $C$ levels of towers other than $\bigcup_{j=0}^{q^{(n)}_\alpha-1}T^j(I^{(n)}_{\alpha})$.
This completes the proof of the proposition.
\end{proof}

\section{Diophantine conditions}
\label{sc:Diophantine}
In this section, we show that a certain collection of properties holds for typical IETs. These properties concern the parametric description of properly chosen return sets for the Zorich induction $\mathcal Z$. For an overview of the history and recent development of techniques related to different types of Diophantine properties for IETs and translation surfaces, we refer to the survey article \cite{Ul:ICM}.

Fix a Rauzy graph $\mathcal G\subset S^\A$ and consider an IET $(\pi,\lambda)$ such that $\pi\in\mathcal G$ is an irreducible permutation over an alphabet $\A$ of $d\ge 2$ elements. Assume that there exists an increasing sequence $\{q_n\}_{n\in\N}$ of natural numbers, a sequence $\{\{T^i(\Delta^{n}) \mid 0\leq i<q_n\}\}_{n\in\N}$ of Rokhlin towers, and a constant $C>1$, such that
\begin{equation}
	\label{eq: qn1}
	\limsup_{n\to\infty}(q_n)^{\frac{1}{n}}<\infty;
\end{equation}
and, for any $n \in \N$,
	\begin{gather}
	\label{eq: qn2,5}
	q_{n+1}\geq 320C^2 q_n, \\%
	\label{eq: qn3}
	C\geq q_n|\Delta^n|\geq C^{-1}>0, \\%
		\label{eq: qn3,5}
	T^i(\Delta^n) \text{ is an interval disjoint from the discontinuities of $T$, for any $0 \leq i < q_n$,}\\
	\label{eq: qn4}
	\Big(\bigcup_{0\leq i<q_n}T^i(\Delta^n)\Big)^c\text{ consists of at most $q_{n}+1$ intervals (holes) of lengths less than $\tfrac{C}{q_n}$;}\\
	\label{eq: qn5}
	\text{$T^{q_n}\mid_{\Delta^n}$ is a translation by a number from $(\tfrac{1}{16}|\Delta^n|,\tfrac{1}{8}|\Delta^n|)$.}
\end{gather}

\begin{proposition}\label{prop:full}
For almost every $\lambda\in \Lambda^{\mathcal A}$ there exists an increasing sequence $\{q_n\}_{n\in\N}$ of natural numbers and a sequence $\{T^i(\Delta^n) \mid 0\leq i<q_n\}_{n\in\N}$ of Rokhlin towers satisfying \eqref{eq: qn2,5}--\eqref{eq: qn5}. Moreover, these conditions are obtained via returning to an appropriate subset $\mathfrak L\subseteq \mathcal M$ of positive measure.
\end{proposition}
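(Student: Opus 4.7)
The plan is to identify an open subset $\mathfrak{L}$ of the Zorich cross-section $\mathcal X$ of positive $\mu$-measure such that, each time the Rauzy--Veech orbit of $(\pi,\lambda,\tau)$ visits $\mathfrak L$, a Rokhlin tower built over a distinguished sub-interval $I^{(n)}_\alpha$ of the renormalized IET automatically satisfies the local requirements in \eqref{eq: qn3}--\eqref{eq: qn5}. Together with ergodicity of $\mathcal Z$ and Fubini, this yields an infinite sequence of returns $(n_k)$ for Lebesgue-a.e.\ $\lambda$; a final subsequence extraction then enforces the fast-growth condition \eqref{eq: qn2,5}.

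To construct $\mathfrak{L}$, I would fix a permutation $\pi\in\mathcal G$, a symbol $\alpha\in\mathcal A$, a rationally independent $\tilde\tau\in\Theta_\pi$ with associated $\epsilon>0$ and $C_0>0$ coming from Proposition \ref{prop: horizontal spacing}, and a balance parameter $\kappa>1$. I would take $\mathfrak L$ to be the open set of $(\pi,\lambda,\tau)\in\mathcal X$ satisfying three conditions simultaneously: (a) both the length vector $\lambda$ and the first-return matrix are $\kappa$-balanced, so that $q^{(n)}_\alpha|I^{(n)}_\alpha|\asymp 1/d$ and $|I^{(n)}_\beta|\asymp 1/q^{(n)}_\alpha$ for all $\beta$; (b) $\tau\in B_{\Theta_\pi}(\tilde\tau,\epsilon)$; and (c) the $\alpha$-coordinate of the translation vector $w=\Omega_\pi\lambda$ satisfies $w_\alpha/\lambda_\alpha\in(1/16,1/8)$. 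Exhibiting a single triple $(\pi,\lambda_0,\tau_0)\in\mathcal X$ at which (a)--(c) all hold is a finite combinatorial computation on a suitable Rauzy graph and yields $\mathfrak L\neq \emptyset$; positive $\mu$-measure then follows from openness of the three conditions.

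By ergodicity of $\mathcal Z$ on $(\mathcal X,m)$, $m$-a.e.\ triple returns to $\mathfrak L$ infinitely often. Since the Rauzy--Veech map on the IET side depends only on $\lambda$, Fubini gives, for Lebesgue-a.e.\ $\lambda\in\Lambda^{\mathcal A}$, some $\tau$ and an infinite sequence $(n_k)$ with $\mathcal R^{n_k}(\pi,\lambda,\tau)\in\mathfrak L$. Setting $\Delta^k:=I^{(n_k)}_\alpha$ and $q_k:=q^{(n_k)}_\alpha$, each of the desired conditions follows from one of (a)--(c): \eqref{eq: qn3,5} is built into the Rauzy--Veech tower construction; \eqref{eq: qn3}, $q_k|\Delta^k|\asymp 1$, is just tower balance from (a); \eqref{eq: qn4} combines the elementary bound of $q_k+1$ on the number of maximal components of the complement of the $\alpha$-tower with the size bound $C_0\cdot\max_{\beta\neq\alpha}|I^{(n_k)}_\beta|\lesssim C_0\kappa/q_k$ provided by Proposition \ref{prop: horizontal spacing} and (a); and \eqref{eq: qn5} is exactly (c), since $T^{q^{(n_k)}_\alpha}|_{I^{(n_k)}_\alpha}$ is the pure translation by $w^{(n_k)}_\alpha$ and the ratio $w_\alpha/\lambda_\alpha$ is scale-invariant under renormalization.

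Finally, to obtain \eqref{eq: qn2,5}, I pass to a subsequence: by the positivity of the top Lyapunov exponent of the Kontsevich--Zorich cocycle together with log-integrability \eqref{eq: KZintegrabilty}, $q_k$ grows exponentially in $k$ along the returns, so a greedy thinning $(k_j)$ produces $q_{k_j}/q_{k_{j-1}}\geq 320 C^2$ for every $j$ while only losing bounded density. I expect the main obstacle to be the concrete verification that $\mathfrak L\neq\emptyset$: (a) and (b) are standard sub-conditions of the Veech--Zorich Diophantine classes, but coupling them with the particular numerical condition (c) at the same point demands an explicit choice of $(\pi,\alpha,\lambda_0,\tau_0)$, which is the only place where the specific constants $1/16$ and $1/8$ governing condition \eqref{eq: qn5} really enter the proof.
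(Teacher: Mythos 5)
Your overall architecture matches the paper's: define a positive-measure set $\mathfrak L$ in the Zorich cross-section whose membership conditions automatically yield \eqref{eq: qn3}--\eqref{eq: qn5}, invoke ergodicity of $\mathcal Z$ for recurrence, and then secure \eqref{eq: qn2,5} by acceleration. You diverge in two places, one cosmetic and one substantive.

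On the membership conditions: your condition (a) imposes $\kappa$-balance on $\lambda$, whereas the paper's $\mathcal D_1$ does the opposite, demanding $\lambda_{\pi_0^{-1}(d)}\ge 7/8$, so the chosen base interval nearly fills $I$. The lower bound $q_n|\Delta^n|\gtrsim 1$ in \eqref{eq: qn3} then comes solely from \emph{height} balance, which is automatic on the relatively compact cone $\mathcal E=\R_{>0}B_{\Theta_\pi}(\tilde\tau,\epsilon)$. This also dissolves the verification difficulty you flag: for $\alpha=\pi_0^{-1}(d)$ the translation coordinate is $w_\alpha=\sum_{\pi_1(\beta)>\pi_1(\alpha)}\lambda_\beta\le 1-\lambda_\alpha\le 1/8$ automatically, so only the open condition $w_\alpha>1/16$ has to be arranged. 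Coupling length-balance with the translation condition at the same parameter, as you propose, is a genuinely harder non-emptiness problem that the paper's choice sidesteps.

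The substantive gap is your treatment of \eqref{eq: qn2,5}. Greedy thinning of the return sequence is a pointwise, $\lambda$-dependent construction; the resulting subsequence is not the return sequence to any fixed positive-measure subset of $\mathcal M$. The proposition explicitly asserts that the conditions are obtained \emph{via returning to a positive-measure set} $\mathfrak L$, and this is load-bearing: Proposition~\ref{prop:DC} later intersects $\mathfrak L$ with the recurrence sets from \cite{Ul:abs} and \cite{Fr-Ul2} so that UDC and SUDC hold along the very same return times, an operation that is meaningless for a $\lambda$-by-$\lambda$ thinned subsequence. The paper instead accelerates measurably: by \cite{Ma-Mo-Yo}, for a.e.\ $\lambda\in\mathcal D_1$ there is $N$ with $Q(N)(\pi,\lambda)$ positive; by local continuity of the KZ cocycle there is a positive-measure $\mathcal D_2\subset\mathcal D_1$ on which $Q(N)$ is a fixed matrix with all entries $>320C^2$; and one then shrinks to $\mathcal D_3\subset\mathcal D_2$ of positive measure whose sets $\mathcal Z^{j}(\{\pi\}\times\mathcal D_3\times\Theta_\pi)$, $0\le j\le N$, are pairwise disjoint. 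Any two consecutive returns to $\{\pi\}\times\mathcal D_3\times\mathcal E$ are then $\geq N$ Zorich steps apart, so \eqref{eq: qn2,5} holds for \emph{every} consecutive pair of returns, with no thinning. Replacing your greedy subsequence with a construction of this kind is needed to close the gap.
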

\begin{proof}
Fix $\pi\in S^{\A}$ and let $\tau\in\Theta_{\pi}$ be rationally independent. We will choose an appropriate on which to induce the Zorich induction $\mathcal Z$. We will define $\Delta_n$ as a properly chosen subinterval of the base of the tower $\bigcup_{j=0}^{q^{(n)}_\alpha}T^j(I^{(n)}_{\alpha})$, for some $\alpha\in\A$.

Let $\mathcal D_1\subseteq\Lambda^\A$ be a set of length parameters such that
\begin{equation}\label{def:D1}
\mathcal D_1:=\{\lambda\in \Lambda^\A\mid \lambda_{\pi_0^{-1}(d)}\ge \tfrac{7}{8}\ \text{and}\ \tfrac{1}{16}<w_{\pi_0^{-1}(d)}< \tfrac{1}{8} \},
\end{equation}
where $w = \Omega_\pi \lambda $ is the translation vector associated with $(\pi, \lambda)$.

Note that since $\lambda_{\pi_0^{-1}(d)}\ge \tfrac{7}{8}$ for $\lambda\in\mathcal D_1$, the IET $(\pi,\lambda)$ has top type for the RV induction.
Moreover, whenever $\mathcal Z^n(\pi,\lambda, \tau)\in \{\pi\}\times\mathcal D_1\times \Theta_{\pi}$, we have that %
{$T^{q^{(n)}_{\pi_0^{-1}(d)}}$} acts on $I_{\pi_0^{-1}(d)}^{(n)}$ via translation by a number between $\tfrac{1}{16}|I_{\pi_0^{-1}(d)}^{(n)}|$ and $\tfrac{1}{8}|I_{\pi_0^{-1}(d)}^{(n)}|$. Thus, the interval $I_{\pi_0^{-1}(d)}^{(n)}$ satisfies \eqref{eq: qn5}.

Let now $\tilde\tau\in \Theta_{\pi}$ be such that $(\pi,\lambda,\tilde\tau)$ is of bottom backward type and $\tilde\tau$ is rationally independent. Let $\epsilon=\epsilon(\tilde\tau)>0$ and $C_1=C(\tilde\tau,\epsilon)>0$ be given by Proposition \ref{prop: horizontal spacing}. Denote
\[
\mathcal E:= \R_{>0}B_{\Theta_\pi}(\tilde\tau,\epsilon).
\]
In view of Proposition \ref{prop: horizontal spacing}, whenever $\mathcal Z^n(\pi,\lambda,\tau)\in \{\pi\}\times\mathcal D_1\times \mathcal E$, the tower $\bigcup_{j=0}^{q^{(n)}_{\pi_0^{-1}(d)}-1} {I_{\pi_0^{-1}(d)}^{(n)}}$ satisfies \eqref{eq: qn4} with $C=C_1$. Moreover, since the cone $\mathcal E$ is relatively compact (with respect to the Hilbert pseudometric) in $\Theta_{\pi}$ and, by considering smaller $\epsilon$ if necessary, we may assume that all $\tau\in\mathcal E$ are of bottom backward type and
\[
\sup_{\tau\in\mathcal E}\max_{\alpha,\beta\in \A}\frac{h_{\alpha}}{h_{\beta}}\ \leq\ C_2,
\]
for some $C_2>0$, where $h=h(\tau)=-\Omega_{\pi}\tau$ is the height vector. It follows that if $\mathcal Z^n(\pi,\lambda,\tau)\in \{\pi\}\times\mathcal D_1\times \mathcal E$, then
\[
\max_{\alpha,\beta\in \A}\frac{h^{(n)}_{\alpha}}{h^{(n)}_{\beta}}\ \leq\ C_2.
\]

For every $\tau\in\Theta_\pi$ let $M(\tau):=\max_{\alpha\in\A}h_\alpha(\tau)$, $m(\tau):=\min_{\alpha\in\A}h_\alpha(\tau)$ and $\delta(\tau):=\frac{M(\tau)}{m(\tau)}$.
As the zippered rectangle $D_\alpha^{(n)}$ consists of $q_\alpha^{(n)}$ rectangles of heights $h_\beta$ for $\beta\in\A$, we have
\[m(\tau)\leq \frac{h^{(n)}_{\alpha}}{q^{(n)}_{\alpha}}\leq M(\tau)\quad\text{for any }\alpha\in \A.\]
It follows that if $\mathcal Z^n(\pi,\lambda,\tau)\in \{\pi\}\times\mathcal D_1\times \mathcal E$, then $\frac{q^{(n)}_{\beta}}{q^{(n)}_{\alpha}}\ \leq\ C_2\delta(\tau)$ for all $\alpha,\beta\in \A$,
and letting $\alpha=\pi_0^{-1}(d)$, we have
\[1=|I|=\sum_{\beta\in\A}q^{(n)}_\beta|I^{(n)}_\beta|\leq C_2\delta(\tau)q^{(n)}_\alpha\sum_{\beta\in\A}|I^{(n)}_\beta|=C_2\delta(\tau)q^{(n)}_\alpha|I^{(n)}|\geq \frac{8}{7}C_2\delta(\tau)q^{(n)}_\alpha|I^{(n)}_\alpha|.\]
By taking $C:=\max\{C_1,\frac{8}{7}C_2\delta(\tau)\}$, both the conditions \eqref{eq: qn3} and \eqref{eq: qn4} are satisfied.

By \cite{Ma-Mo-Yo} (see Subsection 1.2.4), for almost every $\lambda\in \mathcal{D}_1$, there exists $k=k(\pi,\lambda)\in\N$ such that KZ-matrix $Q(k)(\pi,\lambda)$ is positive (note that this matrix does not depend on $\tau$). Using the same argument for successive iterations of Zorich induction and multiplying several matrices with positive (natural) coefficients, for almost every $\lambda\in \mathcal{D}_1$ we can find $N=N(\pi,\lambda)\in\N$ such that all coefficients of the KZ-matrix $Q(N)(\pi,\lambda)$ are greater than $320C^2$.
Since Zorich induction $\mathcal Z$ and KZ-cocycle are locally continuous, there exists a subset $\mathcal D_2\subseteq \mathcal D_1$ of positive measure such that the matrix $Q(N)$ is constant on $\{\pi\}\times\mathcal D_2$ and all its coefficients are greater than $320C^2$. %
Next, by the ergodicity of $\mathcal Z$ on $\mathcal M^1$, there exists a subset $\mathcal D_3\subseteq \mathcal D_2$ of positive measure such that $\mathcal Z^{j}(\{\pi\}\times\mathcal D_3\times \Theta_\pi)$ are pairwise disjoint for $0\leq j\leq N$. Thus, if $\mathcal Z^{n_k}(\pi,\lambda,\tau),\mathcal Z^{n_{k+1}}(\pi,\lambda,\tau)\in \{\pi\}\times \mathcal D_3\times\mathcal E$ are two consecutive visits $\{\pi\}\times \mathcal D_3\times\mathcal E$, then $q^{(k_{n+1})}_\alpha\geq 320C^2q^{(k_{n})}_\alpha$ for any $\alpha\in\A$ ensuring that condition \eqref{eq: qn2,5} is met.
Summarizing, by taking
\[
\Delta^n:=I_{\pi_0^{-1}(d)}^{(k_n)}\quad\text{and}\quad q_n:=q^{(k_n)}_{\pi_0^{-1}(d)},
\]
where $k_n:=k_n(\pi,\lambda,\tau)$ is such that $k_n$ is the $n$-th visit of $(\pi,\lambda,\tau)$ in $\mathfrak L:=\{\pi\}\times\mathcal D_3\times\mathcal E$, we obtain \eqref{eq: qn2,5}-\eqref{eq: qn5} are satisfied and the proof is complete.
\end{proof}
\begin{remark}\label{rem: sym_midpoints}If we assume additionally that the permutation $\pi$ is symmetric and $\tau$ in the beginning of the proof of Proposition \ref{prop:full} is chosen so that
	\begin{equation}\label{eq:sym1}
	\sum_{\alpha\in\A}\tau_{\alpha}>0>\sum_{\alpha\in \A}\tau_{\alpha}-\tfrac{1}{2}\tau_{\pi_1^{-1}(d)}
	\end{equation}
	then by choosing $\mathcal E$ so that all elements of $\mathcal E$ satisfy this condition and by Lemma~3.12 and Remark~4 in \cite{Be-Tr-Ul} we get that $\tfrac{1}{2}$ and the mid-points $m_{\alpha}$ of the intervals $I_{\alpha}$ are forward images of the mid-points of the interval $I^{(k_n)}$ or intervals $I_{\alpha}^{(k_n)}$. It follows also from the construction in \cite{Be-Tr-Ul} that if we also assume about $\tau$ that
	\begin{equation}\label{eq:sym2}
	\sum_{\alpha\in\A}\tau_{\alpha}< \sum_{\alpha\mid\pi_0(\alpha)\le k}\tau_{\alpha}\quad\text{for}\quad k=1,\ldots,d-1
	\end{equation}
	and $r\ge 0$ is such that the $r$-th iteration of the mid-point of $I^{(k_n)}$ is either $\tfrac{1}{2}$ or $m_{\alpha}$ then $T^j(I^{(k_n)})$ is a continuity interval for $T$ for $j=0,\ldots,r$.
	\end{remark}

The presented conditions on the returns of the IET via the Zorich induction $\mathcal Z$ yield the geometric picture used to prove Proposition~\ref{prob: BC} and Proposition~\ref{thm:ergcr}. However, they give no information on the behavior of the Birkhoff sum of the considered cocycles and their derivatives. A problem of this nature was already tackled by the second author and Corinna Ulcigrai in \cite{Fr-Ul2}. There, the authors introduced two Diophantine conditions called \emph{Uniform Diophantine Condition (UDC)} and \emph{Symmetric Uniform Diophantine Condition (SUDC)} (see Definition~3 and Definition~7 in \cite{Fr-Ul2}). For the sake of this article, it is not important to recall their rather technical formal definitions. However, their consequences will be of great use. One of the crucial implications is that the UDC condition gives a sufficient condition for \eqref{eq: qn1} to be satisfied for the heights of towers obtained by considering the renormalization times, which satisfy the UDC condition.

To obtain the conditions UDC and SUDC in \cite{Fr-Ul2}, the authors considered a proper inducing of the Kontsevich-Zorich cocycle to a subset $K$ of positive measure in $\mathcal M^1$, see Theorem 3.8 and Theorem 5.6 in \cite{Fr-Ul2}. To adjust our conditions to theirs, we first consider a set $\mathfrak L$ from Proposition~\ref{prop:full} with the set of recurrence $\mathfrak D$ chosen in \cite{Ul:abs} to guarantee the control over the Birkhoff sums of the derivatives of logarithmic cocycles, see Lemma~2.1 and Lemma~2.2 in \cite{Ul:abs}. In the proof of the first of the lemmas, the author uses Egorov's Theorem to obtain the desired set, while in the second, it is again a matter of choosing a sufficiently large defining constant. In particular, the set of recurrence from \cite{Ul:abs} can be made arbitrarily large. Thus, we may assume that the set $Y$ is constructed as an intersection of $\mathfrak L$, and this recurrence set $\mathfrak D$ has a positive Lebesgue measure. Now it remains to take this set $Y$ in Lemma 3.6 in \cite{Fr-Ul2} and choose the set $K$ in the proof of Theorem 3.8 as a subset of $\mathfrak D\cap \mathfrak L$. {By taking now $\mathfrak L$ as the set which satisfies Proposition~\ref{prop:full} (and Remark~\ref{rem: sym_midpoints}), we get the following Proposition}.
\begin{proposition}\label{prop:DC}
	Let $\pi\in S^{\mathcal A}$. For almost every $T=(\pi,\lambda)\in \{\pi\}\times\Lambda^{\mathcal A}$ there exists an increasing sequence $\{q_n\}_{n\in\N}$ of natural numbers and a sequence $\{\{T^i(\Delta^n) \mid 0\leq i<q_n\}\}_{n\in\N}$ of Rokhlin towers satisfying \eqref{eq: qn1}--\eqref{eq: qn5}. Moreover, the conditions $\mathrm{UDC}$ and $\mathrm{SUDC}$ are satisfied for the sequence of renormalization times that yields $\{q_n\}_{n\in\N}$. If additionally, $\pi$ is symmetric, then the conclusion of Remark~\ref{rem: sym_midpoints} is satisfied.
\end{proposition}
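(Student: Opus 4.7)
The plan is to prove Proposition~\ref{prop:DC} by synthesizing the construction in Proposition~\ref{prop:full} with the technical machinery for UDC/SUDC developed in \cite{Fr-Ul2} and the control on Birkhoff sums of derivatives of logarithmic cocycles from \cite{Ul:abs}. The strategy is to force all three packages of conditions to be generated by returns to a single positive-measure set, by ensuring that the UDC-certifying set sits inside the $\mathfrak{L}$ constructed in Proposition~\ref{prop:full}.

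First, I would start from the set $\mathfrak{L}=\{\pi\}\times\mathcal{D}_3\times\mathcal{E}$ produced in Proposition~\ref{prop:full}, which has positive $\mu$-measure in $\mathcal{M}^1$ and whose visits along the Zorich orbit already yield Rokhlin towers satisfying conditions \eqref{eq: qn2,5}--\eqref{eq: qn5} with the choice $\Delta^n=I^{(k_n)}_{\pi_0^{-1}(d)}$ and $q_n=q^{(k_n)}_{\pi_0^{-1}(d)}$. Next, I would invoke the recurrence set $\mathfrak{D}\subseteq\mathcal{M}^1$ provided by \cite[Lemmas 2.1 and~2.2]{Ul:abs}, which controls the Birkhoff sums of derivatives needed for logarithmic cocycles. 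Since these lemmas produce $\mathfrak{D}$ via an application of Egorov's theorem (Lemma~2.1) and a choice of sufficiently large defining constant (Lemma~2.2), the measure of $\mathfrak{D}$ can be taken arbitrarily close to $1$. In particular, the intersection $Y:=\mathfrak{L}\cap\mathfrak{D}$ retains positive measure.

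Second, I would plug this set $Y$ into \cite[Lemma~3.6]{Fr-Ul2}, and then follow the proof of \cite[Theorem~3.8]{Fr-Ul2} (and \cite[Theorem~5.6]{Fr-Ul2} for SUDC under the symmetry assumption on $\pi$), choosing the UDC set $K$ inside $Y$. Because $K\subseteq\mathfrak{L}$, every return of the Zorich orbit to $K$ is also a return to $\mathfrak{L}$. Consequently, if one defines $q_n$ by the consecutive hitting times of $K$, then the conditions \eqref{eq: qn2,5}--\eqref{eq: qn5} inherited from the $\mathfrak{L}$-returns are automatic; while condition \eqref{eq: qn1}, i.e.\ the subexponential growth $\limsup q_n^{1/n}<\infty$, is exactly the statement encoded in the UDC along these renormalization times. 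Ergodicity of $\mathcal{Z}$ and Birkhoff's theorem ensure that a.e.\ $\lambda$ has infinitely many visits to $K$, so the $q_n$ sequence is well-defined for a.e.\ IET.

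Third, for the symmetric case one need only ensure that the initial vector $\tilde\tau$ chosen in the proof of Proposition~\ref{prop:full} additionally satisfies \eqref{eq:sym1} and \eqref{eq:sym2} and that the cone $\mathcal{E}$ is shrunk so that both inequalities persist on $\mathcal{E}$. Then Remark~\ref{rem: sym_midpoints} applies at each $\mathfrak{L}$-return, and since $K\subseteq\mathfrak{L}$, the conclusion is preserved along the $K$-returns driving $\{q_n\}$. The main technical obstacle is checking that the UDC/SUDC set $K$ from \cite{Fr-Ul2} can truly be taken inside the prescribed set $\mathfrak{L}\cap\mathfrak{D}$ without damaging the positivity/spectral-gap arguments in Lemma~3.6 and Theorem~3.8 of \cite{Fr-Ul2}; but as those arguments only require that $K$ be a positive measure subset on which the relevant KZ matrix is a fixed positive matrix, restricting $K$ to a positive measure subset of $\mathfrak{L}\cap\mathfrak{D}$ (which is possible since the locally constant partition of the Zorich induction is countable) preserves all the ingredients. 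This completes the proof scheme.
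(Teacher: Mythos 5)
Your proof takes essentially the same route as the paper: pass to $Y=\mathfrak{L}\cap\mathfrak{D}$ using the recurrence set of \cite{Ul:abs} (whose measure can be taken arbitrarily close to full), feed $Y$ into Lemma~3.6 of \cite{Fr-Ul2}, and pick the UDC/SUDC set $K$ inside $Y$, so that $K$-returns are in particular $\mathfrak{L}$-returns, yielding \eqref{eq: qn2,5}--\eqref{eq: qn5}, while \eqref{eq: qn1} is the content of UDC; the symmetric case is handled by choosing $\mathcal{E}$ so that Remark~\ref{rem: sym_midpoints} applies. Your additional observation that the constructions in \cite{Fr-Ul2} only need $K$ to be a positive-measure set on which a fixed positive KZ-matrix occurs, and that this is compatible with forcing $K\subseteq\mathfrak{L}\cap\mathfrak{D}$, is a correct and useful elaboration of a point the paper leaves implicit.
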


All the properties implied by the UDC and SUDC conditions that we use in this article come from \cite{Fr-Ul2}. Hence, whenever we use one of them, we give a precise reference to the other paper.

\section{Ergodicity Criterion and Borel-Cantelli Argument}

In this section, we show that appropriate control of the Birkhoff sums for cocycles with singularities over large enough sets, but whose measure still decreases to 0 in a controlled manner, is enough to deduce the ergodicity of the corresponding skew products over IETs for which there exist Rokhlin towers as described in Section \ref{sc:Diophantine}.

This ergodicity criterion (see Theorem \ref{thm:ergcr}) relies on the classical notion of \emph{essential values} (see Definition \ref{def:essential_values}). Before stating it, we give here a brief overview of the tools needed to prove ergodicity. For further background material concerning skew products and infinite measure-preserving dynamical systems, we
refer the reader to \cite{Aa} and \cite{Sch}.

Let $T$ be an ergodic measure-preserving automorphism on a probability measure space $(X,\mu)$.
For any measurable map (cocycle) $\varphi: X\to \R$ denote by $T_\varphi$ the skew product extension on $X\times \R$ given by $T_{\varphi}(x,r)=(Tx,r+\varphi(x))$. Then for any $n\in\Z$ we have $T^n_{\varphi}(x,r)=(T^nx,r+S_n\varphi(x))$, where
\begin{equation}\label{def:BS}
S_n\varphi(x)=\left\{
\begin{array}{cl}
\sum_{0\leq j<n}\varphi(T^jx)&\text{if }n\geq 0,\\
-\sum_{n\leq j<0}\varphi(T^jx)&\text{if }n< 0.
\end{array}
\right.
\end{equation}
The skew product $T_\varphi$ preserves the infinite product measure $\mu\times Leb$.

Two measurable cocycles $\varphi,\psi: X\to \R$ are
called \emph{cohomologous} if there exists a measurable function
$g:X\to \R$ (called a transfer function) such that
$\varphi=\psi+g-g\circ T$. Then the corresponding skew products $T_\varphi$ and
$T_{\psi}$ are measure-theoretically isomorphic via the maps
$(x,y)\mapsto(x,y+g(x))$. A
cocycle $\varphi:X\to \R$ is a \emph{coboundary} if it is
cohomologous to the zero cocycle.

\begin{definition}
\label{def:essential_values}
A real number $r\in \R$ is said to be an \emph{essential value} of $\varphi$, if for any $\epsilon>0$
 and any measurable set $\Omega \subseteq X$ with $\mu(\Omega)>0$, there exists $n\in\Z$ such that
\begin{eqnarray}
\mu(\Omega\cap T^{-n}\Omega\cap\{x\in X:S_n\varphi(x)\in (r-\epsilon,r+\epsilon)\})>0.
\label{val-ess}
\end{eqnarray}
The set of essential values of $\varphi$ is denoted by
${E}(\varphi)$.
\end{definition}
\begin{proposition}[See \cite{Sch}]\label{proposition:schmidt}
The set ${E}(\varphi)$ is a closed subgroup of $\R$.
The skew product $T_{\varphi}$ is ergodic if and only if
$E(\varphi)=\R$.
\end{proposition}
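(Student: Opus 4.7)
The plan is to establish the two assertions separately and in the standard Schmidt fashion. Closedness of $E(\varphi)$ is immediate from the definition: if $r_n\to r$ with $r_n\in E(\varphi)$, then for any $\epsilon>0$ and any measurable $\Omega$ with $\mu(\Omega)>0$, picking $n$ so that $|r_n-r|<\epsilon/2$ and applying the essential-value condition for $r_n$ with tolerance $\epsilon/2$ gives a witness for $r$. For the subgroup structure I would check closure under addition and negation, using throughout the cocycle identity $S_{n+m}\varphi(x)=S_n\varphi(x)+S_m\varphi(T^nx)$ and invertibility of $T$. Given $r,s\in E(\varphi)$, $\Omega$ and $\epsilon$, first apply the essential-value condition for $r$ to produce $n_1$ and a positive-measure set $A_1\subseteq\Omega\cap T^{-n_1}\Omega$ on which $|S_{n_1}\varphi-r|<\epsilon/2$; then apply the condition for $s$ with tolerance $\epsilon/2$ to the positive-measure set $T^{n_1}A_1\subseteq\Omega$ to produce $n_2$ and a subset $A_2\subseteq T^{n_1}A_1\cap T^{-n_2}\Omega$. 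Translating back by $T^{-n_1}$ and using the cocycle identity exhibits a set of positive measure in $\Omega\cap T^{-(n_1+n_2)}\Omega$ on which $S_{n_1+n_2}\varphi$ is within $\epsilon$ of $r+s$. Negation uses the identity $S_{-n}\varphi\circ T^n=-S_n\varphi$: if $|S_n\varphi-r|<\epsilon$ on $A\subseteq\Omega\cap T^{-n}\Omega$, then $T^nA\subseteq\Omega\cap T^n\Omega$ witnesses $-r$. Finally, $0\in E(\varphi)$ follows from Poincar\'e recurrence applied to sublevel sets of $|S_n\varphi|$.

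For the ergodicity equivalence, I would use the Fourier decomposition in the fibre coordinate. Let $F\in L^\infty(X\times\R)$ be $T_\varphi$-invariant and localized in $r$, and for $\xi\in\R$ set $\widehat F(x,\xi):=\int F(x,r)e^{-2\pi i\xi r}\,dr$. Invariance of $F$ translates into the multiplicative cohomology equation $\widehat F(Tx,\xi)=e^{2\pi i\xi\varphi(x)}\widehat F(x,\xi)$. The key lemma, which is the heart of Schmidt's argument, is that $\widehat F(\cdot,\xi)\not\equiv 0$ forces $e^{2\pi i\xi r}=1$ for every $r\in E(\varphi)$: given any $r\in E(\varphi)$ one iterates the cohomology equation along times $n$ for which $S_n\varphi$ is close to $r$ on a set of positive measure, passing to the limit using $|\widehat F(T^nx,\xi)|=|\widehat F(x,\xi)|$. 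Assuming $E(\varphi)=\R$, this forces $\widehat F(\cdot,\xi)\equiv 0$ for all $\xi\neq 0$, so $F(x,\cdot)$ is constant in $r$ for $\mu$-a.e.\ $x$; the resulting $T$-invariant function is then constant by ergodicity of $T$, proving ergodicity of $T_\varphi$. Conversely, if $E(\varphi)$ is a proper closed subgroup, then its annihilator contains a nonzero $\xi_0$; one solves the cohomology equation $\widehat F(Tx,\xi_0)=e^{2\pi i\xi_0\varphi(x)}\widehat F(x,\xi_0)$ with a measurable unit-modulus $\widehat F(\cdot,\xi_0)$ (this solvability being precisely what the hypothesis $\xi_0\in E(\varphi)^\perp$ provides), yielding a nonconstant $T_\varphi$-invariant function $F(x,r)=\widehat F(x,\xi_0)e^{2\pi i\xi_0 r}$ that contradicts ergodicity.

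The main obstacle is the technical step identifying the solvability of the multiplicative cohomology equation for $\xi$ with the condition $\xi\in E(\varphi)^\perp$; this equivalence between essential values of the $\R$-cocycle and the spectrum of $T_\varphi$ is the substance of \cite{Sch}. Since both statements of the proposition are standard results of Schmidt, quoted here directly, I would simply invoke the reference rather than reproduce the proof in detail.
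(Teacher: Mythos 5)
The paper does not prove this proposition; it is stated as a citation to Schmidt's monograph \cite{Sch} with no argument supplied. Your proposal, after sketching the standard Schmidt machinery, arrives at the same conclusion — invoke the reference — so in effect you and the paper take the same route. The outline you give is nonetheless a correct account of the underlying argument: the subgroup and closedness properties follow from the cocycle identity $S_{n+m}\varphi = S_n\varphi + S_m\varphi\circ T^n$ exactly as you describe, and the ergodicity equivalence is indeed proved by Fourier analysis in the fibre, with the central point being that a nonzero invariant Fourier mode $\widehat F(\cdot,\xi)$ annihilates every essential value, so $E(\varphi)=\R$ forces all modes with $\xi\neq 0$ to vanish. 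One small remark: you do not need Poincar\'e recurrence to see $0\in E(\varphi)$, since the definition permits $n=0$ and $S_0\varphi\equiv 0$ gives a trivial witness. Also, in the converse direction of the ergodicity statement, the cleaner formulation is that $E(\varphi)$ equals the group of periods of the $\sigma$-algebra of $T_\varphi$-invariant sets, so a proper closed subgroup immediately yields a nonconstant invariant function via the quotient $\R/E(\varphi)$; this sidesteps having to directly solve the multiplicative cohomology equation, which is where the technical content of Schmidt's proof lives.
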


Recall that the singularities of the cocycles considered in this work can be described through an auxiliary increasing function $\theta:[1,+\infty)\to\R_{>0}$ (see \eqref{def:Zf0}) for which certain growth properties hold (see \eqref{prop:theta tau0}).

In the following, we use a similar auxiliary function to describe sets and properties of the (derivative of) Birkhoff sums of a piecewise smooth cocycle that guarantee the ergodicity of the associated skew product over IETs as in Section \ref{sc:Diophantine}.

From now on, for any interval $J=[a,b]$ and any $0\leq \delta<1/2$ we denote by $J(\delta)$ its \emph{$\delta$-slimming}, that is, $J(\delta):=[a+\delta|J|,b-\delta|J|]$. We define the $\delta$-slimming of open and semi-open intervals analogously.

\begin{theorem}[Ergodicity criterion]\label{thm:ergcr}
Let $T:[0,1)\to[0,1)$ be an IET and assume that there exist a sequence $\{q_n\}_{n \in \N} \subseteq \N$ and a sequence of Rokhlin towers $\{\{T^i(\Delta^n) \mid 0\leq i<q_n\}\}_{n\in\N}$ satisfying \eqref{eq: qn1}-\eqref{eq: qn5}. Let $\theta:[1,+\infty)\to\R_{>0}$ be an increasing function satisfying
\begin{equation}\label{eq: divsum_theta}
	\sum_{n=1}^{\infty}\frac{1}{\theta(q_n)}=\infty.
\end{equation}
Suppose that for any $n \in \N$ there exists a family of intervals $\{\Delta_i^n\mid0\leq i<q_n,n\in\N\}$ such that
\[\Delta_i^n \subseteq T^i\Delta^n(\tfrac{1}{4})\quad\text{and}\quad\frac{D_1}{q_n\theta( q_n)}\leq|\Delta_i^n|\leq\frac{D_2}{q_n\theta( q_n)}\quad\text{for all }0\leq i<q_n,\]
for some constants $0 < D_1 < D_2$.

Assume that $f:[0,1)\to \R$ is a piecewise $C^1$ map ($C^1$ on the interior of exchanged intervals) and there exist $E>1$ and $v\in\R$ such that, for every $n\in\N$ and $0\leq i<q_n$,
\begin{equation}\label{eq: 0 of f}
	S_{q_n}(f)(\xi_i^n)\in[v-D_1(4E)^{-1},v+D_1(4E)^{-1}],\ \text{where}\ \xi_i^n\text{ is the midpoint of }\Delta_i^n,
\end{equation}
$S_{q_n}(f)$ is strictly monotonic on $\Delta^n_i$ and
\begin{equation}\label{eq: derivative of f}
E^{-1}q_n\theta( q_n)	\le |S_{q_n}(f')(x)|\le Eq_n\theta( q_n)\text{ for }x\in \Delta_i^n.
\end{equation}
Then the skew product $T_f:[0,1)\times\R\to [0,1)\times\R$ is ergodic.
\end{theorem}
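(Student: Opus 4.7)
My plan is to apply Schmidt's criterion (Proposition~\ref{proposition:schmidt}): ergodicity of $T_f$ is equivalent to $E(f) = \R$. Since $E(f)$ is a closed subgroup of $\R$, it is enough to exhibit a non-degenerate interval of essential values, as any closed subgroup of $\R$ containing an interval coincides with $\R$. I will target an interval centered at $v$.

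\emph{Step 1 (good sub-intervals).} Fix $v'$ with $|v' - v| < D_1/(8E)$ and $\epsilon \in (0, D_1/(8E))$. By strict monotonicity of $S_{q_n}(f)$ on $\Delta_i^n$, the midpoint condition \eqref{eq: 0 of f}, and the lower derivative bound in \eqref{eq: derivative of f}, the image $S_{q_n}(f)(\Delta_i^n)$ is an interval containing $(v - D_1/(4E), v + D_1/(4E))$, hence $(v' - \epsilon, v' + \epsilon)$. The upper bound in \eqref{eq: derivative of f} then gives that
\[
\tilde\Delta_i^n := \bigl(S_{q_n}(f)\big|_{\Delta_i^n}\bigr)^{-1}(v' - \epsilon, v' + \epsilon) \subseteq \Delta_i^n
\]
is a sub-interval of length at least $2\epsilon/(Eq_n \theta(q_n))$. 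Setting $\mathcal{G}_n := \bigsqcup_{0\le i<q_n} \tilde\Delta_i^n$, we have $|\mathcal{G}_n| \geq 2\epsilon/(E\theta(q_n))$, so $\sum_n |\mathcal{G}_n| = \infty$ by \eqref{eq: divsum_theta}.

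\emph{Step 2 (structure of $T^{q_n}$ on $\mathcal{G}_n$).} By \eqref{eq: qn5} together with \eqref{eq: qn3,5}, for every $0 \le i < q_n$ the iterate $T^{q_n}$ restricted to $T^i\Delta^n$ is a translation by the same $s_n \in (|\Delta^n|/16, |\Delta^n|/8)$. Since $\tilde\Delta_i^n \subseteq T^i\Delta^n(1/4)$ and $|s_n| < |T^i\Delta^n|/8$, the image $T^{q_n}(\tilde\Delta_i^n)$ is again contained in $T^i\Delta^n$. Moreover, by \eqref{eq: qn3} and \eqref{eq: qn4}, the tower $\bigsqcup_i T^i\Delta^n$ exhausts $[0,1)$ up to $O(1/q_n)$, so $\mathcal{G}_n$ is uniformly distributed at the scale $|\Delta^n| \sim 1/q_n$: for any interval $J \subseteq [0,1)$,
\[
\bigl| |\mathcal{G}_n \cap J| - |\mathcal{G}_n| \cdot |J| \bigr| = O\!\left(|\mathcal{G}_n|/q_n\right).
\]

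\emph{Step 3 (essential value via Borel--Cantelli).} Given a positive measure set $\Omega \subseteq [0,1)$, I want some $n$ with $|\Omega \cap T^{-q_n}\Omega \cap \mathcal{G}_n| > 0$, as by Step 1 any such $n$ witnesses $v' \in E(f)$ up to the error $\epsilon$. Consider $A_n := \mathcal{G}_n \cap T^{-q_n}\Omega$. Combining the equidistribution from Step~2 with Lebesgue differentiation (to pass from intervals to the general set $\Omega$) gives $|A_n| \sim |\Omega|\,|\mathcal{G}_n|$, so $\sum_n |A_n| = \infty$. A quasi-independence estimate $|A_n \cap A_m| \lesssim |A_n|\,|A_m|$ for $m \gg n$, fed into Chung--Erd\H os, then yields $|\limsup_n A_n| > 0$. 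Intersecting with $\Omega$, some $n$ satisfies $|\Omega \cap A_n| > 0$, as required. Letting $v'$ range over a non-degenerate interval around $v$ and invoking the subgroup property of $E(f)$ concludes $E(f) = \R$.

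The main obstacle is the quasi-independence bound in Step~3: for $m$ much larger than $n$, one must show that the finer family $\mathcal{G}_m$ equidistributes inside each level $T^i\Delta^n$ of the coarser tower, so that the joint event $A_n \cap A_m$ factors approximately as a product. This is precisely the purpose of the strong scale separation \eqref{eq: qn2,5}: the bound $q_{n+1} \geq 320 C^2 q_n$ forces $|\Delta^m| \ll |\Delta^n|$ at a geometric rate, allowing a Fubini-type decomposition of the joint probabilities that produces the required product bound.
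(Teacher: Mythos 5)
Your proposal has the right high-level framing (targeting essential values in an interval around $v$, building the test sets $\tilde\Delta_i^n$ from the monotonicity and derivative bounds) and Step~1 is essentially identical to the paper's choice of the intervals $J_i^n$. But the proof diverges from the paper's at Step~3, where there is a genuine gap.

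The quasi-independence estimate $|A_n \cap A_m| \lesssim |A_n|\,|A_m|$ is asserted but not proved, and I don't think it holds for an arbitrary positive-measure $\Omega$. The sets $A_n = \mathcal{G}_n \cap T^{-q_n}\Omega$ involve the pullbacks $T^{-q_n}\Omega$, and these can be highly correlated across $n$: schematically, if $T^{-q_n}\Omega$ and $T^{-q_m}\Omega$ were the same set $S$ of measure $|\Omega|<1$, one would get $|A_n\cap A_m|\sim |\Omega|/(\theta(q_n)\theta(q_m))$, which is an order of magnitude larger than $|A_n||A_m|\sim |\Omega|^2/(\theta(q_n)\theta(q_m))$. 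The scale separation \eqref{eq: qn2,5} controls how $\mathcal{G}_m$ equidistributes inside the coarser tower, but it says nothing about the joint behaviour of $T^{-q_n}\Omega$ and $T^{-q_m}\Omega$, which is what quasi-independence really requires. A second, smaller, gap: even granting $|\limsup_n A_n|>0$ from Chung--Erd\H{o}s, it does not follow that $|\Omega \cap A_n|>0$ for some $n$; what is actually needed is $|\Omega\cap T^{-q_n}\Omega\cap\mathcal{G}_n|>0$, and ``intersecting with $\Omega$'' at the end of Step~3 is a non-sequitur.

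The paper sidesteps both difficulties by replacing the probabilistic Borel--Cantelli/Chung--Erd\H{o}s route with a deterministic one (Proposition~\ref{prob: BC}): after Lebesgue density localizes to an interval $(r,s)$ where $\Omega$ has density $>9/10$, one inductively chooses subcollections $\tilde A_n\subseteq A_n$ living entirely in the ``holes'' left by earlier generations, using \eqref{eq: qn5} to guarantee $\tilde A_n$ and $T^{q_n}\tilde A_n$ stay inside those holes and are genuinely pairwise disjoint, and using the divergence \eqref{eq: divsum_theta} to show the disjoint union $\bigcup_n(\tilde A_n\cup T^{q_n}\tilde A_n)$ eventually fills all of $(r,s)$. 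Ergodicity then follows by a clean contradiction: if no $J_i^n$ has positive overlap with $\Omega\cap T^{-q_n}\Omega$, summing over the disjoint pieces would force $Leb((r,s)\setminus\Omega)\geq\frac12(s-r)$, contradicting density. This construction produces exactly disjoint sets with full coverage, which is a much stronger conclusion than the positive-measure limsup that Chung--Erd\H{o}s would give, and it never needs any independence statement involving $\Omega$. To repair your proposal you would either have to prove the missing joint estimate (which I believe fails in general) or abandon Chung--Erd\H{o}s in favour of a hole-filling construction along the paper's lines.
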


The proof of this criterion relies on the following fact, which is in the spirit of the classical Borel-Cantelli Lemma and whose proof is inspired by a similar result obtained in \cite{Fa-Le}.

\begin{proposition}\label{prob: BC}
	Let $T$, $\{q_n\}_{n \in \N}$, $\{ \Delta^n\}_{n \in \N}$ and $\theta$ as in Theorem \ref{thm:ergcr}. For every $n\in\N$ let $\{ J^n_i\}_{0\leq i<q_n}$ be a family of intervals and let $0<D_1<D_2$ be constants such that
	\begin{gather}\label{eq: midposition}
	J^n_i\subseteq T^i\big(\Delta^n(\tfrac{1}{4})\big),\\
	\label{eq: size_of_J}
	\frac{D_1}{q_n\theta(q_n)}<|J^n_i|<\frac{D_2}{q_n\theta(q_n)},
	\end{gather}
 for all $0\leq i<q_n$ and $n\in\N$. Let
	\[
	A_n:=\bigcup_{0\leq i<q_n} J^n_i.
	\]
Then for any interval $(r,s) \subseteq [0,1)$ and any $k\in\N$ with $\frac{10C}{q_{k}}\leq(s-r)$ and $\theta(q_k)\geq 32 CD_2$ the following holds. For any $n\ge k$ there exists a subset $\tilde A_n \subseteq A_n$, given by the union of a subcollection of intervals in $\{ J^n_i\}_{0\leq i<q_n}$, such that
	\begin{gather}\label{eq: pairdisj}
	 \big\{ \tilde A_n,\, T^{q_n} (\tilde A_n) \,\big|\, n \geq k\big\} \text{ is a collection of pairwise disjoint subsets of $(r,s)$;}\\
	\label{eq: pos_and_disj}
	Leb\,\Big(\bigcup_{n\geq k}\tilde A_n\Big)= \frac{1}{2}(s-r).
	\end{gather}
\end{proposition}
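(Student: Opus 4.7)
The plan is a greedy selection argument of Borel--Cantelli type, inspired by \cite{Fa-Le}. I would construct the subcollections $\tilde A_n$ inductively, enforcing at each step that each newly selected $J^n_i$ together with its $T^{q_n}$-translate avoids the regions already used at previous scales. First, I would set up the translation structure at each scale. By \eqref{eq: qn5} together with \eqref{eq: qn3,5}, $T^{q_n}$ acts as a uniform translation by some $c_n \in (|\Delta^n|/16, |\Delta^n|/8)$ on every tower level $T^i\Delta^n$. Combining the size bound $|J^n_i| \le D_2/(q_n\theta(q_n))$ with $|\Delta^n| \ge 1/(Cq_n)$ and the hypothesis $\theta(q_k) \ge 32CD_2$ yields $|J^n_i| < c_n$, so $J^n_i$ and $T^{q_n}J^n_i$ are disjoint translates both lying inside $T^i\Delta^n$. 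In particular, the intra-scale disjointness of $\tilde A_n$ and $T^{q_n}\tilde A_n$ is automatic; only cross-scale disjointness must be actively enforced.

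The heart of the argument lies in two quantitative measure estimates. A lower bound on the mass inside $(r,s)$: using \eqref{eq: qn4} together with $s-r \ge 10C/q_k \ge 10C/q_n$, the number of tower levels $T^i\Delta^n$ fully contained in $(r,s)$ is at least $c_1(s-r)q_n$, and hence $|A_n \cap (r,s)| \ge c_1 D_1 (s-r)/\theta(q_n)$. An upper bound on the overlap with previously chosen sets: setting $F_n := \bigcup_{m<n}\bigl(\tilde A_m \cup T^{q_m}\tilde A_m\bigr)$, the set $F_n$ is a union of at most $O(q_{n-1})$ intervals (since the $q_m$ grow geometrically by \eqref{eq: qn2,5}), whereas $A_n$ consists of evenly distributed small intervals in the $q_n \gg q_{n-1}$ tower levels of scale $n$ (Proposition \ref{prop: horizontal spacing}). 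One deduces $|A_n \cap F_n| \le c_2 (|F_n| + \eta_n)/\theta(q_n)$ with $\eta_n \to 0$ from the lacunarity \eqref{eq: qn2,5}, and combining with $|F_n| \le 2 S_{n-1}$ (where $S_{n-1} := \sum_{m<n} |\tilde A_m|$), the measure of usable candidates $\mathcal I_n := \{i : J^n_i \cup T^{q_n}J^n_i \subseteq (r,s)\setminus F_n\}$ satisfies
\[
\Bigl|\bigcup_{i\in\mathcal I_n} J^n_i\Bigr| \;\ge\; c\,\frac{(s-r) - 2S_{n-1}}{\theta(q_n)}.
\]

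The final step is the greedy selection itself: at each stage $n$, add intervals $J^n_i$, $i\in\mathcal I_n$, to $\tilde A_n$ until either $\mathcal I_n$ is exhausted or the running total reaches $(s-r)/2$, with exact equality arranged by choosing a suitable terminal subcollection (possible because the interval sizes $|J^n_i| \to 0$). Writing $U_n := (s-r)/2 - S_n$, the previous increment bound gives $U_n \le U_{n-1}\bigl(1 - 2c/\theta(q_n)\bigr)$, and the divergence hypothesis \eqref{eq: divsum_theta} forces $U_n \to 0$ in finitely many stages, yielding $S_n = (s-r)/2$. The principal obstacle is the overlap estimate: a careful accounting of the density of $A_n$ inside each constituent interval of $F_n$ across the hierarchy of coarser scales, which requires balancing the horizontal equidistribution of tower levels (Proposition \ref{prop: horizontal spacing}) against the accumulating boundary contributions from previously selected intervals, with the rapid growth \eqref{eq: qn2,5} ensuring those corrections remain negligible compared with the bulk.
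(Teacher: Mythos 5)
Your proposal correctly identifies the skeleton of the paper's argument: an inductive/greedy selection of intervals at each scale, a per-stage measure gain proportional to $\mathrm{Leb}(\text{remaining})/\theta(q_n)$, the observation that $\mathrm{Leb}(\tilde A_n) = \mathrm{Leb}(T^{q_n}\tilde A_n)$ forcing the factor $\frac{1}{2}$, and the final convergence via $\prod(1-2c/\theta(q_j)) \to 0$ from \eqref{eq: divsum_theta}. The intra-scale disjointness argument using $|J^n_i| \ll |\Delta^n|/16 < c_n$ is also as in the paper.

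However, there is a genuine gap at the point you flag as the ``principal obstacle.'' Your overlap estimate $|A_n \cap F_n| \le c_2(|F_n|+\eta_n)/\theta(q_n)$ is phrased as a global density bound, but it silently requires the complementary set $(r,s)\setminus F_n$ to be composed of intervals long enough to actually contain a candidate pair $J^n_i \cup T^{q_n}J^n_i$. The paper establishes and propagates the explicit invariant \eqref{eq: holes sepration}: every hole in $(r,s) \setminus \bigcup_{j\le n}\hat A_j$ has length $\ge 10C/q_{n+1}$. The mechanism is the step you omit: at each hole $(a,b)$, one discards the leftmost and rightmost intervals of $A_n$ meeting $(a,b)$ before selecting (the set $\mathcal I^n_{(a,b)}$). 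This guarantees the boundary fragments of the new holes have length $\ge \frac{1}{2}|\Delta^n|$, which, combined with $T^{q_n}$ being a translation of magnitude in $(|\Delta^n|/16,|\Delta^n|/8)$, yields the inductively required lower bound $|\Delta^n|/32 \ge 10C/q_{n+1}$. Your greedy rule ``include $J^n_i$ iff $J^n_i\cup T^{q_n}J^n_i \subseteq (r,s)\setminus F_n$'' without this discarding step can create boundary fragments of arbitrarily small length, below $10C/q_{n+1}$, in which no candidate of the next scale fits. The total measure of such fragments is not automatically of lower order than $U_{n-1}=(s-r)/2-S_{n-1}$ (the ratio $q_{n-1}/q_n$ decays only geometrically while $U_{n-1}$ can decay arbitrarily slowly when $\theta(q_n)\to\infty$), so the sketched $\eta_n\to 0$ argument does not close. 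Also note two minor slips: the measure estimate actually used must be hole-by-hole (as in \eqref{eq: measure control ab}), with the same constant $c$ in numerator for both lower and upper bounds on density, not distinct $c_1 < c_2$; and the phrase ``$U_n\to 0$ in finitely many stages'' is incorrect — the product tends to zero only in the limit, which still suffices for \eqref{eq: pos_and_disj} since the $\tilde A_n$ are disjoint.
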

\begin{proof}
	The idea to prove this result is to show that for every $n> k$ a large proportion of the set $A_n$ ``fills'' the holes left by the sets $A_k,\ldots,A_{n-1}$ (more precisely, by sets $\tilde A_j$ and $T^{q_j} \tilde A_j$ for $k\leq j<n$, {which are properly chosen subsets of $A_j$. Later in the proof, we provide the construction of $\tilde A_n$.}%
	
	Note that, for every $n\ge k$, the set $A_n^c$ is a union of $q_n+1$ disjoint intervals. This follows from the fact that each interval $J^n_i$ belongs to a different level of the tower $\bigcup_{0\leq i<q_n} T^i(\Delta^n)$ and is separated from its left and right endpoints. In particular, the set $\bigcap_{j=k}^{n-1} A_j^c$ consists of at most $q_{n-1}+q_{n-2}+\ldots+q_k+1$ disjoint intervals. We refer to each of these intervals as a \emph{hole}. Passing to $\bigcap_{j=k}^{n-1} \hat{A}_j^c$, where
\[\hat{A}_j:=\tilde{A}_j\cup T^{q_j}\tilde{A}_j,%
\]
we extend the holes such that their lengths are at least $\frac{10C}{q_n}$. Roughly speaking, all intervals from $A_n$ that are contained in these holes are used in the construction of $\tilde{A}_n$.

\medskip

\textbf{Preliminary arguments.} Assume that $(a,b)\subseteq[0,1)$ is a hole (interval) such that $b-a\geq \frac{10C}{q_n}$.
By \eqref{eq: qn3} and \eqref{eq: qn4}, we have that the maximal distance between consecutive intervals forming $A_{n}$ is at most $\frac{3C}{2q_{n}}$. In view of \eqref{eq: size_of_J}, the hole $(a,b)$ intersects $l$ intervals of the form $J^{n}_i$ with $l \in \N$ satisfying
\begin{equation}\label{eq: el}
b-a\leq l\frac{D_2}{q_{n}\theta(q_{n})}+(l+1)\frac{3C}{2q_{n}}.
\end{equation}
Hence
\begin{equation*}
10\leq l\frac{D_2}{C\theta(q_{n})}+(l+1)\leq l\frac{1}{32}+\frac{3}{2}(l+1), %
\end{equation*}
so $l \geq 6$. In particular, we get that $2(l-2)\geq l+1$ and thus by \eqref{eq: el} we obtain
\begin{equation}\label{eq: el2}
	2(l-2)\ge \frac{b-a}{\frac{D_2}{q_{n}\theta(q_{n})}+\frac{3C}{2q_{n}}}.
\end{equation}

Denote by $\mathcal I^n_{(a,b)}$ the set of indices $0\leq i<q_n$ for which $J^{n}_i\cap (a,b)\neq \emptyset$ after throwing out the indices corresponding to the leftmost and rightmost intervals in $A_n$ intersecting $(a, b)$. Then $\# \mathcal I^n_{(a,b)} = l - 2\geq 4$ and
\[(a,b)\setminus \bigcup_{i\in \mathcal I^n_{(a,b)}}J^n_i\]
consists of $l-1$ holes. In view of \eqref{eq: midposition}%
, the length of every such hole is at least $\frac{1}{2}|\Delta^n|$ %
and hence, since by \eqref{eq: qn5} $T^{q_n}$ translates the interval $J^n_i$ by at most $\frac{1}{8}|\Delta^n|$, we obtain
\begin{equation}\label{eq: shiftstays}
	T^{q_n}J^n_i \subseteq (a,b), \qquad\text{for every }i\in\mathcal{I}^n_{(a,b)}.
\end{equation}
Note that, by \eqref{eq: qn5}, we have that
	\begin{equation}\label{eq: intervals_are_disjoint}
	\{ J_i^n, T^{q_n}J_i^n \mid 0\leq i<q_n\} \quad \text{is a collection of pairwise disjoint sets.}
	\end{equation}
	Indeed, as $T^{q_n}\Delta^n(\tfrac{1}{4}) \subseteq \Delta^n(\tfrac{1}{8}) \subseteq \Delta^n$, for every $0\leq i<q_n$, we have $J_i^n,\,T^{q_n}J_i^n \subseteq T^{i}\Delta^n$ and for $i_1\neq i_2$ the intervals $T^{i_1}\Delta^n$ and $T^{i_2}\Delta^n$ are two different levels of the Rokhlin tower.
It follows that $J^n_{i_1}$ and $T^{q_n}J^n_{i_1}$ are disjoint from $J^n_{i_2}$ and $T^{q_n}J^n_{i_2}$ and each pair of intervals is distant by at least $\frac{1}{4}|\Delta^n|$.
Moreover, by \eqref{eq: qn5}, $T^{q_n}J_i^n$ is a translation of $J_i^n$ by at least $\frac{1}{16}|\Delta^n|$. In view of \eqref{eq: size_of_J} and \eqref{eq: qn3},
$|J^n_i|\leq \frac{D_2}{q_n\theta( q_n)}\leq \frac{1}{32}|\Delta^n|$.
It follows that $J_i^n$ and $T^{q_n}J_i^n$ are also disjoint and they are distant by at least $\frac{1}{32}|\Delta^n|$.
In summary,
\begin{align}\label{eq: ab holes1}
(a,b)\setminus \bigcup_{i\in \mathcal I^n_{(a,b)}}(J^n_i\cup T^{q_n}J^n_i)\text{ consists of $2(l-2)+1$ holes of length at least }\frac{1}{32}|\Delta^n|.
\end{align}
Moreover, by \eqref{eq: qn2,5} and \eqref{eq: qn3},
\begin{equation}\label{eq: ab holes2}
\frac{1}{32}|\Delta^n|\geq \frac{1}{32Cq_n}\geq \frac{10C}{q_{n+1}}.
\end{equation}
On the other hand, by \eqref{eq: size_of_J} and \eqref{eq: el2},
	\begin{align}\label{eq: measure control ab}
\begin{split}
	Leb\Big(\bigcup_{i\in \mathcal I^n_{(a,b)}}J^n_i\Big)
=Leb\Big(\bigcup_{i\in \mathcal I^n_{(a,b)}}T^{q_n}J^n_i\Big)
&\ge (l-2)\cdot\frac{D_1}{q_{n}\theta(q_{n})}\geq \frac{b-a}{\frac{2D_2}{q_{n}\theta (q_{n})}+\frac{3C}{q_{n}}}\cdot\frac{D_1}{q_{n}\theta(q_{n})}\\
&\geq \frac{D_1(b-a)}{(3C+2D_2)\theta( q_{n})}=\frac{c}{\theta( q_n)}(b-a),
\end{split}
	\end{align}
with $c:=D_1/(3C+2D_2)$.

\medskip

\textbf{Inductive construction.}
Fix any interval $(r,s) \subseteq [0,1)$ and $k\in \N$ such that $\frac{10C}{q_{k}}\leq(s-r)$ and $\theta( q_k)\geq 32 CD_2$.
We will define inductively a sequence $\{\tilde{A}_n\}_{n\geq k}$ of subsets of $(r,s)$ such that, for every $n\ge k$, the set $\tilde A_n \subseteq A_n$ is the union of intervals $J_i^n$ with $0\leq i<q_n$ and
\begin{gather}
\label{eq: holes sepration}
\text{all holes of }(r,s)\,\setminus\bigcup_{k\leq j\leq n}(\tilde A_j\cup T^{q_j}\tilde A_j)\text{ have length at least }\frac{10C}{q_{n+1}},\\
\label{eq: measure control}
Leb(\hat A_n)\geq \frac{2c}{\theta( q_n)}Leb\Big((r,s)\setminus\bigcup_{k\leq j< n}\hat A_j\Big), \qquad
\text{ where }\hat A_j=\tilde A_j\cup T^{q_j}\tilde A_j.
\end{gather}
Moreover, we define this sequence so that the collection $\{ \tilde A_n$, $T^{q_n} \tilde A_n \mid n \geq k\}$ consists of pairwise disjoint subsets of $(r,s)$.

\medskip

\textbf{First step.} Let us define the set $\tilde A_k$. As $ s-r\geq \frac{10C}{q_k}$, using the preliminary arguments at the beginning of the proof, we can define $\tilde A_k$ as
\[\tilde A_k:=\bigcup_{i\in \mathcal I^k_{(r,s)}}J^k_i.\]
By \eqref{eq: intervals_are_disjoint}, the sets $\tilde A_k$ and $T^{q_k}\tilde A_k$ are disjoint and, by \eqref{eq: shiftstays}, $T^{q_k} \tilde A_k$ is a subset of $(r,s)$.
Moreover, \eqref{eq: holes sepration} follows directly from \eqref{eq: ab holes1} and \eqref{eq: ab holes2}, and \eqref{eq: measure control} from \eqref{eq: measure control ab}.

\medskip
\textbf{Inductive step.}
Suppose that the sets $\tilde A_k,\ldots, \tilde A_n$ are already defined, and let us define the set $\tilde A_{n + 1}$. Denote by $\mathcal H_n$ the set of all holes in $(r,s)\setminus\bigcup_{k\leq j\leq n}(\tilde A_j\cup T^{q_j}\tilde A_j)$. By assumption, for any $(a,b)\in \mathcal H_n$ we have $(b-a)\geq \frac{10C}{q_{n+1}}$. Let
\[\tilde A_{n+1}:=\bigcup_{(a,b)\in \mathcal H_n}\bigcup_{i\in \mathcal I^{n+1}_{(a,b)}}J^{n+1}_i.\]
By \eqref{eq: intervals_are_disjoint}, the sets $\tilde A_{n+1}$ and $T^{q_{n+1}}\tilde A_{n+1}$ are disjoint and by \eqref{eq: shiftstays}, we have that $T^{q_{n+1}}(\tilde A_{n+1}\cap (a,b)) $ is a subset of $(a,b)$ for any $(a,b)\in\mathcal{H}_n$.
That is, we have
\[\tilde A_{n+1},\ T^{q_{n+1}}\tilde A_{n+1} \subseteq \bigcup_{(a,b)\in \mathcal H_n}(a,b)=(r,s)\setminus\bigcup_{k\leq j\leq n}(\tilde A_j\cup T^{q_j}\tilde A_j),\]
and hence, we also have the disjointness of $\tilde A_{n+1}$ and $T^{q_{n+1}}\tilde A_{n+1}$ from $\tilde A_{j}$ and $T^{q_{j}}\tilde A_{j}$ for all $k\leq j\leq n$. Moreover, \eqref{eq: holes sepration} follows directly from \eqref{eq: ab holes1} and \eqref{eq: ab holes2}.
In view of \eqref{eq: measure control ab} for any $(a,b)\in \mathcal H_n$ we have
\[Leb\Big(\bigcup_{i\in \mathcal I^{n+1}_{(a,b)}}(J^{n+1}_i\cup T^{q_{n+1}}J^{n+1}_i)\Big)\geq\frac{2c}{\theta( q_{n+1})}(b-a).\]
Summing up along $(a,b)\in \mathcal H_n$, we obtain
\[Leb(\hat A_{n+1})\geq \frac{2c}{\theta( q_{n+1})}\bigcup_{(a,b)\in \mathcal H_n}(b-a)=\frac{2c}{\theta (q_{n+1})}Leb\Big((r,s)\setminus\bigcup_{k\leq j\leq n}\hat A_j\Big),\]
so \eqref{eq: measure control} holds.

\medskip
\textbf{Final arguments.} Using the construction above, we define a collection $\{ \tilde A_n$, $T^{q_n} \tilde A_n \mid n \geq k\}$ of pairwise disjoint subsets of $(r,s)$ satisfying \eqref{eq: holes sepration} and \eqref{eq: measure control}. Let $n \geq k$. Since $\hat A_{n} \subseteq (r,s)$ and is disjoint from $\hat A_{k},\ldots,\hat A_{n-1}$, by \eqref{eq: measure control}, we have
\[
Leb\Big((r,s)\setminus\bigcup_{k\leq j\leq n}\hat A_j\Big)= Leb\Big((r,s)\setminus\bigcup_{k\leq j< n}\hat A_j\Big)-Leb(\hat A_{n})
\leq \left(1-\frac{2c}{\theta( q_{n})}\right)Leb\Big((r,s)\setminus\bigcup_{k\leq j< n}\hat A_j\Big).
\]
It follows that
\[Leb\Big((r,s)\setminus\bigcup_{k\leq j\leq n}\hat A_j\Big)\leq (s-r)\prod_{k\leq j\leq n}\left(1-\frac{2c}{\theta( q_{j})}\right).\]
As $\sum_{n\geq k}\frac{1}{\theta( q_n)}=+\infty$, this gives
\[Leb\Big((r,s)\setminus\bigcup_{n\geq k}\hat A_n\Big)=\lim_{n\to\infty}Leb\Big((r,s)\setminus\bigcup_{k\leq j\leq n}\hat A_j\Big)=0.\]
Therefore,
\[2Leb\Big(\bigcup_{n\geq k}\tilde A_n\Big)=Leb\Big(\bigcup_{n\geq k}\tilde A_n\Big)+Leb\Big(\bigcup_{n\geq k}T^{q_n}\tilde A_n\Big)=Leb\Big(\bigcup_{n\geq k}\hat A_n\Big)=(s-r),\]
which completes the proof.
\end{proof}

\begin{proof}[Proof of Theorem \ref{thm:ergcr}]
In view of \eqref{eq: qn3}, the IET $T$ is ergodic.%

We will show that every number $a\in(v-D_1(4E)^{-1},v+D_1(4E)^{-1})$ is an essential value of $T_f$. Notice that, by Proposition \ref{proposition:schmidt}, this is enough to conclude the ergodicity of $T_f$.
	
	Fix any $a\in(v-D_1(4E)^{-1},v+D_1(4E)^{-1})$ and $0<\epsilon<D_1(4E)^{-1}-|v-a|$. Note that
\[(a-\epsilon,a+\epsilon)\subseteq[v-D_1(4E)^{-1},v+D_1(4E)^{-1}] \subseteq S_{q_n}(f)(\Delta_i^n),\]
for all $n\in\N$ and $0\leq i<q_n$. Let $J_i^n$ be a subinterval of $\Delta_i^n$ such that $S_{q_n}(f)(J_i^n)= [a-\epsilon,a+\epsilon]$, for any $n\in\N$ and $0\leq i<q_n$. By \eqref{eq: derivative of f}, each of these intervals satisfies $\frac{2\epsilon E^{-1}}{q_n\theta( q_n)}\le |J_i^n|\le \frac{2\epsilon E}{q_n\theta( q_n)}$. In particular, the family of intervals $J_i^n$ for $n\in\N$ and $0\leq i<q_n$ satisfies the assumptions of Proposition~\ref{prob: BC}. %
	
	Let $\Omega\subseteq[0,1)$ be any measurable set of positive Lebesgue measure. By the Lebesgue Density Theorem, there exists an interval $(r,s)\subseteq[0,1)$ such that
	\begin{equation}\label{eq: bigOmega}
	Leb(\Omega\cap (r,s))>\frac{9}{10}(s-r).
	\end{equation}
By Proposition \ref{prob: BC}, there exists $k\in\N$ such that for every $n\geq k$ there exists a set $\tilde A_n$ consisting of some intervals of the form $J_i^n$ such that $\big\{ \tilde A_n,\, T^{q_n} (\tilde A_n) \,\big|\, n \geq k\big\}$ is a collection of pairwise disjoint subsets of $(r,s)$ and
	\begin{equation}\label{eq: tildeAn_are_big}
	Leb\Big(\bigcup_{n\geq k}\tilde A_n\Big)=\frac{1}{2}(s-r).
	\end{equation}
	We claim that there exists $n\geq k$ and $0\leq i<q_n$ such that
	\begin{equation}\label{eq: verifyEV}
	Leb\left(\left\{x\in J_i^n \mid x, T^{q_n}x\in \Omega\cap(r,s)\right\}\right)>0.
	\end{equation}
	Before proving this, notice that this implies that $a$ is an essential value. Indeed, since we chose the interval $J_i^n$ so that $S_{q_n}f$ restricted to the interior of $J_i^n$ attains values in $(a-\epsilon,a+\epsilon)$, this would verify the condition required to check that $a$ is an essential value.
	
Therefore, it suffices to show that \eqref{eq: verifyEV} holds for some $n\geq k$ and $0\leq i<q_n$. For the sake of contradiction, let us assume that \eqref{eq: verifyEV} is never satisfied. Then for any $n\geq k$, any $0\leq i<q_n$, and for a.e.\ $x\in J_i^n$ we have $x\notin \Omega\cap(r,s)$ or $T^{q_n}x\notin \Omega\cap(r,s)$, so
\begin{align*}
Leb(J_i^n)&\leq Leb(J^n_i\setminus (\Omega\cap(r,s)))+Leb(J^n_i\setminus T^{-q_n}(\Omega\cap(r,s)))\\
&=Leb(J^n_i\setminus (\Omega\cap(r,s)))+Leb(T^{q_n}J^n_i\setminus (\Omega\cap(r,s))) \\
& =Leb((J^n_i\cup T^{q_n}J^n_i)\setminus (\Omega\cap(r,s))).
\end{align*}
For any $n\geq k$, by summing up over all $0\leq i<q_n$ with $J^n_i \subseteq \tilde{A}_n$, we get
	\[
	Leb((\tilde A_n\cup T^{q_n} \tilde A_n)\setminus (\Omega\cap(r,s)))\geq Leb(\tilde A_n).
	\]
	Then, by summing up over $n\ge k$ and using \eqref{eq: tildeAn_are_big}, we get
	\[
	Leb\left((r,s)\setminus (\Omega\cap (r,s))\right)\geq Leb\Big(\bigcup_{n\geq k}(\tilde A_n\cup T^{q_n} \tilde A_n )\setminus(\Omega\cap(r,s))\Big)
\geq Leb\Big(\bigcup_{n\geq k}\tilde A_n\Big)=\frac{1}{2}(s-r).
	\]
	Hence
	\[
	Leb(\Omega\cap(r,s))\le \frac{1}{2}(s-r),
	\]
	which is a contradiction with \eqref{eq: bigOmega}.
\end{proof}

\section{Antisymmetric skew products over symmetric IETs}

Let $T=(\pi, \lambda):[0, 1) \to [0, 1)$ be a symmetric IET, i.e.\ $\pi_0(\alpha)+\pi_1(\alpha)=d+1$ for any $\alpha\in\mathcal{A}$. Denote by $\I:[0,1]\to[0,1]$ the reflection across the midpoint $m_I:=\tfrac{1}{2}$, i.e., $\I(x)=1-x$. Then $\I \circ T=T^{-1}\circ \I$ and this map acts on every interval $I_\alpha$ as the reflection across the midpoint $m_\alpha$ of the exchanged interval $I_\alpha$, for any $\alpha \in \A$. Note that if the permutation $\pi$ is irreducible and \emph{non-degenerated}, that is, there are no two adjacent intervals translated in parallel by $T$, and the length of exchanged intervals are rationally independent (in fact, it is enough to know that they are different) then the condition $\I \circ T=T^{-1}\circ \I$ yields that $T$ is symmetric.

Let $f:[0,1)\to\R$ be a piecewise continuous function with a finite number of discontinuities. We say that $f$ is \emph{piecewise monotonic} if it is increasing (resp.\ always decreasing) when restricted to the interior of each interval of continuity. The map $f$ is said to be \emph{anti-symmetric with respect to $T$} if it is continuous on the interior of the intervals exchanged by $T$ and satisfies $f\circ T^{-1}\circ \I=-f$, which means that the restriction of $f$ to any of the intervals exchanged by $T$ is anti-symmetric with respect to the central reflection on the exchanged interval. If there is no risk of confusion, we will sometimes omit the dependence on $T$ in the previous definition and simply say that $f$ is anti-symmetric.

\begin{lemma}\label{lem:anti}
Let $T=(\pi, \lambda): I \to I$ be a symmetric IET and $f:I\to\R$ be piecewise monotonic and anti-symmetric. Then
{\begin{equation}\label{eq:antisymSqf}
S_nf(x)=-S_nf(\I\circ T^{n} x), \quad\text{for any }  x\in I\text{ and any }n\in\N.
\end{equation}}

Let $\alpha\in\mathcal{A}\cup\{I\}$ and $0 < \epsilon < \tfrac{|I|}{2}$. Suppose that $\Delta_\alpha=[m_\alpha-\vep,m_\alpha+\vep]$ and $q\in\N$ are such that $\{T^i(\Delta_\alpha) \mid 0\leq i<q\}$ is a Rokhlin tower of intervals included in the exchanged intervals of $T$ and
\[T^qm_\alpha=m_\alpha+\delta, \qquad \text{ with }0 < |\delta|<\vep/3.\]
Then, for every $-q<i<q$ the map $S_qf$ is continuous on $T^i[m_\alpha-2\vep/3,m_\alpha+2\vep/3]$ and there exists $\xi_i\in T^i[m_\alpha-\vep/2,m_\alpha+\vep/2]$ such that
\[ S_qf(\xi_i) = \left\{ \begin{array}{lcl} f(m_\alpha-\delta/2) & \text{ if } & \alpha\in\mathcal{A}, \\
0 & \text{ if } & \alpha = I. \end{array} \right.\]

\end{lemma}

\begin{proof}
{
 By the anti-symmetricity of $f$, for any $x\in[0,1)$ and any $n \in \N$, we have
\begin{align*}\label{eq:antif}
\begin{aligned}
S_nf(x)&=\sum_{0\leq j<n}f(T^jx)=-\sum_{0\leq j< n}f(T^{-1}\circ \I\circ T^jx)\\
&=-\sum_{0\leq j< n}f( T^{n-j-1}\circ T^{-n}\circ \I x)=-S_nf(T^{-n}\circ \I x)=-S_nf(\I\circ T^{n} x),
\end{aligned}
\end{align*}
which gives \eqref{eq:antisymSqf}.}

Let $-q<i<q$. Notice that $T^{\pm q}(m_\alpha) = m_\alpha \pm \delta$ and, more generally, $$T^{\pm q}(x) = x \pm \delta, \qquad \text{for any }x\in T^i[m_\alpha-2\vep/3,m_\alpha+2\vep/3].$$ Hence $T^{\pm q}[m_\alpha-2\vep/3,m_\alpha+2\vep/3] \subseteq \Delta_\alpha$, and the continuity of $S_qf$ on $T^i[m_\alpha-2\vep/3,m_\alpha+2\vep/3]$ follows directly from our assumptions.

Without loss of generality of reasoning, let us assume that $f$ is piecewise increasing and that $0< \delta<\vep/3$. In all other cases, the reasoning goes the same way. We now consider the cases $\alpha \in \A$ and $\alpha = I$ separately.

\textbf{Case 1: $\alpha \in \A$.} %
By the anti-symmetricity of $f$, for any $x\in[0,1)$, we have
\begin{align*}
S_qf(x)&=\sum_{0\leq j<q}f(T^jx)=f(x)-f(T^qx)+\sum_{0< j\leq q}f(T^jx)\\
&=f(x)-f(T^qx)-\sum_{0< j\leq q}f(T^{-1}\circ \I \circ T^jx)\\
&=f(x)-f(T^qx)-\sum_{0< j\leq q}f( T^{-j-1}\circ \I x)\\
&=f(x)-f(T^qx)-S_qf(T^{-q}\circ T^{-1}\circ \I x).
\end{align*}

	As $T^q(m_\alpha-\delta/2)=m_\alpha+\delta/2$ and $T^{-1}\circ \I(m_\alpha-\delta/{2})=m_\alpha+\delta/{2}$, it follows that
\begin{align*}
S_qf(m_\alpha-\tfrac{\delta}{2})&=f(m_\alpha-\tfrac{\delta}{2})-f(T^q(m_\alpha-\tfrac{\delta}{2}))-S_qf(T^{-q}\circ T^{-1}\circ \I(m_\alpha-\tfrac{\delta}{2}))\\
&=f(m_\alpha-\tfrac{\delta}{2})-f(m_\alpha+\tfrac{\delta}{2})-S_qf(T^{-q}(m_\alpha+\tfrac{\delta}{2}))\\
&=2f(m_\alpha-\tfrac{\delta}{2})-S_qf(m_\alpha-\tfrac{\delta}{2}).
\end{align*}
Therefore, $S_qf(m_\alpha-\tfrac{\delta}{2})=f(m_\alpha-\tfrac{\delta}{2})$.
Note that, if $0\leq i\leq q$, for any $x\in [m_\alpha-3\vep/4,m_\alpha+3\vep/4]$ we have
\begin{align*}
S_qf(T^ix)=\sum_{i\leq j<q}f(T^jx)+\sum_{0\leq j<i}f(T^q T^jx)=\sum_{i\leq j<q}f(T^jx)+\sum_{0\leq j<i}f(T^jx+\delta).
\end{align*}
As $f$ is piecewise increasing, this yields
\[S_qf(x)\leq S_qf(T^ix)\leq S_qf(x+\delta).\]
It follows that
\[S_qf(T^i(m_\alpha-\tfrac{\delta}{2}))\geq S_qf(m_\alpha-\tfrac{\delta}{2})=f(m_\alpha-\tfrac{\delta}{2})\]
and
\[S_qf(T^i(m_\alpha-\tfrac{3\delta}{2}))\leq S_qf(m_\alpha-\tfrac{\delta}{2})=f(m_\alpha-\tfrac{\delta}{2}).\]
By continuity, there exists
\[\xi_i\in T^i[m_\alpha-\tfrac{3\delta}{2},m_\alpha-\tfrac{\delta}{2}]\ \text{ such that }\ S_qf(\xi_i)=f(m_\alpha-\tfrac{\delta}{2}).\]
If $-q\leq i\leq 0$, then we take $\xi_i:=\xi_{q+i}$. Then
\[\xi_i=\xi_{q+i}\in T^{q+i}[m_\alpha-\tfrac{3\delta}{2},m_\alpha-\tfrac{\delta}{2}]=T^{i}[m_\alpha-\tfrac{\delta}{2},m_\alpha+\tfrac{\delta}{2}].\]
Finally, we have $\xi_i\in T^{i}[m_\alpha-\vep/2,m_\alpha+\vep/2]$ in both cases. \medskip

\noindent
\textbf{Case 2: $\alpha = I$.}  As $T^{-q}(m_I-\delta/2)=m_I+\delta/2$ and $\I(m_I+\delta/2)=m_I-\delta/2$, {by \eqref{eq:antisymSqf}} it follows that
\begin{align*}
S_qf(m_I-\tfrac{\delta}{2})=-S_qf(m_I-\tfrac{\delta}{2}),\quad \text{so}\quad S_qf(m_I-\tfrac{\delta}{2})=0.
\end{align*}
As $f$ is piecewise increasing, if $0 < i \leq q$, the arguments used in the first case show that
\[S_qf(x)\leq S_qf(T^ix)\leq S_qf(x+\delta), \qquad \text{ for any } x\in [m_I-2\vep/3,m_I+2\vep/3].\]
It follows that
\[S_qf(T^i(m_I-\tfrac{\delta}{2}))\geq S_qf(m_I-\tfrac{\delta}{2})=0\quad \text{ and }\quad S_qf(T^i(m_I-\tfrac{3\delta}{2}))\leq S_qf(m_I-\tfrac{\delta}{2})=0.\]
By continuity, there exists
\[\xi_i\in T^i[m_I-\tfrac{3\delta}{2},m_I-\tfrac{\delta}{2}]\ \text{ such that }\ S_qf(\xi_i)=0.\]
If $-q\leq i\leq 0$, then we take again $\xi_i:=\xi_{q+i}$.
\end{proof}

For any integrable map $f:I\to\R$ and any subinterval $J \subseteq I$ let $m(f,J)$ be the mean value of $f$ on $J$, that is, $m(f,J):=\frac{1}{|J|}\int_J f(x)\,dx$. For any IET $T:I\to I$ denote by $T_J:J\to J$ the induced map and by $f_J:J\to\R$ the induced cocycle, that is, $T_J(x)=T^{r_J(x)}(x)$ and $f_J(x)=S_{r_J(x)}f(x)$, for any $x \in J$, where $r_J:J\to\N$ denotes the first time return map.

Recall that for any interval $J=[a,b]$ and $0\leq \delta<1/2$ we let $J(\delta)=[a+\delta|J|,b-\delta|J|]$.

\begin{lemma}\label{lem:ind}
Let $T= (\pi,\lambda):I\to I$ be a symmetric IET whose interval lengths are rationally independent, and let $f:I\to\R$ be an anti-symmetric integrable map. Then for any $0\leq \delta<1/2$, we have $m(f,I_\alpha(\delta))=0$, for every $\alpha\in\mathcal{A}$.

Let $J \subseteq I$ be a symmetric interval, i.e., $\I(J)=J$, such that the induced IET $T_J$ exchanges the same number of intervals as $T$. Then $T_J$ is a symmetric IET, and $f_J:J\to\R$ is an antisymmetric cocycle with respect to $T_J$. In particular, $m(f_J,J_\alpha(\delta))=0$ for any $\alpha\in\mathcal{A}$ and any $0\leq \delta<1/2$.
\end{lemma}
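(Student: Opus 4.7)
The plan is to split the proof according to the two assertions and then use the first to deduce the ``in particular'' part of the second.

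\textbf{Part 1 (Vanishing of mean values).} Since $T$ is symmetric with rationally independent lengths, the irreducibility/non-degeneracy condition mentioned just before the lemma applies, so the antisymmetry $f\circ T^{-1}\circ\I=-f$ forces $f$ to be antisymmetric with respect to the central reflection of each $I_\alpha$, that is $f(2m_\alpha-x)=-f(x)$ for a.e.\ $x\in I_\alpha$. For any $0\le\delta<1/2$ the slimmed interval $I_\alpha(\delta)$ is symmetric about $m_\alpha$, so the change of variables $y=2m_\alpha-x$ gives
\[
\int_{I_\alpha(\delta)} f(x)\,dx=\int_{I_\alpha(\delta)} f(2m_\alpha-y)\,dy=-\int_{I_\alpha(\delta)} f(y)\,dy,
\]
hence the integral is zero and $m(f,I_\alpha(\delta))=0$.

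\textbf{Part 2 (Symmetry of $T_J$ and antisymmetry of $f_J$).} Write $\I_J$ for the reflection of $J$ about its midpoint; since $J$ is $\I$-invariant, $\I_J=\I|_J$. For $x\in J$ let $r(x)$ denote the first return time to $J$ under $T$, so $T_J(x)=T^{r(x)}x$. The key observation is that $\I$ conjugates forward orbits under $T$ into backward orbits: because $\I\circ T^n=T^{-n}\circ\I$, the sequence of positions $T^j x$ ($0\le j\le r(x)$) is mapped by $\I$ to the backward orbit $T^{-j}(\I x)$. Since $\I(J)=J$, an intermediate point $T^j x$ lies outside $J$ iff $T^{-j}(\I x)$ does; hence the first return time under $T^{-1}$ starting from $\I x$ is exactly $r(x)$. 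This yields $\I\circ T_J=T_J^{-1}\circ \I$ on $J$. Combined with the hypothesis that $T_J$ exchanges the same number of intervals as $T$ (so the induced combinatorics cannot degenerate), this identity is equivalent to the symmetry of the permutation of $T_J$, by the remark preceding the lemma.

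\textbf{Antisymmetry of the induced cocycle.} This is the step that needs the most bookkeeping. Fix $x\in J$, let $n=r(x)$, and set $y:=T_J^{-1}(\I x)$. The argument above (applied to $\I x$ in place of $x$, noting that $r_{T^{-1}}(\I x)=n$) shows that $\I y=T^n x$, so running the orbit of $y$ forwards corresponds to running the orbit of $\I x$ backwards:
\[
f_J(y)=\sum_{j=0}^{n-1} f(T^j y)=\sum_{j=0}^{n-1} f\bigl(T^{j-n}(\I x)\bigr)=\sum_{k=1}^{n} f\bigl(T^{-k}(\I x)\bigr).
\]
The antisymmetry $f\circ T^{-1}\circ\I=-f$ reads $f(T^{-1}(\I z))=-f(z)$; applied with $z=T^{k-1}x$ (so that $T^{-1}(\I z)=\I(Tz)=\I(T^k x)=T^{-k}(\I x)$) this gives $f(T^{-k}(\I x))=-f(T^{k-1}x)$. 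Summing,
\[
f_J\bigl(T_J^{-1}(\I x)\bigr)=f_J(y)=-\sum_{k=1}^{n} f(T^{k-1}x)=-f_J(x),
\]
which is exactly $f_J\circ T_J^{-1}\circ \I=-f_J$ on $J$. Finally, the ``in particular'' statement follows by applying Part 1 to the symmetric IET $T_J$ and the antisymmetric cocycle $f_J$, using that rational independence of the lengths of $T$ is inherited by the induced IET (as each induced length is a $\Z$-linear combination of the original ones, with the same non-degeneracy) so the hypotheses of Part 1 are satisfied for $T_J$.

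The main obstacle is the careful index matching in the computation of $f_J(T_J^{-1}(\I x))$; everything else is a direct consequence of the intertwining $\I\circ T=T^{-1}\circ\I$ and the symmetry of $J$.
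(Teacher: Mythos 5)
Your proof is correct and follows essentially the same route as the paper's: the change of variables via the central reflection for Part~1, the orbit-intertwining argument $\I\circ T^n=T^{-n}\circ\I$ to show that first return times from $x$ and from $\I(T_Jx)$ coincide (giving $\I\circ T_J=T_J^{-1}\circ\I$), and the same index manipulation with the antisymmetry identity $f(T^{-1}\I z)=-f(z)$ to get $f_J\circ T_J^{-1}\circ\I=-f_J$. The only cosmetic difference is that the paper packages the Birkhoff-sum step as the identity $S_qf(x)=-S_qf(\I\circ T^q x)$ proved earlier in Lemma~\ref{lem:anti}, while you redo that telescoping inline with $y=T_J^{-1}(\I x)$ and $\I y=T^{r(x)}x$; the underlying computation is identical. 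One minor imprecision: you say rational independence of $\lambda$ is ``inherited'' by $T_J$ and use that for the ``in particular'' step, but in fact Part~1 never uses rational independence — it only needs symmetry of the IET and antisymmetry of the cocycle (rational independence enters only in Part~2, via the paper's remark that it forces the lengths of the induced intervals to be pairwise distinct, which together with $\I\circ T_J=T_J^{-1}\circ\I$ and non-degeneracy yields symmetry of the induced permutation). Your phrasing ``each induced length is a $\Z$-linear combination of the original ones'' is also stated backwards (it is the original lengths that are $\Z$-linear combinations of the induced ones, via the visit matrix), though the intended conclusion — that the induced length vector differs from $\lambda$ by a $\Q$-invertible integer matrix — is correct and harmless.
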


\begin{proof}
Note that
\begin{align*}
\int_{I_\alpha(\delta)}f(x)\,dx&=\int_{l_\alpha+\delta|I_\alpha|}^{m_\alpha}f(x)\,dx+\int_{m_\alpha}^{r_\alpha-\delta|I_\alpha|}f(x)\,dx\\
&=\int_{l_\alpha+\delta|I_\alpha|}^{m_\alpha}f(x)\,dx-\int_{m_\alpha}^{r_\alpha-\delta|I_\alpha|}f(\I\circ T^{-1}x)\,dx\\
&=\int_{l_\alpha+\delta|I_\alpha|}^{m_\alpha}f(x)\,dx-\int_{l_\alpha+\delta|I_\alpha|}^{m_\alpha}f(x)\,dx=0.
\end{align*}

As $\I\circ T=T^{-1}\circ \I$, we have $\I\circ T^n=T^{-n}\circ \I$ for any $n\in\Z$. Hence, for any $x\in J$, we have
$T^{r_J(x)}\circ \I\circ T_Jx=\I x\in J$. It follows that $r_J(\I(T_Jx))=r_J(x)$ and $\I\circ T_J=T_J^{-1}\circ \I$. By assumption, the permutation of $T_J$ belongs to the Rauzy class of $\pi$, so it is irreducible and non-degenerated.
Moreover, since $\lambda$ is rationally independent, the lengths of the intervals exchanged by $T_J$ are different. Therefore, $T_J$ is also symmetric. Moreover, by \eqref{eq:antisymSqf},
\[f_J(\I(T_Jx))=S_{r_J(\I(T_Jx))}f(\I(T_Jx))=S_{r_J(x)}f(\I(T^{r_J(x)}x))=-S_{r_J(x)}f(x)=-f_J(x).\]
By the first part of the lemma, this gives $m(f_J, J_\alpha(\delta))=0$ for every $\alpha\in\mathcal{A}$.
\end{proof}

Suppose that
\begin{align}\label{prop:theta tau}
\begin{aligned}
\text{$\theta:[1,+\infty)\to\R_{>0}$ is an increasing $C^1$-map such that $\int_{1}^{+\infty}\frac{dx}{x\theta(x)}=+\infty$,} \\
\text{ and the map $\tau:(0,1]\to\R_{>0}$ given by $\tau(s)=\frac{s^2}{\theta'(1/s)}$ is increasing.}\quad
\end{aligned}
\end{align}
Recall that $\Upsilon_\theta(\bigsqcup_{\alpha\in \mathcal{A}}
I_{\alpha})$ is the space of functions $f:[0,1)\to\R$ which are $C^1$ on the interior of the intervals $I_{\alpha}$, $\alpha\in \mathcal{A}$, and
\begin{equation}\label{def:Zf}
Z_\theta(f):=\max_{\alpha\in\mathcal{A}}\left\{\sup_{x\in(l_\alpha,m_\alpha]}|f'(x)\tau(x-l_\alpha)|,\sup_{x\in[m_\alpha,r_\alpha)}|f'(x)\tau(r_\alpha-x)|\right\}<+\infty.
\end{equation}

The following proposition shows how to verify the assumption \eqref{eq: derivative of f} in Theorem~\ref{thm:ergcr} for functions in $\Upsilon_\theta(\bigsqcup_{\alpha\in \mathcal{A}}
I_{\alpha})$ with $\theta$ as in \eqref{prop:theta tau}.

\begin{proposition}\label{prop:controlder}
Let $T:[0,1)\to[0,1)$ be an IET and assume that there exist a sequence $\{q_n\}_{n \in \N} \subseteq \N$ and a sequence of Rokhlin towers $\{\{T^i(\Delta^n) \mid 0\leq i<q_n\}\}_{n\in\N}$ satisfying \eqref{eq: qn3}-\eqref{eq: qn5}. Let $\theta$ and $\tau$ as in \eqref{prop:theta tau}.
Then, for any $f\in \Upsilon_\theta(\bigsqcup_{\alpha\in \mathcal{A}}I_{\alpha})$, there exists $E>1$ such that
\begin{align}\label{eq: Sqleq}
|S_{q_n}(f')(x)|\le Eq_n\theta(E q_n)\qquad \text{ for any }x\in T^i\Delta^n(\tfrac{1}{4})\text{ and any }0\leq i<q_n.
\end{align}
If additionally $f$ is piecewise monotonic and
\begin{equation}\label{def:zf}
z_\theta(f):=\max_{\alpha\in\mathcal{A}}\left\{\inf_{x\in(l_{\alpha},m_{\alpha}]}|f'(x)\tau(x-l_{\alpha})|,
\inf_{x\in[m_{\alpha},r_{\alpha})}|f'(x)\tau(r_{\alpha}-x)|\right\}>0,
\end{equation}
then
\begin{align}\label{eq: Sqgeq}
|S_{q_n}(f')(x)|\ge E^{-1}q_n\theta(E^{-1} q_n)\qquad \text{ for any }x\in T^i\Delta^n(\tfrac{1}{4})\text{ and any }0\leq i<q_n.
\end{align}
\end{proposition}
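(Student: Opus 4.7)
The overall strategy is to exploit the Rokhlin tower structure to turn the Birkhoff sum $S_{q_n}(f')(x)$ into a sum of $1/\tau$ evaluated at the distances from successive orbit points to the nearest singularities of $f'$, and then to evaluate that sum via the identity $1/\tau(s)=\theta'(1/s)/s^2$. For $x\in T^i\Delta^n(\tfrac14)$ and $0\le j<q_n$, the fact from \eqref{eq: qn5} that $T^{q_n}$ acts on $\Delta^n$ as a translation by some $\delta\in(\tfrac{1}{16}|\Delta^n|,\tfrac{1}{8}|\Delta^n|)$ places $T^jx$ inside the $(1/8)$-slimming of a unique level $T^{k(j)}\Delta^n$, where $k(j):=(i+j)\bmod q_n$; by \eqref{eq: qn3,5} this level is contained in a single exchanged interval $I_{\alpha(j)}$. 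Hence
\[
|f'(T^jx)|\;\leq\;\frac{Z_\theta(f)}{\tau\bigl(\min(T^jx-l_{\alpha(j)},\,r_{\alpha(j)}-T^jx)\bigr)}.
\]

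For each $\alpha\in\A$ I enumerate the levels contained in $I_\alpha$ from left to right as the $1,2,\dots,N_\alpha$th. Using $|\Delta^n|\leq C/q_n$ from \eqref{eq: qn3} and the hole-length bound from \eqref{eq: qn4}, counting the $m-1$ preceding levels of width $|\Delta^n|$ and the at most $m$ preceding holes places the $m$th level's left endpoint at distance $p_m\in[(m-1)|\Delta^n|,\,2mC/q_n]$ from $l_\alpha$. Adding the slimming margin $|\Delta^n|/8\geq 1/(8Cq_n)$ gives, for $T^jx$ in the $m$th level from the left,
\[
\frac{m}{8Cq_n}\;\leq\; T^jx-l_\alpha \;\leq\; \frac{3mC}{q_n},
\]
and the symmetric bound from the right. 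Setting $\epsilon=1/(8Cq_n)$, using that $1/\tau$ is decreasing, and applying the identity $\int_a^b ds/\tau(s)=\theta(1/a)-\theta(1/b)$ (obtained from $1/\tau(s)=\theta'(1/s)/s^2$ via the substitution $u=1/s$) yields
\[
\sum_{m=2}^{N_\alpha}\frac{1}{\tau(m\epsilon)}\;\leq\;\frac{1}{\epsilon}\int_{\epsilon}^{N_\alpha\epsilon}\frac{ds}{\tau(s)}\;\leq\;8Cq_n\,\theta(8Cq_n).
\]
The isolated $m=1$ term is handled using that $\tau$ is increasing, equivalently that $u^2\theta'(u)$ is increasing in $u$: integrating the resulting inequality $\theta'(s)\geq(u/s)^2\theta'(u)$ over $[u,2u]$ gives $u\theta'(u)\leq 2\theta(2u)$, hence $1/\tau(\epsilon)=(1/\epsilon)^2\theta'(1/\epsilon)\leq 16Cq_n\theta(16Cq_n)$. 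Summing over both sides, over $\alpha\in\A$, and absorbing $Z_\theta(f)$, $|\A|$ and $C$ into a single constant $E\geq 16C$ yields the upper bound $|S_{q_n}(f')(x)|\leq Eq_n\theta(Eq_n)$.

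For the lower bound, piecewise monotonicity forces $f'$ to have constant sign on the interior of each exchanged interval, so $|S_{q_n}(f')(x)|=\sum_{j=0}^{q_n-1}|f'(T^jx)|$. From $z_\theta(f)>0$ pick $\alpha_0\in\A$ and, up to the symmetric argument from the right, assume $|f'(y)\tau(y-l_{\alpha_0})|\geq z_\theta(f)$ on $(l_{\alpha_0},m_{\alpha_0}]$. The bound $p_m+|\Delta^n|\leq(2m+1)C/q_n$ shows the $m$th level from the left lies fully inside $(l_{\alpha_0},m_{\alpha_0}]$ whenever $m\leq|I_{\alpha_0}|q_n/(5C)$, giving $N'\geq cq_n$ such levels with $c>0$ for all sufficiently large $n$. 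The upper distance estimate combined with $\tau$ increasing and a Riemann-sum comparison from below produce
\[
|S_{q_n}(f')(x)|\;\geq\; z_\theta(f)\sum_{m=1}^{N'}\frac{1}{\tau(3mC/q_n)}\;\geq\;\frac{z_\theta(f)\,q_n}{3C}\bigl[\theta(q_n/(3C))-\theta(1/(3cC))\bigr],
\]
and the bracketed difference exceeds $\theta(q_n/(3C))/2$ for all sufficiently large $n$; enlarging $E$ to absorb $6C/z_\theta(f)$ and to cover the finitely many small-$n$ cases yields $|S_{q_n}(f')(x)|\geq E^{-1}q_n\theta(E^{-1}q_n)$.

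The main technical obstacle is controlling the distances from the Rokhlin levels to the endpoints of the exchanged intervals in the presence of a potentially irregular hole structure, which is resolved by combining the uniform lower bound $|\Delta^n|\geq 1/(Cq_n)$ from \eqref{eq: qn3} with the uniform upper bound $C/q_n$ on hole sizes from \eqref{eq: qn4}. A secondary but crucial subtlety is that the identity $1/\tau(s)=\theta'(1/s)/s^2$ together with the monotonicity of $u^2\theta'(u)$ are precisely what allow the Riemann sum to collapse to a difference of $\theta$-values and, via $u\theta'(u)\leq 2\theta(2u)$, to tame the otherwise dangerous boundary term $1/\tau(1/q_n)$; the rescaling constants appearing inside $\theta$ after this substitution are exactly what force the conclusion to be stated with $\theta(E^{\pm 1}q_n)$ rather than $\theta(q_n)$.
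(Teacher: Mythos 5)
Your proof is correct and follows essentially the same strategy as the paper's: use conditions \eqref{eq: qn3}--\eqref{eq: qn5} to show that the orbit points $\{T^jx\}_{0\le j<q_n}$ together with the endpoints of exchanged intervals are roughly $1/q_n$-equispaced, convert the resulting sum of $1/\tau$-terms into an integral, and evaluate that integral via $1/\tau(s)=-\tfrac{d}{ds}\theta(1/s)$. The only cosmetic difference is that the paper absorbs the troublesome boundary term into the Riemann-sum comparison by shifting by a half-step (starting the integral at $1/(2\bar{C}q_n)$), while you split off the nearest level separately and tame it with the derived inequality $u\theta'(u)\le 2\theta(2u)$ — both devices exploit the same monotonicity of $u^2\theta'(u)$ and lead to the same $\theta(E^{\pm1}q_n)$ conclusion.
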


\begin{proof}
By \eqref{eq: qn3} and \eqref{eq: qn4}, there exists $\bar{C}>1$ such that for any $x\in T^i\Delta^n(\tfrac{1}{4})$ with $0\leq i<q_n$, we have that
the distance between two consecutive elements of the set
\[\{T^jx:0\leq j<q_n\}\cup End(T),\]
where $End(T)$ denotes the set of endpoints of the exchanged intervals of $T$, is always between $\bar{C}^{-1}/q_n$ and $\bar{C}/q_n$. As $1/\tau$ is decreasing, this observation together with \eqref{prop:theta tau} yield
\[|S_{q_n}f'(x)|\leq \sum_{0\leq j<q_n}|f'(T^jx)|\leq 2dZ_\theta(f)\sum_{1\leq j\leq q_n}\frac{1}{\tau(\frac{1}{\bar{C}q_n}j)}\leq 4dZ_\theta(f)\bar{C}q_n\int_{\frac{1}{2\bar{C}q_n}}^1\frac{ds}{\tau(s)}.\]
Moreover, by the definition of $\tau$,
\[\int_{\frac{1}{2\bar{C}q_n}}^1\frac{ds}{\tau(s)}=\int_{\frac{1}{2\bar{C}q_n}}^1\frac{\theta'(1/s)ds}{s^2}=
-\int_{\frac{1}{2\bar{C}q_n}}^1\frac{d}{ds}\theta(1/s)ds=\theta(2\bar{C}q_n)-\theta(1)\leq \theta(2\bar{C}q_n),\]
which gives \eqref{eq: Sqleq}.

Suppose now that $f$ is piecewise monotonic and that $z_\theta(f)$ given by \eqref{def:zf} is positive. Let us assume without loss of generality that $f$ is increasing and that, for some $\alpha_0 \in \A$, $z_\theta(f)=\inf_{x\in(l_{\alpha_0},m_{\alpha_0}]}f'(x)\tau(x-l_{\alpha_0})>0$. In all other cases, the arguments go in the same way.

Then, for any $x\in T^i\Delta^n(\tfrac{1}{4})$ with $0\leq i<q_n$, we have
\begin{align*}
|S_{q_n}f'(x)|&=\sum_{0\leq j<q_n}f'(T^jx)\geq \sum_{\substack{0\leq j<q_n\\T^jx\in(l_{\alpha_0},m_{\alpha_0}]}}f'(T^jx)
\geq z_\theta(f)\sum_{1\leq j\leq \frac{\lambda_{\alpha_0}}{2\bar{C}}q_n}\frac{1}{\tau(\frac{\bar{C}}{q_n}j)}\\
&\geq \frac{z_\theta(f)}{\bar{C}}q_n\int_{\frac{\bar{C}}{q_n}}^{{\lambda_{\alpha_0}}}\frac{ds}{\tau(s)}=
\frac{z_\theta(f)}{\bar{C}}q_n\left(\theta(\tfrac{q_n}{\bar{C}})-\theta(\tfrac{1}{\lambda_{\alpha_0}})\right),
\end{align*}
which gives \eqref{eq: Sqgeq}.
\end{proof}

We say that an increasing function $\theta:[x_0,+\infty)\to\R_{>0}$, with $x_0 > 0$, is \emph{slowly varying} if for any $\eta>0$ there exists $C_\eta>1$ such that
\[C^{-1}_\eta\leq\frac{\theta(\eta x)}{\theta(x)}\leq C_\eta, \qquad\text{for any } x\geq x_0.\]

Notice that if $\theta:[1,+\infty)\to\R_{>0}$ in Proposition \ref{prop:controlder} is slowly varying, then \eqref{eq: Sqleq} and \eqref{eq: Sqgeq} imply \eqref{eq: derivative of f}.

\begin{proof}[Proof of Theorem~\ref{thm:anti}]
By Theorem~\ref{prop:full}, for a.e.\ $T$ there exists an increasing sequence $\{q_n\}_{n\in\N}$ of natural numbers and a sequence $\{\{T^i(\Delta^n) \mid 0\leq i<q_n\}\}_{n\in\N}$ of Rokhlin towers satisfying \eqref{eq: qn1}--\eqref{eq: qn5}. Moreover, it follows from the proof of Theorem \ref{prop:full} that we can take $\Delta^n=I_{\pi_0^{-1}(d)}^{k_n}$ and $q_n=q^{k_n}_{\pi_0^{-1}(d)}$ for some increasing sequence $\{k_n\}_{n\in\N}$.
In view of \cite[Lemma~3.12]{Be-Tr-Ul}, there exists $\alpha\in\mathcal{A}\cup\{I\}$ and $0\leq s_n< q_n$ such that $T^{-s_n}m_{\alpha}=m_n$ is the center of $I_{\pi_0^{-1}(d)}^{k_n}$.

As $T^{q_n}m_n=m_n+\delta_n$, for some $|\Delta^n|/16<|\delta_n|<|\Delta^n|/8$, the interval {$[m_\alpha-\vep_n,m_\alpha+\vep_n]=T^{s_n}[m_m-\vep_n,m_n+\vep_n]$} with $\vep_n=\tfrac{3}{8}|\Delta^n|$ satisfies the assumptions of Lemma~\ref{lem:anti}.
Hence for any $0\leq i<q_n$ there exists
{\[\xi^n_i\in T^{i-s_n}[m_\alpha-\vep_n,m_\alpha+\vep_n]=T^i[m_n-\vep_n/2,m_n+\vep_n/2]=T^i\Delta^n(\tfrac{5}{16})\]} such that
\[ S_{q_n}f(\xi^n_i) = v_n := \left\{ \begin{array}{lcl} f(m_\alpha-\delta_n/2) & \text{ if } & \alpha\in\mathcal{A}, \\
0 & \text{ if } & \alpha = I. \end{array} \right.\]
By the anti-symmetricity of $f$, we have, $v_n\to f(m_\alpha) = 0$ as $n\to\infty$.

As $f\in \Upsilon_\theta(\bigsqcup_{\alpha\in \mathcal{A}}I_{\alpha})$ and $\theta$ is slowly varying, by Proposition~\ref{prop:controlder}, there exists $E>1$ such that
\[E^{-1}q_n\theta(q_n)\leq|S_{q_n}(f')(x)|\leq Eq_n\theta(q_n)\text{ for any }x\in T^i\Delta^n(\tfrac{1}{4})\text{ and }0\leq i<q_n.
\]

 Notice that $[\xi^n_i-|\Delta^n|/16,\xi^n_i+|\Delta^n|/16]\subseteq \Delta^n(\tfrac{1}{4})$. Let $0<D<C^{-1}/8$, where $C$ is the constant appearing in \eqref{eq: qn1}--\eqref{eq: qn5} and let $\Delta^n_i$ be the interval centred at $\xi^n_i$ such that $|\Delta^n_i|=\frac{D}{q_n\theta(q_n)}<|\Delta_n|/8$.
Then $\Delta^n_i\subseteq \Delta^n(\tfrac{1}{4})$ and $|S_{q_n}f(\xi^n_i)|=v_n\leq \frac{D}{4E}$, for all $n$ large enough. In view of Theorem~\ref{thm:ergcr}, this gives the ergodicity of $T_f$.
\end{proof}

\section{Applications to skew product with logarithmic singularities}\label{sec:appskew}

Given an IET $T: I \to I$ with exchanged intervals $\{I_\alpha\}_{\alpha \in \A}$, we denote by $\mathrm{LG}(\sqcup_{\alpha\in \mathcal{A}} I_{\alpha})$ the set of functions (or cocycles) $\varphi: I\to \R$ with \emph{logarithmic singularities of geometric type} associated with $T$, that is, the set of functions for which there exist
constants $C_\alpha^\pm=C_\alpha^\pm(\varphi) %
\in \mathbb{R}$, for every $\alpha \in \mathcal{A}$, and a function $g_\varphi: I\to\R$ absolutely continuous on the interior of the exchanged intervals, such that
\begin{equation}\label{def:logsing}
\begin{aligned}
\varphi(x)=&-\sum_{\alpha\in\mathcal{A}}C^+_\alpha\log\Big(|I|\Big\{\frac{x-l_\alpha}{|I|}\Big\}\Big)
- \sum_{\alpha\in\mathcal{A}} C^-_\alpha\log\Big(|I|\Big\{\frac{r_\alpha-x}{|I|}\Big\}\Big)+g_\varphi(x),
\end{aligned}
\end{equation}
where $\{ \cdot \}$ denotes the fractional part, and
 \[ C_{\pi_0^{-1}(d)}^{-} \cdot C_{\pi_1^{-1}(d)}^{-}=0 \quad\text{and}\quad C_{\pi_0^{-1}(1)}^{+}\cdot C_{\pi_1^{-1}(1)}^{+}=0.\]
For every $\varphi\in \mathrm{LG}(\sqcup_{\alpha\in \mathcal{A}}
I_{\alpha})$ we set
\[\mathcal{L}(\varphi) := \sum_{\alpha\in\mathcal{A}}(|C^+_\alpha|+|C^-_\alpha|)\quad\text{and}\quad
\mathcal{LV}(\varphi):=\mathcal{L}(\varphi)+\operatorname{Var} g_\varphi.\]
For any $\varphi \in\mathrm{LG}(\sqcup_{\alpha\in \mathcal{A}} I_{\alpha})$ and $\mathcal{O}\in\Sigma(\pi)$ let
\[
\Delta_\mathcal{O}(\varphi):=\sum_{\alpha\in\mathcal{A}_\mathcal{O}^-}C^-_{\alpha}-
\sum_{\alpha\in\mathcal{A}_\mathcal{O}^+}C^+_{\alpha}\quad\text{and let}\quad
\mathcal{AS}(\varphi):=\sum_{\mathcal{O}\in\Sigma(\pi)}|\Delta_\mathcal{O}(\varphi)|.\]
If $T = (\pi, \lambda)$ comes from a locally Hamiltonian flow $\psi_\R$ on a surface $M$ and $\varphi=\varphi_f\in \mathrm{LG}(\sqcup_{\alpha\in \mathcal{A}} I_{\alpha})$ for some smooth $f:M\to \R$ (cf.\ \eqref{def:phif}), then $\Delta_\mathcal{O}(\varphi)$ measures the asymmetry of the Birkhoff integrals of $f$ when the orbits pass close to the saddle associated with $\mathcal{O}\in\Sigma(\pi)$.

Recall that the genus $g = g(T)$ of any suspension $(\pi, \lambda, \tau)$ is determined by $\pi$ (see \eqref{eq:genus}). In view of Theorem~6.1 in \cite{Fr-Ul2} and the definition of distributions $(d_i)_{1 \leq i \leq g}$ in \cite[Section~7.2.1]{Fr-Ul2} (recall that $D_i(f)=d_i(\varphi_f)$, for $1\leq i\leq {g}$), we have the following.
\begin{theorem}\label{operatorcorrection}
For any IET $T : I \to I$ satisfying $\mathrm{UDC}$ there exist $g$ bounded linear operators $d_i : \mathrm{LG}(\sqcup_{\alpha\in \mathcal{A}} I_{\alpha}) \to \R$, $1 \leq i \leq g$, such that, for every $\varphi\in\mathrm{LG}(\sqcup_{\alpha\in \mathcal{A}} I_{\alpha})$ satisfying $d_i(\varphi)=0$, for all $1\leq i\leq g$, there exists a sequence of renormalizing intervals $\{I^{(k)}\}_{k\geq 0}$ such that%
\begin{equation}\label{eq:thmcorr1}
\limsup_{k\to+\infty}\frac{\log\left(\frac{1}{|I^{(k)}|}\|S(k)\, {\varphi}\|_{L^1(I^{(k)})}\right)}{k}\leq 0.
\end{equation}
Furthermore, if additionally $T$ satisfies $\mathrm{SUDC}$ and $\mathcal{AS}(\varphi)=0$, then the sequence
\begin{equation}\label{eq:thmcorr2}
\left\{\frac{1}{|I^{(k)}|}\|S(k)\, {\varphi}\|_{L^1(I^{(k)})}\right\}_{k\geq 0}\quad\text{is bounded.}
\end{equation}
\end{theorem}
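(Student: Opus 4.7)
The plan is to deduce Theorem \ref{operatorcorrection} directly from the renormalization machinery of \cite{Fr-Ul2}, by identifying the operators $d_i$ with the projections onto the $g$ expanding directions of the Kontsevich-Zorich cocycle. The first step is to describe how $S(k)\varphi$ behaves under the Zorich induction. By Marmi-Moussa-Yoccoz type estimates, for any $\varphi \in \mathrm{LG}(\sqcup_\alpha I_\alpha)$ the Birkhoff sum $S(k)\varphi$ restricted to the $k$-th renormalization interval $I^{(k)}$ is, up to an $L^1$-error of size $\mathrm{Var}\,g_\varphi \cdot |I^{(k)}|$, essentially a piecewise affine cocycle on $I^{(k)}$ whose slopes and jumps are linearly controlled by $\mathcal{L}(\varphi)$. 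Rescaling by $|I^{(k)}|$ and expressing the result in the basis $\{\chi_{I^{(k)}_\alpha}\}_{\alpha \in \A}$ produces a vector in $\R^{\A}$ on which the Zorich cocycle $Z(k)$ acts; this is the setup of \cite[Theorem~6.1]{Fr-Ul2}.

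Second, under the UDC one has enough hyperbolic control on the accelerated cocycle $Z(k)$ so that the Oseledets splitting is well-defined along the orbit of $T$, with a $g$-dimensional unstable subspace $E^u$ corresponding to the positive Lyapunov exponents $1=\nu_1>\nu_2>\cdots>\nu_g>0$. The operators $d_i$ are then defined as the bounded linear functionals on $\mathrm{LG}$ obtained by composing $\varphi \mapsto (\text{rescaled}\ S(k)\varphi)$ with the projection onto the $i$-th Oseledets direction, exactly as in \cite[Section~7.2.1]{Fr-Ul2}. Boundedness of $d_i$ with respect to $\mathcal{LV}$ follows from the log-integrability \eqref{eq: KZintegrabilty} of the KZ-cocycle together with the explicit form of the Rauzy-Veech induction on slopes and jumps of piecewise affine approximations. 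When $d_i(\varphi) = 0$ for all $1 \leq i \leq g$, all expanding contributions to the decomposition of $S(k)\varphi$ vanish, and the remaining stable/central part grows subexponentially in $k$, yielding \eqref{eq:thmcorr1}.

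For the boundedness statement \eqref{eq:thmcorr2}, the extra hypotheses SUDC and $\mathcal{AS}(\varphi)=0$ are used to control the central (zero-exponent) part of the KZ-cocycle. The quantities $\Delta_\mathcal{O}(\varphi)$ measure exactly the asymmetric boundary contribution of $\varphi$ at each singular orbit $\mathcal{O} \in \Sigma(\pi)$, and under induction this asymmetry contributes a drift along $\ker\Omega_\pi$; the SUDC combined with $\mathcal{AS}(\varphi)=0$ guarantees that this drift averages out uniformly, upgrading the subpolynomial bound to a genuine uniform bound. The main obstacle is to verify that the functionals $d_i$ from \cite[Section~7.2.1]{Fr-Ul2}, originally formulated for cocycles $\varphi = \varphi_f$ arising from smooth observables on locally Hamiltonian flows, extend as bounded linear operators to the whole abstract space $\mathrm{LG}(\sqcup_\alpha I_\alpha)$. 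This amounts to checking that the endpoint constraints $C_{\pi_0^{-1}(1)}^+\cdot C_{\pi_1^{-1}(1)}^+=0$ and $C_{\pi_0^{-1}(d)}^-\cdot C_{\pi_1^{-1}(d)}^-=0$ in the definition of $\mathrm{LG}$ are precisely what keep the logarithmic singularities well-controlled under Rauzy-Veech induction, so that the distributions defined geometrically on the surface side coincide with those defined combinatorially on $\mathrm{LG}$.
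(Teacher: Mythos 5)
Your proposal takes essentially the same approach as the paper: the paper itself offers no independent proof of this theorem but simply states that it follows from Theorem~6.1 and Section~7.2.1 of \cite{Fr-Ul2} (with $d_i$ being the coordinate functionals of the correction operator $\mathfrak{h}$ in the Oseledets basis), and your sketch traces exactly that route, describing the Marmi--Moussa--Yoccoz renormalization estimates, the Oseledets decomposition under UDC, and the use of SUDC with $\mathcal{AS}(\varphi)=0$ to control the central part. The minor worry you raise at the end about extending the functionals from the geometric to the combinatorial side is already handled inside \cite{Fr-Ul2}, where the correction operator is constructed directly on the abstract space $\mathrm{LG}(\sqcup_\alpha I_\alpha)$.
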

Recall that UDC and SUDC are the two Diophantine conditions already mentioned in Section~\ref{sc:Diophantine}. They were introduced in \cite{Fr-Ul2}, where it was proven that almost every IET satisfies both conditions.

In the following, for any IET $T$ satisfying UDC and for any $\varphi \in \mathrm{LG}(\sqcup_{\alpha\in \mathcal{A}} I_{\alpha})$ we denote
\[\overline{d}(\varphi):= (d_i(\varphi))_{i = 1}^g \in \R^g,\]
where $d_i $, for $1 \leq i \leq g$, are the linear operators given by Theorem~\ref{operatorcorrection}.

\begin{remark}\label{rmk:defdi}
Suppose that $T$ satisfies UDC.
By the definition of the correction operator $\mathfrak{h}:\mathrm{LG}(\sqcup_{\alpha\in \mathcal{A}} I_{\alpha})\to F \subseteq \R^{d}$, $\dim F=g$ (see the proof of Theorem~6.1 in \cite{Fr-Ul2}) and Lemma~6.8 in \cite{Fr-Ul2},
for any $\varphi\in \mathrm{LG}(\sqcup_{\alpha\in \mathcal{A}} I_{\alpha})$, $\mathfrak{h}(\varphi)$ is the unique vector (piecewise constant function) in $F$ such that
\[\lim_{k\to\infty}\frac{\log\frac{1}{|I^{(k)}|}\|S(k)(\varphi-\mathfrak{h}(\varphi))\|_{L^1(I^{(k)})}}{k}\leq 0.\]
Thus \eqref{eq:thmcorr1} implies $\mathfrak{h}(\varphi)=0$. Moreover, $\mathfrak{h}(\varphi)=\sum_{i=1}^gd_i(\varphi)h_i$, where $h_1,\ldots, h_g$ is a basis of $F$, see Section~7.2.1 in \cite{Fr-Ul2}. Hence, $\overline{d}(\varphi) = 0$. Therefore, \eqref{eq:thmcorr1} is equivalent to $\overline{d}(\varphi) = 0$.
\end{remark}
Moreover, we have the following criterion for ergodicity.
\begin{theorem}[Theorem~8.1 in \cite{Fr-Ul2}]\label{thm:erg}
For any IET $T : I \to I$ satisfying $\mathrm{SUDC}$ and any cocycle $\varphi\in \mathrm{LG}(\sqcup_{\alpha\in \mathcal{A}} I_{\alpha})$ satisfying
\[
\mathcal{L}(\varphi)>0,\qquad \mathcal{AS}(\varphi)=0,\qquad \overline{d}(\varphi) = 0, \qquad g'_{\varphi}\in \mathrm{LG}(\sqcup_{\alpha\in \mathcal{A}} I_{\alpha}),\]
the skew product $T_{\varphi}$ on $I\times \R$ is ergodic.
\end{theorem}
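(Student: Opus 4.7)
The plan is to deduce Theorem~\ref{thm:erg} from the ergodicity criterion of Theorem~\ref{thm:ergcr} applied with $\theta(x) = \log x$. By Proposition~\ref{prop:DC}, the SUDC assumption yields a sequence of Rokhlin towers $\{T^i\Delta^n : 0 \leq i < q_n\}_{n\in\N}$ satisfying \eqref{eq: qn1}--\eqref{eq: qn5}, with base $\Delta^n = I^{(k_n)}_{\pi_0^{-1}(d)}$ arising from Zorich induction and with the UDC/SUDC properties realized along the times $k_n$. The divergence condition \eqref{eq: divsum_theta}, which in this setting reads $\sum 1/\log q_n = +\infty$, is immediate from the subexponential growth \eqref{eq: qn1} of $q_n$.

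For the derivative bound \eqref{eq: derivative of f}, I would observe that the assumption $g'_\varphi\in\mathrm{LG}(\sqcup_\alpha I_\alpha)$ together with the explicit form \eqref{def:logsing} of logarithmic singularities gives $\varphi\in\Upsilon_{\log}(\sqcup_\alpha I_\alpha)$, while $\mathcal{L}(\varphi)>0$ forces at least one coefficient $C^\pm_\alpha$ to be non-zero and hence $z_{\log}(\varphi)>0$. Proposition~\ref{prop:controlder} then supplies a constant $E>1$ with $E^{-1}q_n\log q_n \leq |S_{q_n}\varphi'(x)| \leq E q_n \log q_n$ uniformly for $x\in T^i\Delta^n(\tfrac{1}{4})$, which automatically gives strict monotonicity of $S_{q_n}\varphi$ on each inner level.

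The heart of the argument is verifying \eqref{eq: 0 of f}, i.e.\ locating for each $n$ and each $0\leq i<q_n$ an interval $\Delta^n_i\subseteq T^i\Delta^n(\tfrac{1}{4})$ of the prescribed size $\asymp 1/(q_n\log q_n)$ whose midpoint $\xi^n_i$ satisfies $|S_{q_n}\varphi(\xi^n_i)-v|\leq D_1/(4E)$ for a \emph{single} constant $v\in\R$ independent of $n,i$. This is where the hypotheses $\overline{d}(\varphi)=0$ and $\mathcal{AS}(\varphi)=0$ intervene: Theorem~\ref{operatorcorrection} yields that the averages $|I^{(k_n)}|^{-1}\|S(k_n)\varphi\|_{L^1(I^{(k_n)})}$ are uniformly bounded, and since $\Delta^n=I^{(k_n)}_{\pi_0^{-1}(d)}$ occupies a definite fraction of $I^{(k_n)}$ (by the choice of $\mathcal D_1$ in the proof of Proposition~\ref{prop:full}), this transfers to a uniform bound on $|\Delta^n|^{-1}\|S_{q_n}\varphi\|_{L^1(\Delta^n)}$. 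Chebyshev's inequality combined with the two-sided derivative bound and the intermediate value theorem then produces a point $\xi^n_0 \in \Delta^n(\tfrac{1}{4})$ with $S_{q_n}\varphi(\xi^n_0)=v$ for a bounded value $v=v_n$, and by compactness one may pass to a subsequence along which $v_n$ converges. To propagate to higher levels one writes the cocycle identity $S_{q_n}\varphi(T^i x)=S_{q_n}\varphi(x)+S_i\varphi(T^{q_n}x)-S_i\varphi(x)$ and exploits the fact that $T^{q_n}|_{\Delta^n}$ is a translation of size less than $|\Delta^n|/8$.

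The main obstacle is precisely this last propagation step: ensuring that the correction $S_i\varphi(T^{q_n}x)-S_i\varphi(x)$ remains uniformly small as $i$ ranges over $0\leq i<q_n$, despite the fact that the individual Birkhoff sums $S_i\varphi$ blow up near the logarithmic singularities of $\varphi$. This is a Denjoy--Koksma-type problem whose resolution relies on the quantitative control of orbit segments near singularities provided by the SUDC condition (as encoded, for example, in the Diophantine estimates of \cite{Fr-Ul2} used in Section~\ref{sc:Diophantine}); this is exactly the technical hurdle that the symmetric anti-symmetric decomposition avoided in Theorem~\ref{thm:anti} via the explicit cancellation of Lemma~\ref{lem:anti}. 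Once this uniform control is achieved, $\xi^n_i$ can be defined, the hypotheses of Theorem~\ref{thm:ergcr} are met, and ergodicity of $T_\varphi$ follows at once.
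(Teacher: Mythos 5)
The paper does not reprove this statement: it is Theorem~8.1 of \cite{Fr-Ul2}, cited as a black box (the argument there is built on partial rigidity sets, not on the Borel--Cantelli criterion of Theorem~\ref{thm:ergcr}). Your attempt to derive it from Theorem~\ref{thm:ergcr} is therefore a genuinely different route, but it has a fatal gap at the step where you invoke Proposition~\ref{prop:controlder}.

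The lower bound $|S_{q_n}(f')(x)|\geq E^{-1}q_n\theta(E^{-1}q_n)$ in Proposition~\ref{prop:controlder} holds \emph{only} under the additional hypothesis that $f$ is piecewise monotonic (and $z_\theta(f)>0$); the proof there explicitly assumes all summands $f'(T^jx)$ have the same sign. A general $\varphi\in\mathrm{LG}(\sqcup_\alpha I_\alpha)$ with $\mathcal{AS}(\varphi)=0$ is not piecewise monotonic, so you cannot apply that bound. Worse, the bound does not merely lack justification here --- it actually \emph{fails} in the symmetric regime. The condition $\mathcal{AS}(\varphi)=0$ forces the logarithmic coefficients on the two sides of each marked point to balance, so near such a point $p$ one has $\varphi'(x)\sim -C/(x-p)$ with the \emph{same} $C$ on both sides; hence $\varphi'$ changes sign across $p$. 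By \eqref{eq: qn4} the orbit $\{T^jx\}_{0\le j<q_n}$ approaches $p$ from both sides at comparable distances $\asymp k/q_n$, so the dominant terms in $S_{q_n}\varphi'(x)$ pair off with opposite signs, and the telescoped sum is only $O(q_n)$ rather than $\asymp q_n\log q_n$. Consequently $S_{q_n}\varphi$ does not fill a fixed neighbourhood of a constant $v$ at the rate required by hypotheses \eqref{eq: 0 of f} and \eqref{eq: derivative of f} of Theorem~\ref{thm:ergcr}, and the Borel--Cantelli machine cannot be started. This cancellation is precisely the structural reason why the symmetric case is delegated to \cite{Fr-Ul2}, while the present paper's Theorem~\ref{thm:anti} is reserved for antisymmetric piecewise-monotonic cocycles, where no such cancellation occurs.

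Two smaller points. First, the implication ``$\mathcal{L}(\varphi)>0$ $\Rightarrow$ $z_{\log}(\varphi)>0$'' is not automatic: $z_{\log}$ is an infimum of $|\varphi'(x)(x-l_\alpha)|$ over an entire half-interval, and $\varphi'$ can vanish at an interior point even when $C^\pm_\alpha\neq 0$. Second, the ``propagation to higher levels'' step that you flag as the main obstacle is in fact manageable (a version of it appears in the proof of Theorem~\ref{thm:ergasym}, via the bound \eqref{eq:derg0} coming from Proposition~5.6 of \cite{Fr-Ul2}); the genuine obstruction is the two-sided derivative estimate discussed above.
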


\begin{proposition}\label{prop:van}
For a.e. symmetric IET $T = (\pi,\lambda)$ and any $\varphi\in\mathrm{LG}(\sqcup_{\alpha\in \mathcal{A}} I_{\alpha})$ anti-symmetric we have $\overline{d}(\varphi) = 0$.
\end{proposition}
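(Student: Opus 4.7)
The plan is to apply the characterization of $\overline{d}(\varphi)$ given in Remark~\ref{rmk:defdi}: by the uniqueness of $\mathfrak{h}(\varphi)$ as the vector in $F$ witnessing the subexponential $L^1$-decay of $S(k)(\varphi-\mathfrak{h}(\varphi))$, it suffices to exhibit a renormalization sequence $\{I^{(k_n)}\}$ along which $\tfrac{1}{|I^{(k_n)}|}\|S(k_n)\varphi\|_{L^1(I^{(k_n)})}$ grows at most subexponentially in $k_n$. I will take the sequence supplied by Proposition~\ref{prop:DC} (together with Remark~\ref{rem: sym_midpoints}) and in fact establish a bound that is polynomial in $k_n$.

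Fix such a sequence $\{k_n\}$, with associated Rokhlin tower $\{T^i\Delta^{(k_n)}:0\leq i<q_{k_n}\}$ based at $\Delta^{(k_n)}=I^{(k_n)}_{\pi_0^{-1}(d)}$ and satisfying \eqref{eq: qn1}--\eqref{eq: qn5}. By Remark~\ref{rem: sym_midpoints}, some forward iterate $T^{r_n}$ sends the midpoint of $\Delta^{(k_n)}$ to $m_{\alpha_n}$ for some $\alpha_n\in\A\cup\{I\}$, through continuity intervals of $T$, so translating the tower by $T^{r_n}$ places us in the hypotheses of Lemma~\ref{lem:anti}. Applied to the anti-symmetric cocycle $\varphi$, that lemma yields, for each $0\leq i<q_{k_n}$, a point $\xi^n_i\in T^i\Delta^{(k_n)}(\tfrac{5}{16})$ with $|S_{q_{k_n}}\varphi(\xi^n_i)|=v_n$ where $v_n\to 0$ (by anti-symmetry, which forces $\varphi(m_{\alpha_n})=0$, combined with continuity of $\varphi$ on the interior of each $I_\alpha$). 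Since $\varphi\in \mathrm{LG}\subseteq\Upsilon_{\log}$, Proposition~\ref{prop:controlder} applied with $\theta=\log$ gives the derivative bound $|S_{q_{k_n}}\varphi'(x)|\lesssim q_{k_n}\log q_{k_n}$ on $T^i\Delta^{(k_n)}(\tfrac{1}{4})$. Combined with $|\Delta^{(k_n)}|q_{k_n}\lesssim 1$ from \eqref{eq: qn3}, this yields the uniform pointwise estimate $|S_{q_{k_n}}\varphi(x)|\lesssim \log q_{k_n}$ on $T^i\Delta^{(k_n)}(\tfrac{5}{16})$ for every $0\leq i<q_{k_n}$.

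To convert this pointwise bound into an $L^1$-estimate on $I^{(k_n)}=\bigsqcup_\alpha I^{(k_n)}_\alpha$, I exploit the dominance of $\Delta^{(k_n)}$: by the choice of $\mathcal{D}_1$ in Proposition~\ref{prop:full} we have $|\Delta^{(k_n)}|\geq\tfrac{7}{8}|I^{(k_n)}|$, so the pointwise estimate integrated over the central portion $\Delta^{(k_n)}(\tfrac{5}{16})$ contributes $O(|I^{(k_n)}|\log q_{k_n})$. For the boundary of $\Delta^{(k_n)}$ and the smaller intervals $I^{(k_n)}_\alpha$ with $\alpha\neq\pi_0^{-1}(d)$, the induced cocycle $S(k_n)\varphi$ lies in $\mathrm{LG}(\sqcup_\alpha I^{(k_n)}_\alpha)$ with logarithmic constants $C^{\pm,k_n}_\alpha$ that coincide with individual $C^\pm_\gamma(\varphi)$ of the original cocycle (since each $I^{(k_n)}_\alpha$ is a continuity interval of $T^{q^{(k_n)}_\alpha}$, so each endpoint is a preimage of a single discontinuity of $T$); these are uniformly bounded in $n$, so the log parts contribute $O(|I^{(k_n)}_\alpha|\log(1/|I^{(k_n)}_\alpha|))$, and the absolutely continuous part is controlled by combining the anti-symmetric pointwise bound at $\xi^n_i$ with the bounded variation of $g_\varphi$. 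Summing and using $\log(1/|I^{(k_n)}|)\lesssim k_n$ from \eqref{eq: qn1} gives $\tfrac{1}{|I^{(k_n)}|}\|S(k_n)\varphi\|_{L^1(I^{(k_n)})}=O(k_n)$, which is subexponential; by Remark~\ref{rmk:defdi} this forces $\mathfrak{h}(\varphi)=0$, i.e. $\overline{d}(\varphi)=0$. The main technical obstacle is the matching between the Rokhlin tower (on which Lemma~\ref{lem:anti} controls $S_{q_{k_n}}\varphi$ at the single return time $q_{k_n}$) and the Rauzy-Veech partition (for which the relevant Birkhoff sums are $S_{q^{(k_n)}_\alpha}\varphi$ with distinct return times); this is resolved by the dominance of $\Delta^{(k_n)}$ in $I^{(k_n)}$ together with the uniform $\mathrm{LG}$-bound on the remainder.
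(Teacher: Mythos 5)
Your plan controls the single Birkhoff sum $S_{q_{k_n}}\varphi$ on the levels of the Rokhlin tower over $\Delta^{(k_n)}$, via Lemma~\ref{lem:anti} and Proposition~\ref{prop:controlder}. That does yield a satisfactory bound on $\|S(k_n)\varphi\|_{L^1(I^{(k_n)}_{\pi_0^{-1}(d)})}$, since $S(k_n)\varphi|_{I^{(k_n)}_{\pi_0^{-1}(d)}}=S_{q_{k_n}}\varphi$. But $\|S(k_n)\varphi\|_{L^1(I^{(k_n)})}$ is defined using a \emph{different} return time on each exchanged interval: by definition $S(k_n)\varphi|_{I^{(k_n)}_\alpha}=S_{q^{(k_n)}_\alpha}\varphi$, and for $\alpha\neq\pi_0^{-1}(d)$ one has $q^{(k_n)}_\alpha> q_{k_n}$. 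Your pointwise anti-symmetric estimate at the marked points $\xi^n_i$ concerns $S_{q_{k_n}}\varphi$, not $S_{q^{(k_n)}_\alpha}\varphi$, so it says nothing directly about $m(S(k_n)\varphi,I^{(k_n)}_\alpha)$ for $\alpha\neq\pi_0^{-1}(d)$. That mean is exactly the dangerous quantity: the ``uniform $\mathrm{LG}$-bound'' and the boundedness of $\operatorname{Var}g_\varphi$ control, via Corollary~5.12 of \cite{Fr-Ul2}, only the \emph{deviation} of $S(k_n)\varphi$ from its mean on each $I^{(k_n)}_\alpha$; they do not constrain the mean itself, which can grow on the order of $\|Q(k_n)\|$, i.e.\ exponentially in $k_n$. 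Smallness of $|I^{(k_n)}_\alpha|$ relative to $|I^{(k_n)}|$ does not compensate, because $|I^{(k_n)}_\alpha|\cdot\|Q(k_n)\|$ need not be $O(|I^{(k_n)}|\,k_n)$. So the passage from the bound on $\Delta^{(k_n)}$ to the claimed $L^1$-bound over all of $I^{(k_n)}$ is a genuine gap, not a technicality.

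The paper closes precisely this gap by using Lemma~\ref{lem:ind} rather than Lemma~\ref{lem:anti}. Inducing $\varphi$ onto a symmetric interval $J^{(k)}$, anti-symmetry of the induced cocycle forces $m(\varphi_{J^{(k)}},J^{(k)}_\alpha(\delta))=0$ for \emph{every} $\alpha\in\mathcal{A}$, i.e.\ it provides $d$ constraints, not one. One then writes each $m(\varphi_{J^{(k)}},J^{(k)}_\alpha(\delta))$ (up to $O(\log\|Q(k)\|)$ errors controlled by the derivative estimates along the continuity tower) as $\sum_\beta D_{\alpha\beta}\,m(S(k)\varphi,I^{(k)}_\beta)$ for a fixed invertible appearance matrix $D$, and inverts $D$ to obtain $m(S(k)\varphi,I^{(k)}_\alpha)=O(\log\|Q(k)\|)$ for all $\alpha$ simultaneously. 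This is the structural step you are missing: you need vanishing means on all exchanged intervals of a symmetric induced system, together with the transfer via the appearance matrix, to control the means on every $I^{(k)}_\alpha$ and not just on the dominant one.
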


\begin{proof}
Note that, for a.e. symmetric IET $T = (\pi, \lambda)$ there exists $0<\kappa<1$ and a sequence of renormalizations $\{(\pi^{(k)},\lambda^{(k)})\}_{k\geq 1}$ such that:
\begin{itemize}
\item[(i)] $\pi^{(k)}=\pi$ for $k\geq 0$;
\item[(ii)] $\lambda^{(k)}_\alpha>\kappa|I^{(k)}|$ for all $\alpha\in\mathcal{A}$;
\item[(iii)] $T$ satisfies UDC for the sequence of renormalizations $\{(\pi^{(k)},\lambda^{(k)})\}_{k\geq 1}$;
\item[(iv)] $|I^{(k)}|p^{(k)}\geq \kappa>0$, where $p^{(k)}$ is the maximal number for which $T^j(\mathrm{Int}I^{(k)})$, $0\leq j<p^{(k)}$ avoid the discontinuities of $T$;
\item[(v)] for every $\alpha\in\mathcal{A}\cup\{I\}$ there exists $\beta_\alpha\in\mathcal{A}\cup\{I\}$ and $0\leq s^{(k)}_\alpha< q_\alpha^{(k)}$ such that $T^{s^{(k)}_\alpha}m^{(k)}_\alpha=m_{\beta_\alpha}$, where $q_I^{(k)}= p^{(k)}$.
\end{itemize}
Let $J^{(k)}:=I^{(k)}_{\alpha_0}$ if $\beta_{\alpha_0}=I$, and $J^{(k)}:=I^{(k)}$ if $\beta_I=I$. We require an additional condition for $T$:
\begin{itemize}
\item[(vi)] the length of all intervals exchanged by $T_{J^{(k)}}$ are greater than $\kappa|J^{(k)}|$.
\end{itemize}
To show that the a.e.\ symmetric IET $T$ satisfies the above conditions, one must construct a set $Y\subset\mathcal{M}$ of positive measure such that for any $(\pi',\lambda,\tau)\in Y$ we have,
\begin{itemize}
\item $\pi'=\pi$ and $\lambda_\alpha\geq \kappa|\lambda|$ for $\alpha\in\mathcal{A}$ (this is responsible for (i) and (ii));
\item $(\pi,\lambda,\tau)$ satisfies \eqref{eq:sym1} and \eqref{eq:sym2} (in view of Remark~\ref{rem: sym_midpoints}, this is responsible for (v));
 \item $(\pi,\lambda,\tau)$ satisfies the conditions (i) and (ii) in the proof of Lemma~3.6 (this is responsible for (iv))(this is responsible for (iv));
 \item for any $\alpha\in\mathcal A$ the IET induced from $(\pi,\lambda)$ on $I_\alpha$ is such that the length of all its exchanged intervals are greater than $\kappa\lambda_\alpha$ (this is responsible for (vi)).
\end{itemize}
Finally, we apply the proof of Theorem~3.8 in \cite{Fr-Ul2} (to the set $Y\subset \mathcal{M}$) to obtain a renormalization sequence (given by recurrence to the set $K\subset Y$) for which the Diophantine condition UDC is also met.
\medskip

For every $\alpha\in\mathcal{A}$, let $r^{(k)}_\alpha\in\N$ be the first return time of $J^{(k)}_{\alpha}$ for $T^{(k)}$ to $J^{(k)}$. By assumption, $r^{(k)}_\alpha\leq \kappa^{-2}$, for all $k\geq 1$ and any $\alpha\in\mathcal{A}$. For any $0\leq j<r_\alpha^{(k)}$, let $\gamma^{(k)}(\alpha,j)\in\mathcal{A}$ be such that $(T^{(k)})^j J^{(k)}_{\alpha} \subseteq I^{(k)}_{\gamma^{(k)}(\alpha,j)}$. As the sequences $\{r^{(k)}_\alpha\}_{k\geq 1}$ are bounded, we may pass to a subsequence and assume that $r^{(k)}_\alpha$ and $\gamma^{(k)}(\alpha,j)$ do not depend on $k$, so we will write $r_\alpha$ and $\gamma(\alpha,j)$ instead.
Let us consider the appearance matrix $D\in SL(d,\Z)$ given by $D_{\alpha\beta}=\#\{0\leq j<r_\alpha:\gamma(\alpha,j)=\beta\}$.

Fix $ k \geq 1$. Let $J:=T^{s}J^{(k)}$, where $s:=s^{(k)}_{\alpha_0}$ if $\beta_{\alpha_0}=I$, and $s:=s^{(k)}_{I}$ if $\beta_I=I$. Then $1/2=m_I$ is the midpoint of $J$. It follows directly from the assumptions that $\{T^j\mathrm{Int}J^{(k)}\mid0\leq j\leq s\}$ is a tower of intervals avoiding the discontinuities of $T$.

In view of Lemma~\ref{lem:ind}, for any $0<\delta<1/2$, we have
\[m(\varphi_J,J_\alpha(\delta))=0\text{ for all }\alpha\in\mathcal{A}.\]
Note that
\begin{equation}\label{eq:diffmv}
|m(\varphi_J,J_\alpha(\delta))-m(\varphi_{J^{(k)}},J^{(k)}_\alpha(\delta))|\leq \mathcal{LV}(\varphi)\frac{1+\log\|Q(k)\|}{\kappa\delta}.
\end{equation}
Indeed, as $J_\alpha=T^sJ^{(k)}_\alpha$, we have
\begin{equation}\label{eq:phiJ}
\varphi_{J^{(k)}}=\sum_{0\leq j<r_\alpha}S(k)\varphi\circ (T^{(k)})^j= S_{u^{(k)}_\alpha}\varphi\quad\text{on } J^{(k)}_\alpha, \quad\text{and}\quad
\varphi_{J}=S_{u^{(k)}_\alpha}\varphi\quad\text{on } J_\alpha,
\end{equation}
where $u^{(k)}_\alpha:=\sum_{0\leq j<r_\alpha}q^{(k)}_{\gamma(\alpha,j)}$. Notice that $T^{u^{(k)}_\alpha}x\in J^{(k)}$, for any $x\in J^{(k)}_\alpha$. It follows that
\begin{align*}
|m(\varphi_J,J_\alpha(\delta))-m(\varphi_{J^{(k)}},J^{(k)}_\alpha(\delta))|&=|m(S_{u^{(k)}_\alpha}\varphi,T^sJ^{(k)}_\alpha(\delta))-m(S_{u^{(k)}_\alpha}\varphi,J^{(k)}_\alpha(\delta))|\\
&= \frac{1}{|J^{(k)}_\alpha(\delta)|}\left|\int_{J^{(k)}_\alpha(\delta)}(S_{u^{(k)}_\alpha}\varphi(T^sx)-S_{u^{(k)}_\alpha}\varphi(x))dx\right|\\
&= \frac{1}{|J^{(k)}_\alpha(\delta)|}\left|\int_{J^{(k)}_\alpha(\delta)}(S_s\varphi(T^{u_\alpha^{(k)}}x)-S_s\varphi(x))dx\right|\\
& \leq \frac{1}{|J^{(k)}_\alpha(\delta)|}\int_{J^{(k)}_\alpha(\delta)}\left|S_s\varphi(T^{u_\alpha^{(k)}}x)-S_s\varphi(x)\right|dx.
\end{align*}
Moreover, for any $x\in J^{(k)}_\alpha(\delta)$, we have $T^{u_\alpha^{(k)}}x\in J^{(k)}(\kappa\delta)$ and
\[\left|S_s\varphi(x)-S_s\varphi(T^{u_\alpha^{(k)}}x)\right|=|x-T^{u_\alpha^{(k)}}x||S_s\varphi'(y)|,\]
for some $y\in J^{(k)}(\kappa\delta)$. Since $\{T^j\mathrm{Int}J^{(k)} \mid 0\leq j\leq s\}$ is a tower of intervals avoiding discontinuities of $T$, standard arguments (cf.\ the proof of Lemma~5.10 in \cite{Fr-Ul2}) show that
\[|S_s\varphi'(y)|\leq \mathcal{LV}(\varphi)\left(\frac{1}{\kappa\delta|J^{(k)}|}+\frac{1+\log\|Q(k)\|}{\kappa|I^{(k)}|}\right).\]
This gives \eqref{eq:diffmv}.

In view of \eqref{eq:phiJ}, we have
\begin{equation}\label{eq:mJk}
m(\varphi_{J^{(k)}},J^{(k)}_\alpha(\delta))=\sum_{0\leq j<r_\alpha}m\big(S(k)\varphi,(T^{(k)})^jJ_\alpha^{(k)}(\delta)\big).
\end{equation}
Let $0 \leq j < r_\alpha$. As $ (T^{(k)})^jJ_\alpha^{(k)}(\delta) \subseteq I_{\gamma(\alpha,j)}^{(k)}$ and
\[|(T^{(k)})^jJ_\alpha^{(k)}(\delta)|=|J_\alpha^{(k)}(\delta)|=(1-2\delta)|J_\alpha^{(k)}|\geq (1-2\delta)\kappa|I_{\gamma(\alpha,j)}^{(k)}|,\]
in view of Proposition~2.5 in \cite{Fr-Ul1} and Corollary~5.12 in \cite{Fr-Ul2}, we have
\[\big|m\big(S(k)\varphi,(T^{(k)})^jJ_\alpha^{(k)}(\delta)\big)-m\big(S(k)\varphi,I_{\gamma(\alpha,j)}^{(k)}\big)\big|\leq\mathcal{LV}(S(k)\varphi)\Big(4+\frac{1}{(1-2\delta)\kappa}\Big)=O(\log\|Q(k)\|).\]
In view of \eqref{eq:phiJ} and \eqref{eq:mJk}, it follows that for every $\alpha\in\mathcal{A}$, we have
\begin{align*}
\sum _{\beta\in\mathcal{A}}D_{\alpha\beta}m(S(k)\varphi,I_{\beta}^{(k)})=\sum_{0\leq j<r_\alpha}m\big(S(k)\varphi,I_{\gamma(\alpha,j)}^{(k)}\big)=O(\log\|Q(k)\|).
\end{align*}
Since the matrix $D$ is invertible, this gives
\[m(S(k)\varphi,I_{\alpha}^{(k)})=O(\log\|Q(k)\|),\qquad\text{for all}\quad\alpha\in\mathcal{A}.\]
In view of Proposition~2.5 in \cite{Fr-Ul1} and Corollary~5.12 in \cite{Fr-Ul2}, for any $\alpha\in\mathcal{A}$, we have
\begin{align*}
\left|\frac{1}{|I^{(k)}_\alpha|}\int_{I^{(k)}_\alpha}|S(k)\varphi(x)|dx-|m(S(k)\varphi,I_{\alpha}^{(k)})|\right|&\leq
\frac{1}{|I^{(k)}_\alpha|}\int_{I^{(k)}_\alpha}|S(k)\varphi(x)-m(S(k)\varphi,I_{\alpha}^{(k)})|dx\\
&\leq 8\mathcal{LV}(S(k)\varphi)=O(\log\|Q(k)\|).
\end{align*}
Hence
\[\frac{\|S(k)\varphi\|_{L^1(I^{(k)})}}{|I^{(k)}|}=O(\log\|Q(k)\|).\]
As $T$ satisfies UDC, in view of Remark~\ref{rmk:defdi}, this gives $\overline{d}(\varphi) = 0$.
\end{proof}

\begin{theorem}\label{thm:ergasym}
Let $d \geq 2$ even. For a.e.\ symmetric IET $T : I \to I$ on $d$ intervals and any cocycle $\varphi\in \mathrm{LG}(\sqcup_{\alpha\in \mathcal{A}} I_{\alpha})$ satisfying
\[
\mathcal{L}(\varphi)>0, \qquad \overline{d}(\varphi) = 0, \qquad \textup{Var}(g_\varphi') < +\infty,\]
the skew product $T_{\varphi}$ on $I\times \R$ is ergodic.

\end{theorem}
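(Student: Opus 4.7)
The plan is to decompose $\varphi = \varphi_s + \varphi_a$ with respect to the involution $T^{-1}\circ \I$, setting
\[\varphi_s := \tfrac12(\varphi + \varphi\circ T^{-1}\I), \qquad \varphi_a := \tfrac12(\varphi - \varphi\circ T^{-1}\I).\]
Since $T^{-1}\I$ acts as the central reflection on each $I_\alpha$, one checks that both $\varphi_s$ and $\varphi_a$ remain in $\mathrm{LG}(\sqcup_{\alpha\in\mathcal{A}} I_\alpha)$, with $C^\pm_\alpha(\varphi_s) = \tfrac12(C^+_\alpha+C^-_\alpha)$ and $C^+_\alpha(\varphi_a) = -C^-_\alpha(\varphi_a) = \tfrac12(C^+_\alpha - C^-_\alpha)$; in particular $\varphi_s$ has symmetric logarithmic singularities, and the bounded variation of $g'_\varphi$ descends to $g'_{\varphi_s}$ and $g'_{\varphi_a}$. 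By Proposition~\ref{prop:van} and linearity of $\overline{d}$, one has $\overline{d}(\varphi_s) = \overline{d}(\varphi_a) = 0$.

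The key combinatorial input is that for a symmetric permutation $\pi$ on $d$ \emph{even} letters, a direct calculation on the Veech permutation $\sigma_\pi$, tracing the orbit of $0$, produces a single orbit $\Sigma(\pi) = \{\mathcal{O}\}$ of length $d+1$, whence $\mathcal{A}_\mathcal{O}^- = \mathcal{A}_\mathcal{O}^+ = \mathcal{A}$: this is precisely where the parity hypothesis enters. Combined with $C^+_\alpha(\varphi_s) = C^-_\alpha(\varphi_s)$, this forces $\Delta_\mathcal{O}(\varphi_s) = 0$, i.e., $\mathcal{AS}(\varphi_s) = 0$ unconditionally. If moreover $\mathcal{L}(\varphi_a) = 0$ (equivalently, $C^+_\alpha = C^-_\alpha$ for every $\alpha$), the same single-orbit identity also yields $\mathcal{AS}(\varphi) = 0$; in this subcase Theorem~\ref{thm:erg} applies directly to $\varphi$, since SUDC is furnished by Proposition~\ref{prop:DC} and $\textup{Var}(g'_\varphi) < +\infty$ trivially places $g'_\varphi$ in $\mathrm{LG}$, and ergodicity of $T_\varphi$ follows.

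The substantive case is $\mathcal{L}(\varphi_a) > 0$, where the asymmetry of $\varphi$ cannot be canceled. Here we verify the ergodicity criterion Theorem~\ref{thm:ergcr} for $\varphi$ with $\theta = \log$. Proposition~\ref{prop:DC} and Remark~\ref{rem: sym_midpoints} supply Rokhlin towers $\{T^i\Delta^n\}_{0\leq i< q_n}$ satisfying UDC and SUDC together with a distinguished $\alpha\in \mathcal{A}\cup\{I\}$ (constant along a subsequence) and $s_n$ with $T^{s_n}m_n = m_\alpha$. Applying Lemma~\ref{lem:anti} to the antisymmetric piece $\varphi_a$ yields points $\xi_i^n \in T^i\Delta^n(\tfrac{5}{16})$ with $S_{q_n}\varphi_a(\xi_i^n) \to 0$ as $n\to\infty$; the required derivative bound $E^{-1}q_n\log q_n \leq |S_{q_n}\varphi'(x)| \leq E q_n\log q_n$ on $T^i\Delta^n(\tfrac14)$ follows from the upper estimate in Proposition~\ref{prop:controlder} together with a matching lower bound obtained by isolating the dominant $O(q_n)$ contributions at the orbit points closest to the discontinuities, which do not cancel thanks to $\mathcal{L}(\varphi_a) > 0$ and the controlled spacing of Proposition~\ref{prop: horizontal spacing}.

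The main obstacle is to produce a common $v\in\R$ with $|S_{q_n}\varphi(\xi_i^n) - v|$ uniformly small. Since $\mathcal{AS}(\varphi_s) = \overline{d}(\varphi_s) = 0$ and $T$ satisfies SUDC, Theorem~\ref{operatorcorrection} gives $\|S(k_n)\varphi_s\|_{L^1(I^{(k_n)})}/|I^{(k_n)}|$ uniformly bounded, but promoting this $L^1$ control to pointwise values at $\xi_i^n$ is nontrivial. The plan is to combine this mean estimate with the derivative bound $|S_{q_n}\varphi'_s(x)| \lesssim q_n\log q_n$ from Proposition~\ref{prop:controlder} and the $\I T$-symmetry identity $S_n\varphi_s(x) = S_n\varphi_s(\I T^n x)$, which pairs orbit points and forces the variation of $S_{q_n}\varphi_s$ to be controlled near the fixed points of $\I T^{q_n}$; by Remark~\ref{rem: sym_midpoints} the points $\xi_i^n$ are located precisely in a neighborhood of such fixed points. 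After a further subsequence extraction along which $S_{q_n}\varphi_s(\xi_i^n)$ converges uniformly in $i$ to some $v^s\in\R$, the choice $v := v^s$ gives $S_{q_n}\varphi(\xi_i^n) = S_{q_n}\varphi_s(\xi_i^n) + S_{q_n}\varphi_a(\xi_i^n) \to v^s$ uniformly in $i$, verifying the hypotheses of Theorem~\ref{thm:ergcr} and yielding the ergodicity of $T_\varphi$.
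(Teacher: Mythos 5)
Your plan shares the paper's broad architecture (decompose into symmetric and anti-symmetric parts, kill the symmetric part using the correction theorem and $L^1$ control, run the Borel--Cantelli ergodicity criterion on the anti-symmetric part), but the specific decomposition you chose creates two genuine obstructions that the paper sidesteps by a more careful construction.

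\textbf{The decomposition leaves $\mathrm{LG}$.} Taking $\varphi_s = \tfrac12(\varphi + \varphi\circ T^{-1}\mathcal{I})$ and $\varphi_a = \tfrac12(\varphi - \varphi\circ T^{-1}\mathcal{I})$ gives $C^\pm_\alpha(\varphi_s) = \tfrac12(C^+_\alpha + C^-_\alpha)$ and $C^\pm_\alpha(\varphi_a) = \pm\tfrac12(C^+_\alpha - C^-_\alpha)$. The class $\mathrm{LG}$ requires the geometric-type constraints $C^-_{\pi_0^{-1}(d)}\cdot C^-_{\pi_1^{-1}(d)} = 0$ and $C^+_{\pi_0^{-1}(1)}\cdot C^+_{\pi_1^{-1}(1)} = 0$, and these can easily fail for your $\varphi_s$: for instance if $\varphi$ has $C^-_\alpha = 0$ and $C^+_\alpha \neq 0$ for every $\alpha$ (which is admissible), then $C^-_{\pi_0^{-1}(d)}(\varphi_s) = \tfrac12 C^+_{\pi_0^{-1}(d)} \neq 0$ and $C^-_{\pi_1^{-1}(d)}(\varphi_s) = \tfrac12 C^+_{\pi_1^{-1}(d)} \neq 0$. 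Then Theorem~\ref{operatorcorrection} and the proposition giving $\overline{d}=0$ do not apply to $\varphi_s$ as stated. The paper instead fixes $a = \Delta_{\mathcal{O}}(\varphi)/(2d)$ and uses the explicit cocycle $\varphi_a(x) = -a\log(r_\alpha - x) + a\log(x - l_\alpha)$ on the intervals with $\pi_0(\alpha)\neq d$ and $\varphi_a \equiv 0$ on $I_{\pi_0^{-1}(d)}$; since $\varphi_a$ has no singularities at the ends of $I_{\pi_0^{-1}(d)}$, both $\varphi_a$ and $\varphi_s := \varphi - \varphi_a$ stay in $\mathrm{LG}$, and $\Delta_\mathcal{O}(\varphi_a) = \Delta_\mathcal{O}(\varphi)$ by design.

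\textbf{Piecewise monotonicity of $\varphi_a$ is not automatic, and the final step is too vague.} Your $\varphi_a$ has singularity coefficients of sign $\mathrm{sgn}(C^+_\alpha - C^-_\alpha)$, which can change from one $\alpha$ to another, and the smooth correction $g_{\varphi_a}$ is uncontrolled; so $\varphi_a$ need not be piecewise monotonic in the sense required by Lemma~\ref{lem:anti} and the lower bound of Proposition~\ref{prop:controlder}. The paper's explicit $\varphi_a$ is piecewise increasing (or decreasing) with a common constant $a$, so both tools apply directly. Finally, for the pointwise control of $S_{q_n}\varphi_s$ at the marked points $\xi_i^n$, the quantitative argument should use that the symmetric part has Birkhoff derivative bound $O(q_n)$ (from the log-cocycle estimates in \cite{Fr-Ul2}), not $O(q_n \log q_n)$: combined with the $L^1$-mean bound this yields a uniform numerical bound $\Xi$, after which one takes $v = 0$ and any $D \geq 8E\Xi$ in Theorem~\ref{thm:ergcr}. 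Your appeal to extracting a subsequence along which $S_{q_n}\varphi_s(\xi_i^n)$ converges uniformly in $i$ is problematic, because the number of indices $i$ grows with $n$; boundedness (not convergence) is all that is needed, and all that one can reasonably hope to prove.

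Your observation that $\mathcal{A}^-_\mathcal{O}=\mathcal{A}^+_\mathcal{O}=\mathcal{A}$ for $d$ even and that this forces $\mathcal{AS}(\varphi_s)=0$ for symmetric-singularity $\varphi_s$ is correct and matches the paper's use of the single-orbit structure. With the explicit choice of $\varphi_a$ substituted in place of the projection $\tfrac12(\varphi - \varphi\circ T^{-1}\mathcal{I})$, the remainder of your outline aligns with the paper's proof.
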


\begin{proof}
Since $d$ is even, the set $\Sigma(\pi)$ is one-element, say $\Sigma(\pi)=\{\mathcal{O}\}$. If $\Delta_{\mathcal{O}}(\varphi)=0$, then $\mathcal{AS}(\varphi)=|\Delta_{\mathcal{O}}(\varphi)|=0$ and the ergodicity of $T_{\varphi}$ follows directly from Theorem~\ref{thm:erg}.%

Suppose that $\Delta_{\mathcal{O}}(\varphi)\neq 0$. Let $a:=\Delta_{\mathcal{O}}(\varphi)/(2d)$ and consider $\varphi_a\in\mathrm{LG}(\sqcup_{\alpha\in \mathcal{A}} I_{\alpha})$ given by
\[\varphi_a(x)=\left\{
\begin{array}{cl}
-a\log(r_\alpha-x)+a\log(x-l_\alpha)&\text{ for }x\in I_\alpha\text{ if }\pi_0(\alpha)\neq d,\\
0&\text{ for }x\in I_\alpha\text{ if }\pi_0(\alpha)= d.
\end{array}\right.\]
Then $\varphi_a$ is an anti-symmetric piecewise monotonic map with $\Delta_{\mathcal{O}}(\varphi_a)=2da=\Delta_{\mathcal{O}}(\varphi)$. By Proposition~\ref{prop:van}, $\overline{d}(\varphi_a)=0.$ We will use the following decomposition $\varphi=\varphi_a+\varphi_s$. Then $\varphi_s\in\mathrm{LG}(\sqcup_{\alpha\in \mathcal{A}} I_{\alpha})$ with $\mathcal{AS}(\varphi_s)=|\Delta_{\mathcal{O}}(\varphi)-\Delta_{\mathcal{O}}(\varphi_a)|=0$
and $\overline{d}(\varphi_s)=\overline{d}(\varphi)-\overline{d}(\varphi_a)=0$.

By Proposition~\ref{prop:DC} (and its proof), there exist $0<\kappa<1$, $C>0$, and a sequence of renormalizations $\{(\pi^{(k)},\lambda^{(k)})\}_{k\geq 1}$, such that
\begin{itemize}
\item[(i)] $\pi^{(k)}=\pi$ for $k\geq 0$;
\item[(ii)] $\lambda^{(k)}_\alpha>\kappa|I^{(k)}|$ for all $\alpha\in\mathcal{A}$;
\item[(iii)] $|I^{(k)}|p^{(k)}\geq \kappa>0$, where $p^{(k)}$ is the maximal number for which $T^j\mathrm{Int}I^{(k)}$, $0\leq j<p^{(k)}$ avoid the discontinuities of $T$;
\item[(iv)] for every $\alpha\in\mathcal{A}\cup\{I\}$ there exists $\beta_\alpha\in\mathcal{A}\cup\{I\}$ and $0\leq s^{(k)}_\alpha< q_\alpha^{(k)}$ such that $T^{s^{(k)}_\alpha}m^{(k)}_\alpha=m_{\beta_\alpha}$, where $q_I^{(k)}=p^{(k)}$;
\item[(v)] there exists $\alpha_0\in \mathcal{A}$ such that the sequence of towers $\{\{T^jI^{(k)}_{\alpha_0}\mid 0\leq j<q_{\alpha_0}^{(k)}\}\}_{k\in\N}$ satisfies \eqref{eq: qn1}--\eqref{eq: qn4} and $|I^{(k)}_{\alpha_0}|/32<|T^{q_{\alpha_0}^{(k)}}x-x|<|I^{(k)}_{\alpha_0}|/16$ for $x\in I^{(k)}_{\alpha_0}$;
\item[(vi)] the IET $T$ satisfies SUDC.
\end{itemize}
Indeed, (ii) and (iii) are included in SUDC. Property (iv) follows from Remark~\ref{rem: sym_midpoints}. Properties \eqref{eq: qn1}--\eqref{eq: qn4} in (v) hold directly for $\alpha_0=\pi_0^{-1}(d)$
while obtaining the final double inequality requires some modification of the definition \eqref{def:D1} of the set $\mathcal{D}_1$ by halving the constants in its double inequality.

For any $k\geq 1$ let $\Delta^k:=I^{(k)}_{\alpha_0}(1/4)$ and $q_k:=q^{(k)}_{\alpha_0}$. As $|I^{(k)}_{\alpha_0}|/32<|\delta|<|I^{(k)}_{\alpha_0}|/16$ and $|\Delta^k|=|I^{(k)}_{\alpha_0}|/2$, we have
\[ |\Delta^k|/16<|T^{q_k}x-x|<|\Delta^k|/8, \qquad \text{ for any }x\in\Delta^k.\]
It follows that $\{\{T^i\Delta^k \mid 0\leq i<q_k\}\}_{k\in\N}$ is a sequence of Rokhlin towers satisfying \eqref{eq: qn1}--\eqref{eq: qn5}. %
By \eqref{eq: qn1}--\eqref{eq: qn5} and Proposition~\ref{prop:controlder} with $\theta=\log$, there exists $E>1$ such that
\begin{equation}\label{eq:der1}
E^{-1}q_k\log(E^{-1}q_k)\leq |S_{q_k}\varphi'_a(x)|\leq Eq_k\log(Eq_k)\qquad \text{ for any }x\in \bigsqcup_{0\leq j<q_k}T^j\Delta^k(1/4).
\end{equation}

We now use another decomposition $\varphi_s=\varphi_0+g$, where
\[\varphi_0(x)=-\sum_{\alpha\in\mathcal{A}}C^+_\alpha(\varphi_s)\log\Big(|I|\Big\{\frac{x-l_\alpha}{|I|}\Big\}\Big)
- \sum_{\alpha\in\mathcal{A}} C^-_\alpha(\varphi_s)\log\Big(|I|\Big\{\frac{r_\alpha-x}{|I|}\Big\}\Big)\text{ and }g=g_{\varphi_s}.\]
As $g_{\varphi_s} - g_\varphi$ is piecewise $C^\infty$, we have $g=g_{\varphi_s}$ is piecewise absolutely continuous whose derivative is of bounded variation.
 In view of Proposition~5.6 in \cite{Fr-Ul2}, for any $0\leq j\leq q^{(k)}_{\alpha_0}$ and $x\in I^{(k)}_{\alpha_0}(1/16)$, we have
\begin{equation}\label{eq:derg0}
|S_j\varphi'_0(x)|\leq \mathcal{L}(\varphi_0)\Big(\frac{16}{|I^{(k)}_{\alpha_0}|}+Mq^{(k)}_{\alpha_0}\Big)
\leq \mathcal{L}(\varphi_0)(16\kappa^{-2}+M)q^{(k)}_{\alpha_0}.
\end{equation}
As $g'$ is bounded, we also have
\begin{equation}\label{eq:derg}
|S_{j}g'(x)|\leq j\|g'\|_{\sup}, \quad\text{for any } x\in I\text{ and any }j\in\N.
\end{equation}
It follows that
\begin{equation}\label{eq:derj}
|S_j\varphi_s'(x)|\leq (\mathcal{L}(\varphi_0)(16\kappa^{-2}+M)+\|g'\|_{\sup})q_k,\quad \text{for any }x\in I^{(k)}_{\alpha_0}(1/16)\text{ and any }0\leq j\leq q_k.
\end{equation}
Note that
\begin{equation}\label{eq:der2}
|S_{q_k}\varphi'_0(x)|\leq 3\mathcal{L}(\varphi_0)(16\kappa^{-2}+M)q_k, \quad \text{for all }x\in \bigcup_{0\leq j<q_k}T^j\Delta^k(1/4).
\end{equation}
Indeed, if $0\leq j<q^{(k)}_{\alpha_0}$ then for every $x\in \Delta^k(1/4)= I^{(k)}_{\alpha_0}(3/8)$, we have
\[S_{q^{(k)}_{\alpha_0}}\varphi'_0(T^jx)=S_{q^{(k)}_{\alpha_0}}\varphi'_0(x)-S_j\varphi'_0(x)+S_j\varphi'_0(T^{q^{(k)}_{\alpha_0}}x),\qquad \text{ with
 }T^{q^{(k)}_{\alpha_0}}x\in I^{(k)}_{\alpha_0}(5/16).\]
In view of \eqref{eq:derg0}, this gives \eqref{eq:der2}. Next,
by \eqref{eq:der1}, \eqref{eq:der2} and \eqref{eq:derg}, for all $k$ large enough,
\begin{equation}\label{eq:der3}
(2E)^{-1}q_k\log q_k\leq |S_{q_k}\varphi'(x)|\leq 2Eq_k\log q_k, \qquad \text{ for all }x\in \bigcup_{0\leq j<q_k}T^j\Delta^k(1/4).
\end{equation}

Let $\beta:=\beta_{\alpha_0}$. Let us consider the interval $J=[m_\beta-\vep,m_\beta+\vep]$ with $\vep=\frac{1}{8}|I^{(k)}_{\alpha_0}|$. Then $\{T^jJ \mid 0\leq j<q_k\}$ is a Rokhlin tower of intervals included in the exchanged intervals, and $T^{q_k}m_\beta=m_\beta+\delta$ with $|\delta|<\vep/3$. In view of Lemma~\ref{lem:anti}, for any $0\leq i<q_k$ there exists
\[\xi^k_i\in T^{-s_{\alpha_0}^{(k)}+i}[m_\beta-\vep/2,m_\beta+\vep/2]=T^i[m^{(k)}_{\alpha_0}-\vep/2,m^{(k)}_{\alpha_0}+\vep/2] \subseteq T^iI^{(k)}_{\alpha_0}(7/16)\] such that
\begin{equation}\label{eq:choicexi}
|S_{q_k}\varphi_a(\xi_i^k)|\leq 1.
\end{equation}

As $\mathcal{AS}(\varphi_s)=0$ and $\overline{d}(\varphi_s)=0$, by Theorem~\ref{operatorcorrection}, there exists $C>0$ such that
\[\frac{1}{|I^{(k)}|}\|S(k)(\varphi_s)\|_{L^1(I^{(k)})}\leq C, \qquad \text{ for all }k\geq 1.\]
In view of Corollary~4.5 and Proposition~5.8 in \cite{Fr-Ul2}, for every $x\in I^{(k)}_{\alpha_0}(1/4)$ we have
\begin{align}\label{eq:boundint}
\begin{aligned}
|S_{q_k}\varphi_s(x)|=|S(k)\varphi_s(x)|&\leq 2\frac{|I^{(k)}|}{|I^{(k)}_{\alpha_0}|}\frac{\|S(k)(\varphi_s)\|_{L^1(I^{(k)})}}{|I^{(k)}|}+\mathcal{LV}(S(k)\varphi_s)(1+\log 2)\\
&\leq \frac{2C}{\kappa}+\frac{8 M \mathcal{LV}(\varphi_s)}{\kappa}.
\end{aligned}
\end{align}
For any $x\in I^{(k)}_{\alpha_0}(1/4)$ and any $0\leq i<q_k$ there exists $y\in [x,T^{q_k}x] \subseteq I^{(k)}_{\alpha_0}(3/16)$ such that
\begin{align*}
S_{q_k}\varphi_s(T^ix)=S_{q_k}\varphi_s(x)+S_{i}\varphi_s(T^{q_k}x)-S_{i}\varphi_s(x)=S_{q_k}\varphi_s(x)+S_{i}\varphi'_s(y)(T^{q_k}x-x).
\end{align*}
In view of \eqref{eq:boundint} and \eqref{eq:derj}, this gives
\[|S_{q_k}\varphi_s(T^ix)|\leq \frac{2C}{\kappa}+\frac{8 M \mathcal{LV}(\varphi_s)}{\kappa}+\mathcal{L}(\varphi_0)(16\kappa^{-2}+M)+\|g'\|_{\sup}.\]
As $\xi^k_i\in T^iI^{(k)}_{\alpha_0}(7/16) \subseteq T^iI^{(k)}_{\alpha_0}(1/4)$, by \eqref{eq:choicexi}, we have
\begin{equation}\label{eq:Xi}
|S_{q_k}\varphi(\xi_i^k)|\leq \Xi:=\frac{2C}{\kappa}+\frac{8 M \mathcal{LV}(\varphi_s)}{\kappa}+\mathcal{L}(\varphi_0)(16\kappa^{-2}+M)+\|g'\|_{\sup}+1.
\end{equation}
Choose any $D\geq 8E\Xi$. As $\xi^k_i\in T^iI^{(k)}_{\alpha_0}(7/16) \subseteq T^iI^{(k)}_{\alpha_0}(3/8)=T^i\Delta^k(1/4)$,
for any $k$ large enough and for any $0\leq i<q_k$ we can choose an interval $\Delta^k_i \subseteq \Delta^k(1/4)$ centered at $\xi^k_i$
such that $|\Delta^k_i|=\frac{D}{q_k\log q_k}$.

In summary, we have constructed a family of intervals which satisfies \eqref{eq: 0 of f} (it follows from \eqref{eq:Xi} and $D(8E)^{-1}\geq \Xi$) and \eqref{eq: derivative of f} (is given by \eqref{eq:der3}) with $D_1=D_2=D$, $E:=2E$, $v=0$ and $\theta=\log$. In view of Theorem~\ref{thm:ergcr}, this shows the ergodicity of the skew product $T_\varphi$.
\end{proof}

\section{Applications to locally Hamiltonian flows with perfect saddles}
\label{sc:PS}
Let $\psi_\R$ be a locally Hamiltonian flow whose fixed points are centers or perfect saddles. Let $M' \subseteq M$ be a minimal component and $I \subseteq M'$ a transversal curve. Denote by $T:I\to I$ the first return map to $I$, which in standard parameters is a minimal IET $T=(\pi,\lambda)$. Let $g$ be the genus of $M'$ and let $\gamma$ be the number of saddles in $M'$. In the following, we will use freely the notations introduced in Section \ref{sec:introHam}.

For any $\alpha \in \A$ and any closed interval $J \subseteq I_\alpha$, we denote by $J^\tau \subseteq M$ the closure of the set of orbit segments starting from $\mathrm{Int} J$ and running until the first return to $I$. For any $\sigma\in M'\cap\mathrm{PSd}(\psi_\R)$ and $\ell\in \mathcal{L}^{\sigma}_{\sim}$ there exist $\alpha\in\mathcal{A}$ and an interval $J$ of the form $[l_\alpha,l_\alpha+\vep]$ or $[r_\alpha-\vep,r_\alpha]$, for some $\varepsilon > 0$, such that $l_\alpha$ or $r_\alpha$ is
the first backward meeting point of a separatrix incoming to $\sigma\in\mathrm{PSd}(\psi_\R)$ and $J^\tau$ contains all angular sectors $U_{\sigma,l}$ with $l\in \ell$. All parameters (numbers) $l$ in $\ell$ have the same parity which we denote by $\varrho(\ell)$. If $\varrho(\ell)=$~odd then $j=[l_\alpha,l_\alpha+\vep]$ and if $\varrho(\ell)=$~even then $j=[r_\alpha-\vep,r_\alpha]$.

After \cite[Section 7.1]{Fr-Ki3}, for any $0\leq k\leq m_\sigma-2$ let us consider ${\xi}_{(\sigma,\ell,k)}:I\to\R$ a map such that
\begin{itemize}

\item if $\varrho(\ell)=$~odd, then for any $s\in I_\alpha$,
\begin{align*}
{\xi}_{(\sigma,\ell,k)}(s)=
\left\{
\begin{array}{cl}
\frac{1}{m_\sigma^2 k!}(s-l_\alpha)^{\frac{k-(m_\sigma-2)}{m_\sigma}}+const&\text{ if }0\leq k< m_\sigma-2,\\
-\frac{1}{m_\sigma^2 (m_\sigma-2)!}\log(s-l_\alpha)+const &\text{ if }k= m_\sigma-2;
\end{array}
\right.
\end{align*}
\item if $\varrho(\ell)=$~even, then for any $s\in I_\alpha$,
\begin{align*}
{\xi}_{(\sigma,\ell,k)}(s)=
\left\{
\begin{array}{cl}\frac{1}{m_\sigma^2 k!}(r_\alpha-s)^{\frac{k-(m_\sigma-2)}{m_\sigma}}+const&\text{ if }0\leq k< m_\sigma-2,\\
-\frac{1}{m_\sigma^2 (m_\sigma-2)!}\log(r_\alpha-s)+const &\text{ if }k= m_\sigma-2;
\end{array}
\right.
\end{align*}
\item ${\xi}_{(\sigma,\ell,k)}$ is constant on any interval $I_\beta$ with $\beta\neq \alpha$;
\item
\begin{equation}\label{eq:laxi}
\lim_{k\to\infty}\frac{1}{k}\log\Big(\frac{\|S(k)({\xi}_{(\sigma,\ell,k)})\|_{L^1(I^{(k)})}}{|I^{(k)}|}\Big)=\lambda_1\frac{k-(m_\sigma-2)}{m_\sigma}.
\end{equation}
\end{itemize}
Let $m$ be the maximal multiplicity of saddles in $\mathrm{PSd}(\psi_\R)\cap M'$. In view of Theorem~1.3 in \cite{Fr-Ki3} applied to any $0<r<\min\{1/m,\lambda_g/\lambda_1\}$, there exist invariant functionals
\[D_i:C^m(M)\to \R, \text{ for }1\leq i\leq g,\qquad B_s:C^m(M)\to \R, \text{ for }1\leq s<\gamma,\qquad V:C^m(M)\to \R,\]
piecewise constant maps on $I$ (constant on $\{I_\alpha\}_{\alpha\in\mathcal{A}}$)
\[h_i: I \to \R, \text{ for }1\leq i\leq g,\quad c_s:I \to \R, \text{ for }1\leq s<\gamma,\]
and a piecewise linear map (linear on $\{I_\alpha\}_{\alpha\in\mathcal{A}}$)
\[ v: I \to \R,\]
satisfying
\begin{gather}
\nonumber
\label{eq:lah}
\lim_{k\to\infty}\frac{\log\Big(\tfrac{\|S(k)h_i\|_{L^1(I^{(k)})}}{|I^{(k)}|}\Big)}{k}=\lambda_i, \\\label{eq:lacv}
\lim_{k\to\infty}\frac{\log\Big(\tfrac{\|S(k)c_s\|_{L^1(I^{(k)})}}{|I^{(k)}|}\Big)}{k}= 0 =
\lim_{k\to\infty}\frac{\log\Big(\tfrac{\|S(k)v\|_{L^1(I^{(k)})}}{|I^{(k)}|}\Big)}{k},
\end{gather}
for any $1 \leq i \leq g$ and any $1 \leq s \leq \gamma$, such that, for any $f\in C^m(M)$, we have
\begin{align}\label{eq:varphif}
\begin{aligned}
\varphi_f&=\sum_{\sigma\in M'\cap\mathrm{PSd}(\psi_\R)}\sum_{\ell\in\mathcal{L}_\sim^\sigma}\sum_{0\leq k\leq m_\sigma-2}\mathfrak{C}^k_{(\sigma,\ell)}(f)\xi_{(\sigma,\ell,k)}\\
&\quad
+\sum_{1\leq i\leq g}D_i(f)h_i+\sum_{1\leq s<\gamma}B_s(f)c_s+V(f)v+\mathfrak{r}(f),
\end{aligned}
\end{align}
where $\mathfrak{r}(f):I\to\R$ is a piecewise continuous map (continuous on $\{I_\alpha\}_{\alpha\in\mathcal{A}}$) such that
\begin{equation}
\label{eq:larf}
\limsup_{k\to\infty}\frac{\log\|S(k)\mathfrak{r}(f)\|_{\sup}}{k}\leq -\lambda_1 r.
\end{equation}
Then, standard Gottschalk-Hedlund type arguments (c.f.\ \cite[Section 3.4]{Ma-Mo-Yo}) show that $\mathfrak{r}(f)$ is a coboundary with continuous transfer map, i.e.,\ there exists a continuous function $g:I\to\R$ such that $\mathfrak{r}(f)=g\circ T-g$.

\begin{proof}[Sketch of the proof of Theorem~\ref{thm:devspect}]

Let $\tau:I\to\R_{>0}\cup\{+\infty\}$ be the first return time map for the flow $\psi_\R$ on $M'$. %
Using the notations introduced in Section \ref{sec:introHam} (in particular, from \eqref{def:phif}), we have $\tau=\varphi_1$. We will make use of the special representation $T^\tau_\R$ of $\psi_\R$ on $M'$.
Recall that $T^\tau_\R$ acts on $I^\tau:=\{(x,r):x\in I,0\leq r<\tau(x)\}$ so that $T^\tau_t(x,r)=(x,r+t)$ until it reaches the roof point $(x,\tau(x))$ which is identified with $(Tx,0)$ on the bottom.
For any map $\varphi:I\to\R$ let $f_{\varphi}:I^\tau\to\R$ be given by $f_{\varphi}(x,r)=\frac{\varphi(x)}{\tau(x)}$. Then $\varphi_{f_{\varphi}}=\varphi$. Moreover, if $\varphi$ is smooth of intervals $I_\alpha$, $\alpha\in \mathcal{A}$, then $f_{\varphi}:M\to\R$ is piecewise smooth, i.e.\ is smooth of the interior of $I^\tau_\alpha$, $\alpha\in \mathcal{A}$.

For any $1\leq i\leq g$, let $\zeta_i=f_{h_i}$, and for any $\sigma\in\mathrm{PSd}(\psi_R)$, any $\ell\in\mathcal{L}^\sigma_\sim$, and any $0\leq k\leq m_\sigma$, let $\zeta^k_{(\sigma,\ell)}=f_{\xi_{(\sigma,\ell,k)}}$ (with $h_i$ and $\xi_{(\sigma,\ell,k)}$ as defined at the beginning of this section). As each $h_i$ is bounded and the singularities of $\xi_{(\sigma,\ell,k)}$ are dominated by the singularities of $g=\varphi_1$ (see Theorem~9.1 in \cite{Fr-Ki1}), the maps $\zeta_i$ and $\zeta^k_{(\sigma,\ell)}$ are piecewise smooth and bounded.

Now we can define the cocycles $u_i(T,x)$ and $c^k_{(\sigma,\ell)}(T,x)$ by
\[u_i(T,x):=\int_0^T\zeta_i(\psi_tx)\,dt\ \text{ and } \ c^k_{(\sigma,\ell)}(T,x):=\int_0^T\zeta^k_{(\sigma,\ell)}(\psi_tx)\,dt,\ \text{ for any } T\in\R \text{ and any } x\in M'.\]
For any $f\in C^m(M)$ let us consider the decomposition
\[f=\sum_{\sigma\in M'\cap\mathrm{PSd}(\psi_\R)}\sum_{\ell\in\mathcal{L}_\sim^\sigma}\sum_{0\leq k< m_\sigma-2}\mathfrak{C}^k_{(\sigma,\ell)}(f)\zeta^k_{(\sigma,\ell)}+\sum_{1\leq i\leq g}D_i(f)\zeta_i+f_e,\]
where the distributions $\mathfrak{C}^k_{(\sigma,\ell)}$ are given by \eqref{def:gothc} and $f_e$ is a bounded piecewise smooth map (as the difference of a smooth map and a bounded piecewise smooth map) treated as an error term. Then
 \begin{align*}
 \int_0^Tf(\psi_tx)\,dt&=\sum_{\sigma\in M'\cap\mathrm{PSd}(\psi_\R)}\sum_{\ell\in\mathcal{L}_\sim^\sigma}\sum_{0\leq k< m_\sigma-2}\mathfrak{C}^k_{(\sigma,\ell)}(f)c^k_{(\sigma,\ell)}(T,x)\\
 &\quad+\sum_{1\leq i\leq g}D_i(f)u_i(T,x)+err(f,T,x)
 \end{align*}
with $err(f,T,x)=\int_0^Tf_e(\psi_tx)\,dt$. Taking any $x\in I$ and $T=\tau(x)$, we obtain
\[\varphi_f=\sum_{\sigma\in M'\cap\mathrm{PSd}(\psi_\R)}\sum_{\ell\in\mathcal{L}_\sim^\sigma}\sum_{0\leq k< m_\sigma-2}\mathfrak{C}^k_{(\sigma,\ell)}(f)\xi_{(\sigma,\ell,k)}
+\sum_{1\leq i\leq g}D_i(f)h_i+\varphi_{f_e},\]
so, by \eqref{eq:varphif},
\begin{equation}\label{eq:fe}
\varphi_{f_e}=\sum_{\sigma\in M'\cap\mathrm{PSd}(\psi_\R)}\sum_{\ell\in\mathcal{L}_\sim^\sigma}\mathfrak{C}^k_{(\sigma,\ell)}(f)\xi_{(\sigma,\ell,m_\sigma-2)}+\sum_{1\leq s<\gamma}B_s(f)c_s+V(f)v+\mathfrak{r}(f).
\end{equation}
In view of \eqref{eq:laxi}, \eqref{eq:lacv} and \eqref{eq:larf}, this gives
\[\limsup_{k\to\infty}\frac{\log\Big(\tfrac{\|S(k)\varphi_{f_e}\|_{L^1(I^{(k)})}}{|I^{(k)}|}\Big)}{k}\leq 0.\]
By Proposition~7.13 in \cite{Fr-Ki1}, we have
\[\limsup_{T\to\infty}\frac{\log |err(f,T,x)|}{\log T}=\limsup_{T\to\infty}\frac{\log |\int_0^Tf_e(\psi_tx)\,dt|}{\log T}\leq 0, \qquad \text{ for a.e. }x\in M'.\]
As $\varphi_{\zeta_i}=h_i$, by \eqref{eq:lah} and Proposition~7.13 in \cite{Fr-Ki1},
\[\limsup_{T\to\infty}\frac{\log |u_i(T,x)|}{\log T}=\limsup_{T\to\infty}\frac{\log |\int_0^T\zeta_i(\psi_tx)\,dt|}{\log T}=\frac{\lambda_i}{\lambda_1}, \qquad \text{ for a.e. }x\in M'.\]

As $\xi_{(\sigma,\ell,k)}=\varphi_{\zeta^k_{(\sigma,\ell)}}$, in view of \eqref{eq:laxi}, applying Theorem~7.10 in \cite{Fr-Ki1} to $f=\zeta^k_{(\sigma,\ell)}$ and $b=\frac{(m_\sigma-2)-k}{m_\sigma}$, we obtain
\[\limsup_{T\to\infty}\frac{\log |c^k_{(\sigma,\ell,k)}(T,x)|}{\log T}=\limsup_{T\to\infty}\frac{\log |\int_0^T\zeta^k_{(\sigma,\ell)}(\psi_tx)\,dt|}{\log T}\leq\frac{(m_\sigma-2)-k}{m_\sigma},\quad\text{ for a.e. }x\in M'.\]
As $|(x-l_\alpha)^{1+b}(\xi_{(\sigma,\ell,k)})'(x)|=\frac{b}{m_\sigma^2 k!}>0$ if $\varrho(\ell)=$~odd, and
$|(r_\alpha-x)^{1+b}(\xi_{(\sigma,\ell,k)})'(x)|=\frac{b}{m_\sigma^2 k!}>0$ if $\varrho(\ell)=$~even, for any $x\in I_\alpha$,
by Proposition~5.6 in \cite{Fr-Ki1} and applying the arguments in Part V of the proof of Theorem~1.1 in \cite{Fr-Ki1},
we obtain the lower bound
\[\limsup_{T\to\infty}\frac{\log |c^k_{(\sigma,\ell,k)}(T,x)|}{\log T}=\limsup_{T\to\infty}\frac{\log |\int_0^T\zeta^k_{(\sigma,\ell)}(\psi_tx)\,dt|}{\log T}\geq\frac{(m_\sigma-2)-k}{m_\sigma},\quad \text{ for a.e. }x\in M'.\]
\end{proof}

Now we are ready to prove Conjecture~\ref{conj} in the %
case where the flow is minimal on the whole surface.

\begin{theorem}\label{mainthm:mini}
For almost every minimal locally Hamiltonian flow $\psi_\mathbb{R}$ on a compact surface $M$ of genus $g \geq 1$, the following holds. Let $m:=\max\{m_\sigma:\sigma\in \mathrm{Fix}(\psi_\mathbb{R})\}$. For every $f\in C^m(M)$ such that
\begin{itemize}
\item $D_i(f)=0$ for all $1\leq i\leq g$,
\item $\mathfrak{C}^k_{\sigma,l}(f)=0$ for all $\sigma\in\mathrm{Fix}(\psi_\mathbb{R})$, $0\leq k<m_\sigma-2$ and $0\leq l<2m_\sigma$, and
\item $\mathfrak{C}^{m_\sigma-2}_{\sigma,l}(f)\neq 0$ for some $\sigma\in\mathrm{Fix}(\psi_\mathbb{R})$ and $0\leq k<m_\sigma-2$,
\end{itemize}
the skew product flow $\psi^f_\R$ is ergodic.
\end{theorem}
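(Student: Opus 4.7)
The plan is to reduce ergodicity of $\psi^f_\R$ on $M \times \R$ to ergodicity of the Poincar\'e skew product $T_{\varphi_f}$ over the first return IET $T = (\pi,\lambda)$ via \eqref{def:phif}, and then to apply the ergodicity criterion of Theorem~\ref{thm:erg}. For a.e.\ such $\psi_\R$, Proposition~\ref{prop:DC} lets me assume that $T$ satisfies SUDC.

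Starting from the spectral decomposition \eqref{eq:varphif}, the vanishing hypotheses $D_i(f)=0$ and $\mathfrak{C}^k_{\sigma,\ell}(f)=0$ for $k<m_\sigma-2$ delete the hyperbolic-growth terms $h_i$ and the polynomial-singularity terms $\xi_{\sigma,\ell,k}$ (with $k<m_\sigma-2$), so that
\[
\varphi_f = \sum_{\sigma}\sum_{\ell\in\mathcal{L}^\sigma_\sim}\mathfrak{C}^{m_\sigma-2}_{\sigma,\ell}(f)\,\xi_{\sigma,\ell,m_\sigma-2} + \sum_{1\leq s<\gamma} B_s(f)\,c_s + V(f)\,v + \mathfrak{r}(f).
\]
By the explicit form of $\xi_{\sigma,\ell,m_\sigma-2}$ recorded in Section~\ref{sc:PS}, each surviving singular term is purely logarithmic, while the remaining terms are piecewise smooth (and $\mathfrak{r}(f)$ is a continuous coboundary). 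Thus $\varphi_f\in\mathrm{LG}(\sqcup_{\alpha\in\A} I_\alpha)$, its absolutely continuous part $g_{\varphi_f}$ has $g'_{\varphi_f}\in\mathrm{LG}(\sqcup_{\alpha\in\A} I_\alpha)$, the positivity $\mathcal{L}(\varphi_f)>0$ follows from the assumption that some $\mathfrak{C}^{m_\sigma-2}_{\sigma,\ell}(f)\neq 0$, and $\overline{d}(\varphi_f)=0$ follows from $D_i(f)=0$ via the identity $d_i(\varphi_f)=D_i(f)$ recorded in the paragraph preceding Theorem~\ref{operatorcorrection}.

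The remaining and only nontrivial step is to verify $\mathcal{AS}(\varphi_f)=0$. This is where minimality of $\psi_\R$ enters decisively. In the absence of saddle connections, no separatrix of any perfect saddle $\sigma$ closes into a loop, so the incoming separatrices at $\sigma$ (which contribute $C^-_\alpha$-type logarithmic coefficients to $\varphi_f$) and the outgoing ones (which contribute $C^+_\alpha$-type coefficients) pair up symmetrically around each orbit $\mathcal O\in\Sigma(\pi)$. Reading off $C^\pm_\alpha$ from the explicit formulas for $\xi_{\sigma,\ell,m_\sigma-2}$, and using the structural identities $\mathfrak{C}^{m_\sigma-2}_{\sigma,l+m_\sigma}=\mathfrak{C}^{m_\sigma-2}_{\sigma,l}$ (for $0\leq l<m_\sigma$) together with the orientation conventions associated with $\varrho(\ell)$, a direct computation shows $\Delta_\mathcal O(\varphi_f)=0$ for every $\mathcal O\in\Sigma(\pi)$, hence $\mathcal{AS}(\varphi_f)=0$.

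Collecting $\mathcal{L}(\varphi_f)>0$, $\mathcal{AS}(\varphi_f)=0$, $\overline{d}(\varphi_f)=0$ and $g'_{\varphi_f}\in\mathrm{LG}(\sqcup_{\alpha\in\A} I_\alpha)$, Theorem~\ref{thm:erg} delivers the ergodicity of $T_{\varphi_f}$, hence of $\psi^f_\R$. I expect the main obstacle to be the bookkeeping in the symmetry verification when $m_\sigma>2$, which extends the non-degenerate argument of \cite{Fr-Ul2} to perfect saddles of arbitrary multiplicity; beyond that, this case requires no new technology beyond what is already available in the paper, as is anticipated in the discussion immediately preceding Theorem~\ref{mainthm:mini}.
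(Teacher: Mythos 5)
Your overall strategy is the paper's: reduce to the Poincar\'e skew product $T_{\varphi_f}$, use the spectral decomposition \eqref{eq:varphif} and the vanishing hypotheses to conclude that $\varphi_f$ has purely logarithmic singularities, verify $\mathcal{L}>0$, $\overline{d}=0$ and $\mathcal{AS}=0$, and invoke Theorem~\ref{thm:erg}. Two steps are not properly closed, and one of them is the crux of the theorem. First, Theorem~\ref{thm:erg} also requires $g'_{\varphi}\in\mathrm{LG}(\sqcup_\alpha I_\alpha)$; for $\varphi_f$ this is not clear because the remainder $\mathfrak{r}(f)$ is only known to be continuous (Gottschalk--Hedlund). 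You correctly observe that $\mathfrak{r}(f)$ is a coboundary, but you should actually use this to replace $\varphi_f$ by $\widetilde{\varphi}_f=\varphi_f-\mathfrak{r}(f)$ (to which $T_{\varphi_f}$ is isomorphic); then $g'_{\widetilde{\varphi}_f}$ is piecewise constant, and the hypothesis is met cleanly. This is exactly what the paper does.

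Second, and more seriously, the verification $\mathcal{AS}(\widetilde{\varphi}_f)=0$ is the genuinely nontrivial content of this theorem and your sketch does not establish it. The structural identity you invoke, $\mathfrak{C}^{m_\sigma-2}_{\sigma,l+m_\sigma}=\mathfrak{C}^{m_\sigma-2}_{\sigma,l}$, is actually $\mathfrak{C}^{k}_{\sigma,l+m_\sigma}=(-1)^{k}\mathfrak{C}^{k}_{\sigma,l}$ in the paper, so for $k=m_\sigma-2$ it holds with a $(-1)^{m_\sigma}$ sign and is false for odd $m_\sigma$; and in any case this identity alone does not produce the needed cancellation. The geometric ``incoming/outgoing separatrices pair up'' picture is the correct heuristic for why minimality lets every $0\leq l<2m_\sigma$ contribute separately (no saddle loops means $\mathcal{L}^\sigma_\sim$ consists of singletons), but the actual vanishing of $\Delta_{\mathcal O}(\widetilde{\varphi}_f)$ is an algebraic cancellation: after expanding $\mathfrak{C}^{m_\sigma-2}_{\sigma,l}$ via the definition \eqref{def:gothc} and interchanging the sums over $l$ and $i$, one must use the root-of-unity orthogonality
\begin{equation*}
\sum_{0\leq l<m_\sigma}\theta_\sigma^{2l(2i+2-m_\sigma)}=\begin{cases} m_\sigma & \text{if }i=\tfrac{m_\sigma-2}{2},\\ 0 & \text{otherwise,}\end{cases}
\end{equation*}
(and the analogous computation for the odd-indexed sectors) to see that $\sum_{l}\mathfrak{C}^{m_\sigma-2}_{\sigma,2l}(f)=\sum_{l}\mathfrak{C}^{m_\sigma-2}_{\sigma,2l+1}(f)$. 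Without this computation you have not shown $\mathcal{AS}=0$, and the argument does not go through.
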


\begin{proof}
As we noted earlier, it is enough to show that the skew product map $T_{\varphi_f}$ is ergodic if $T$ satisfies SUDC.
In view of \eqref{eq:varphif}, we have
\[\varphi_f=\sum_{\sigma\in \mathrm{Fix}(\psi_\R)}\sum_{0\leq l<2m_\sigma}\mathfrak{C}^{m_\sigma-2}_{(\sigma,l)}(f)\xi_{(\sigma,l,m_\sigma-2)}
+\sum_{1\leq s<\gamma}B_s(f)c_s+V(f)v+\mathfrak{r}(f).\]
Let
\[\widetilde{\varphi}_f=\sum_{\sigma\in \mathrm{Fix}(\psi_\R)}\sum_{0\leq l<2m_\sigma}\mathfrak{C}^{m_\sigma-2}_{(\sigma,l)}(f)\xi_{(\sigma,l,m_\sigma-2)}
+\sum_{1\leq s<\gamma}B_s(f)c_s+V(f)v.\]
Since $\mathfrak{r}(f)$ is a coboundary, the skew products $T_{\varphi_f}$ and $T_{\widetilde{\varphi}_f}$ are isomorphic. Therefore, it is enough to show the ergodicity of $T_{\widetilde{\varphi}_f}$.
Note that $\widetilde{\varphi}_f\in \mathrm{LG}(\sqcup_{\alpha\in \mathcal{A}} I_{\alpha})$ and, by \eqref{eq:laxi} and \eqref{eq:lacv}, we have
\[\lim_{k\to\infty}\frac{1}{k}\log\Big(\frac{\|S(k)(\widetilde{\varphi}_f)\|_{L^1(I^{(k)})}}{|I^{(k)}|}\Big)\leq 0.\]
In view of Remark~\ref{rmk:defdi}, this gives $\overline{d}(\widetilde{\varphi}_f)=0$.%

Recall that every $\mathcal{O}\in\Sigma(\pi)$ represents a saddle $\sigma$. Then, by the definition of $\xi_{(\sigma,l,m_\sigma-2)}$, we have
\begin{gather*}
\mathcal{L}(\widetilde{\varphi}_f)=\sum_{\alpha\in\mathcal{A}}(|C^-_{\alpha}(\widetilde{\varphi}_f)|+ |C^+_{\alpha}(\widetilde{\varphi}_f)|)=\tfrac{1}{m_\sigma^2 (m_\sigma-2)!}\sum_{\sigma\in\mathrm{Fix}(\psi_\R)}\sum_{0\leq l<m_\sigma}|\mathfrak{C}^{m_\sigma-2}_{(\sigma,l)}(f)|>0,\\
\Delta_\mathcal{O}(\widetilde{\varphi}_f)=\sum_{\alpha\in\mathcal{A}_\mathcal{O}^-}C^-_{\alpha}(\widetilde{\varphi}_f)-
\sum_{\alpha\in\mathcal{A}_\mathcal{O}^+}C^+_{\alpha}(\widetilde{\varphi}_f)=\tfrac{1}{m_\sigma^2 (m_\sigma-2)!}\Big(\sum_{0\leq l<m_\sigma}\mathfrak{C}^{m_\sigma-2}_{(\sigma,2l)}(f)-\sum_{0\leq l<m_\sigma}\mathfrak{C}^{m_\sigma-2}_{(\sigma,2l+1)}(f)\Big).
\end{gather*}
By the definition of functionals $\mathfrak{C}^k_{(\sigma,\ell)}$, we have
\begin{align*}
\sum_{0\leq l<m_\sigma}\mathfrak{C}^{m_\sigma-2}_{(\sigma,2l)}(f)
 & =\sum_{0\leq l<m_\sigma}\sum_{0\leq i\leq m_\sigma-2}
\theta_\sigma^{2l(2i+2-m_\sigma)}\tbinom{m_\sigma-2}{i}\mathfrak{B}(\tfrac{(m_\sigma-1)-i}{m_\sigma},\tfrac{i+1}{m_\sigma})\tfrac{\partial^{m_\sigma-2}(f\cdot V)}{\partial z^i\partial\overline{z}^{m_\sigma-2-i}}(0,0)\\
&=\sum_{0\leq i\leq m_\sigma-2}\Big(\sum_{0\leq l<m_\sigma}\theta_\sigma^{2l(2i+2-m_\sigma)}\Big)
\tbinom{m_\sigma-2}{i}\mathfrak{B}(\tfrac{(m_\sigma-1)-i}{m_\sigma},\tfrac{i+1}{m_\sigma})\tfrac{\partial^{m_\sigma-2}(f\cdot V)}{\partial z^i\partial\overline{z}^{m_\sigma-2-i}}(0,0),
\end{align*}
and
\begin{align*}
\sum_{0\leq l<m_\sigma}\!\mathfrak{C}^{m_\sigma-2}_{(\sigma,2l+1)}(f) &=\!\sum_{0\leq i\leq m_\sigma-2}\!\Big(\!\sum_{0\leq l<m_\sigma}\!\theta_\sigma^{(2l+1)(2i+2-m_\sigma)}\Big)
\tbinom{m_\sigma-2}{i}\mathfrak{B}(\tfrac{(m_\sigma-1)-i}{m_\sigma},\tfrac{i+1}{m_\sigma})\tfrac{\partial^{m_\sigma-2}(f\cdot V)}{\partial z^i\partial\overline{z}^{m_\sigma-2-i}}(0,0).
\end{align*}
Note that
\[\sum_{0\leq l<m_\sigma}\theta_\sigma^{2l(2i+2-m_\sigma)}=\left\{
\begin{array}{cl}
m_\sigma&\text{if }i=\frac{m_\sigma-2}{2},\\
0 &\text{otherwise.}
\end{array}
\right.\]
It follows that $\Delta_\mathcal{O}(\widetilde{\varphi}_f)=0$ for every $\mathcal{O}\in\Sigma(\pi)$. As the IET $T:I\to I$ satisfies SUDC,
$\widetilde{\varphi}_f\in \mathrm{LG}(\sqcup_{\alpha\in \mathcal{A}} I_{\alpha})$, $\mathcal{L}(\widetilde{\varphi}_f)>0$, $\mathcal{AS}(\widetilde{\varphi}_f)=0$, $g'_{\widetilde{\varphi}_f}$ is piecewise constant and $\overline{d}(\widetilde{\varphi}_f)=0$, %
 by Theorem~\ref{thm:erg}, the skew product $T_{\widetilde{\varphi}_f}$ on $I\times \R$ is ergodic.
\end{proof}

\begin{proof}[Proof of Theorem~\ref{mainthm:onesaddle}]
To conclude the first part of the result, as in the proof of Theorem~\ref{mainthm:mini}, it is enough to show that the skew product map $T_{\varphi_f}$ is ergodic if $T$ is a.e.\ IET for which Theorem~\ref{thm:ergasym} holds. In fact, in the beginning, we will repeat the reasoning from that proof.
Only the final argument is completely different. Denote by $\sigma$ the only saddle of $\psi_\R$ on $M'$. Then $\gamma=1$.
Recall that the transversal $I \subseteq M'$ can be chosen so that $T=(\pi,\lambda) :I\to I$ is a symmetric IET exchanging $d=2g+1$ intervals.
As $\mathfrak{C}^{k}_{(\sigma,\ell)}(f)=0$, for all $\ell\in\mathcal{L}^\sigma_{\sim}$, $0\leq k<m_\sigma-2$, and $D_i(f)=0$, for $1\leq i\leq g$, in view of \eqref{eq:varphif}, we have
\[\varphi_f=\sum_{\ell\in\mathcal{L}^\sigma_{\sim}}\mathfrak{C}^{m_\sigma-2}_{(\sigma,\ell)}(f)\xi_{(\sigma,l,m_\sigma-2)}
+V(f)v+\mathfrak{r}(f).\]
Let
\[\widetilde{\varphi}_f=\sum_{\ell\in\mathcal{L}^\sigma_{\sim}}\mathfrak{C}^{m_\sigma-2}_{(\sigma,\ell)}(f)\xi_{(\sigma,l,m_\sigma-2)}
+V(f)v.\]
Since $\mathfrak{r}(f)$ is a coboundary, the skew products $T_{\varphi_f}$ and $T_{\widetilde{\varphi}_f}$ are isomorphic. Therefore, it is enough to show the ergodicity of $T_{\widetilde{\varphi}_f}$.
Note that $\widetilde{\varphi}_f\in \mathrm{LG}(\sqcup_{\alpha\in \mathcal{A}} I_{\alpha})$ and, by \eqref{eq:laxi} and \eqref{eq:lacv}, we have
\[\lim_{k\to\infty}\frac{1}{k}\log\Big(\frac{\|S(k)(\widetilde{\varphi}_f)\|_{L^1(I^{(k)})}}{|I^{(k)}|}\Big)\leq 0.\]
In view of Remark~\ref{rmk:defdi}, this gives $\overline{d}(\widetilde{\varphi}_f)=0$. %
Moreover,
\[\mathcal{L}(\widetilde{\varphi}_f)=\sum_{\alpha\in\mathcal{A}}(|C^-_{\alpha}(\widetilde{\varphi}_f)|+ |C^+_{\alpha}(\widetilde{\varphi}_f)|)=\tfrac{1}{m_\sigma^2 (m_\sigma-2)!}\sum_{\ell\in\mathcal{L}^\sigma_\sim}|\mathfrak{C}^{m_\sigma-2}_{(\sigma,\ell)}(f)|>0.\]
As
$\widetilde{\varphi}_f\in \mathrm{LG}(\sqcup_{\alpha\in \mathcal{A}} I_{\alpha})$, $\mathcal{L}(\widetilde{\varphi}_f)>0$, $g'_{\widetilde{\varphi}_f}$ is piecewise constant and $\overline{d}(\widetilde{\varphi}_f)=0$, %
 by Theorem~\ref{thm:ergasym}, the skew product $T_{\widetilde{\varphi}_f}$ on $I\times \R$ is ergodic, thus showing the ergodicity of $T_{{\varphi}_f}$ and $\psi^f_\R$ on $M'\times\R$.

To prove the second part of the result, and in view of \eqref{eqn;dev-complete}, it suffices to show that $err(f, T, x)$ has sub-polynomial growth and that it equidistributes on $\R$. The sub-polynomial oscillation follows directly from Theorem~\ref{thm:devspect}. Let us assume that $f:M\to\R$ is an arbitrary $C^{m_\sigma}$-map. As $\gamma=1$, in view of \eqref{eq:fe},
\[\varphi_{f_e}=\sum_{\ell\in\mathcal{L}^\sigma_{\sim}}\mathfrak{C}^{m_\sigma-2}_{(\sigma,\ell)}(f)\xi_{(\sigma,l,m_\sigma-2)}
+V(f)v+\mathfrak{r}(f).\]
Therefore, $\varphi_{f_e}$ has the same form as $\varphi_{f}$ in the first part of the proof. It follows that the skew product $T_{\varphi_{f_e}}$ and the skew product flow $\psi^{f_e}_\R$ on $M'\times \R$ is ergodic. Then
\[\psi^{f_e}_T(x,r)=\Big(\psi_Tx,r+\int_0^Tf_e(\psi_t x)\,dt\Big)=\big(\psi_Tx,r+err(f,T,x)\big).\]
We now can apply the ratio ergodic theorem to the ergodic flow $\psi^{f_e}_\R$ on $M'\times\R$ to obtain the equidistribution of the error term for a.e.\ $x\in M'$. For all the details of this step, we refer the reader to the proof of Theorem~1.3 in \cite{Fr-Ul2}.
\end{proof}

\section*{Acknowledgements}
\noindent This research was partially supported by the Narodowe Centrum Nauki Grant 2022/45/B/ST1/00179. The third author was supported by the Spanish State Research Agency through the Severo Ochoa and María de Maeztu Program for Centers and Units of Excellence in R$\&$D (CEX2020-001084-M) and by the European Union's Horizon 2020 research and innovation program under the Marie Skłodowska-Curie grant agreement No. 101154283.

\appendix
\section{Locally Hamiltonian flows with imperfect saddles}\label{sec:imper}
The main purpose of this section is to build a natural class of examples of ergodic locally Hamiltonian flows $\psi_\R$ for which $\varphi_f$ have singularities other than logarithmic and power.
More precisely, we find a natural family of functions $\theta$ satisfying \eqref{prop:theta tau0} for which $\varphi_f\in\Upsilon_\theta(\bigsqcup_{\alpha\in \mathcal{A}}
I_{\alpha})$. Finally, for this type of flows $\psi_\R$ which are of hyper-elliptic type (there exists an involution $\mathcal{I}$ such that $\psi_t\circ\mathcal{I}=\mathcal{I}\circ\psi_{-t}$), we construct anti-symmetric observables $f$ (i.e.\ $f\circ \mathcal{I}=-f$) for which the skew extension $\psi^f_\R$ is ergodic, using Theorem~\ref{thm:anti} for $\varphi_f\in\Upsilon_\theta(\bigsqcup_{\alpha\in \mathcal{A}}
I_{\alpha})$.

To implement this idea into practice, in Sections~\ref{sec:simple} and \ref{sec:impmulti} we consider some local Hamiltonians having degenerate saddle points that are not perfect. Such non-perfect saddles have pairs of tangent separatrices, unlike the perfect situation, see Figures~\ref{fig:RR} and \ref{fig:QQ}. In Section~\ref{sec:simple}, we perform an analysis of transition times near simple imperfect saddles, with only four separatrices, see Figure~\ref {fig:RR}. Using the results obtained in Section~\ref{sec:impmulti}, we examine imperfect multi-saddles. These, in turn, are used to construct hyper-elliptic locally Hamiltonian flows with saddle loops and their ergodic skew extensions in Section~\ref{sec:impfinal}.

\subsection{Degenerate simple saddles}\label{sec:simple}
Let $H:[-1,1]^2\to\R$ be a $C^2$-hamiltonian such that $H(x,y)=-H_1(x)+H_2(y)$ and $H_1,H_2:[-1,1]\to\R_{\geq 0}$ are unimodal functions with the minimum at zero, i.e.
\[H_1(0)=H_2(0)=0,\quad H'_1(0)=H'_2(0)=0,\text{ and }xH_1'(x)>0,\quad xH_2'(x)>0\text{ for }x\neq 0.\]
Most often, we will assume that $H_1,H_2$ are even, i.e.
\begin{equation}\label{eq: symmetricH}
	 H_1(-x)=H_1(x)\ \text{and}\ H_2(-x)=H_2(x).
	 \end{equation}
Then, the corresponding Hamiltonian equation is of the form
\[x'=H_2'(y),\quad y'=H_1'(x).\]
We denote the corresponding local Hamiltonian flow on $[-1,1]^2$ by $(\Psi_t)$. It has four separatrixes, which together form the set of solutions to the equation $H_1(x)=H_2(y)$. The set of solutions is the union of two graphs of the maps $H_{2,+}^{-1}\circ H_1$ and $H_{2,-}^{-1}\circ H_1$, where $H_{i,\pm}^{-1}$ are two inverse branches of $H_i$ for $i=1,2$. Both maps $H_{2,\pm}^{-1}\circ H_1$ are continuous, and $C^2$ except perhaps zero. We will deal also with level sets $L_s:=\{(x,y)\in [-1,1]^2:-H_1(x)+H_2(y)=s\}$ for $s\in\R\setminus \{0\}$. Each of them is the union of two disjoint $C^2$-curves. Let $\gamma$ be a compact smooth transversal curve. Let us consider its parametrization $\gamma:[a,b]\to \R^2$ such that $H(\gamma(s))=s$ for $s\in[a,b]$. This parametrization is of class $C^2$ and is standard in the sense that $\int_{\gamma([a,s])}\eta=s-a$, where $\eta=\frac{\partial H}{\partial x}dx+\frac{\partial H}{\partial y}dy$. Indeed,
\begin{gather*}
\int_{\gamma([a,s])}\eta=\int_a^s\Big(\frac{\partial H}{\partial x}(\gamma(t))\gamma'_1(t)+\frac{\partial H}{\partial y}(\gamma(t))\gamma'_2(t)\Big)dt\\
=\int_a^s\frac{d}{dt}H(\gamma(t))dt=H(\gamma(s))-H(\gamma(a))=s-a.
\end{gather*}

\begin{figure}[h!]
 \includegraphics[width=0.4\textwidth]{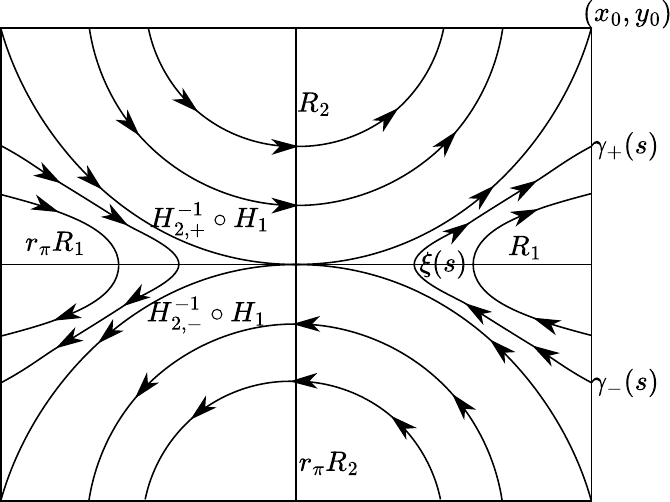}
 \caption{The phase portrait of the local Hamiltonian on the rectangle $R$. {The map $r_{\pi}$ is the rotation by $\pi$. The symmetry of the picture is due to \eqref{eq: symmetricH}.} } \label{fig:RR}
\end{figure}

For any $0<x_0\leq 1$ and $0<\vep<s_0:=H_1(x_0)$ (we will constantly assume that $s_0=H_1(x_0)\leq 1$), let $\gamma^+,\gamma^-$ are two curves (lines) given by
\begin{equation}\label{def:g+-}
\gamma^+(s)=(x_0,H_{2,+}^{-1}(H_1(x_0)-s))\text{ and }\gamma^-(s)=(x_0,H_{2,-}^{-1}(H_1(x_0)-s))\text{ for }s\in [0,\vep].
\end{equation}
Then,
\begin{equation}\label{eq:stand}
H(\gamma^\pm(s))=-H_1(x_0)+H_2(H_{2,\pm}^{-1}(H_1(x_0)-s))=-s,
\end{equation}
so their reverse parametrizations are standard. Since $\gamma^+(s),\gamma^-(s)\in L_{-s}$ for $s\in(0,\vep]$, they lay on the same orbit of $(\Psi_t)$. We denote the time it takes for the flow to get from $\gamma^-(s)$ to $\gamma^+(s)$ by $r(s)>0$. Since the orbit starts from the lower half-plane (from $\gamma^-(s)$), ends in the upper half-plane (at $\gamma^+(s)$), and goes up all the time, there is exactly one its intersection point with the horizontal axis, that is $(\xi(s),0)=\Psi_{t(s)}(\gamma^-(s))$ with $0<t(s)<r(s)$. Moreover,
\[-H_1(\xi(s))=H(\xi(s),0)=H(\gamma^-(s))=-s,\text{ so }\xi(s)=H^{-1}_{1,+}(s).\]

Let $f:\R^2\to\R$ be a smooth function. We are interested in studying the behavior of ergodic integrals
\[\varphi_f(s)=\int_0^{r(s)}f(\Psi_t(\gamma^-(s)))dt=\int_0^{t(s)}f(\Psi_t(\gamma^-(s)))dt+\int_{t(s)}^{r(s)}f(\Psi_t(\gamma^-(s)))dt\]
when $s$ converges to zero. Let $(x(t),y(t))=\Psi_t(\gamma^-(s))$. Then $H_1(x(t))-H_2(y(t))=s$, so
\[y(t)=\left\{
\begin{array}{rl}
H_{2,-}^{-1}(H_1(x(t))-s)&\text{for }t\in[0,t(s)]\\
H_{2,+}^{-1}(H_1(x(t))-s)&\text{for }t\in[t(s),r(s)].
\end{array}
\right.\]
Moreover, $x'(t)=H'_2(y(t))=H'_2(H_{2,+}^{-1}(H_1(x(t))-s))$. Substituting $u= u(t)=H_1(x(t))$, for $t\in[0,t(s)]$, we get
\[du=H'_1(x(t))x'(t)dt=H'_1(H^{-1}_{1,+}(u))H'_2(H_{2,-}^{-1}(u-s))dt\]
with
\[u(0)=H_1(x(0))=H_1(\gamma^-_1(s))=H_1(x_0)=s_0\text{ and }u(t(s))=H_1(x(t(s)))=H_1(\xi(s))=s,\]
and for $t\in[t(s),r(s)]$, we get
\[du=H'_1(x(t))x'(t)dt=H'_1(H^{-1}_{1,+}(u))H'_2(H_{2,+}^{-1}(u-s))dt\]
with
\[u(r(s))=H_1(x(r(s)))=H_1(\gamma^+_1(s))=H_1(x_0)=s_0\text{ and }u(t(s))=s.\]
Therefore, integrating by substitution, we have
\begin{align*}
\int_0^{t(s)}f(\Psi_t(\gamma^-(s)))dt&=-\int_s^{s_0}\frac{f(H^{-1}_{1,+}(u),H_{2,-}^{-1}(u-s))}{H'_1(H^{-1}_{1,+}(u))H'_2(H_{2,-}^{-1}(u-s))}du\\
\int_{t(s)}^{r(s)}f(\Psi_t(\gamma^-(s)))dt&=\int_s^{s_0}\frac{f(H^{-1}_{1,+}(u),H_{2,+}^{-1}(u-s))}{H'_1(H^{-1}_{1,+}(u))H'_2(H_{2,+}^{-1}(u-s))}du.
\end{align*}
Recall that $H_1$ and $H_2$ are even. Then, $-H^{-1}_{i,-}=H^{-1}_{i,+}=:H^{-1}_{i}$ and
\begin{equation}\label{eq:phif}
\varphi_f(s)=\int_s^{s_0}\frac{f(H^{-1}_{1}(u),H_{2}^{-1}(u-s))+f(H^{-1}_{1}(u),-H_{2}^{-1}(u-s))}{H'_1(H^{-1}_{1}(u))H'_2(H_{2}^{-1}(u-s))}du.
\end{equation}

Let $g:\R_{\geq 0}\to\R_{> 0}$ be a $C^1$-map such that
\begin{equation}\label{prop:g}
g\text{ is increasing and concave with}\quad \lim_{x\to+\infty}g(x)=+\infty\text{ and }\int_2^\infty\frac{1}{xG(\log x)}\,dx=+\infty,
\end{equation}
where $G$ is the primitive of $g$ with $G(0)=0$. {Observe that $g(x)=\log(x)$ satisfies \eqref{prop:g}.}

\begin{remark}
Note that
\begin{equation}\label{eq:ga}
\lim_{x\to+\infty}\frac{g(x+a)}{g(x)}=1\text{ for any }a>1.
\end{equation}
Indeed, by \eqref{prop:g}, $\frac{d}{dx}\log g(x)=\frac{g'(x)}{g(x)}\to 0$ as $x\to+\infty$. Therefore, for any $a>0$,
\[\log 1\leq\liminf_{x\to+\infty}\log \frac{g(x+a)}{g(x)}\leq \limsup_{x\to+\infty}\log \frac{g(x+a)}{g(x)}\leq\limsup_{x\to+\infty}a\frac{d}{dx}\log g(x)=0,\]
which gives \eqref{eq:ga}.
\end{remark}

\begin{lemma}
Let $k\geq 1$. For any $C^k$-map $g:\R_{\geq 0}\to\R_{> 0}$ satisfying \eqref{prop:g}, there exists an even $C^2$-map $H:[-x_0,x_0]\to[0,s_0]$ with $0<x_0\leq 1$ and $s_0:=H(x_0)\leq 1$, such that
\[(H^{-1})'(u)=\frac{g(-\log u)}{\sqrt{u}}\text{ for }u\in(0,s_0], \text{ and } H(0)= H'(0)=H''(0)=0.\]
Moreover, $H$ is of class $C^{k+1}$ on $(0,x_0]$.
\end{lemma}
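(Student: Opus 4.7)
The plan is to construct $H$ by first defining its inverse via a direct integration. Set
\[\Phi(u) := \int_0^u \frac{g(-\log v)}{\sqrt{v}}\,dv, \quad u \in [0,1].\]
The integral converges near $v=0$ because $g$ is concave and positive on $[0,\infty)$, so $g(y) = O(y)$ and hence $g(-\log v)/\sqrt{v} = O(\log(1/v)/\sqrt{v})$, which is integrable near $0$. By the fundamental theorem of calculus, $\Phi$ is continuous on $[0,1]$, is $C^{k+1}$ on $(0,1]$ with $\Phi'(u) = g(-\log u)/\sqrt{u} > 0$, and is strictly increasing. I would then pick $s_0 \in (0,1]$ small enough that $x_0 := \Phi(s_0) \leq 1$, define $H := \Phi^{-1}$ on $[0,x_0]$, and extend it to $[-x_0,x_0]$ by $H(-x) := H(x)$. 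By construction, $H(0)=0$, $H(x_0)=s_0$, and $(H^{-1})'(u) = g(-\log u)/\sqrt{u}$ on $(0,s_0]$.

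The $C^{k+1}$ regularity of $H$ on $(0,x_0]$ is immediate from the inverse function theorem since $\Phi'>0$ is $C^k$ there. The heart of the proof is verifying that the even extension is $C^2$ at $0$ with $H(0)=H'(0)=H''(0)=0$. Setting $\psi(u) := \sqrt{u}/g(-\log u)$, the chain rule expresses both derivatives through $\psi$:
\[H'(x) = \psi(H(x)), \qquad H''(x) = \psi'(H(x))\psi(H(x)).\]
A direct computation gives
\[\psi'(u)\psi(u) = \frac{1}{2\,g(-\log u)^2}\left(1 + \frac{2\,g'(-\log u)}{g(-\log u)}\right).\]
The key estimate is that as $u \to 0^+$ one has $g(-\log u) \to +\infty$ (by hypothesis \eqref{prop:g}), while $g'(-\log u)$ stays bounded: indeed, $g$ concave and nonnegative on $[0,\infty)$ forces $g'$ to be nonnegative and non-increasing, hence bounded by $g'(0)$. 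Consequently $\psi(u) \to 0$ and $\psi'(u)\psi(u) \to 0$, so $H'(x) \to 0$ and $H''(x) \to 0$ as $x \to 0^+$, giving continuous extensions $H'(0) = H''(0) = 0$.

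Finally, the even extension is automatically $C^2$ at $0$: after reflection $H'$ continues as an odd function with matching one-sided limit $0$, and $H''$ continues as an even function with matching limit $0$. I expect the main obstacle to be precisely this asymptotic analysis at the origin, and within it, the control of the ratio $g'/g$ using concavity plus unboundedness of $g$; the remainder of the argument consists of straightforward applications of the inverse function theorem and standard integration estimates.
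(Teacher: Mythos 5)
Your proof is correct and follows essentially the same approach as the paper's: define the inverse as a primitive of $g(-\log u)/\sqrt{u}$, invert, extend evenly, and show $H'(x)\to 0$, $H''(x)\to 0$ at the origin using concavity (hence $g'$ bounded) and unboundedness of $g$. Your auxiliary function $\psi$ is just the paper's formula $H'(x)=\sqrt{H(x)}/g(-\log H(x))$ written more explicitly, and your product $\psi'\psi$ reproduces the same expression $\tfrac{1}{2g^2}+\tfrac{g'}{g^3}$.
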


\begin{proof}
Let us start by observing that, by the concavity of $g$, the function $\frac{g(-\log u)}{\sqrt{u}}=O( \frac{-\log u}{\sqrt{u}})$ is integrable. Let us define the inverse $H^{-1}:[0,s_0]\to[0,x_0]$, which is given by
\[H^{-1}(x)=\int_0^x \frac{g(-\log u)du}{\sqrt{u}}\]
which is a continuous increasing function and is $C^{k+1}$ on $(0,s_0]$ {and $H(0)=0$}. More precisely, we choose $0<s_0\leq 1$ such that the integral $x_0=H^{-1}(s_0)$ is at most $1$.
Then, its inverse can be extended to an even continuous function $H:[-x_0,x_0]\to [0,s_0]$ such that $H$ is of class $C^{k+1}$ on $(0,x_0]$, $H(0)=0$, and $(H)'(x)=\frac{\sqrt{H(x)}}{g(-\log H(x))}$ for $x\in (0,x_0]$.
To show that $H$ on $[-1,1]$ is class $C^2$, it is enough to show that
\[\lim_{x\to 0}H'(x)=\lim_{x\to 0}H''(x)=0.\]
As $H(0)=0$, by \eqref{prop:g}, we have
\[(H)'(x)=\frac{\sqrt{H(x)}}{g(-\log H(x))}\to 0\text{ as }x\to 0,\]
and
\begin{align*}
(H)''(x)&=\frac{(H)'(x)}{2\sqrt{H(x)}g(-\log H(x))}+\frac{(H)'(x)g'(-\log H(x))}{\sqrt{H(x)}g^2(-\log H(x))}\\
&=\frac{1}{2g^2(-\log H(x))}+\frac{g'(-\log H(x))}{g^3(-\log H(x))}\to 0\text{ as }x\to 0.
\end{align*}
\end{proof}

{We now provide a construction of the Hamiltonian in the simple saddle case, where one of the components $H_i$ of the Hamiltonian is simply a square, while the other is a perturbation of a square so that finally, the ergodic integral obtained via approaching the saddle, is of the form for which the Theorem \ref{mainthm:mini} holds in the simple case. Moreover, this construction will also be crucial in the general (non-simple saddle) case, where the picture seen on some of the pairs of neighboring sectors can be viewed as a pre-image of the simple saddle picture through the map $z\mapsto z^m$, where $2m$ is the multiplicity of the saddle. The cases depend on the coordinate on which we see the perfect square. It is worth mentioning that in the final construction, we use Case 2. However, Case 1 (depicted in Figure \ref{fig:RR}) corresponds to the picture seen in Case 2 rotated by $\tfrac{\pi}{2}$ and allows for some simplification in the computation.}

\noindent
\textbf{Case 1.}
Suppose that $H_2(y)=y^2$, $H_1$ is an even $C^2$-map such that $(H^{-1}_{1})'(u)=\frac{g(-\log u)}{\sqrt{u}}$ for $u\in(0,H_1(1)]$, and $f\equiv 1$.
Then, by \eqref{eq:phif}, we have
\[\varphi_1(s)=\int_s^{s_0}\frac{g(-\log u)}{\sqrt{u}\sqrt{u-s}}du=\int_1^{\frac{s_0}{s}}\frac{g(-\log su)}{\sqrt{u}\sqrt{u-1}}du\]
and, {by the Leibniz integration formula, we have}
\[\varphi'_1(s)=-\frac{s_0}{s}\frac{g(-\log s_0)}{\sqrt{s_0}\sqrt{s_0-s}}-\int_1^{\frac{s_0}{s}}\frac{g'(-\log su)}{s\sqrt{u}\sqrt{u-1}}du.\]
Moreover,
\begin{align*}
-s\varphi'_1(s)&=\frac{g(-\log s_0)\sqrt{s_0}}{\sqrt{s_0-s}}+\int_1^{\frac{s_0}{s}}\frac{g'(-\log su)}{\sqrt{u}\sqrt{u-1}}du\\
&\geq g(-\log s_0)+\int_1^{\frac{s_0}{s}}\frac{g'(-\log su)}{u}du=g(-\log s),
\end{align*}
and for any $n\geq 2$ and any $s\in(0,s_0/n]$, we have
\begin{align*}
-s\varphi'_1(s)&=\frac{g(-\log s_0)\sqrt{s_0}}{\sqrt{s_0-s}}+\int_1^{\frac{s_0}{s}}\frac{g'(-\log su)}{\sqrt{u}\sqrt{u-1}}du\\
&\leq \sqrt{\frac{n}{n-1}}g(-\log s_0)+\int_1^{n}\frac{g'(-\log su)}{\sqrt{u}\sqrt{u-1}}du+\sqrt{\frac{n}{n-1}}\int_n^{\frac{s_0}{s}}\frac{g'(-\log su)}{u}du\\
&\leq \sqrt{\frac{n}{n-1}}g(-\log s)+\int_1^{n}\frac{g'(0)}{\sqrt{u}\sqrt{u-1}}du.
\end{align*}
It follows that there exists $C>1$ such that
\begin{gather}\label{eq:ups1}
\frac{g(-\log s)}{s}\leq |\varphi'_1(s)|\leq C\frac{g(-\log s)}{s}\text{ for }s\in(0,s_0/2],\\
\nonumber 1\leq\liminf_{s\to 0}\frac{-\varphi'_1(s)}{\frac{g(-\log s)}{s}}\leq \limsup_{s\to 0}\frac{-\varphi'_1(s)}{\frac{g(-\log s)}{s}}\leq\sqrt{\frac{n}{n-1}}.
\end{gather}
By passing with $n$ to infinity, we get
\begin{equation}\label{eq:ups2}
\lim_{s\to 0}{\frac{s}{g(-\log s)}{\varphi'_1(s)}}=-1.
\end{equation}

\begin{remark}
Note that, in this case, the map $H^{-1}_2\circ H_1=\sqrt{H_1}:[-x_0,x_0]\to\R$ is of class $C^1$ with the derivative at zero equal zero. Indeed, this follows from
\[(\sqrt{H_1})'(x)=\frac{(H_1)'(x)}{2\sqrt{H_1(x)}}=\frac{1}{2g(-\log H_1(x))}\to 0\text{ as }x\to 0.\]
Therefore, the level set $L_0$ is the union of the graphs of two $C^1$-maps $\pm\sqrt{H_1}$, and the phase portrait of the local Hamiltonian flow is as shown in Figure~\ref{fig:RR}.
\end{remark}
\medskip

\noindent
\textbf{Case 2.} Suppose that $H_1(x)=x^2$, $H_2$ is an even $C^2$-map such that $(H^{-1}_{2})'(u)=\frac{g(-\log u)}{\sqrt{u}}$, and $f\equiv 1$. Then,
\[\varphi_1(s)=\int_s^{s_0}\frac{g(-\log (u-s))}{\sqrt{u}\sqrt{u-s}}du=\int_1^{\frac{s_0}{s}}\frac{g(-\log s(u-1))}{\sqrt{u}\sqrt{u-1}}du,\]
and, for any $n\geq 2$ and any $s\in(0,s_0/n]$, {the Leibniz integration formula yields}
\begin{align*}
-s\varphi'_1(s)&=\frac{g(-\log (s_0-s))\sqrt{s_0}}{\sqrt{s_0-s}}+\int_1^{\frac{s_0}{s}}\frac{g'(-\log s(u-1))}{\sqrt{u}\sqrt{u-1}}du\\
&=\frac{g(-\log (s_0-s))\sqrt{s_0}}{\sqrt{s_0-s}}+\int_1^{n}\frac{g'(-\log s(u-1))}{\sqrt{u}\sqrt{u-1}}du
+\int_{n-1}^{\frac{s_0}{s}-1}\frac{g'(-\log su)}{\sqrt{u+1}\sqrt{u}}du.
\end{align*}
Moreover, {since $s\le s_0/n$,} we have
\begin{gather*}
0\leq\frac{g(-\log (s_0-s))\sqrt{s_0}}{\sqrt{s_0-s}}\leq\sqrt{\frac{n}{n-1}}g(-\log (s_0(1-\tfrac{1}{n})))=:C_n,\\
0\leq \int_1^{n}\frac{g'(-\log s(u-1))}{\sqrt{u}\sqrt{u-1}}du\leq \int_1^{n}\frac{g'(0)}{\sqrt{u}\sqrt{u-1}}du=:D_n<+\infty.
\end{gather*}
It follows that for $s\in(0,s_0/2]$, we have
\begin{align*}
-s\varphi'_1(s)&
\leq C_2+ D_2+\int_1^{\frac{s_0}{s}-1}\frac{g'(-\log su)}{\sqrt{u+1}\sqrt{u}}du \\
&\leq
C_2+ D_2+\int_1^{\frac{s_0}{s}}\frac{g'(-\log su)}{u}du\leq C_2+ D_2+g(-\log s),
\end{align*}
and any $s\in(0,s_0/n]$, we have
\begin{align*}
-s\varphi'_1(s)&
\geq \int_{n-1}^{\frac{s_0}{s}-1}\frac{g'(-\log su)}{\sqrt{u+1}\sqrt{u}}du\geq \sqrt{1-\frac{1}{n}}\int_{n-1}^{\frac{s_0}{s}(1-\frac{1}{n})}\frac{g'(-\log su)}{u}du\\
&\geq \sqrt{1-\frac{1}{n}}(g(-\log (n-1)s)-g(-\log (s_0(1-\tfrac{1}{n}))).
\end{align*}
It follows that there are $0<c<C$ such that
\begin{gather}\label{eq:ups3}
c\frac{g(-\log s)}{s}\leq |\varphi'_1(s)|\leq C\frac{g(-\log s)}{s}\text{ for }s\in (0,s_0/4],\\
\nonumber \sqrt{1-\frac{1}{n}}\leq\liminf_{s\to 0}\frac{-\varphi'_1(s)}{\frac{g(-\log (n-1)s)}{s}}, \text{ and } \limsup_{s\to 0}\frac{-\varphi'_1(s)}{\frac{g(-\log s)}{s}}\leq 1.
\end{gather}
Using \eqref{eq:ga}, by passing with $n$ to infinity, we get
\begin{equation}\label{eq:ups4}
\lim_{s\to 0}{\frac{s}{g(-\log s)}{\varphi'_1(s)}}=-1.
\end{equation}

\medskip

Let us consider two maps $\tau:(0,1]\to\R_{\geq 0}$ given by $\tau(s)=\frac{s}{g(-\log s)}$, and $\theta:[1,+\infty)\to\R_{\geq 0}$ given by
\[\theta(x)=\int_1^x\frac{du}{u^2\tau(1/u)}=\int_1^x\frac{g(\log u)}{u}du=\int_0^{\log x}g(u)du=:G(\log x).\]

\begin{proposition}
Let $g:\R_{\geq 0}\to \R_{>0}$ be a $C^1$-map satisfying \eqref{prop:g}. Then, $\theta$ and $\tau$ satisfy \eqref{prop:theta tau}, and $\theta$ is slowly varying.

Suppose that $H:[-1,1]^2\to \R$ be a $C^2$-map such that $H(x,y)=H_1(x)+H_2(y)$ with
\begin{itemize}
\item $H_1(x)=x^2$, $H_2$ is an even $C^2$-map such that $(H^{-1}_{2})'(u)=\frac{g(-\log u)}{\sqrt{u}}$, or
\item $H_2(y)=y^2$, $H_1$ is an even $C^2$-map such that $(H^{-1}_{1})'(u)=\frac{g(-\log u)}{\sqrt{u}}$.
\end{itemize}
Then, $\varphi_1\in {C^1((0,s_0])}$%
and there are $0<c<C$ such that
\begin{equation}\label{eq:ups}
c\leq\tau(s)|\varphi_1'(s)|\leq C\text{ for }s\in(0,s_0/4]\text{ with }\lim_{s\to 0}\tau(s)\varphi_1'(s)=-1.
\end{equation}
\end{proposition}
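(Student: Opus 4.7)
The proof combines three largely independent verifications.

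First, I will check that $\theta$ and $\tau$ satisfy \eqref{prop:theta tau}. By construction $\theta(x) = G(\log x)$ is $C^1$ and increasing (since $g > 0$), and the substitution $u = \log x$ identifies $\int_{x_0}^{+\infty} \frac{dx}{x\theta(x)}$ with $\int^{+\infty} \frac{du}{G(u)}$, which diverges by the hypothesis in \eqref{prop:g}. A direct computation gives $\theta'(1/s) = s\,g(-\log s)$, so $\tau(s) = s^2/\theta'(1/s) = s/g(-\log s)$, as already observed in the text. Differentiating,
$$\tau'(s) = \frac{g(-\log s) + g'(-\log s)}{g(-\log s)^2} > 0,$$
since $g > 0$ and $g' \geq 0$ by the monotonicity of $g$.

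Second, I will prove that $\theta$ is slowly varying. Fix $\eta > 0$ and set $a = \log \eta$. Then $\theta(\eta x)/\theta(x) = G(\log x + a)/G(\log x)$, so it suffices to bound $G(y+a)/G(y)$ away from $0$ and $\infty$, uniformly for $y$ large. Concavity together with monotonicity of $g$ gives $g(y) - g(y/2) \geq g'(y)\cdot y/2$, whence $g'(y)\cdot y \leq 2g(y)$, that is, $(\log g)'(y) \leq 2/y$. Integrating this inequality shows $g(y+|a|)/g(y/2) = O(1)$ as $y \to \infty$, and combining this with $G(y) \geq (y/2)g(y/2)$ (since $g$ is increasing) and $G(y+a) - G(y) \leq |a|\,g(y+|a|)$ one obtains $G(y+a)/G(y) = 1 + O(1/y)$. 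The lower bound follows symmetrically, yielding the uniform two-sided estimate required.

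Third, the estimate \eqref{eq:ups} on $\varphi_1'$ follows by directly combining Cases~1 and~2 already computed above. In both situations the explicit integral representation \eqref{eq:phif} for $\varphi_1$, together with the Leibniz integration rule, yields $\varphi_1 \in C^1((0,s_0])$. The two-sided bounds \eqref{eq:ups1} and \eqref{eq:ups3} then rewrite as $c \leq -s\varphi_1'(s)/g(-\log s) \leq C$ on $(0, s_0/4]$ (taking the more restrictive range from Case~2), which is precisely $c \leq \tau(s)|\varphi_1'(s)| \leq C$; while \eqref{eq:ups2} and \eqref{eq:ups4} give $\lim_{s\to 0}\tau(s)\varphi_1'(s) = -1$. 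The only delicate point of the whole argument is the slow-variation statement, whose key input is the concavity inequality $(\log g)'(y) \leq 2/y$ used above.
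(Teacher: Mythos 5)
Your proposal is correct, and the first and third parts follow the paper's route essentially verbatim (computing $\tau'$, invoking \eqref{prop:g} via $\theta(x)=G(\log x)$ for divergence, and then reading \eqref{eq:ups} off the two-sided bounds \eqref{eq:ups1}--\eqref{eq:ups4} from Cases~1 and~2). Where you diverge is the slow-variation argument. The paper applies L'H\^opital to $\theta(\alpha x)/\theta(x)$, reducing to $g(x+\log\alpha)/g(x)\to 1$, which it obtains from \eqref{eq:ga}; \eqref{eq:ga} in turn is derived (in the preceding remark) from the assertion $(\log g)'(x)\to 0$, which the paper leaves to the reader. You instead establish the explicit concavity estimate $(\log g)'(y)\leq 2/y$ (from $g(y)-g(y/2)\geq g'(y)\,y/2$ and $g>0$), and then bound $G(y+a)/G(y)$ directly using $G(y)\geq\tfrac{y}{2}g(y/2)$ and $|G(y+a)-G(y)|\leq|a|\,g(y+|a|)$. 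This is a somewhat more elementary and self-contained derivation: it avoids L'H\^opital, gives a quantitative rate $G(y+a)/G(y)=1+O(1/y)$ rather than merely a limit, and in particular supplies the missing justification for why $(\log g)'\to 0$ (your bound $(\log g)'(y)\leq 2/y$ implies it). Either argument is adequate for the result.
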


\begin{proof}
By definition, $\theta$ is increasing and
\[\tau'(s)=\frac{1}{g(-\log s)}+\frac{g'(-\log s)}{g^2(-\log s)}>0,\]
so $\tau$ is also increasing. By \eqref{prop:g}, we also have $\int_{2}^{+\infty}\frac{dx}{x\theta(x)}=+\infty$, so $\theta$ and $\tau$ satisfy \eqref{prop:theta tau}.

In view of \eqref{eq:ga}, for any $\alpha>1$, we have
\[\lim_{x\to+\infty}\frac{\theta(\alpha x)}{\theta(x)}=\alpha\lim_{x\to+\infty}\frac{\theta'(\alpha x)}{\theta'(x)}=\lim_{x\to+\infty}\frac{g(x+\log \alpha)}{g(x)}=1,\]
the map $\theta$ is also slowly varying.

Finally, \eqref{eq:ups} (in particular, $\varphi_1\in \Upsilon_\theta((0,s_0])$) follows directly from \eqref{eq:ups1}, \eqref{eq:ups2}, \eqref{eq:ups3}, and \eqref{eq:ups4}.
\end{proof}

\subsection{Imperfect multi-saddles}\label{sec:impmulti}
{We are now ready to consider flows with saddles of higher multiplicities. A large part of what follows was inspired by techniques used in \cite{Fr-Ki2}, where a very thorough local analysis of flow with perfect saddles was performed. }

Let us consider the standard Hamiltonian equation on the complex plane $z'=-2i\frac{\partial H}{\partial\bar z}(z,\bar{z})$.
Suppose that $\widetilde{H}:[-1,1]^2\to\R$ is a $C^2$-Hamiltonian which is $C^\infty$ outside $(0,0)$. Later on, it will simply be the Hamiltonian considered in Section~\ref{sec:simple}. Let $m\geq 2$. We denote by $\mathcal{D}=\mathcal{D}_m$ the pre-image of the square $[-1,1]\times[-1,1]$ by the map $\C\ni\omega\mapsto \omega^m\in\C$. Let $H:\mathcal{D}\to\R$ be the new Hamiltonian given by $H(z,\bar{z})=\widetilde H(z^m,\bar{z}^m)$. The map $H$ is of class $C^{2m}$. We analyze the dynamics of trajectories of the Hamiltonian flow derived from the equation $z'=-2i\frac{\partial H}{\partial\bar z}(z,\bar{z})$ by changing variables $\omega=z^m$ and comparing to the flow associated with Hamiltonian $\widetilde H$. We study our new local flow separately on each angular sector of $\mathcal{D}$, which is the image of $[-1,1]^2$ through any branch of the $m$-th root.

Let $G_0:\C\to\C$ be the principal branch of the $m$-th root $G_0(re^{\iota t})=r^{1/m}e^{\iota t/m}$ if $t\in[-\pi,\pi$) and let $\zeta$
be the principal $2m$-th
root of unity.
For any $0\leq l< 2m$ let $G_l:\C\to\C$ be given by $G_l=\zeta^lG_0$.

As $z'=-2i\frac{\partial H}{\partial\bar z}(z,\bar{z})$, in new coordinates $\omega=z^m$, we have
\begin{align}\label{eq:passflows}
\begin{aligned}
\omega'&=mz^{m-1}z'=-2imz^{m-1}\frac{\partial H}{\partial\bar z}(z,\bar{z})=-2imz^{m-1}\frac{d}{d\bar z}\widetilde H(z^m,\bar{z}^m)\\
&=-2im^2|z|^{2(m-1)}\frac{\partial \widetilde H}{\partial\bar \omega}(z^m,\bar{z}^m)
=-2im^2|\omega|^{2\frac{m-1}{m}}\frac{\partial \widetilde H}{\partial\bar \omega}(\omega,\bar{\omega}).
\end{aligned}
\end{align}
Suppose that $\widetilde H(x,y)=-H_1(x)+H_2(y)$, where $H_1,H_2$ are even $C^2$-maps as in Section~\ref{sec:simple}. Choose $x_0,y_0,s_0\in(0,1]$ such that $H_1(x_0)=H_2(y_0)=s_0$ and let $R= R(H_1,H_2):=[-x_0,x_0]\times[-y_0,y_0]\subseteq[-1,1]^2$. Let
\begin{align*}
R_1=R_1(H_1,H_2):&=\{(x,y)\in(0,x_0]\times [-y_0,y_0]:H_1(x)<H_2(y)\},\\
R_2:=R_1(H_2,H_1):&=\{(x,y)\in[-x_0,x_0]\times (0,y_0]:H_1(x)>H_2(y)\}.
\end{align*}
For any $\alpha\in \R$ denote by $r_\alpha:\C\to\C$ the rotation $r_\alpha z=e^{i\alpha}z$.
Then the sets $R_1, R_2, r_{\pi}R_1, r_{\pi}R_2$ give a splitting of $R$ (after removing separatrices) into invariant sets for the local Hamiltonian flow
 $(\widetilde \Psi_t)$ restricted to $R$ and related to the Hamiltonian equation $\omega'=-2i\frac{\partial \widetilde H}{\partial\bar \omega}(\omega,\bar{\omega})$, see Figure~\ref{fig:RR}.

Let $\mathcal{Q}=\mathcal{Q}(s_0) \subseteq \C$ be the pre-image of the rectangle $R$ by the map $\C\ni\omega\mapsto \omega^m\in\C$.
We denote the (local) Hamiltonian flow on $\mathcal{Q}$ related to the Hamiltonian $H$ by $(\Psi_t)$. Let us consider the family of sets $\{\mathcal{Q}_j:1\leq j\leq 4m\}$ given by
\[\mathcal{Q}_{2j+1}:=G_j R_1,\quad \mathcal{Q}_{2j+2}:=G_j R_2\quad\text{for}\quad 0\leq j<2m.\]
Then $\{\mathcal{Q}_j:1\leq j\leq 4m\}$ gives a splitting of $\mathcal{Q}$ (after removing separatrices) into, roughly speaking, invariant sets for the local Hamiltonian flow
 $(\Psi_t)$.

Let $f:\mathcal Q\to\R$ be any smooth map. In this section, we are interested in understanding the singularity of the function that counts ergodic integrals of $f$ from the entrance to the exit of the set $\mathcal{Q}_j$ as an orbit approaches separatrices. We first focus on the set $\mathcal{Q}_1$, but as we will see, this is enough to understand the singularities for all sets $\mathcal{Q}_j$.

Note that $G_0:R_1\to \mathcal{Q}_1$ is an analytic diffeomorphism. We denote by $(\widehat \Psi_t)$ the local flow on $R_1$ conjugated by $G_0$ to $(\Psi_t)$. In view of \eqref{eq:passflows}, $(\widehat \Psi_t)$ is the solution to the equation $\omega'=-2im^2|\omega|^{2\frac{m-1}{m}}\frac{\partial \widetilde H}{\partial\bar \omega}(\omega,\bar{\omega})$, so is a time change of the flow
 $(\widetilde \Psi_t)$ related to the equation $\omega'=-2i\frac{\partial \widetilde H}{\partial\bar \omega}(\omega,\bar{\omega})$.

 Let us consider two curves in $\mathcal{Q}_1$, $G_0\circ \gamma^{\pm}:[0,s_0]\to \mathcal{Q}_1$, where $\gamma^\pm$ are defined in \eqref{def:g+-}. In view of \eqref{eq:stand},
 \[H(G_0\circ\gamma^\pm(s))=\widetilde H((G_0\circ\gamma^\pm(s))^m)=\widetilde H(\gamma^\pm(s))=-s,\]
 so the inverse of $G_0\circ \gamma^{\pm}$ are standard for the Hamiltonian $H$. We denote the time it takes for the flow $(\Psi_t)$ to get from $G_0\circ \gamma^-(s)$ to $G_0\circ\gamma^+(s)$ by $r(s)>0$.
 Then, $r(s)$ is also the time of passing from $\gamma^-(s)$ to $\gamma^+(s)$ for the flow $(\widehat \Psi_t)$. Let $\tilde{r}(s)$ be the time of passing from $\gamma^-(s)$ to $\gamma^+(s)$ for the flow $(\widetilde \Psi_t)$.
 As $(\widehat \Psi_t)$ is a time change of $(\widetilde \Psi_t)$, in view of \eqref{eq:passflows}, for any $f:\mathcal{D}\to\R$, we have
 \begin{equation*}
\varphi^1_f(s):=\int_0^{r(s)}f(\Psi_t(G_0\circ \gamma^-(s)))dt=\int_0^{r(s)}f(G_0\widehat \Psi_t(\gamma^-(s))))dt=\int_0^{\tilde r(s)}\frac{f(G_0\widetilde \Psi_t(\gamma^-(s))))}{m^2|\widetilde \Psi_t(\gamma^-(s))|^{2\frac{m-1}{m}}}dt.
 \end{equation*}
In view of \eqref{eq:phif}, this gives
\begin{equation}\label{eq:phifmulti}
\varphi^1_f(s)=\int_s^{s_0}\frac{f\circ G_0(H^{-1}_{1}(u),H_{2}^{-1}(u-s))+f\circ G_0(H^{-1}_{1}(u),-H_{2}^{-1}(u-s))}{m^2((H^{-1}_{1}(u))^2+(H_{2}^{-1}(u-s))^2)^{\frac{m-1}{m}}H'_1(H^{-1}_{1}(u))H'_2(H_{2}^{-1}(u-s))}du.
\end{equation}

\begin{remark}
The map $\varphi^1_f:(0,s_0]\to\R_{>0}$ counts the ergodic integrals of $f$ from the entrance to the exit of the set $\mathcal{Q}_1$ for the flow $(\Psi_t)$. Similarly, for any $1\leq j\leq 4m$, we define the map $\varphi^j_f:(0,s_0]\to\R_{>0}$ that counts the ergodic integrals of $f$ from the entrance to the exit of the set $\mathcal{Q}_j$. As $\mathcal{Q}_2=G_0 R_2(H_1,H_2)=G_0 R_1(H_2,H_1)$, to determine the map $\varphi^2_f$, it is enough to swap the roles of $H_1$ and $H_2$ in the formula \eqref{eq:phifmulti}. More precisely
\begin{equation}\label{eq:phifmulti2}
\varphi^2_f(s)=\int_s^{s_0}\frac{f\circ G_0(H^{-1}_{2}(u),H_{1}^{-1}(u-s))+f\circ G_0(H^{-1}_{2}(u),-H_{1}^{-1}(u-s))}{m^2((H^{-1}_{2}(u))^2+(H_{1}^{-1}(u-s))^2)^{\frac{m-1}{m}}H'_2(H^{-1}_{2}(u))H'_1(H_{1}^{-1}(u-s))}du.
\end{equation}
As $H_1$ and $H_2$ are even, the Hamiltonian $H(z,\bar{z})=\widetilde{H}(z^m,\bar{z}^m)$ is invariant under the rotation by $\zeta^j$. Therefore, for any $0\leq j<2m$ the flow $(\Psi_t)$ restricted to $\mathcal{Q}_{2j+1}$ (or $\mathcal{Q}_{2j+2}$) is conjugated by the rotation by $\zeta^j$ to the flow $(\Psi_t)$ restricted to $\mathcal{Q}_{1}$ (or $\mathcal{Q}_{2}$ resp.). It follows that
\begin{equation}\label{eq:phifmulti3}
\varphi^{2j+1}_f=\varphi^{1}_{f\circ \zeta^{-j}}\quad \text{and}\quad \varphi^{2j+2}_f=\varphi^{2}_{f\circ \zeta^{-j}}\quad \text{for any}\quad 0\leq j<2m.
\end{equation}
\end{remark}

\subsection*{Singularity analysis for $\varphi^{1}_f$}
In the remainder of this section, we will write $\varphi_f$ instead of $\varphi^{1}_f$ to simplify notation.
Let $f(z,\bar z)=f_{k,l}(z,\bar z)=z^k\bar z^l$ on $\mathcal Q$. Then, by \eqref{eq:phifmulti},
\begin{gather*}
\varphi_{f_{k,l}\circ \zeta^j}(s)
=\frac{2\zeta^{j(k-l)}}{m^2}\Re\int_s^{s_0}
\frac{\big(G_0(H^{-1}_{1}(u)+iH_{2}^{-1}(u-s))\big)^k\big(G_0(H^{-1}_{1}(u)-iH_{2}^{-1}(u-s))\big)^l}{((H^{-1}_{1}(u))^2+(H_{2}^{-1}(u-s))^2)^{\frac{m-1}{m}}H'_1(H^{-1}_{1}(u))H'_2(H_{2}^{-1}(u-s))}du\\
=\frac{2\zeta^{j(k-l)}}{m^2}\Re\int_s^{s_0}
\frac{\big(G_0(H^{-1}_{1}(u)+iH_{2}^{-1}(u-s))\big)^{k-(m-1)}}{\big(G_0(H^{-1}_{1}(u)-iH_{2}^{-1}(u-s))\big)^{(m-1)-l}}\frac{1}{H'_1(H^{-1}_{1}(u))H'_2(H_{2}^{-1}(u-s))}du.
\end{gather*}
We will deal with the case where $k+l=2(m-1)$. Then
\begin{align}\label{eq:kl}
\varphi_{f_{k,l}\circ \zeta^j}(s)
&=\frac{2\zeta^{j(k-l)}}{m^2}\Re\int_s^{s_0}
\frac{\Big(\frac{G_0(H^{-1}_{1}(u)+iH_{2}^{-1}(u-s))}{G_0(H^{-1}_{1}(u)-iH_{2}^{-1}(u-s))}\Big)^{k-(m-1)}}{H'_1(H^{-1}_{1}(u))H'_2(H_{2}^{-1}(u-s))}du.
\end{align}
Suppose that $g:\R_{\geq 0}\to\R_{> 0}$ is a $C^1$-map satisfying \eqref{prop:g} and such that $g(0)=1$. Recall that $\tau:(0,1]\to\R_{\geq 0}$ is given by $\tau(s)=\frac{s}{g(-\log s)}$

\begin{lemma}\label{lem:deg1}
Suppose that $H_2(y)=y^2$ and $H_1$ is an even $C^2$-map such that $(H^{-1}_{1})'(u)=\frac{g(-\log u)}{\sqrt{u}}>0$. Let $k,l\in\Z_{\geq 0}$ such that $k+l=2(m-1)$. Then $\varphi_{f_{k,l}\circ\zeta^j}:(0,s_0]\to \C$ is a $C^2$-map such that
\begin{equation}\label{eq:fkl1}
\varphi'_{f_{k,l}\circ\zeta^j}(s)=O(\tau(s)^{-1})\quad\text{and}\quad \lim_{s\to 0}\tau(s)\varphi'_{f_{k,l}}(s)=- \frac{\zeta^{2j(k-(m-1))}}{m^2}.
\end{equation}
\end{lemma}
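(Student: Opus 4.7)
The plan is to exploit the explicit integral representation \eqref{eq:kl} together with a careful control of the oscillating phase that the factor $\Big(\frac{G_0(\cdot+i\cdot)}{G_0(\cdot-i\cdot)}\Big)^{k-(m-1)}$ contributes. Under the hypotheses of the lemma, $H_2'(H_2^{-1}(v))=2\sqrt{v}$ and $H_1'(H_1^{-1}(u))=\sqrt{u}/g(-\log u)$, so one may write $\varphi_{f_{k,l}\circ\zeta^j}(s)=(\zeta^{j(k-l)}/m^2)\,\Phi(s)$ where
\[
\Phi(s):=\Re\int_s^{s_0} h(u,s)\,\frac{g(-\log u)}{\sqrt{u}\sqrt{u-s}}\,du,\qquad h(u,s):=e^{2ip\theta(u,s)/m},
\]
with $p:=k-(m-1)$ and $\theta(u,s):=\arctan(\sqrt{u-s}/H_1^{-1}(u))\in(0,\pi/2)$; in particular $|h|\equiv 1$ and $h(u,s)\to 1$ as $u\to s^+$. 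Since $k-l=2p$, the target limit reduces to proving $\tau(s)\Phi'(s)\to -1$ together with $\Phi'(s)=O(\tau(s)^{-1})$. The $C^2$-regularity comes from the smoothness of parameter integrals after the regularising change of variables $u=s+v^2$, which removes the integrable square-root singularity at $u=s$.

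A preparatory step is the asymptotic expansion $H_1^{-1}(u)=2\sqrt{u}\,g(-\log u)+O(\sqrt{u}\,g'(-\log u))$, obtained by integration by parts in $H_1^{-1}(u)=\int_0^u g(-\log v)v^{-1/2}\,dv$ and the slow-variation relation $g'/g\to 0$ (which itself follows from the divergence assumption in \eqref{prop:g}). A direct consequence is the pointwise bound $\theta(u,s)=O(1/g(-\log u))$ whenever $u$ is small, i.e., precisely the range where the kernel contributes. After rescaling $u=sw$ and differentiating by Leibniz, $\Phi'(s)$ splits into a boundary term at $w=s_0/s$ of order $s^{-1/2}=o(\tau(s)^{-1})$, a ``main'' term from $\partial_s g(-\log sw)=-g'(-\log sw)/s$, and an ``error'' term from $\partial_s h(sw,s)$.

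For the main term, Case 1 of Section~\ref{sec:simple} shows that $\int_1^{s_0/s}g'(-\log sw)/\sqrt{w(w-1)}\,dw\sim g(-\log s)$, so it suffices to prove
\[
\int_1^{s_0/s}(h(sw,s)-1)\,\frac{g'(-\log sw)}{\sqrt{w(w-1)}}\,dw=o(g(-\log s)).
\]
Substituting $t=-\log(sw)$ (so $w=e^{-t}/s$), I would split the $t$-range into three pieces: the initial bounded range $t\in[-\log s_0,T]$ gives an $O(1)$ contribution; the bulk range $t\in[T,-\log s-1]$ uses the preparatory bound $|h-1|\leq C/g(t)$ together with $\sqrt{w/(w-1)}$ bounded to produce $\int g'(t)/g(t)\,dt=O(\log g(-\log s))$; the short endpoint range $t\in[-\log s-1,-\log s]$ is controlled crudely by $|h-1|\leq 2$ against the integrable singularity $\sqrt{w/(w-1)}\sim\sigma^{-1/2}$ ($\sigma=-\log s-t$) and contributes $O(g'(-\log s))=o(1)$. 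The total is $O(\log g(-\log s))=o(g(-\log s))$ since $g(-\log s)\to\infty$.

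For the error term, a direct computation of $\partial_s\theta(sw,s)=w\,\partial_1\theta(sw,s)+\partial_2\theta(sw,s)$ reveals an \emph{exact cancellation} of the two leading-order $g(-\log sw)$-terms in the bracket $H_1^{-1}(sw)/(2\sqrt{s})-w\sqrt{s}(H_1^{-1})'(sw)$; the refined asymptotic of $H_1^{-1}$ leaves a remainder of order $g'(-\log sw)/(s\,g^2(-\log sw))$ in the regime $sw\to 0$, while a crude $O(s^{-1/2})$ bound suffices in the small tail $sw\sim s_0$. Integrating against the kernel yields a total contribution of order $s^{-1}\log g(-\log s)$, hence $o(\tau(s)^{-1})$. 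Combining everything gives $\tau(s)\Phi'(s)\to -1$ together with $\Phi'=O(\tau^{-1})$, proving the lemma. The principal obstacle is the estimate of the previous paragraph: $h-1$ does not tend to $0$ uniformly in $w$, and its decay has to be matched precisely against the weight $g'(-\log sw)/\sqrt{w(w-1)}$ via the asymptotic of $H_1^{-1}$; a naive dominated-convergence argument does not suffice.
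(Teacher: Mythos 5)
Your proposal is correct but follows a genuinely different route from the paper. The paper's proof subtracts $\zeta^{j(k-l)}\varphi_{f_{m-1,m-1}}$ \emph{before} differentiating: the key observation there is that the oscillating factor minus $1$, divided by $\sqrt{u-s}/H_1^{-1}(u)$, is an analytic function $\psi$ near $0$, so the difference integral has an \emph{integrable} kernel in which the $(u-s)^{-1/2}$ singularity has already cancelled. Leibniz differentiation is then immediate, and both the boundary term and the inner integral are bounded by $O\!\big(g(-\log s)/(\sqrt{s}H_1^{-1}(s))\big)=O(1/s)=o(\tau(s)^{-1})$, using only $H_1^{-1}(x)\ge 2\sqrt{x}\,g(-\log x)$ and boundedness of $\psi,\psi'$. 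You instead differentiate the full oscillating integral after rescaling $w=u/s$, and only afterwards split $h=1+(h-1)$ for the $\partial_s g$-term and handle $\partial_s h$ as a separate error. This forces you to verify the cancellation between the two contributions to $\partial_s\theta(sw,s)$: the bracket $H_1^{-1}(sw)/(2\sqrt s)-w\sqrt s(H_1^{-1})'(sw)$ has its leading $g$-terms coincide exactly (both equal $\sqrt{w}\,g(-\log sw)$), leaving a remainder $\le 2g'(-\log sw)\sqrt w$. That computation is correct, as is the subsequent $\int g'/g\,dt=O(\log g(-\log s))=o(g(-\log s))$ argument, but it is a delicate step the paper's decomposition avoids entirely. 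Two small inaccuracies worth flagging: the boundary term at $w=s_0/s$ is $O(1/s)$, not $O(s^{-1/2})$ (still $o(\tau^{-1})$, so this doesn't affect the conclusion); and the pointwise bound $\theta(u,s)\le 1/(2g(-\log u))$ actually holds for \emph{all} $u\in(0,s_0]$, not just small $u$, so the split off of an ``initial bounded range'' is superfluous. Both proofs use the same underlying asymptotic $H_1^{-1}(x)=2\sqrt{x}\,g(-\log x)+O(\sqrt{x}\,g'(-\log x))$ and the slow-variation relation $g'/g\to 0$; your route trades the paper's single clean decomposition for a more hands-on cancellation analysis, with no loss of correctness.
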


\begin{proof}
As $k+l=2(m-1)$, by \eqref{eq:kl}, we have
\[\varphi_{f_{k,l}\circ\zeta^j}(s)-\zeta^{j(k-l)}\varphi_{f_{m-1,m-1}}(s)=\int_s^{s_0}\frac{2\psi\left(\frac{H_{2}^{-1}(u-s)}{H^{-1}_{1}(u)}\right)\frac{H_{2}^{-1}(u-s)}{H^{-1}_{1}(u)}}{H'_1(H^{-1}_{1}(u))H'_2(H_{2}^{-1}(u-s))}du,\]
where $\psi:\R\to\C$ is an analytic map given by
\[\psi(x)=\zeta^{j(k-l)}\Re \frac{1}{m^2x}\left(\frac{(G_0(1+ix))^{k-(m-1)}}{(G_0(1-ix))^{k-(m-1)}}-1\right).\]
Therefore,
\begin{align*}
\varphi_{f_{k,l}\circ\zeta^j}(s)-\zeta^{j(k-l)}\varphi_{f_{m-1,m-1}}(s)
&=\int_s^{s_0}\frac{\psi\left(\frac{\sqrt{u-s}}{H^{-1}_{1}(u)}\right)\cdot \frac{\sqrt{u-s}}{H^{-1}_{1}(u)}\cdot g(-\log u)}{\sqrt{u}\sqrt{u-s}}du\\
&=\int_s^{s_0}\frac{\psi(\frac{\sqrt{u-s}}{H^{-1}_{1}(u)})g(-\log u)}{\sqrt{u}H^{-1}_{1}(u)}du
\end{align*}
with
\[\varphi_{f_{m-1,m-1}}(s)=\int_s^{s_0}\frac{g(-\log u)}{m^2\sqrt{u}\sqrt{u-s}}du.\]
It follows that
\begin{align*}
\varphi'_{f_{k,l}\circ\zeta^j}(s)&-\zeta^{j(k-l)}\varphi'_{f_{m-1,m-1}}(s)=-\frac{\psi(0)g(-\log s)}{\sqrt{s}H^{-1}_{1}(s)}+\int_s^{s_0}\frac{\psi'\left(\frac{\sqrt{u-s}}{H^{-1}_{1}(u)}\right)g(-\log u)}{2\sqrt{u}\sqrt{u-s}(H^{-1}_{1}(u))^2}du.
\end{align*}
By definition,
\[H^{-1}_{1}(x)=\int_0^x(H^{-1}_{1})'(u)du=\int_0^x\frac{g(-\log u)}{\sqrt{u}}du\geq \int_0^x\frac{g(-\log x)}{\sqrt{u}}du=2\sqrt{x}g(-\log x),\]
so
\begin{equation}\label{eq: Hg_bound}
	\frac{\sqrt{u}}{H^{-1}_{1}(u)}\leq \frac{1}{2g(-\log u)}\leq \frac{1}{2g(-\log s_0)}\text{ for }u\in(0,s_0].
	\end{equation}
Moreover, by concavity of $g$, we have
\begin{align*}
H^{-1}_{1}(x)-2\sqrt{x}g(-\log x)&= \int_0^x\frac{g(-\log u)-g(-\log x)}{\sqrt{u}}du=\int_0^x\int_u^x\frac{g'(-\log v)}{v\sqrt{u}}dvdu\\
&\leq\int_0^x\frac{g'(0)}{v}\int_0^v\frac{du}{\sqrt{u}}dv=4g'(0)\sqrt{x}.
\end{align*}
It follows that there exists $C_g>0$ such that
\begin{equation}\label{eq:szaH}
2\sqrt{x}g(-\log x)\leq H^{-1}_{1}(x)\leq C_g\sqrt{x}g(-\log x)\text{ for all }x\in [0,s_0].
\end{equation}

As $\psi$ is analytic, there is $C>0$ such that %
 $|\psi'(x)|\leq C$ for $|x|\leq \frac{1}{g(-\log s_0)}$.
It follows that
\begin{align*}
|\varphi'_{f_{k,l}\circ\zeta^j}(s)&-\zeta^{j(k-l)}\varphi'_{f_{m-1,m-1}}(s)|\leq
|\psi(0)|\frac{g(-\log s)}{\sqrt{s}H^{-1}_{1}(s)}+C\int_s^{s_0}\frac{g(-\log u)}{2\sqrt{u}\sqrt{u-s}(H^{-1}_{1}(u))^2}du.
\end{align*}
Moreover, {by \eqref{eq: Hg_bound}}, it holds that
\begin{align*}
\int_s^{s_0}\frac{g(-\log u)}{2\sqrt{u}\sqrt{u-s}(H^{-1}_{1}(u))^2}du\leq \int_s^{s_0}\frac{1}{4u\sqrt{u-s}H^{-1}_{1}(u)}du\\
\leq \frac{1}{\sqrt sH^{-1}_{1}(s)}\int_1^{+\infty}\frac{1}{4u\sqrt{u-1}}du.
\end{align*}
Therefore,
\begin{equation}\label{eq:singdiff}
\varphi'_{f_{k,l}\circ\zeta^j}(s)-\zeta^{j(k-l)}\varphi'_{f_{m-1,m-1}}(s)=O\Big(\frac{g(-\log s)}{\sqrt{s}H^{-1}_{1}(s)}\big)=O(1/s).
\end{equation}
As $\varphi_{f_{m-1,m-1}}(s)=\int_s^{s_0}\frac{g(-\log u)}{m^2\sqrt{u}\sqrt{u-s}}du$, in view of \eqref{eq:ups1} and \eqref{eq:ups2}, we have
\begin{equation*}
\varphi'_{f_{m-1,m-1}}(s)=O\Big(\frac{ g(-\log s)}{s}\Big)\quad\text{and}\quad \lim_{s\to 0}\frac{s}{g(-\log s)}\varphi'_{f_{m-1,m-1}}(s)=-\frac{1}{m^2}.
\end{equation*}
Together with \eqref{eq:singdiff}, this gives \eqref{eq:fkl1}.
\end{proof}

\begin{lemma}\label{lem:deg2}
Suppose that $H_1(y)=y^2$ and $H_2$ is an even $C^2$-map such that $(H^{-1}_{2})'(u)=\frac{g(-\log u)}{\sqrt{u}}>0$. Let $k,l\in\Z_{\geq 0}$ such that $k+l=2(m-1)$. Then $\varphi_{f_{k,l}\circ\zeta^j}:(0,s_0]\to \C$ is a $C^2$-map such that
\begin{equation}\label{eq:fkl2}
\varphi'_{f_{k,l}\circ\zeta^j}(s)=O(\tau(s)^{-1})\quad\text{and}\quad \lim_{s\to 0}\tau(s)\varphi'_{f_{k,l}\circ\zeta^j}(s)=-\frac{\zeta^{2j(k-(m-1))}\Re\zeta^{k-(m-1)}}{m^2}.
\end{equation}
\end{lemma}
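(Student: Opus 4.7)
The plan is to mimic the structure of the proof of Lemma~\ref{lem:deg1} but with modifications reflecting the swapped roles of $H_1$ and $H_2$. First, setting $\alpha:=k-(m-1)$ and using $H_1(x)=x^2$ (so $H_1^{-1}(u)=\sqrt{u}$ and $H_1'(H_1^{-1}(u))=2\sqrt{u}$), an explicit polar decomposition of the principal $m$-th root yields
\[
\chi(s,u):=\Big(\tfrac{G_0(\sqrt{u}+iH_2^{-1}(u-s))}{G_0(\sqrt{u}-iH_2^{-1}(u-s))}\Big)^{\alpha}
=\zeta^{\alpha}\cdot \widetilde\psi\big(x(s,u)\big),
\]
where $x(s,u):=\sqrt{u}/H_2^{-1}(u-s)$ and $\widetilde\psi(x):=e^{-2i\alpha\arctan(x)/m}$. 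The crucial difference with Case~1 is that $\widetilde\psi(0)=1$ while $\widetilde\psi(x)\to\zeta^{-\alpha}$ as $x\to+\infty$. Substituting into \eqref{eq:kl} and rewriting in terms of $\widetilde\psi$ using $H_2'(H_2^{-1}(u-s))=\sqrt{u-s}/g(-\log(u-s))$, one obtains the clean decomposition
\[
\varphi_{f_{k,l}\circ\zeta^j}(s)\ =\ \frac{\zeta^{j(k-l)}\,\Re\zeta^{\alpha}}{m^{2}}\,\varphi_1^{(2)}(s)\ +\ R(s),\qquad R(s)=\frac{2\zeta^{j(k-l)}}{m^{2}}\Re\!\left[\zeta^{\alpha}B(s)\right],
\]
where $\varphi_1^{(2)}(s):=\int_s^{s_0}\tfrac{g(-\log(u-s))}{\sqrt{u}\sqrt{u-s}}\,du$ is exactly the integral treated in Case~2 of Section~\ref{sec:simple}, and $B(s):=\int_s^{s_0}(\widetilde\psi(x(s,u))-1)\,\tfrac{g(-\log(u-s))}{2\sqrt{u}\sqrt{u-s}}\,du$ is a correction term.

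Next, I would exploit what is already known about $\varphi_1^{(2)}$: by \eqref{eq:ups3}--\eqref{eq:ups4}, $(\varphi_1^{(2)})'(s)=O(\tau(s)^{-1})$ and $\tau(s)(\varphi_1^{(2)})'(s)\to -1$. Plugging this into the decomposition gives, provided $R'(s)=o(\tau(s)^{-1})$, exactly the claimed identity
\[
\lim_{s\to 0}\tau(s)\,\varphi'_{f_{k,l}\circ\zeta^j}(s)\ =\ -\frac{\zeta^{j(k-l)}\,\Re\zeta^{\alpha}}{m^{2}}\ =\ -\frac{\zeta^{2j(k-(m-1))}\,\Re\zeta^{k-(m-1)}}{m^{2}},
\]
together with the $O(\tau(s)^{-1})$ bound. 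The $C^2$-regularity of $\varphi_{f_{k,l}\circ\zeta^j}$ on $(0,s_0]$ follows by a direct (twice) differentiation after the substitution $v=u-s$ which moves the $s$-dependence inside the integrand and removes the moving endpoint at $s$; the remaining singularity at $v=0$ is integrable.

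The main obstacle, therefore, is to estimate $B'(s)$. Unlike the situation in Lemma~\ref{lem:deg1}, here the ``small parameter'' $x(s,u)$ is not uniformly small: by \eqref{eq:szaH} applied to $H_2^{-1}$, $x(s,u)=O(1/g(-\log(u-s)))$ is small when $u-s$ is bounded away from $0$, but $x(s,u)\to+\infty$ as $u\to s^{+}$. I would therefore split the integral (after substituting $v=u-s$) as $B=B_1+B_2$ according to $v\leq s$ or $v\geq s$. On the ``outer'' piece $v\geq s$, both $\sqrt{s+v}\asymp \sqrt{v}$ and $x(s,s+v)=O(1/g(-\log v))$, so $|\widetilde\psi(x)-1|=O(x)$ allows the $g(-\log v)$ factor in the integrand to be absorbed, leaving an integral which differentiates in $s$ producing a quantity $O(1/s)$ times something without the $g$ factor, i.e.\ $o(\tau(s)^{-1})$. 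On the ``inner'' piece $v\leq s$, one only uses the trivial bound $|\widetilde\psi(x)-1|\leq 2$ together with $\sqrt{s+v}\asymp\sqrt{s}$: this bounds $|B_1(s)|\leq C\,s^{-1/2}\int_0^s g(-\log v)/\sqrt{v}\,dv=C\,s^{-1/2}H_2^{-1}(s)=O(g(-\log s))$, which is $o(\tau(s)^{-1})$. The analogous (and technically more involved) estimate for the $s$-derivatives of $B_1$ and $B_2$, obtained by differentiating under the integral and using $|\widetilde\psi'(x)|=2|\alpha|/(m(1+x^2))$, then gives $B'(s)=o(\tau(s)^{-1})$, which is exactly what is needed to close the argument.
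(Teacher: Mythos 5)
The decomposition you propose is correct and, up to constants, identical to the paper's. Your $\widetilde\psi(x)=e^{-2i\alpha\arctan(x)/m}$ is related to the paper's auxiliary function via the identity $\tfrac{G_0(x+i)}{G_0(x-i)}=\zeta\,e^{-2i\arctan(x)/m}$, so that the paper's $\widetilde\psi_{\mathrm{paper}}$ is precisely $\zeta^{j(k-l)}\Re\big[\zeta^\alpha(\widetilde\psi(x)-1)/(m^2 x)\big]$, and your $\varphi_1^{(2)}$ is $m^2\varphi_{f_{m-1,m-1}}$. You also correctly identify the main obstacle: $x(s,s+v)=\sqrt{s+v}/H_2^{-1}(v)$ blows up as $v\to 0$, so one cannot treat $\widetilde\psi$ as a perturbation of $\widetilde\psi(0)$ uniformly.

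However, the argument you sketch for $B'(s)=o(\tau(s)^{-1})$ does not close with the split at $v=s$ and the stated bounds, and the gap is precisely in the term you gloss over. Differentiating $B_1(s)=\int_0^s(\widetilde\psi(x)-1)\tfrac{g(-\log v)}{2\sqrt{s+v}\sqrt v}\,dv$ in $s$ produces two pieces: $\widetilde\psi'(x)\,\partial_s x$ times the kernel, for which your bound $|\widetilde\psi'(x)|\le 2|\alpha|/(m(1+x^2))$ does suffice (after keeping the full $(H_2^{-1}(v))^2+s+v$ in the denominator one gets $O(s^{-1}\log g(-\log s))=o(\tau(s)^{-1})$); and the piece
\[
-\int_0^s(\widetilde\psi(x)-1)\,\frac{g(-\log v)}{4(s+v)^{3/2}\sqrt v}\,dv,
\]
coming from $\partial_s(s+v)^{-1/2}$. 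With the trivial bound $|\widetilde\psi(x)-1|\le 2$ you advocate on the inner piece, this is bounded only by $\tfrac12 s^{-3/2}\int_0^s g(-\log v)/\sqrt v\,dv=\tfrac12 H_2^{-1}(s)\,s^{-3/2}\asymp g(-\log s)/s=\Theta(\tau(s)^{-1})$: the same order as the main term, not $o$. One cannot conclude. To make your plan work one would have to further split $[0,s]$ at the scale where $x(s,s+v)\asymp 1$, i.e.\ $(H_2^{-1}(v))^2\asymp s$, use $|\widetilde\psi(x)-1|\le C\min\{x,1\}$ on the two pieces, and invoke $g'(-\log s)/g(-\log s)\to 0$ (concavity) to turn the resulting $\log g$-gain into genuine $o$-smallness.

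The paper sidesteps the problematic second term entirely by a small but essential reorganization: it divides $(\widetilde\psi(x)-1)$ by $x=\sqrt{s+v}/H_2^{-1}(v)$, which absorbs the $s$-dependent $\sqrt{s+v}$ into the function of $x$. The integrand then reads $\widetilde\psi_{\mathrm{paper}}(x)\cdot\tfrac{g(-\log v)}{H_2^{-1}(v)\sqrt v}$ with $s$ entering \emph{only} through $x$, so a single term $\widetilde\psi'_{\mathrm{paper}}(x)\,\partial_s x$ appears upon differentiation and the global bound $|x^2\widetilde\psi'_{\mathrm{paper}}(x)|\le C$ can be exploited. Even then, the paper still needs a non-obvious cutoff $a(s)=\widecheck H(2\sqrt s)$, where $\widecheck H$ is the potential built from $\sqrt g$ (so $a(s)g(-\log a(s))\asymp s$), and the slowly-varying property $g'/g\to 0$ to show $\sqrt{g(-\log a(s))}=o(g(-\log s))$. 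Your proposal neither performs this reorganization nor introduces a finer cutoff nor invokes concavity; the claim ``$B'(s)=o(\tau(s)^{-1})$'' is therefore asserted rather than proved, and the asserted route would fail at the step described above.
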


\begin{proof}
As $k+l=2(m-1)$, by \eqref{eq:kl}, we have
\begin{align*}
\varphi_{f_{k,l}\circ\zeta^j}(s)-\zeta^{j(k-l)}\Re\zeta^{k-(m-1)}\varphi_{f_{m-1,m-1}}(s)&=\int_s^{s_0}\frac{2\widetilde\psi(\frac{H_{1}^{-1}(u)}{H^{-1}_{2}(u-s)})\frac{H_{1}^{-1}(u)}{H^{-1}_{2}(u-s)}}{H'_1(H^{-1}_{1}(u))H'_2(H_{2}^{-1}(u-s))}du\\
&=
\int_s^{s_0}\frac{\widetilde\psi(\frac{\sqrt{u}}{H^{-1}_{2}(u-s)})\frac{\sqrt{u}}{H^{-1}_{2}(u-s)}g(-\log (u-s))}{\sqrt{u}\sqrt{u-s}}du\\
&=\int_0^{s_0-s}\frac{\widetilde\psi(\frac{\sqrt{u+s}}{H^{-1}_{2}(u)})g(-\log (u))}{H^{-1}_{2}(u)\sqrt{u}}du
\end{align*}
with
\[\varphi_{f_{m-1,m-1}}(s)=\int_s^{s_0}\frac{g(-\log (u-s))}{m^2\sqrt{u}\sqrt{u-s}}du,\]
where $\tilde \psi:\R\to\R$ is an analytic map given by
\[\widetilde \psi(x)=e^{j(k-l)}\Re \frac{1}{m^2x}\left(\frac{(G_0(x+i))^{k-(m-1)}}{(G_0(x-i))^{k-(m-1)}}-\zeta^{k-(m-1)}\right).\]
Thus, by Leibniz integration formula, for $s\in(0,s_0/2]$ it holds that
\begin{align}\label{eq: comp0}
\begin{aligned}
\varphi'_{f_{k,l}\circ\zeta^j}(s)-\zeta^{j(k-l)}\Re\zeta^{k-(m-1)}\varphi'_{f_{m-1,m-1}}(s)&=-\frac{\widetilde\psi(\frac{\sqrt{s_0}}{H^{-1}_{2}(s_0-s)})g(-\log (s_0-s))}{H^{-1}_{2}(s_0-s)\sqrt{s_0-s}}\\
&\quad+\int_0^{s_0-s}\frac{\widetilde\psi'(\frac{\sqrt{u+s}}{H^{-1}_{2}(u)})g(-\log u)}{2\sqrt{u+s}\sqrt{u}(H^{-1}_{2}(u))^2}du.
\end{aligned}
\end{align}
Recall that $H^{-1}_{2}(x)\geq 2\sqrt{x}g(-\log x)$.
As $\psi$ is analytic, for every $a>0$ there exists $C_a>0$ such that $|\widetilde\psi(x)|,|\widetilde\psi'(x)|\leq C_a$ for $|x|\leq a$. Moreover,
\begin{align*}
\widetilde \psi'(x)&=-\zeta^{j(k-l)}\Re \frac{1}{m^2x^2}\left(\frac{(G_0(x+i))^{k-(m-1)}}{(G_0(x-i))^{k-(m-1)}}-\zeta^{k-(m-1)}\right)\\
&\quad-\zeta^{j(k-l)}\Re \frac{(k-(m-1))}{m^3x}\frac{(G_0(x+i))^{k-(m-1)-m}}{(G_0(x-i))^{k-(m-1)-m}}\frac{2i}{(x-i)^2}.
\end{align*}
Therefore, there exists $C>0$ such that $|x^2\widetilde \psi'(x)|\leq C$ for all $x\in\R$.

As $0<\frac{\sqrt{s_0}}{H^{-1}_{2}(s_0-s)}\leq a_0:=\frac{\sqrt{s_0}}{H^{-1}_{2}(s_0/2)}$ for $s\in(0,s_0/2]$, we have
\begin{equation}\label{eq: comp1}
\left|\frac{\widetilde\psi(\frac{\sqrt{s_0}}{H^{-1}_{2}(s_0-s)})g(-\log (s_0-s))}{H^{-1}_{2}(s_0-s)\sqrt{s_0-s}}\right|\leq
\frac{C_{a_0}g(-\log (s_0/2))}{H^{-1}_{2}(s_0/2)\sqrt{s_0/2}}\quad\text{for all}\quad s\in(0,s_0/2].
\end{equation}
Moreover, for any $s\in(0,s_0/2]$ and for any $0<a(s)\leq s_0-s$,
\begin{align}\label{eq: comp2}
\begin{aligned}
\left|\int_0^{a(s)}\frac{\widetilde\psi'(\frac{\sqrt{u+s}}{H^{-1}_{2}(u)})g(-\log u)}{\sqrt{u+s}\sqrt{u}(H^{-1}_{2}(u))^2}du\right|
&\leq C\int_0^{a(s)}\frac{g(-\log u)}{(u+s)^{3/2}\sqrt{u}}du\\
&\leq \frac{C}{s^{3/2}}\int_0^{a(s)}(H_2^{-1})'(u)du\leq C\frac{H_2^{-1}(a(s))}{s^{3/2}}.
\end{aligned}
\end{align}
Suppose that there exists $a>0$ such that for any $s\in(0,s_0/2]$ and $a(s)\leq u\leq s_0-s$, we have $\frac{\sqrt{u+s}}{H^{-1}_{2}(u)}\leq a$. Then, in view of \eqref{eq:szaH} {(with replacing $H_1$ by $H_2$)}, we have
\begin{align}\label{eq: comp3}
\left|\int_{a(s)}^{s_0-s}\frac{\widetilde\psi'(\frac{\sqrt{u+s}}{H^{-1}_{2}(u)})g(-\log u)}{\sqrt{u+s}\sqrt{u}(H^{-1}_{2}(u))^2}du\right|
\leq \frac{C_a}{H^{-1}_{2}(a(s))}\int_{a(s)}^{s_0-s}\frac{du}{2u\sqrt{u+s}}\leq \frac{C_a}{\sqrt{a(s)}H^{-1}_{2}(a(s))}.
\end{align}

\medskip
{The term on the right-hand side in \eqref{eq: comp1} is bounded. Hence, we will now focus on proving that the terms on the right-hand side of \eqref{eq: comp3} and \eqref{eq: comp4} have lower order than $\frac{g(-\log s)}{s}$.}
Note first that $\sqrt{g}:\R_{\geq 0}\to\R_{> 0}$ is a $C^1$-map satisfying \eqref{prop:g}. Let $\widecheck H:[-\check x_0,\check x_0]\to[0,\check s_0]$ be the related potential, i.e.\ $(\widecheck{H}^{-1})'(u)=\frac{\sqrt{g(-\log u)}}{\sqrt{u}}$ with $\check x_0\in(0,1]$ and $\check s_0=\widecheck{H}(\check x_0)\leq 1$.
Let $a:(0,\check x^2_0/4]\to[0,\check s_0]\subset [0,1]$ be given by $a(s)=\widecheck{H}(2\sqrt{s})$. In view of \eqref{eq:szaH},
\begin{gather*}
2\sqrt{a(s)}g(-\log a(s))\leq H^{-1}_{2}(a(s))\leq C_g\sqrt{a(s)}g(-\log a(s)),\\
2\sqrt{a(s)}\sqrt{g(-\log a(s))}\leq \widecheck H^{-1}(a(s))=2\sqrt{s}\leq C_{\sqrt{g}}\sqrt{a(s)}\sqrt{g(-\log a(s))}.
\end{gather*}
It follows that, for every $0<s\leq \check x^2_0$, we have
\begin{gather}
\label{eq: comp4}
\frac{H_2^{-1}(a(s))}{s\sqrt{s}}\leq \frac{C_g\sqrt{a(s)}g(-\log a(s))}{s\sqrt{a(s)}\sqrt{g(-\log a(s))}}=\frac{C_g\sqrt{g(-\log a(s))}}{s},\\
\label{eq: comp5}
\frac{1}{\sqrt{a(s)}H^{-1}_{2}(a(s))}\leq \frac{1}{2a(s)g(-\log a(s))}\leq \frac{C^2_{\sqrt{g}}}{s},\\
\nonumber
a(s)= a(s)g(0)\leq a(s)g(-\log a(s))\leq s.
\end{gather}
Moreover, if $a(s)\leq u\leq s$, then
\[\frac{\sqrt{u+s}}{H^{-1}_{2}(u)}\leq \frac{2\sqrt{s}}{H^{-1}_{2}(a(s))}=\frac{\widecheck H^{-1}(a(s))}{H^{-1}_{2}(a(s))}\leq\frac{C_{\sqrt{g}}}{2\sqrt{g(-\log a(s))}}\leq \frac{C_{\sqrt{g}}}{2\sqrt{g(0)}}\leq C_{\sqrt{g}},\]
On the other hand, if $ns\leq u\leq (n+1)s$ for some $n\in \N$, then
\[\frac{\sqrt{u+s}}{H^{-1}_{2}(u)}\leq \frac{\sqrt{n+2}\sqrt{s}}{H^{-1}_{2}(ns)}\leq\frac{\sqrt{n+2}\sqrt{s}}{2\sqrt{ns}g(-\log (ns))}\leq \frac{\sqrt{n+2}}{2\sqrt{n}g(0)}\leq 1.\]
It follows that
\[\frac{\sqrt{u+s}}{H^{-1}_{2}(u)}\leq a:=C_{\sqrt{g}}\quad\text{for all}\quad a(s)\leq u\leq s_0-s,\]
in particular, \eqref{eq: comp3} holds.

As $a(s)\leq s$, for any $s\in(0,s_0/2]$, there is $-\log s\leq y(s)\leq -\log a(s)$ such that
\begin{align*}
\log\frac{g(-\log a(s))}{g(-\log s)}=\frac{g'(y(s))}{g(y(s))}\log\frac{s}{a(s)}\leq \frac{g'(-\log s)}{g(-\log s)}\log\frac{s}{a(s)}.
\end{align*}
Since $\frac{g'(-\log s)}{g(-\log s)}\to 0$ as $s\to 0$, for any $0<\vep<1/2$, there is $s_\vep>0$ such that $\frac{g'(-\log s)}{g(-\log s)}<\vep$ for $s\in(0,s_\vep)$.
Hence, by \eqref{eq: comp5} and mean value theorem, for $s\in(0,s_\vep)$ we have
\[\frac{\sqrt{g(-\log a(s))}}{g(-\log s)}=\frac{\frac{g(-\log a(s))}{g(-\log s)}}{\sqrt{g(-\log a(s))}}\leq \frac{\big(\frac{s}{a(s)}\big)^{\vep}}{\sqrt{g(-\log a(s))}}\leq \frac{C_{\sqrt{g}}^{2\vep}}{g(-\log a(s))^{1/2-\vep}}.\]
Hence,
\[\lim_{s\to 0}\frac{\sqrt{g(-\log a(s))}}{g(-\log s)}= 0.\]
In view of \eqref{eq: comp0}, \eqref{eq: comp1}, \eqref{eq: comp2}, \eqref{eq: comp3}, \eqref{eq: comp4}, and \eqref{eq: comp5}, it follows that
\begin{equation}\label{eq:singdiff2}
\varphi'_{f_{k,l}\circ\zeta^j}(s)-\zeta^{j(k-l)}\Re\zeta^{k-(m-1)}\varphi'_{f_{m-1,m-1}}(s)=o\Big(\frac{g(-\log s)}{s}\Big).
\end{equation}
As $\varphi_{f_{m-1,m-1}}(s)=\int_s^{s_0}\frac{g(-\log (u-s))}{m^2\sqrt{u}\sqrt{u-s}}du$, in view of \eqref{eq:ups3} and \eqref{eq:ups4}, we have
\begin{equation*}
\varphi'_{f_{m-1,m-1}}(s)=O\Big(\frac{ g(-\log s)}{s}\Big)\quad\text{and}\quad \lim_{s\to 0}\frac{s}{g(-\log s)}\varphi'_{f_{m-1,m-1}}(s)=-\frac{1}{m^2}.
\end{equation*}
Together with \eqref{eq:singdiff2}, this gives \eqref{eq:fkl2}.
\end{proof}

\subsection{Final discussion}\label{sec:impfinal}
Suppose that $g:\R_{\geq 0}\to\R_{> 0}$ is a $C^\infty$-map satisfying \eqref{prop:g} and such that $g(0)=1$.
Let $H_2(y)=y^2$ and let $H_1$ be an even $C^2$-{map} such that $(H^{-1}_{1})'(u)=\frac{g(-\log u)}{\sqrt{u}}$.
Let $\widetilde{H}:[-1,1]^2\to\R$ be given by $\widetilde{H}(x,y)=-H_1(x)+H_2(y)$. Let $m\geq 1$. Then $H:\mathcal{D}\to\R$ given by
$H(z,\bar{z})=\widetilde H(z^m,\bar{z}^m)$ is of class $C^{2m}$.

\begin{theorem}\label{thm:genCf}
Let $f:\C\to\C$ be a homogenous polynomial of degree $2m-2$
\[f(z,\bar{z})=\sum_{0\leq k\leq 2m-2}a_{k,2m-2-k}z^k\bar{z}^{2m-2-k}.\]
Then for any $1\leq j\leq 2m$, we have
\begin{equation}\label{eq:fpoly}
(\varphi^j_{f})'(s)=O(\tau(s)^{-1})\quad\text{and}\quad \lim_{s\to 0}\tau(s)(\varphi^j_{f})'(s)=-C^j(f),
\end{equation}
where
\begin{align*}
C^{2j+1}(f)&=\frac{1}{m^2}\sum_{0\leq k\leq 2m-2}a_{k,2m-2-k}\zeta^{-2j(k-(m-1))},\\
C^{2j+2}(f)&=\frac{1}{m^2}\sum_{0\leq k\leq 2m-2}a_{k,2m-2-k}\zeta^{-2j(k-(m-1))}\Re\zeta^{k-(m-1)}.
\end{align*}
for $0\leq j<2m$.
\end{theorem}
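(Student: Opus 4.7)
The plan is to reduce Theorem~\ref{thm:genCf} to Lemmas~\ref{lem:deg1} and~\ref{lem:deg2} via linearity in $f$ and the rotational relation \eqref{eq:phifmulti3}. By linearity, it suffices to handle each monomial $f_{k,2m-2-k}(z,\bar z)=z^k\bar z^{2m-2-k}$ separately and then sum up with the coefficients $a_{k,2m-2-k}$; the bound $(\varphi^j_f)'(s)=O(\tau(s)^{-1})$ follows immediately from the corresponding bounds in the two lemmas, since only finitely many monomial terms appear.

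By \eqref{eq:phifmulti3}, $\varphi^{2j+1}_f=\varphi^{1}_{f\circ\zeta^{-j}}$ and $\varphi^{2j+2}_f=\varphi^{2}_{f\circ\zeta^{-j}}$, so the task reduces to analysing $\varphi^{1}$ and $\varphi^{2}$ in our setting. For a monomial with $k+l=2(m-1)$, a direct calculation gives
\[
f_{k,l}\circ\zeta^{-j}(z,\bar z)=(\zeta^{-j}z)^k(\zeta^{j}\bar z)^{l}=\zeta^{-j(k-l)}f_{k,l}(z,\bar z)=\zeta^{-2j(k-(m-1))}f_{k,l}(z,\bar z),
\]
so by linearity $\varphi^{2j+1}_{f_{k,l}}=\zeta^{-2j(k-(m-1))}\varphi^{1}_{f_{k,l}}$ and $\varphi^{2j+2}_{f_{k,l}}=\zeta^{-2j(k-(m-1))}\varphi^{2}_{f_{k,l}}$. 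For the odd sectors I would apply Lemma~\ref{lem:deg1} with $j=0$ (whose hypothesis $H_2(y)=y^2$ is exactly our setting), obtaining $\lim_{s\to 0}\tau(s)(\varphi^{1}_{f_{k,2m-2-k}})'(s)=-m^{-2}$, and hence
\[
\lim_{s\to 0}\tau(s)(\varphi^{2j+1}_{f_{k,2m-2-k}})'(s)=-\frac{\zeta^{-2j(k-(m-1))}}{m^2}.
\]

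The even sectors are the only delicate point, since Lemma~\ref{lem:deg2} is formulated in the regime $H_1(y)=y^2$, not ours. However, as noted in the remark preceding \eqref{eq:phifmulti2}, $\varphi^{2}_f$ is computed by exactly the same integral as $\varphi^{1}_f$ after swapping the roles of $H_1$ and $H_2$. Therefore $\varphi^{2}_f$ in our setting (with $H_2(y)=y^2$ and $H_1$ the non-polynomial potential) coincides with $\varphi^{1}_f$ in the setting of Lemma~\ref{lem:deg2} (with its $H_1=y^2$ and its $H_2$ playing the role of our $H_1$), because both functions are even $C^2$-maps with $(H^{-1})'(u)=g(-\log u)/\sqrt u$. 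Applying Lemma~\ref{lem:deg2} with $j=0$ then yields $\lim_{s\to 0}\tau(s)(\varphi^{2}_{f_{k,2m-2-k}})'(s)=-m^{-2}\Re\zeta^{k-(m-1)}$, and consequently
\[
\lim_{s\to 0}\tau(s)(\varphi^{2j+2}_{f_{k,2m-2-k}})'(s)=-\frac{\zeta^{-2j(k-(m-1))}\,\Re\zeta^{k-(m-1)}}{m^2}.
\]
Summing over $k$ with coefficients $a_{k,2m-2-k}$ recovers the displayed formulas for $C^{2j+1}(f)$ and $C^{2j+2}(f)$. The only real (and modest) obstacle is carefully bookkeeping the $H_1\leftrightarrow H_2$ swap when invoking Lemma~\ref{lem:deg2}; once this symmetric dictionary is verified, everything else is linearity and the identification $k-l=2(k-(m-1))$.
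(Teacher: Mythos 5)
Your proof is correct and follows the same route as the paper: reduce by linearity to monomials, use \eqref{eq:phifmulti3} to pass to $\varphi^{1}$ and $\varphi^{2}$, and then invoke Lemmas~\ref{lem:deg1} and~\ref{lem:deg2} together with the $H_1\leftrightarrow H_2$ swap of \eqref{eq:phifmulti2} for the even sectors. Your cosmetic variant of first factoring out the scalar $\zeta^{-2j(k-(m-1))}$ by linearity and applying the lemmas only at $j=0$ is equivalent to the paper's invocation of those lemmas with general $j$.
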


\begin{proof}
In view of \eqref{eq:phifmulti3},
\[\varphi^{2j+1}_{f_{k,l}}=\varphi^{1}_{f_{k,l}\circ \zeta^{-j}},\quad
\varphi^{2j+2}_{f_{k,l}}=\varphi^{2}_{f_{k,l}\circ \zeta^{-j}}.\]
It follows that
\begin{align*}
\varphi^{2j+1}_{f}=\sum_{k,l}a_{k,l}\varphi^{1}_{f_{k,l}\circ \zeta^{-j}},\quad \varphi^{2j+2}_{f}=\sum_{k,l}a_{k,l}\varphi^{2}_{f_{k,l}\circ \zeta^{-j}}.
\end{align*}
By \eqref{eq:phifmulti2}, Lemmas~\ref{lem:deg1}~and~\ref{lem:deg2}, this gives \eqref{eq:fpoly}.
\end{proof}

\begin{figure}[h!]
 \includegraphics[width=0.5\textwidth]{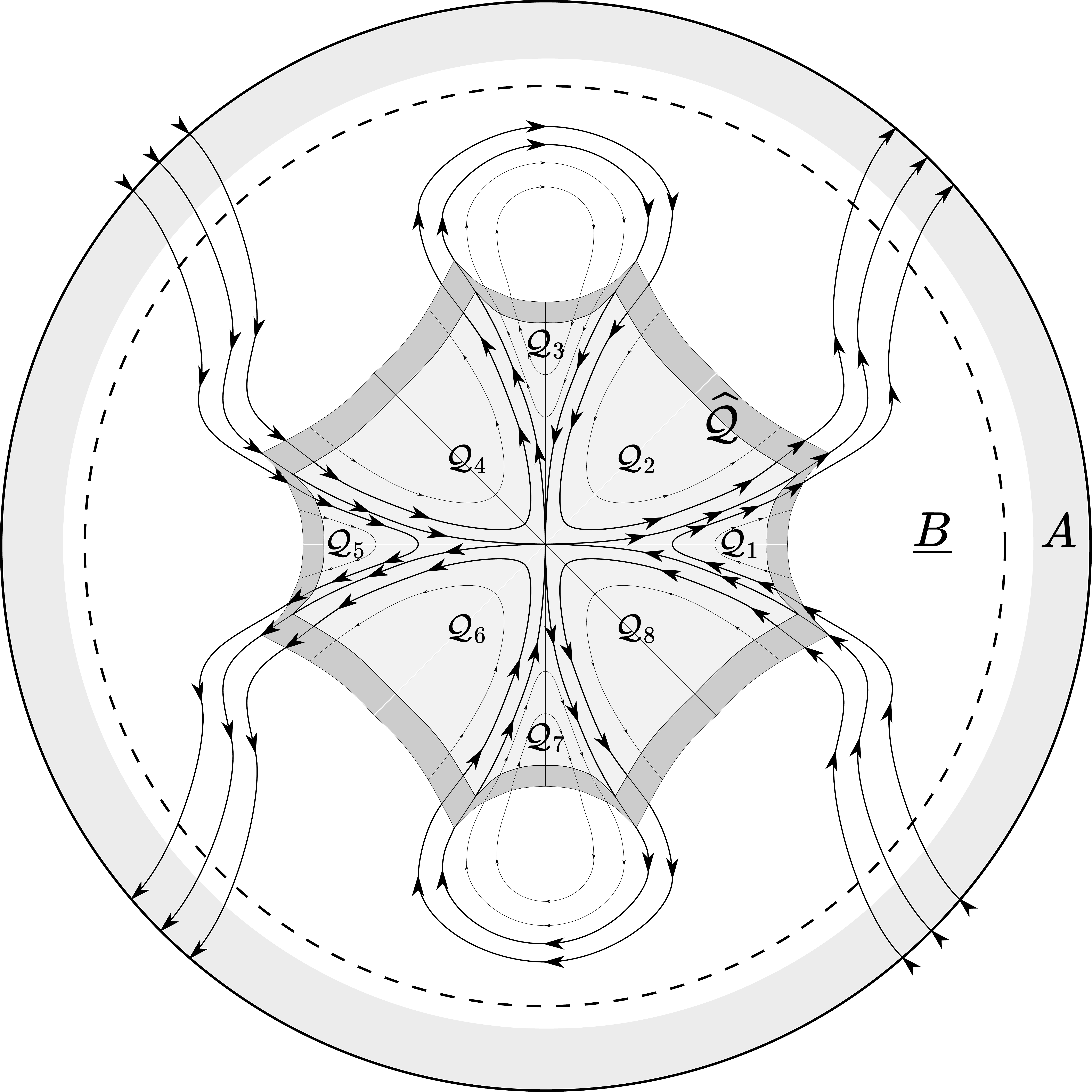}
 \caption{} \label{fig:QQ}
\end{figure}

\begin{example}
Let us consider a specific example where $m=2$ and $f:\mathcal{D}\to\R$ given by $f=f_{2,0}+f_{0,2}$. Then $\zeta=i$. As $\zeta^{2}=-1$, we have $f_{2,0}\circ\zeta=-f_{2,0}$, $f_{0,2}\circ\zeta=-f_{0,2}$ and $f\circ\zeta=-f$. The local Hamiltonian flow has $8$ invariant parabolic sectors for which
\begin{gather*}
\varphi^{1}_{f_{0,2}}=\varphi^{1}_{f_{2,0}},\quad \varphi^{2}_{f_{0,2}}=\varphi^{2}_{f_{2,0}},\\
\varphi^{2j+1}_{f_{0,2}}=\varphi^{1}_{f_{0,2}\circ \zeta^{-j}}=
(-1)^j\varphi^{1}_{f_{0,2}}=(-1)^j\varphi^{1}_{f_{2,0}}=\varphi^{1}_{f_{2,0}\circ \zeta^{-j}}=
\varphi^{2j+1}_{f_{2,0}}\\
\varphi^{2j+2}_{f_{0,2}}=\varphi^{2}_{f_{0,2}\circ \zeta^{-j}}
=(-1)^j\varphi^{2}_{f_{0,2}}=(-1)^{j}\varphi^{2}_{f_{2,0}}=\varphi^{2}_{f_{2,0}\circ \zeta^{-j}}=
\varphi^{2j+2}_{f_{2,0}}.
\end{gather*}
Therefore
\begin{gather*}
\varphi^{1}_{f}=2\varphi^{1}_{f_{2,0}},\quad \varphi^{2}_{f}=2\varphi^{2}_{f_{2,0}},\quad \varphi^{3}_{f}=-2\varphi^{1}_{f_{2,0}},\quad \varphi^{4}_{f}=-2\varphi^{2}_{f_{2,0}},\\
 \varphi^{5}_{f}=2\varphi^{1}_{f_{2,0}},\quad \varphi^{6}_{f}=2\varphi^{2}_{f_{2,0}},\quad \varphi^{7}_{f}=-2\varphi^{1}_{f_{2,0}},\quad \varphi^{8}_{f}=-2\varphi^{2}_{f_{2,0}}.
 \end{gather*}
In view of Theorem~\ref{thm:genCf},
\begin{gather}
\label{eq:tau1}
(\varphi^j_{f})'(s)=O(\tau(s)^{-1})\text{ for all }1\leq i\leq 8,\\
\label{eq:tau2}
C^{1}(f)=-C^3(f)=C^5(f)=-C^7(f)=\frac{1}{2}\text{ and }\\
\nonumber
C^{2}(f)=-C^4(f)=C^6(f)=-C^8(f)=0.
\end{gather}
\end{example}
\begin{figure}[h!]
 \includegraphics[width=1\textwidth]{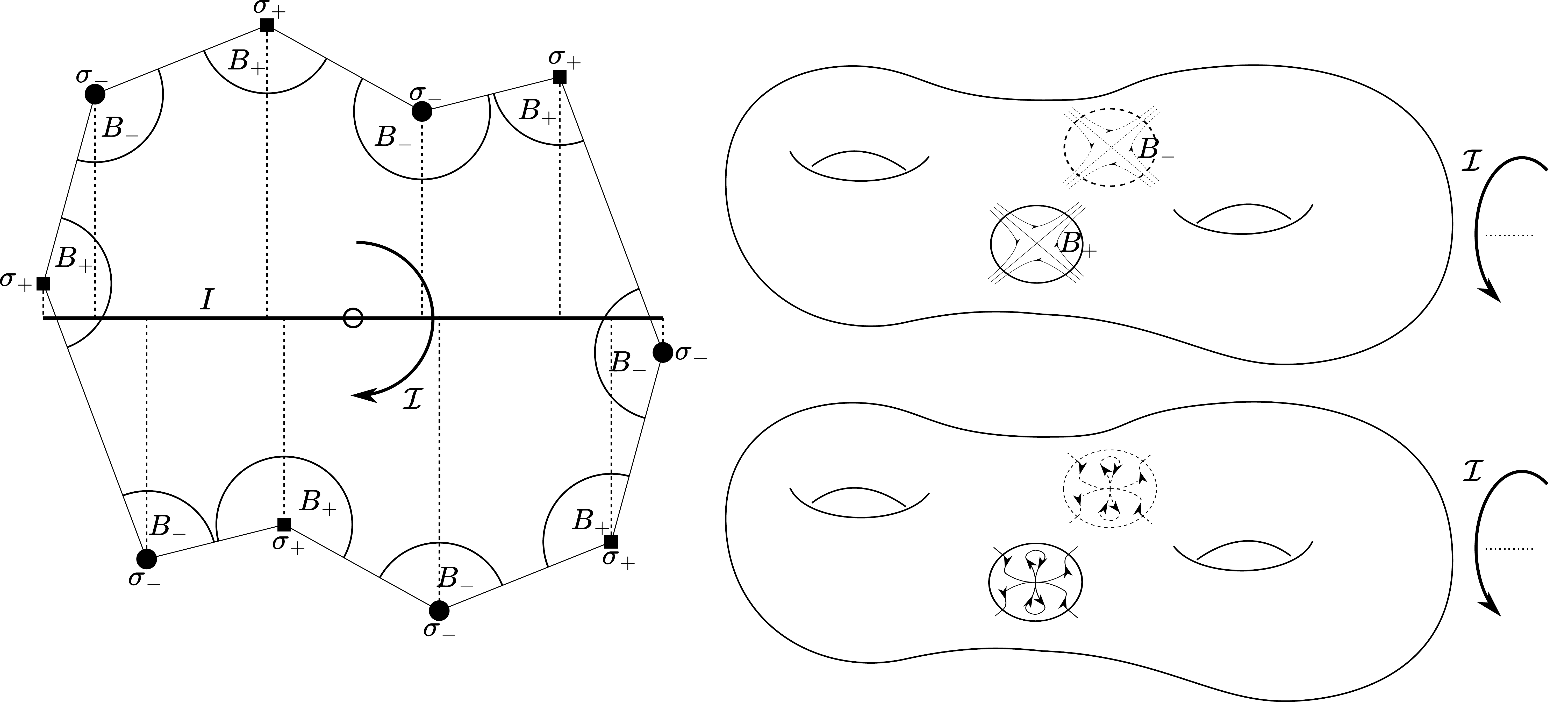}
 \caption{} \label{fig:surf}
\end{figure}

\textbf{Step 1. Extension of the imperfect multi-saddle to a disk.} Choose small numbers $0<s_0<\hat{s}_0$ and set $\mathcal{Q}:=\mathcal{Q}(s_0)$ and $\widehat{\mathcal{Q}}:=\mathcal{Q}(\hat{s}_0)$. In Figure~\ref{fig:QQ}, $\mathcal{Q}$ is the light gray central area, while the difference $\widehat{\mathcal{Q}}\setminus \mathcal{Q}$ is dark gray. Let us consider the Hamiltonian $H:\widehat{\mathcal{Q}}\to\R$, $H(z,\bar{z})=\widetilde{H}(z^2,\bar{z}^2)$ and the corresponding local flow $(\Psi_t)$ on $\widehat{\mathcal{Q}}$. The phase portrait of this flow is presented in Figure~\ref{fig:QQ}. We extend the action of $(\Psi_t)$ to a disk $B$ centered at zero. First, we extend the Hamiltonian $H$ to a smaller central disk $\underline{B}\subsetneq B$ to close the separatrices touching sectors $\widehat{\mathcal{Q}}_3$ and $\widehat{\mathcal{Q}}_7$, creating two saddle loops, one above sector $\widehat{\mathcal{Q}}_3$ and the other below sector $\widehat{\mathcal{Q}}_7$, see Figure~\ref{fig:QQ}. This extension gives rise to two local minima for $H:\underline{B}\to\R$, which are central fixed points for the extended $(\Psi_t)$. As $H$ on $\widehat{\mathcal{Q}}$ is horizontally and vertically symmetric, we can extend it so that the extended Hamiltonian $H:\underline{B}\to\R$ is also horizontally and vertically symmetric.

Next, suppose that the local flow $(\Psi_t)$ on an open neighborhood of the boundary of the disk $B$ (i.e.\ an annulus $A \varsubsetneq B\setminus \underline{B}$ surrounding $\partial B$) is related to the equation $z'=\frac{i}{2z}=\frac{i\bar{z}}{2|z|^2}$. Then $(\Psi_t)$ on $A$ is a local Hamiltonian flow with the Hamiltonian $H(z,\bar{z})=-\frac{1}{4}(z^2+\bar{z}^2)$ and $V(z,\bar{z})=|z|^2$.
On the other hand $(\Psi_t)$ on $A$ coincides with the vertical flows on the translation disk $(B,2z\, dz)$. As $H$ and $V$ on $A$ are horizontally and vertically symmetric, we can find horizontally and vertically symmetric $C^\infty$-maps $H:B\to\R$ and $V:B\to\R_{>0}$ such that $H$ is an extension of the Hamiltonian constructed so far on $\underline{B}$ and $A$, and $V=1$ on $\underline{B}$.
Finally $(\Psi_t)$ on $B$ is the local Hamiltonian flow associated with the Hamiltonian $H:B\to\R$ with the density $V:B\to\R_{>0}$.

\textbf{Step 2. From hyper-elliptic translation surfaces to anti-symmetric Hamiltonian flows.}
Let $(M,\omega)$ be a translation surface in $\mathcal{H}(1,1)$ such that its vertical flow has no saddle connections. As $\mathcal{H}(1,1)$ is a hyper-elliptic stratum, $(M,\omega)$ has a polygonal representation for which the sides of the polygon consist of $5$ pairs of the same intervals symmetrically distributed, as in Figure~\ref{fig:surf}. Then, the rotation through the angle $\pi$ with respect to the center of the polygon is the hyper-elliptic involution $\mathcal{I}:M\to M$ of the translation surface $(M,\omega)$. The surface has two singularities $\sigma_+$, $\sigma_-$ such that $\mathcal{I}(\sigma_+)=\sigma_-$. Let us consider disks $B_+$, $B_-$ centered at $\sigma_+$, $\sigma_-$ respectively such that $\mathcal{I}(B_+)=B_-$. Finally we modify (on $B_\pm$) the vertical flow on $M$ so that the modified flow denoted by $\psi_\R$ coincides on $B_+$ with
the local Hamiltonian flow $(\Psi_t)$ constructed in Step~1, and $\psi_t=\mathcal{I}\circ \Psi_{-t}\circ \mathcal{I}$ on $B_-$. Then $\psi_\R$ is a locally Hamiltonian flow with four saddle loops such that $\psi_t\circ \mathcal{I}=\mathcal{I}\circ \psi_{-t}$ and $\psi_\R$ is minimal on $M'$ the complement of the four periodic components inside saddle loops. Let $I \subseteq M'$ be the interval centered at the center of symmetry of the polygon whose length is equal to the width of the polygon. Then, $T:I\to I$ is a symmetric IET.

\textbf{Step 3. Construction of the map $f:M\to\R$.} Let $f:M\to\R$ be a $C^{\infty}$-map such that $f\circ \mathcal{I}=-f$ as follows. First, $f$
is zero on $M\setminus B_{\pm}$. Next, let $f=f_{2,0}+f_{0,2}$ on $\mathcal{Q}$, that is on the light gray area on Figure~\ref{fig:QQ}.
The map $f$ on $B$ is also equal to zero outside $\widehat{\mathcal{Q}}$. As $f$ on $\mathcal{Q}$ is vertically and horizontally symmetric with $f\circ\zeta=-f$ and $f=0$ outside $\widehat{\mathcal{Q}}$, we can find a $C^{\infty}$ extension $f:B\to\R$ which is vertically and horizontally symmetric with $f\circ\zeta=-f$. We finish to define $f$ on $B_+$ taking this map and $f(x)=-f(\mathcal{I}x)$ on $B_-$.

\begin{theorem}
For almost every IET $T:I\to I$ the skew product flow $\psi^f_\R$ on $M'\times\R$ is ergodic.
\end{theorem}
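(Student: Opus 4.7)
The plan is to reduce ergodicity of $\psi^f_\R$ on $M'\times\R$ to ergodicity of the discrete skew product $T_{\varphi_f}$ on $I\times\R$ via the special-flow representation recalled around \eqref{def:phif}, and then to apply Theorem~\ref{thm:anti} to $\varphi_f$ with the auxiliary function $\theta(x):=G(\log x)$. Since $T=(\pi,\lambda)$ is a symmetric IET in the Rauzy class of a hyper-elliptic component of $\mathcal{H}(1,1)$ for a.e.\ translation surface $(M,\omega)$, the base hypothesis on $T$ in Theorem~\ref{thm:anti} is satisfied for a.e.\ parameter. Before applying the theorem, I would verify the analytic requirements on $\theta$: with $g$ satisfying \eqref{prop:g}, the function $\theta(x)=G(\log x)$ is $C^1$ and strictly increasing, $\int_1^\infty dx/(x\theta(x))=\int_0^\infty du/G(u)=+\infty$ by \eqref{prop:g}, and $x^2\theta'(x)=x\,g(\log x)$ is strictly increasing as a product of two positive increasing factors; the proposition in Section~\ref{sec:simple} already records slow variation. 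The associated $\tau(s)=s^2/\theta'(1/s)=s/g(-\log s)$ coincides with the $\tau$ used throughout Section~\ref{sec:impmulti}.

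Next I would verify the three hypotheses of Theorem~\ref{thm:anti} on $\varphi_f$. For (1), since $f$ is supported in the disks $B_\pm$ around the two singularities $\sigma_\pm$, only the orbit passes through $B_\pm$ contribute to $\varphi_f(x)$. Exactly one such pass approaches a singularity as $x$ tends to an endpoint of an exchanged interval $I_\alpha$. Applying Theorem~\ref{thm:genCf} with $m=2$ and the example computation \eqref{eq:tau1}--\eqref{eq:tau2} for $f=f_{2,0}+f_{0,2}$, one obtains asymptotics of the form
\[
\varphi_f'(x)\,\tau(x-l_\alpha)\xrightarrow[x\to l_\alpha^+]{}-C^{j_\alpha^-}(f),\qquad \varphi_f'(x)\,\tau(r_\alpha-x)\xrightarrow[x\to r_\alpha^-]{}-C^{j_\alpha^+}(f),
\]
with sector indices $j_\alpha^\pm\in\{1,3,5,7\}$ determined by the polygonal combinatorics, and $|C^{j_\alpha^\pm}(f)|=\tfrac12$. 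The remaining contributions to $\varphi_f'$ come from orbit segments that stay bounded away from $\sigma_\pm$ and hence are uniformly $O(1)$. This gives $\varphi_f\in\Upsilon_\theta(\bigsqcup_\alpha I_\alpha)$ and $z_\theta(\varphi_f)\geq\tfrac12>0$. For (2), the hyper-elliptic involution $\mathcal{I}$ restricts to the central reflection on $I$, so $\mathcal{I}T=T^{-1}\mathcal{I}$ and $\mathcal{I}$ preserves $M'$; combined with $\psi_t\circ\mathcal{I}=\mathcal{I}\circ\psi_{-t}$ and $f\circ\mathcal{I}=-f$, a time-reversal change of variable in the integral defining $\varphi_f$ yields $\varphi_f\circ T^{-1}\circ\mathcal{I}=-\varphi_f$, i.e.\ $\varphi_f$ is anti-symmetric with respect to $T$.

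Condition (3), piecewise monotonicity of $\varphi_f$, is the main obstacle and the one that requires care in the construction. By the asymptotics in step (1), $\varphi_f'$ has a definite sign at each endpoint of $I_\alpha$, equal to $-\operatorname{sgn}C^{j_\alpha^\pm}(f)$, while the regular part of $\varphi_f'$ remains bounded on the interior of $I_\alpha$. Monotonicity therefore forces the two asymptotic signs at the endpoints of a single $I_\alpha$ to agree; this is a purely combinatorial statement about the identification of sectors in Figure~\ref{fig:QQ} with the sides of the polygon of Figure~\ref{fig:surf}, once sectors $\widehat{\mathcal{Q}}_3,\widehat{\mathcal{Q}}_7$ (closed off in Step~1) are removed from $M'$. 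Using the alternating pattern $C^{2j+1}(f)=(-1)^j/2$ together with the identification between sectors at $\sigma_+$ and $\sigma_-$ induced by $\mathcal{I}$, one checks case by case that consecutive sectors bounding the same $I_\alpha$ carry matching signs. Once the sign-matching is in place, shrinking the disks $B_\pm$ makes the bounded regular contribution to $\varphi_f'$ smaller than the minimum of the combined singular contribution on $I_\alpha$, which forces $\varphi_f'$ to have constant sign on the interior of every exchanged interval; up to replacing $f$ by $-f$ one may assume the sign is positive throughout, so $\varphi_f$ is piecewise monotonic.

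With all three conditions verified, Theorem~\ref{thm:anti} yields ergodicity of $T_{\varphi_f}$, which by the special-flow representation gives ergodicity of $\psi^f_\R$ on $M'\times\R$. I expect the hard part of the proof to be exactly the verification of piecewise monotonicity, because Theorem~\ref{thm:genCf} controls $\varphi_f'$ only asymptotically near each endpoint, whereas monotonicity is a global condition on the interior of each $I_\alpha$ and depends delicately on both the sector combinatorics and the relative size of the perturbation disks $B_\pm$.
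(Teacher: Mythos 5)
Your overall strategy — reduce to $T_{\varphi_f}$, verify that $\varphi_f\in\Upsilon_\theta(\bigsqcup_\alpha I_\alpha)$ with $z_\theta(\varphi_f)>0$, and verify anti-symmetry, then apply Theorem~\ref{thm:anti} — matches the paper's proof. Your verification of the singularity asymptotics via Theorem~\ref{thm:genCf} and \eqref{eq:tau1}--\eqref{eq:tau2}, and of anti-symmetry via $\psi_t\circ\mathcal{I}=\mathcal{I}\circ\psi_{-t}$ and $f\circ\mathcal{I}=-f$, are both essentially the paper's arguments (the paper also uses that $\widehat{\varphi}_f^j\equiv 0$ for even $j$ to show $\varphi_f$ vanishes identically near the non-singular discontinuities, which streamlines the sign bookkeeping compared to your sector-by-sector matching).

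However, your treatment of piecewise monotonicity has a genuine gap, and this is exactly where the paper diverges from your route. You argue that shrinking the disks $B_\pm$ makes the ``regular contribution'' to $\varphi_f'$ smaller than the ``singular contribution'' on the interior of $I_\alpha$. This cannot work as stated: the observable $f$ is supported in $B_\pm$, so at a point $x$ at distance $\vep>\hat s_0$ from both endpoints of $I_\alpha$, the \emph{entire} contribution to $\varphi_f(x)$ comes from smooth crossings of $\widehat{\mathcal{Q}}\setminus\mathcal{Q}$ that stay bounded away from $\sigma_\pm$; there is no residual singular term left to dominate. Hence $\varphi_f'$ on the interior of $I_\alpha$ is just a bounded smooth function whose sign depends on the (essentially arbitrary) bump extension of $f$ and on the combinatorics of how the first-return orbit revisits $B_\pm$, and there is no reason for it to have constant sign. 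The paper explicitly acknowledges this obstruction (``on intervals of the form $[l_\alpha+\vep,r_\alpha-\vep]$ this kind of monotonicity can be violated'') and resolves it differently: it writes $\varphi_f=\varphi+\zeta$ with $\varphi$ anti-symmetric and piecewise non-increasing agreeing with $\varphi_f$ near the discontinuities, and $\zeta$ anti-symmetric, $C^\infty$, and supported in the interior of the exchanged intervals. Because $\zeta$ vanishes near discontinuities, $\Delta_{\mathcal{O}}(\zeta)=0$; because $\zeta$ is anti-symmetric, Proposition~\ref{prop:van} gives $\overline{d}(\zeta)=0$, hence $\mathfrak{h}(\zeta)=0$; and by \cite[Proposition~8.12]{Fr-Ul2} $\zeta$ is then a coboundary, so $T_{\varphi_f}$ and $T_{\varphi}$ are isomorphic. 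Theorem~\ref{thm:anti} is then applied to $\varphi$, not to $\varphi_f$. You will need this (or some equivalent cohomological cancellation) to close the monotonicity step; simply tuning the support of $f$ does not do it.
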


\begin{proof}
As usual, we need to show that the skew product map $T_{\varphi_f}:I\times\R\to I\times\R$ is ergodic. First note that the map $\varphi_f:I\to\R$
is anti-symmetric. Indeed, as $\mathcal{I}$ maps $I$ to $I$ and it is a central reflection, we have
\begin{align*}
\varphi_f(\mathcal{I}(Tx))&=\int_0^{\tau(\mathcal{I}(Tx))}f(\psi_t(\mathcal{I}(Tx)))\,dt=
\int_0^{\tau(\mathcal{I}(Tx))}f(\mathcal{I}\circ \psi_{-t}(Tx))\,dt\\
&=
-\int_0^{\tau(\mathcal{I}(Tx))}f(\psi_{\tau(x)-t}(x))\,dt=-\int_{\tau(x)-\tau(\mathcal{I}(Tx))}^{\tau(x)}f(\psi_{t}(x))\,dt.
\end{align*}
Note that $\tau(T^{-1}\circ\mathcal{I}x)$ is the first return times to $I$ for the inverse flow $(\psi_{-t})_{t\in\R}$ starting from $\mathcal{I}x\in I$. As $\psi_{-t}\circ\mathcal{I}=\mathcal{I}\circ\psi_t$, $\tau(T^{-1}\circ\mathcal{I}x)$ is the first return times to $I$ for the inverse flow $(\psi_{t})_{t\in\R}$ starting from $x\in I$, so $\tau(\mathcal{I}(Tx))=\tau(T^{-1}\circ\mathcal{I}x)=\tau(x)$. This gives $\varphi_f(\mathcal{I}(Tx))=-\varphi_f(x)$.

As $f$ is zero outside $B_{\pm}$ and in $B_{\pm}\setminus\widehat{\mathcal{Q}}$, the function $\varphi_f:I\to\R$ is zero outside the $\hat{s}_0$-neighbourhood of discontinuities of $T$. By choosing $\hat{s}_0$ sufficiently small, we can guarantee that the mentioned neighborhoods are disjoint. Moreover, the last exchanged interval does not touch the sets $\widehat{\mathcal{Q}}$ when we move it through the flow until its return to $I$. It follows that $\varphi_f$ is zero on the last interval. The behaviour of $\varphi_f$ in other neighborhoods are fully determined
by the functions $\widehat{\varphi}^j_f:(0,\hat{s}_0]\to\R$, $1\leq j\leq 8$, where $\widehat{\varphi}_f^j$ counts ergodic integrals of $f$ from the entrance to the exit of the set $\widehat{\mathcal{Q}}_j$. Recall that ${\varphi}_f^j:(0,s_0]\to\R$ counts ergodic integrals of $f$ from the entrance to the exit of the set ${\mathcal{Q}}_j$. Note that
\begin{equation}\label{eq:2468}
\widehat{\varphi}_f^2=\widehat{\varphi}_f^4=\widehat{\varphi}_f^6=\widehat{\varphi}_f^8=0.
\end{equation}
Indeed, as $f\circ\zeta=-f$ and the flow $\psi_\R$ on $\widehat{\mathcal{Q}}$ is auto-conjugated by the rotation by $\zeta$, we have $\widehat{\varphi}_f^2=\widehat{\varphi}_{f\circ\zeta}^4=-\widehat{\varphi}_{f}^4$ and $\widehat{\varphi}_f^6=\widehat{\varphi}_{f\circ\zeta}^8=-\widehat{\varphi}_{f}^8$. Denote by $\upsilon$ the vertical reflection $\upsilon(x,y)=(-x,y)$. As $H\circ \upsilon=H$ on $\widehat{\mathcal{Q}}$, the flow $\psi_\R$ on $\widehat{\mathcal{Q}}$ is conjugated via $\upsilon$ to its inverse on $\widehat{\mathcal{Q}}$. Since $\widehat{\mathcal{Q}}_4=\upsilon(\widehat{\mathcal{Q}}_2)$ and $\widehat{\mathcal{Q}}_8=\upsilon(\widehat{\mathcal{Q}}_4)$, it follows that
$\widehat{\varphi}_f^2=\widehat{\varphi}_{f}^4$ and $\widehat{\varphi}_f^6=\widehat{\varphi}_{f}^8$. Therefore, $\widehat{\varphi}_f^j=0$ for every even $j$.
It follows that if a discontinuity $r_\alpha\in I$ lies on the separatrix going to $\sigma_+$, then $\varphi_f(x)=\widehat{\varphi}_f^{2j}(r_\alpha-x)+\widehat{\varphi}_{f}^{2j+2}(r_\alpha-x)=0$ for $x\in[r_\alpha-\hat{s}_0,r_\alpha)$ ($j=1$ or $3$), and similarly if $l_\alpha\in I$ lies on the separatrix going to $\sigma_-$, then $\varphi_f(x)=\widehat{\varphi}_f^{2j}(x-l_\alpha)+\widehat{\varphi}_{f}^{2j+2}(x-l_\alpha)=0$ for $x\in(l_\alpha,l_\alpha+\hat{s}_0]$.
Moreover, if $l_\alpha\in I$ lies on the separatrix going to $\sigma_+$, then $\varphi_f(x)=\widehat{\varphi}_f^{j}(x-l_\alpha)$ ($j=1$ or $5$) for $x\in(l_\alpha,l_\alpha+\hat{s}_0]$, and
if $r_\alpha\in I$ lies on the separatrix going to $\sigma_-$, then $\varphi_f(x)=\widehat{\varphi}_{-f}^{j}(r_\alpha-x)=-\widehat{\varphi}_{f}^{j}(r_\alpha-x)$ for $x\in[r_\alpha+\hat{s}_0,r_\alpha)$.

\begin{figure}[h!]
 \includegraphics[width=0.3\textwidth]{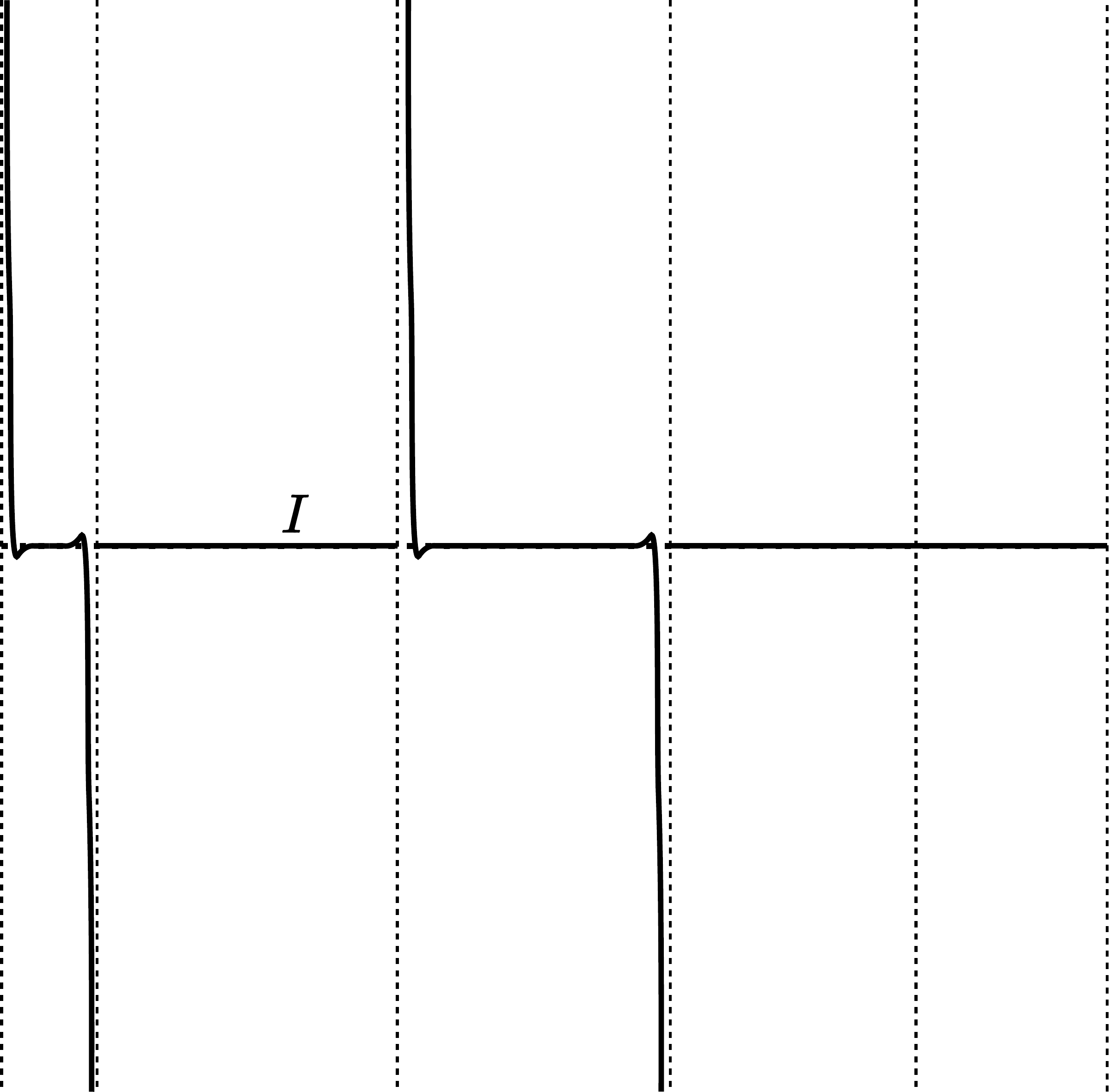}
 \caption{The graph of $\varphi_f$} \label{fig:vphi}
\end{figure}

Since $f$ is of class $C^\infty$, the map $\widehat{\varphi}_{f}^{j}$ on $(0,\hat{s}_0]$ is also of class $C^\infty$. Moreover, $\widehat{\varphi}_{f}^{j}-{\varphi}_{f}^{j}$ on $[0,s_0]$ is of class $C^\infty$, as it counts ergodic integrals of $f$ from the entrance to the exit of $\widehat{\mathcal{Q}}\setminus {\mathcal{Q}}$ (the dark gray set in Figure~\ref{fig:QQ}) which is far from the saddle. In view \eqref{eq:tau1} and \eqref{eq:tau2},
it follows that
\begin{equation}\label{eq:15}
(\widehat{\varphi}_{f}^{j})'(s)=O(\tau(s)^{-1})\text{ and }\lim_{s\to 0}\tau(s)(\widehat{\varphi}_{f}^{j})'(s)=-\frac{1}{2}\text{ for }j=1,5.
\end{equation}
Recall that $\tau(s)=\frac{s}{g(-\log s)}$ and $\theta(x)=\int_0^{\log x}g(u)\,du$.
Hence $\varphi_f:I\to\R$ is an anti-symmetric piecewise $C^\infty$-map with four one-side singularities (see Figure~\ref{fig:vphi}) so that $\varphi_f\in \Upsilon_\theta(\bigsqcup_{\alpha\in \mathcal{A}}
I_{\alpha})$ (i.e.\ $Z_\theta(\varphi_f)$ defined in \eqref{def:Zf} is finite) and $z_\theta(\varphi_f)$ defined in \eqref{def:zf} is positive.

In summary, we are almost ready to use Theorem~\ref{thm:anti} to prove the ergodicity $T_{\varphi_f}$ for a.e.\ IET $T$ because $\varphi_f$ satisfies its assumptions (i) and (ii). Finally, we still need to ensure that (iii) is also satisfied, which demands $\varphi_f$ being locally monotonic. In view of \eqref{eq:2468} and \eqref{eq:15}, $\varphi_f$ is non-increasing on any one-side $\vep$-neighbourhood ($0<\vep<s_0$ is sufficiently small) of any discontinuity of $T$. However, on intervals of the form $[l_\alpha+\vep,r_\alpha-\vep]$ this kind of monotonicity can be violated. To get rid of this problem, we decompose $\varphi_f=\varphi+\zeta$ so that both $\varphi,\zeta:I\to\R$ are anti-symmetric $C^\infty$-maps, $\varphi$ is piecewise non-increasing and $\zeta$ is non-zero only on intervals $[l_\alpha+\vep,r_\alpha-\vep]$. As $\zeta$ vanishes around discontinuities, we have $\partial_{\mathcal{O}}(\zeta)=0$ for any $\mathcal{O}\in \Sigma(\pi)$. As $\zeta$ is anti-symmetric, by Proposition~\ref{prop:van}, for a.e.\ $T$, we have $d_i(\zeta)=1$ for $i=1,2$ (here $g=2$), so $\mathfrak{h}(\zeta)=0$. In view of Proposition~8.12 in \cite{Fr-Ul2}, $\zeta$ is a coboundary, so the skew products $T_{\varphi_f}$ and $T_\varphi$ are isomorphic. As $\varphi$ satisfies all the assumptions of Theorem~\ref{thm:anti}, the skew product $T_\varphi$ is ergodic for a.e.\ $T$. It follows that $T_{\varphi_f}$ and $\psi^f_\R$ on $M'\times\R$ are also ergodic.
\end{proof}

\section*{Data Availabilty Statement}
Data sharing not applicable to this article as no datasets were generated or analysed during the current study.
\end{document}